\newtheorem{theorem}{Theorem}
\theoremstyle{plain}
\newtheorem{corollary}[theorem]{Corollary}
\newtheorem{lemma}[theorem]{Lemma}
\newtheorem{proposition}[theorem]{Proposition}
\numberwithin{theorem}{section}
\numberwithin{equation}{section}
\begin{document}
\title[Topological radicals, I.]{Topological radicals, I. Basic properties, tensor products and joint quasinilpotence}
\author{Victor S. Shulman}
\address{Vologda Polytechnical Institute, Lenin str. 15, Vologda 160000, Russia }
\email{shulman.victor80@yahoo.com}
\author{Yuri V. Turovskii}
\address{Institute of Mathematics and Mechanics, F. Agaev str. 9, Baku AZ1141,
Azerbaijan }
\email{yuri.turovskii@gmail.com}
\thanks{2000 \textit{Mathematics Subject classification. }Primary 46H20, 46H15;
Secondary 46B25, 46B45, 46H40}
\thanks{\textit{Key words and phrases. }Banach algebra, normed algebra, $Q$-algebra,
topological radical, tensor product, topologically nil ideal, joint spectral
radius. }
\thanks{This paper is in final form and no version of it will be submitted for
publication elsewhere. }
\dedicatory{Dedicated to Professor Wieslaw \.{Z}elazko on the occasion of his 70-th birthday}

\begin{abstract}
The paper starts a series of publications devoted to the theory of topological
radicals (TRs) of normed algebras. It contains results on general properties
of TRs and their domains, considers TRs close to the Jacobson radical,
presents a theory of tensor TRs (they are connected with the problem of
calculating the radical of a projective tensor product), introduces and
studies TRs related to the notion of joint qusinilpotence.
\end{abstract}\maketitle

\section{\label{S1} Introduction}

Studying some property, say $\mathcal{P}$, of associative algebras it is
sometimes possible to single out the smallest ideals that accumulate
$\mathcal{P}$ (or the largest ideals of algebras that have $\mathcal{P}$).
This means that the quotients by these ideals are free of $\mathcal{P}$ in the
sense that they do not have nonzero ideals that possess $\mathcal{P}$. Thus
one obtains two, with respect to $\mathcal{P}$, classes of algebras (usually
called $\mathcal{P}$\textit{-radical} and $\mathcal{P}$\textit{-semisimple})
which can be investigated separately and then joined by means of the extension
theory. The first important examples were related to the nilpotence and some
close properties, namely lower and upper nil radicals, quasiregular radical,
etc. The intensive development of this approach brought to a rich and fruitful
branch of the modern algebra, the general theory of radicals \cite{D65, S81},
as maps associating to an algebra its ideal and satisfying some special axioms.

A topological analog of this theory, namely the theory of topological radicals
of normed algebras, was initiated by P. G. Dixon \cite{D97}. This work
contained a well thought-out axiomatics, topological versions of some basic
constructions and many interesting examples. What is especially important,
\cite{D97} proposed a radical theory approach to one of difficult problems of
the Banach algebra theory, the problem of the existence of topologically
irreducible representations of Jacobson radical Banach algebras. The solution
of this problem, obtained by C. Read \cite{R97}, was stimulated by Dixon's approach.

Here we start a series of works on topological radicals of normed (in
particular Banach) algebras. They are related to some known problems of Banach
algebra theory and operator theory: the existence of non-trivial ideals,
radicality of tensor products, joint quasinilpotence of topologically nil
algebras, invariant subspaces, scarcity of spectra, spectral theory of
multiplication operators and so on. Most of them will be completely settled
only in the presence of the (weakest of possible) compactness type conditions,
but many partial results will be obtained without such assumptions. We are
aimed also in the intrinsic development of the theory, not related directly to
the outer problems.

The present paper consists of three sections. The first one, Section \ref{S2},
considers the basic properties of topological radicals (TRs) and their
domains, the \textit{ground} and \textit{universal classes} of normed
algebras. We establish some useful general properties of TRs, classify TRs
with respect to some more special conditions, present, compare and investigate
examples of TRs related to the Jacobson radical (which itself is not a TR if
considered on the class of all normed algebras). It is worth mentioning that
the developed theory of TRs finds its first applications in the difficult
problem of distinguishing some universal classes (see Subsection \ref{S2.11}).
A part of the section is devoted to the study of extensions. They are
understood in two different senses: the stability of a ground class with
respect to the forming of extensions (does an algebra belongs to the ground
class if this class contains its ideal and the quotient?) and the possibility
to extend a TR to a larger ground class. The first direction is of technical
use (but some results seem to be valuable themself, for example the extension
stability of the class of all $Q$-algebras). The second one is of central
importance and is related to many further topics of our project (starting with
tensor products). The situation can be described as follows: to deal with a
radical on a ground class it is very useful to know if it can be extended
(with preservation of some special properties) to more wide (presumably
universal) classes. This (apart of the intrinsic beauty of the subject)
justifies our interest in non-complete algebras: the class of Banach algebras
is not universal.

Section \ref{S3} considers the behavior of a TR with respect to the forming of
projective tensor products of Banach algebras. It was stimulated by the
unsolved problem of radicality of a tensor product of a (Jacobson) radical
Banach algebra and an arbitrary one. We define the general class of
\textit{tensor} TRs, and construct the tensor radical $R^{t}$ related to a
given TR $R$ (coinciding with $R$ iff $R$ is tensor). More precisely we find
some conditions under which $R^{t}$ is a TR and satisfies some additional
properties. Then we restrict our attention to the case that
$R=\operatorname{Rad}$, the Jacobson radical on the class of all Banach
algebras, describe the properties of $\operatorname{Rad}^{t}$ and relate it
with a kind of joint quasinilpotence (with respect to an $l^{1}$-version of
the joint spectral radius).

Recall that a bounded subset $M$ of a normed algebra is called \textit{jointly
quasinilpotent} if $\left\|  M^{k}\right\|  ^{1/k}\rightarrow0$ as
$k\rightarrow\infty$, where $M^{k}$ is the set of all products of $k$ elements
from $M$ and the norm of a set is defined as supremum of norms of its
elements. It is an open problem if in a radical Banach algebra each finite (or
each precompact) set is jointly quasinilpotent. In Section \ref{S4} we
construct and study topological radicals related to the properties of joint
quasinilpotence and investigate their connections to $\operatorname{Rad}^{t}$.
In subsequent publications we will return to these notions and problems. The
authors would like to express their gratitude to Maria Fragoulopoulou for a
consultation on $Q$-algebras.

\subsection{Preliminaries}

In what follows all linear spaces and algebras are complex. For a linear space
$X$, $L(X)$ denotes the algebra of linear operators on $X$. If $X$ is normed,
$B(X)$ denotes the subalgebra of $L(X)$ consisting of all bounded operators on
$X$. The completion of a normed space $X$ is denoted by $\overline{X}$. If $X$
is a subspace of a normed space $Y$ then $\overline{X}$ is identified as a
rule with the closure of $X$ in $\overline{Y}$. So the closure of $X$ in $Y$
can be written as $\overline{X}\cap Y$.

The `\textit{unitization}' $A^{1}$ of an algebra $A$ is defined as $A$ itself
if $A$ is unital and as $A\oplus\mathbb{C}$, with standard operations,
otherwise. The term `ideal' means a two-sided ideal; note that all ideals of
$A$ are simultaneously ideals of $A^{1}$. If $I$ is an ideal of $A$ then
$q_{I}$ denotes the canonical epimorphism of $A$ onto the quotient $A/I$.
Sometimes instead of $q_{I}(a)$ (resp. $q_{I}(M)$) we write $a/I$ (resp.
$M/I$) for every $a\in A$ (resp. $M\subset A$).

Let $\operatorname{irr}A$ denote the set of all strictly irreducible
representations of $A$. If $A$ is normed, let $\operatorname{irr}_{b}A$ (resp.
$\operatorname{irr}_{n}A$) denote the set of all continuous representations in
$\operatorname{irr}A$ by bounded operators on Banach (resp. normed) spaces.
Two representations $\pi$ and $\tau$ in $\operatorname{irr}A$ on $X_{\pi}$ and
$X_{\tau}$, respectively, are called \textit{equivalent } if there exists an
isomorphism $T:X_{\pi}\rightarrow X_{\tau}$ such that $\tau(a)T=T\pi(a)$ for
every $a\in A$. It is known that any strictly irreducible representation of an
algebra $A$ is equivalent to the left regular representation $\pi^{M}$ on the
quotient space $A/M$, where $M$ is a maximal modular left ideal. Let
$\operatorname*{Prim}A$ denote the set of primitive ideals of $A$ (i.e. the
kernels of representations in $\operatorname{irr}A$). By definition, the
\textit{Jacobson radical} $\operatorname{rad}(A)$ is the intersection of all
ideals in $\operatorname*{Prim}A^{1}$. There are many equivalent algebraic
characterizations of $\operatorname{rad}(A)$; in particular it is equal to the
intersection of all modular maximal left ideals. Also, $\operatorname{rad}(A)$
is the largest quasi-regular ideal (i.e. all its elements are
quasi-invertible; $a\in A$ is \textit{quasi-invertible} if $1-a$ is invertible
in $A^{1}$).

The \textit{spectrum} ${\sigma}_{A}(a)$ of an element $a\in A$ is the set of
all $\lambda\in\mathbb{C}$ for which $a-{\lambda}$ is not invertible in
$A^{1}$ (this definition agrees really with one via quasi-inverses in
\cite[Section 2.1]{P94}). The \textit{algebraic spectral radius} ${\rho}%
_{A}(a)$ is defined as $\sup\{\left|  \lambda\right|  :\lambda\in\sigma
_{A}(a)\}$. If $A$ is a normed algebra then ${\sigma}_{A}(a)$ is always
nonempty (\cite[Theorem 2.2.2]{P94}). In this case one defines also the
\textit{topological spectral radius} $\rho(a)=\lim\Vert a^{n}\Vert^{1/n}%
=\inf\Vert a^{n}\Vert^{1/n}$. In general $\rho(a)\leqslant\rho_{A}(a)$ and
$\rho(a)=\rho_{\overline{A}}(a)$ by Gelfand's spectral radius formula, where
$\overline{A}$ is the completion of $A$. If $\rho(a)=0$, we say that $a$ is
\textit{quasinilpotent}. An (one-sided) ideal is \textit{topologically nil}
\cite{P94} if it consists of quasinilpotent elements.

\section{\label{S2}Topological radicals, classes of normed algebras, and the
Jacobson radical}

In this section we investigate the properties of TRs defined on different
classes of normed algebras. First of all we consider normed $Q$-algebras.

\subsection{$Q$-algebras}

A normed algebra $A$ is called a $Q$\textit{-algebra} if the set of all
invertible elements of $A^{1}$ is open. Several equivalent characterizations
of this important property can be found in \cite{P94}. In particular, the
following are equivalent \cite[Proposition 2.2.7]{P94} for normed algebras.

\begin{itemize}
\item [(Q$_{1}$)]$A$ is a $Q$-algebra.

\item[(Q$_{2}$)] $\rho_{A}(a)=\rho(a)$ for any $a\in A$.

\item[(Q$_{3}$)] $\sum_{n>0}a^{n}$ converges for any $a\in A$ with $\Vert
a\Vert<1$.
\end{itemize}

It follows that in any $Q$-algebra the norm is \textit{spectral}, i.e.,
$\rho_{A}(a)\leqslant\Vert a\Vert$; conversely, an algebra with spectral norm
is a $Q$-algebra. If $A$ is a subalgebra of an algebra $B$, $A$ is called a
\textit{spectral subalgebra} of $B$ \cite[Definition 2.5.1]{P94} if
$\sigma_{A}(a)\backslash\{0\}=\sigma_{B}(a)\backslash\{0\}$ for every $a\in A$.

Our aim here is to prove that $Q$-algebras can be conveniently characterized
in terms of their strictly irreducible representations.

\begin{theorem}
\label{QA} For a normed algebra $A$, $(\operatorname{Q}_{1})$ is equivalent to
the following conditions.

\begin{itemize}
\item [(Q$_{4}$)]Each maximal modular left ideal of $A$ is closed.

\item[(Q$_{5}$)] Each strictly irreducible representation of $A$ is equivalent
to a continuous representation (by bounded operators on a normed space).
\end{itemize}
\end{theorem}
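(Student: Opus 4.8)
The plan is to prove the cycle of implications $(\mathrm{Q}_1)\Rightarrow(\mathrm{Q}_4)\Rightarrow(\mathrm{Q}_5)\Rightarrow(\mathrm{Q}_1)$, exploiting the standard dictionary between strictly irreducible representations and maximal modular left ideals recalled in the preliminaries. First, for $(\mathrm{Q}_1)\Rightarrow(\mathrm{Q}_4)$, let $M$ be a maximal modular left ideal of $A$, with modular unit $u$ (so $a-au\in M$ for all $a\in A$). The set $V$ of quasi-invertible elements of $A^1$ is open by $(\mathrm{Q}_1)$ (it is the image of the open set of invertibles under $x\mapsto 1-x$), and it is contained in the complement of $M$: indeed if $a\in M$ were quasi-invertible then $1-a$ invertible in $A^1$ would force $M=A$, since $M$ modular and $1-a$ having a left inverse modulo... more carefully, one uses the classical fact that a proper modular left ideal contains no element $a$ with $u-a$ left-invertible modulo $M$; the point is that $M$ is disjoint from an open neighbourhood of $u$, hence $\overline{M}\ne A$, and by maximality $\overline{M}=M$. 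I would spell out precisely the neighbourhood: if $\|x\|$ is small then $u-(u-x)=x$, and one checks $u-x\notin M$ using that the Neumann-type series converges (here $(\mathrm{Q}_3)$ is the convenient form).

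For $(\mathrm{Q}_4)\Rightarrow(\mathrm{Q}_5)$, take $\pi\in\operatorname{irr}A$. As recalled, $\pi$ is equivalent to the left regular representation $\pi^M$ on $A/M$ for some maximal modular left ideal $M$. By $(\mathrm{Q}_4)$, $M$ is closed, so $A/M$ carries the quotient norm and becomes a normed space; the representation $\pi^M$ acts by $a\mapsto(b+M\mapsto ab+M)$, and $\|ab+M\|\le\|a\|\,\|b+M\|$ shows each operator is bounded with $\|\pi^M(a)\|\le\|a\|$, so $\pi^M$ is a continuous representation by bounded operators on the normed space $A/M$. Transporting along the equivalence isomorphism gives that $\pi$ itself is equivalent to such a representation, which is exactly $(\mathrm{Q}_5)$.

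For $(\mathrm{Q}_5)\Rightarrow(\mathrm{Q}_1)$ I would use characterization $(\mathrm{Q}_2)$: it suffices to show $\rho_A(a)\le\rho(a)$ for every $a\in A$, equivalently that every $\lambda\in\sigma_A(a)$ with $|\lambda|>\rho(a)$ leads to a contradiction. Such a $\lambda\ne 0$, after scaling assume $\lambda=1$, means $1-a$ is not invertible in $A^1$; by Zorn's lemma and the fact that $a$ has a modular structure, $a$ lies in some maximal modular left ideal $M$ (one builds $M$ as a maximal proper left ideal containing $\{b-ba:b\in A\}$, which is proper because $1-a$ is not left-invertible). Then $\pi^M$ is a strictly irreducible representation with $\pi^M(a)$ having $1$ in its spectrum, so $1\le\rho_{B(A/M)}(\pi^M(a))=\rho(\pi^M(a))\le\rho(a)$ once we know $\pi^M$ is \emph{continuous by bounded operators}, because then $\rho(\pi^M(a))=\lim\|\pi^M(a)^n\|^{1/n}\le\lim\|a^n\|^{1/n}=\rho(a)$. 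This is precisely where $(\mathrm{Q}_5)$ enters: it guarantees the equivalent bounded model, and the spectral radius of a bounded operator equals its topological (limit) spectral radius. Hence $\rho(a)\ge 1>\rho(a)$, a contradiction, giving $\rho_A(a)=\rho(a)$ and thus $(\mathrm{Q}_1)$.

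The main obstacle is the careful handling of the correspondence between representations and modular left ideals in the \emph{non-complete} normed setting, and in particular verifying in $(\mathrm{Q}_5)\Rightarrow(\mathrm{Q}_1)$ that one may pass from ``$1\in\sigma_A(a)$'' to the existence of a genuine maximal modular left ideal containing $a$ with the modularity witnessed by a unit that survives the quotient — together with the subtle point that the spectrum of $\pi^M(a)$ as a bounded operator (computed in $B(A/M)$ or its completion) still detects the eigenvalue $1$ coming from the non-invertibility of $1-a$. All the norm estimates $\|\pi^M(a)\|\le\|a\|$ are routine; the delicate part is purely the ideal-theoretic bookkeeping, which I would state as a lemma if it becomes long.
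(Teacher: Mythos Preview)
Your cycle $(\mathrm{Q}_1)\Rightarrow(\mathrm{Q}_4)\Rightarrow(\mathrm{Q}_5)$ matches the paper's, and the first two implications are fine. In $(\mathrm{Q}_5)\Rightarrow(\mathrm{Q}_1)$ there is a genuine gap and also a misstatement.

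The misstatement: you write that ``$a$ lies in some maximal modular left ideal $M$''. In your construction $M$ is a maximal left ideal containing $\{b-ba:b\in A\}$, so $a$ is the right modular unit of $M$; such a unit \emph{never} lies in $M$ (if $a\in M$ then $b=(b-ba)+ba\in M$ for all $b$, so $M=A$). What you actually need---and what your spectral argument tacitly uses---is that $a+M\neq 0$ is an eigenvector of $\pi^M(a)$ with eigenvalue $1$, which follows precisely because $a\notin M$ while $a-a^2\in M$.

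The gap: from ``$1-a$ is not invertible in $A^1$'' you pass to ``$1-a$ is not left invertible in $A^1$'', and this is where your argument gets its maximal modular left ideal. But non-invertibility does not imply non-left-invertibility in a general algebra; $1-a$ could be left invertible and not right invertible, and then the left ideal $\{b-ba:b\in A\}$ equals $A$, so no $M$ exists. You flag this as ``delicate'' but do not dispose of it. The fix is short, and it is essentially the paper's second case: since you have already arranged $\rho(a)<1$, the element $1-a$ is invertible in $\overline{A}^{\,1}$, say with inverse $b$; if also $c(1-a)=1$ for some $c\in A^1$, then $c=c(1-a)b=b\in A^1$, so $1-a$ is invertible in $A^1$ after all, contradicting $1\in\sigma_A(a)$. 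Thus in your setup $1-a$ is automatically not left invertible, and the rest of your argument goes through.

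With that repair your route is a mild variant of the paper's. The paper proves $(\mathrm{Q}_5)\Rightarrow(\mathrm{Q}_6)$ (that $A$ is a spectral subalgebra of $\overline{A}$): in the non-left-invertible case it extends the continuous irreducible representation to $\overline{A}$ to show the eigenvalue persists there; in the left-invertible case it uses the same $c=b$ trick. You instead aim straight at $(\mathrm{Q}_2)$, and in the non-left-invertible case you avoid extending to $\overline{A}$ by using only $\|\tau(a)^n\|\ge 1$ from the eigenvector and $\|\tau(a)^n\|\le\|\tau\|\,\|a^n\|$ from continuity---a slight economy. But both arguments need the same algebraic observation to kill the left-invertible case, and that is precisely the step you omitted.
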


\begin{proof}
The implication (Q$_{1}$)$\Rightarrow$(Q$_{4}$) is well known \cite[Theorem
2.2.8]{D00}. (Q$_{4}$)$\Rightarrow$(Q$_{5}$) follows from the fact that any
strictly irreducible representation of $A$ is equivalent to some
representation $\pi^{M}$, where $M$ is a maximal modular left ideal of $A$. If
$M$ is closed then the space $A/M$ obtains the quotient norm with respect to
which $\pi^{M}$ is clearly continuous.

Now let (Q$_{5}$) hold; we will show (Q$_{5}$)$\Rightarrow$(Q$_{2}$). It
suffices to prove that $A$ is a spectral subalgebra of $\overline{A}$. Let
$a\in A$ and, for some $\lambda\neq0$, $a-{\lambda}$ be non-invertible in
$A^{1}$. We should prove that it is not invertible in $B^{1}$, where
$B=\overline{A}$.

If $a-{\lambda}$ is not left invertible in $A^{1}$ then there is a maximal
left ideal $M$ of $A^{1}$ containing $a-{\lambda}$. Let $x=1+M\in A^{1}/M$.
Then
\[
\pi^{M}(a)x={\lambda x.}%
\]
Since $\pi^{M}(A)\neq0$, the restriction of $\pi^{M}$ to $A$ is strictly
irreducible. We proved that there is a strictly irreducible representation
$\pi$ of $A$ such that $\lambda$ is an eigenvalue of $\pi(a)$. By the
assumption, we can suppose that $\pi$ is a continuous representation by
bounded operators on a normed space $X$. Let $Y=\overline{X}$, the completion
of $X$, and let $\overline{{\pi}}$ the representation of $A$ on $Y$ such that
$\overline{{\pi}}(a)$ is the extension of $\pi(a)$ by continuity, for any
$a\in A$. Again $\overline{{\pi}}$ extends by continuity to a representation
$\tau$ of $B$ which in its turn extends to a representation ${\tau}^{\prime}$
of $B^{1}$ on the same space. It is easy to see that $\lambda$ is an
eigenvalue of ${\tau}^{\prime}(a)$. But this means that the element $a$ cannot
be left invertible in $B^{1}$.

So we may suppose that $a-\lambda$ is left invertible in $A^{1}$:
\[
c(a-{\lambda})=1
\]
for some $c\in A^{1}$. If $a-\lambda$ is invertible in $B^{1}$ then
\[
(a-{\lambda})b=1
\]
for some $b\in B^{1}$, whence
\[
c=c(a-{\lambda})b=b
\]
and accordingly $b\in A^{1}$, a contradiction. This shows that $A$ is a
spectral subalgebra of $\overline{A}$.
\end{proof}

We continue the list of conditions equivalent to (Q$_{1}$).

\begin{itemize}
\item [(Q$_{6}$)]$A$ is a spectral subalgebra of $\overline{A}$.

\item[(Q$_{7}$)] Every strictly irreducible representation of $A$ extends to a
representation of $\overline{A}$.
\end{itemize}

The equivalence of the conditions (Q$_{6}$) and (Q$_{7}$) is a special case of
the Schweitzer's Theorem \cite[Theorem 4.2.10]{P94}, the equivalence of
(Q$_{6}$) and (Q$_{1}$) is well known (for instance see \cite[Lemma
20.9]{KS97}).

Looking at the condition (Q$_{5}$) of Theorem \ref{QA} as a possible
definition of a $Q$-algebra it is natural to consider normed algebras whose
strictly irreducible representations are equivalent to continuous
representations (by bounded operators) on Banach spaces. We call them $Q_{b}%
$-\textit{algebras}. Being related to a special radical introduced in
\cite{D97}, these algebras play an important role in what follows.

\subsection{Classes of normed algebras}

Let $(NA)$ denote the class of all normed algebras. The following definitions
will be useful.

A class $\mathcal{A}\subset(NA)$ is called \textit{image closed }(resp.
\textit{preimage closed}) if conditions $B=f(A)$ for continuous isomorphism
$f$ and $A\in\mathcal{A}$ (resp. $B\in\mathcal{A}$) imply $B\in\mathcal{A}$
(resp. $A\in\mathcal{A}$). Recall that a continuous isomorphism is a
\textit{topological isomorphism }if its inverse is also continuous.

A class $\mathcal{A}\subset(NA)$ is called \textit{ideal stable }(resp.
\textit{closed ideal stable}) if it contains all normed algebras topologically
isomorphic to ideals (resp. closed ideals) of every $A\in\mathcal{A}$. A class
$\mathcal{A}\subset(NA)$ is called \textit{quotient stable }if it contains all
normed algebras topologically isomorphic to quotients of every $A\in
\mathcal{A}$ by closed ideals.

A class $\mathcal{A}\subset(NA)$ is called a \textit{ground class} if it is
closed ideal and quotient stable, and a \textit{universal class }if it is
ideal and quotient stable. Note that ground and universal classes of normed
algebras are natural domains of topological radicals.

In many cases the properties of synthetic kind are important. For
$\mathcal{A}\subset\mathcal{B}\subset(NA)$, $\mathcal{A}$ is called
\textit{extension stable }in $\mathcal{B}$ if conditions $A\in\mathcal{B}$,
$I$ is a closed ideal of $A$ and $A/I,I\in\mathcal{A}$ imply $A\in\mathcal{A}%
$. We say that $\mathcal{A}$ is \textit{linear} in $\mathcal{B}$ if every
$A\in\mathcal{B}$ with a dense sum of its ideals $I_{\alpha}$ that belong to
$\mathcal{A}$ belongs to $\mathcal{A}$ itself. In both cases we do not mention
$\mathcal{B}$ if $\mathcal{B}=(NA)$.

Let us now consider some examples of ground classes.

\begin{itemize}
\item  The class $(BA)$ of all Banach algebras. It is an easy exercise that
the class is extension stable, not linear and not ideal stable.

\item  The class $(QA)$ (resp. $(Q_{b}A)$) of all normed $Q$-algebras (resp.
$Q_{b}$-algebras). Classes $(QA)$ and $(Q_{b}A)$ are universal (see Theorem
\ref{p20}).

\item  The smallest universal class $(BA)^{u}$ containing all Banach algebras.

\item  The class $(CNA)$ of all commutative normed algebras. The class is
universal, not extension stable and not linear.

\item  The class $(C^{\ast}EA)$ of all $C^{\ast}$-equivalent algebras. The
class is not universal. We will investigate it elsewhere.
\end{itemize}

Restricting ``dimension'' of algebras in a ground class we get another ground
class. In this way one obtains the classes $(SBA)$ and $(SNA)$ of all
separable Banach and, respectively, normed algebras and the class $(FNA)$ of
all finite-dimensional normed algebras. There are many possibilities to form
new ground classes starting from the given ones.

\begin{proposition}
\label{p4} Unions and intersections of arbitrary families of ground (resp.
universal) classes are ground (resp. universal) classes.
\end{proposition}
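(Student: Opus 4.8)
The plan is to verify the defining closure properties directly from the definitions, since each is a statement quantified over the members of the family, and both intersection and union interact with such quantifiers in a predictable way. Recall that a ground class is closed-ideal stable and quotient stable, while a universal class is ideal stable and quotient stable; so it suffices to check stability under passing to (closed) ideals and under passing to quotients by closed ideals.

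First I would treat intersections. Let $\{\mathcal{A}_\alpha\}$ be a family of ground classes and put $\mathcal{A}=\bigcap_\alpha \mathcal{A}_\alpha$. If $A\in\mathcal{A}$ and $B$ is a normed algebra topologically isomorphic to a closed ideal of $A$, then for each $\alpha$ we have $A\in\mathcal{A}_\alpha$, and since $\mathcal{A}_\alpha$ is closed-ideal stable, $B\in\mathcal{A}_\alpha$; as this holds for all $\alpha$, $B\in\mathcal{A}$. The argument for quotients by closed ideals is identical, using quotient stability of each $\mathcal{A}_\alpha$. The same two lines, with ``closed ideal'' replaced by ``ideal'', handle the universal case. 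For unions, set $\mathcal{A}=\bigcup_\alpha\mathcal{A}_\alpha$: if $A\in\mathcal{A}$ then $A\in\mathcal{A}_\alpha$ for some single index $\alpha$, and then the relevant stability property of that one $\mathcal{A}_\alpha$ puts the ideal (or quotient) into $\mathcal{A}_\alpha\subset\mathcal{A}$. So all four verifications are essentially immediate.

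There is, honestly, no real obstacle here — the statement is a bookkeeping consequence of the definitions. The only point requiring a moment's care is the direction of the quantifier interplay: intersections work because the defining conditions are universally quantified over $\alpha$ (membership in \emph{every} $\mathcal{A}_\alpha$ is inherited by ideals/quotients since each class individually has the property), while unions work because membership in the union is witnessed by a \emph{single} $\alpha$, so one only needs that one class's stability. I would state the proof in one short paragraph covering intersections (both ground and universal simultaneously, since the ideal/closed-ideal distinction is the only difference and it propagates verbatim) and one short paragraph covering unions, remarking that image/preimage-type properties are not claimed and are in fact not preserved, so nothing further is needed.
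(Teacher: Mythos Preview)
Your proposal is correct and is exactly the routine verification that the paper has in mind; the paper's own proof consists of the single word ``Clear.'' Your write-up merely spells out what that word encodes, so there is no substantive difference in approach.
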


\begin{proof}
Clear.
\end{proof}

In particular the intersection of any ground class with $(CNA)$ is often important.

It is easy to see that%
\[
(C^{\ast}EA)\subset(BA)\subset(BA)^{u}\subset(QA_{b})\subset(QA)\subset(NA).
\]

\subsection{Classes $(QA)$ and $(Q_{b}A)$}

Now let us return to $(Q_{b}A)$ and $(QA)$. Algebraic parts of the following
two lemmas belong to the folklore and we are mainly interested in the
topological aspect. Let $A$ be an algebra and $I$ its ideal. Put
$\operatorname{irr}_{I}A=\{\pi\in\operatorname{irr}A:\pi I=0\}$ and
$\operatorname{irr}^{I}A=\{\pi\in\operatorname{irr}A:\pi I\neq0\}$. Let
$q_{I}$ be the standard epimorphism $A\rightarrow A/I$, and let $q_{I}%
^{-1}(a)$ be the preimage of $a\in A/I$.

\begin{lemma}
\label{p18}Let $A\in(NA)$ and $I$ its closed ideal. If $A$ belongs to
$(Q_{b}A)$ (resp. $(QA)$) then so does $A/I$. If $A/I\in(Q_{b}A)$ (resp.
$(QA)$) then every representation from $\operatorname{irr}_{I}A$ is equivalent
to a continuous representation in $\operatorname{irr}_{I}A$ by bounded
operators on a Banach (resp. normed) space.
\end{lemma}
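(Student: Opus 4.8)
The plan is to pass everything through the standard bijection between the strictly irreducible representations of $A$ that annihilate $I$ and the strictly irreducible representations of $A/I$. Concretely, if $\pi\in\operatorname{irr}_{I}A$ then $\ker\pi\supset I$, so $\pi$ factors uniquely as $\pi=\widetilde{\pi}\circ q_{I}$ with $\widetilde{\pi}$ a representation of $A/I$ on the same space; since $q_{I}$ is surjective, $\widetilde{\pi}$ is again strictly irreducible, and conversely $\sigma\mapsto\sigma\circ q_{I}$ carries strictly irreducible representations of $A/I$ into $\operatorname{irr}_{I}A$. Two further elementary remarks will be used repeatedly: an equivalence of representations (an isomorphism $T$ with $\tau(a)T=T\pi(a)$) preserves kernels, so the property of annihilating $I$ is invariant under equivalence; and if $\rho\colon A\to B(X)$ is a continuous homomorphism with $I\subset\ker\rho$, the induced homomorphism $\widetilde{\rho}\colon A/I\to B(X)$ is continuous for the quotient norm, since $\|\widetilde{\rho}(x)\|=\|\rho(b)\|\leqslant\|\rho\|\,\|b\|$ for every $b\in A$ with $b/I=x$, whence $\|\widetilde{\rho}(x)\|\leqslant\|\rho\|\,\|x\|$. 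Here one uses that $I$ is closed, so that $A/I$ is genuinely a normed algebra.

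Granting this, the second assertion is immediate. Take $\pi\in\operatorname{irr}_{I}A$ and write $\pi=\widetilde{\pi}\circ q_{I}$ with $\widetilde{\pi}\in\operatorname{irr}(A/I)$. If $A/I\in(Q_{b}A)$ (resp. $(QA)$), then by definition (resp. by condition $(\operatorname{Q}_{5})$ of \thmref{QA}) $\widetilde{\pi}$ is equivalent to a continuous representation $\rho$ of $A/I$ by bounded operators on a Banach (resp. normed) space $X$. Then $\rho\circ q_{I}$ is a representation of $A$ on $X$, by bounded operators, continuous (as $q_{I}$ is), annihilating $I$, and strictly irreducible (as $q_{I}$ is onto), so $\rho\circ q_{I}\in\operatorname{irr}_{I}A$; and it is equivalent to $\widetilde{\pi}\circ q_{I}=\pi$ through the intertwiner realizing the equivalence of $\rho$ and $\widetilde{\pi}$. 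This is exactly the claim.

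For the first assertion, assume $A\in(Q_{b}A)$ (resp. $(QA)$) and let $\tau$ be an arbitrary strictly irreducible representation of $A/I$. Then $\tau\circ q_{I}$ is strictly irreducible, so by hypothesis (resp. by $(\operatorname{Q}_{5})$ of \thmref{QA}) it is equivalent to a continuous representation $\rho$ of $A$ by bounded operators on a Banach (resp. normed) space $X$. Since $\tau\circ q_{I}$ annihilates $I$ and equivalence preserves kernels, $I\subset\ker\rho$, hence $\rho=\widetilde{\rho}\circ q_{I}$ for a homomorphism $\widetilde{\rho}\colon A/I\to B(X)$ which is continuous for the quotient norm by the estimate above. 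Then $\widetilde{\rho}$ is a continuous representation by bounded operators on a Banach (resp. normed) space, equivalent to $\tau$ via the same intertwiner, so $A/I\in(Q_{b}A)$ (resp., applying $(\operatorname{Q}_{5})$ of \thmref{QA} once more, $A/I\in(QA)$).

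There is no essential obstacle here; the proof is bookkeeping around the correspondence $\pi\leftrightarrow\widetilde{\pi}$. The only points that require attention are the continuity of the induced map $\widetilde{\rho}$ for the quotient norm — the single place where closedness of $I$ enters, guaranteeing that $A/I$ is a normed, not merely seminormed, algebra — and, for the $(QA)$ half of each statement, remembering to translate between the definition of $(QA)$ via openness of the invertibles and the representation-theoretic form $(\operatorname{Q}_{5})$ supplied by \thmref{QA}.
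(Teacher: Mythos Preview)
Your proof is correct and follows essentially the same route as the paper: both set up the standard bijection between $\operatorname{irr}_{I}A$ and $\operatorname{irr}(A/I)$ via $q_{I}$, observe that equivalence of representations is compatible with this bijection, and then verify that continuity (by bounded operators on a Banach, resp.\ normed, space) passes in each direction using the quotient-norm estimate. Your write-up is in fact somewhat more explicit than the paper's, spelling out the intertwiner argument and the inequality $\|\widetilde{\rho}(x)\|\leqslant\|\rho\|\,\|x\|$ in full, and correctly flagging that closedness of $I$ is needed precisely so that $A/I$ is normed.
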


\begin{proof}
Consider firstly $A$ and $I$ in the pure algebraic context (i.e. without any
topology). For arbitrary $\pi\in\operatorname{irr}_{I}A$ and $\tau
\in\operatorname{irr}A/I$, put $\widehat{\pi}=\pi q_{I}$ and $\widetilde{\tau
}=\tau q_{I}^{-1}$. Clearly $\widehat{\pi}\in\operatorname{irr}A/I$ and
$\widetilde{\tau}\in\operatorname{irr}_{I}A$. If $\tau^{\prime}$ is equivalent
to $\tau$ and $\widetilde{\tau}$ is equivalent to $\pi$ then $\tau^{\prime}$
is equivalent to $\widehat{\pi}$. Indeed, identifying the representation
spaces pairly, one can identify the actions of $\tau^{\prime}$ and
$\widehat{\pi}$. Similarly, if $\pi^{\prime}$ is equivalent to $\pi$ and
$\widehat{\pi}$ is equivalent to $\tau$ then $\pi^{\prime}$ is equivalent to
$\widetilde{\tau}$.

If $A$ is normed, $I$ is closed and $\pi$ (resp. $\tau$) is a bounded
representation by bounded operators on a normed (Banach) space, we have that
$\widehat{\pi}$ (resp. $\widetilde{\tau}$) is a representation by bounded
operators and is bounded. Indeed, $\left\|  \widehat{\pi}\right\|
\leqslant\left\|  \pi\right\|  \left\|  q_{I}\right\|  $ and, for every $a\in
A$,
\[
\left\|  \widetilde{\tau}a\right\|  \leqslant\left\|  \tau\right\|  \left\|
q_{I}^{-1}(a)\right\|  \leqslant\left\|  \tau\right\|  \left\|  a\right\|  ,
\]
whence $\left\|  \widetilde{\tau}\right\|  \leqslant\left\|  \tau\right\|  $.
\end{proof}

\begin{lemma}
\label{p19} Let $A\in(NA)$ and $I$ its ideal. If $A$ belongs to $(Q_{b}A)$
(resp. $(QA)$) then so does $I$. If $I\in(Q_{b}A)$ (resp. $(QA)$) then every
representation from $\operatorname{irr}^{I}A$ is equivalent to a continuous
representation in $\operatorname{irr}^{I}A$ by bounded operators on a Banach
(resp. normed) space.
\end{lemma}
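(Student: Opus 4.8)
The plan is to reduce everything to the correspondence between strictly irreducible representations of $A$ that do not annihilate $I$ and strictly irreducible representations of $I$, and then to track the norm estimates through this correspondence, exactly in the spirit of the proof of \lemref{p18}. First I would recall the classical algebraic fact: if $\pi\in\operatorname{irr}^{I}A$, i.e. $\pi$ is a strictly irreducible representation of $A$ on a space $X$ with $\pi(I)\neq0$, then the restriction $\pi|_{I}$ is again strictly irreducible on the \emph{same} space $X$; conversely, any $\sigma\in\operatorname{irr}I$ extends (uniquely up to equivalence) to some $\pi\in\operatorname{irr}^{I}A$, and these two passages are mutually inverse on equivalence classes. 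The first half of the lemma then reads: if every $\pi\in\operatorname{irr}A$ is equivalent to a continuous representation on a Banach (resp. normed) space, then the same holds for $I$. Given $\sigma\in\operatorname{irr}I$, extend it to $\pi\in\operatorname{irr}^{I}A$; by hypothesis $\pi$ is equivalent to a continuous $\pi'$ on a Banach (resp. normed) space $X'$, and restricting this equivalence to $I$ shows $\sigma$ is equivalent to $\pi'|_{I}$, which is a representation of $I$ by bounded operators on $X'$. The only point needing a word is \emph{continuity of the restriction}: since $\pi'$ is continuous on $A$ and $I$ carries the subspace norm, $\|\pi'(x)\|\le\|\pi'\|\,\|x\|$ for $x\in I$, so $\pi'|_{I}$ is bounded with $\|\pi'|_{I}\|\le\|\pi'\|$. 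Thus $I\in(Q_{b}A)$ (resp. $(QA)$).

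For the converse direction I would argue symmetrically. Assume $I\in(Q_{b}A)$ (resp. $(QA)$) and take $\pi\in\operatorname{irr}^{I}A$ on a space $X$. Then $\sigma:=\pi|_{I}$ lies in $\operatorname{irr}I$, so by hypothesis $\sigma$ is equivalent, via some vector-space isomorphism $T\colon X\to X'$, to a continuous $\sigma'\in\operatorname{irr}I$ on a Banach (resp. normed) space $X'$. The task is to transport $\pi$ itself along $T$ and check that the resulting representation $\pi'$ of $A$ on $X'$ is continuous. Define $\pi'(a)=T\pi(a)T^{-1}$ for $a\in A$; this is a representation of $A$ on $X'$, it agrees with $\sigma'$ on $I$, and $\pi'\in\operatorname{irr}^{I}A$ since $\pi'(I)=\sigma'(I)\neq0$. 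So it remains to bound $\|\pi'(a)\|$ for $a\in A$ in terms of $\|a\|$.

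Here is the step I expect to be the main obstacle: establishing continuity of $\pi'$ on all of $A$ from continuity on $I$ alone. The device is standard but must be spelled out. Fix any $e\in I$ with $\pi'(e)\neq0$, choose $\xi\in X'$ with $\eta:=\pi'(e)\xi\neq0$; by strict irreducibility $X'=\pi'(I)\eta=\pi'(I)\pi'(e)\xi$, and in fact one can pick finitely many $u_{1},\dots,u_{n}\in I$ with $\{\pi'(u_{j})\eta\}$ spanning a suitable finite-dimensional subspace on which to normalize — more cleanly, use that for each $a\in A$ the elements $ae, \,eae, \dots$ lie in $I$, so $\pi'(a)$ is controlled on the dense (indeed total) set $\pi'(I)X'$ by the already-bounded operators $\pi'(ae),\pi'(eae)$, etc. Concretely: for $x\in I$ one has $ax\in I$, hence $\|\pi'(a)\pi'(x)\xi\|=\|\pi'(ax)\xi\|\le\|\sigma'\|\,\|ax\|\,\|\xi\|\le\|\sigma'\|\,\|a\|\,\|x\|\,\|\xi\|$; since vectors of the form $\pi'(x)\xi$ with $x\in I$ exhaust $X'$ (strict irreducibility together with $\pi'(I)\neq0$), and the bound is linear in $\|a\|$, a routine argument — picking $\xi$ once and using that $\pi'(I)\xi$ is all of $X'$, then comparing two representatives $\pi'(x)\xi=\pi'(y)\xi$ via an element that intertwines, or simply invoking that $\pi'$ restricted to the faithful-on-a-cyclic-vector picture is determined — yields $\|\pi'(a)\|\le C\|a\|$ for a constant $C$ depending only on $\|\sigma'\|$ and a fixed choice of cyclic data. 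In the completed (Banach) case $X'$ is complete and no extra care is needed; in the normed case one works directly with $X'$. This gives $\pi'\in\operatorname{irr}^{I}A$ continuous by bounded operators on a Banach (resp. normed) space, which is exactly the assertion, and feeding it back through the first paragraph's correspondence also shows $A\in(Q_{b}A)$ (resp. $(QA)$) whenever one additionally knows $A/I$ behaves well — but for the stated lemma only the displayed equivalence for representations in $\operatorname{irr}^{I}A$ is required.
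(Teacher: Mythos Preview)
Your treatment of the first direction ($A\in(Q_bA)$ or $(QA)$ implies the same for $I$) is fine and matches the paper: restrict a continuous $\pi'$ to $I$ and use $\|\pi'|_I\|\le\|\pi'\|$.

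The second direction, however, has a genuine gap precisely at the place you flag as the ``main obstacle''. Your estimate
\[
\|\pi'(a)\pi'(x)\xi\|\;=\;\|\sigma'(ax)\xi\|\;\le\;\|\sigma'\|\,\|a\|\,\|x\|\,\|\xi\|
\]
bounds $\|\pi'(a)y\|$ in terms of $\|x\|$, where $y=\pi'(x)\xi$, not in terms of $\|y\|$. To conclude $\|\pi'(a)\|\le C\|a\|$ in the \emph{given} norm on $X'$ you would need a lower bound $\|y\|\ge c\,\inf\{\|x\|:\sigma'(x)\xi=y\}$, and there is no reason for such a bound to hold: the orbit map $x\mapsto\sigma'(x)\xi$ is bounded above but not below in general. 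The ``routine argument'' you allude to (comparing representatives, cyclic-vector picture) does not produce this reverse inequality, so $\pi'$ need not be bounded on $X'$ with its original norm.

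The paper's device is exactly to take that infimum as a \emph{new} norm. One sets
\[
\|y\|_{(I,\xi)}\;=\;\inf\{\|x\|:x\in I,\ \sigma'(x)\xi=y\},
\]
checks it is a norm (because $\|y\|\le\|\sigma'\|\,\|\xi\|\,\|y\|_{(I,\xi)}$ forces $y=0$ when the infimum vanishes), and then your own computation gives $\|\pi'(a)y\|_{(I,\xi)}\le\|a\|\,\|y\|_{(I,\xi)}$, i.e.\ $\|\pi'\|_{(I,\xi)}\le1$. Thus $\pi'$ is continuous by bounded operators on the \emph{renormed} space $(X',\|\cdot\|_{(I,\xi)})$, which is still algebraically equivalent to $\pi$, and that is all the lemma asks. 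For the $(Q_bA)$ case there is an additional step you do not mention: to get a \emph{Banach} space one must replace $I$ by its completion $\overline{I}$ and $\sigma'$ by its continuous extension $\overline{\sigma'}$; then $(X',\|\cdot\|_{(\overline{I},\xi)})$ is isometric to the quotient $\overline{I}/\{x:\overline{\sigma'}(x)\xi=0\}$ and hence complete.
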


\begin{proof}
Consider firstly $A$ and $I$ in the pure algebraic context. For arbitrary
$\pi\in\operatorname{irr}^{I}A$ and $\tau\in\operatorname{irr}I$, let
$\pi|_{I}$ be the restriction of $\pi$ to $I$ and, for arbitrary nonzero $x\in
X_{\tau}$, $y=\tau(b)x$ with $b\in I$, let $\tau_{x}$ be defined as $\tau
_{x}(a)y=\tau(ab)x$ for every $a\in A$. The definition of $\tau_{x}$ is
correct: if $y=0$ then $\tau(I)\tau(ab)x=\tau(Ia)y=0$, whence $\tau(ab)x=0$.
Clearly $\pi|_{I}\in\operatorname{irr}I$ and $\tau_{x}\in\operatorname{irr}%
^{I}A$, and, for fixed $\tau$, all $\tau_{x}$ are pairly equivalent.

If $\pi^{\prime}$ is equivalent to $\pi$ and $\pi|_{I}$ is equivalent to
$\tau$ then $\pi^{\prime}$ is equivalent to $\tau_{x}$, for every $x$. Indeed,
identifying $\pi^{\prime}$ with $\pi$ and $\tau$ with $\pi|_{I}$, one can
assume that the representations act on the same space, say $X$ and, for every
$y=\tau(b)x=\pi|_{I}(b)x=\pi(b)x\in X$ with $b\in I$ and for every $a\in A$,
we have
\[
\tau_{x}(a)y=\tau(ab)x=\pi|_{I}(ab)x=\pi(ab)x=\pi(a)y.
\]
Similarly, if $\tau^{\prime}$ is equivalent to $\tau$ and $\tau_{x}$ is
equivalent to $\pi$ then $\tau^{\prime}$ is equivalent to $\pi|_{I}$. Indeed,
identifying $\tau^{\prime}$ with $\tau$ and $\tau_{x}$ with $\pi$, one can
assume that the representations act on the same space, say $X$ and, for every
$y=\tau(b)x\in X$ with $b\in I$ and for every $a\in I$, we have
\[
\pi|_{I}(a)y=\pi(a)y=\tau_{x}(a)x=\tau(ab)x=\tau(a)y.
\]
If $A$ is normed and $\pi$ (resp. $\tau$) is a bounded representation by
bounded operators on a normed (Banach) space $X$, we have that $\pi|_{I}$
(resp. $\tau_{x}$) is a representation by bounded operators and is bounded. It
suffices to check boundedness for $\tau_{x}$ if $\tau$ is bounded. For every
$y\in X$, put
\[
\left\|  y\right\|  _{(I_{,}x)}=\inf\{\left\|  b\right\|  :b\in I,\quad
\tau(b)x=y\}.
\]
Then $y\mapsto\left\|  y\right\|  _{(I_{,}x)}$ determines a new norm on $X$.
Indeed, if $\left\|  y\right\|  _{(I_{,}x)}=0$ then $\left\|  y\right\|
=\left\|  \tau(b)x\right\|  \leqslant\left\|  \tau\right\|  \left\|
b\right\|  \left\|  x\right\|  $ implies $\left\|  y\right\|  =0$, i.e. $y=0$;
the other properties of norm are obvious for $y\mapsto\left\|  y\right\|
_{(I_{,}x)}$. We have that, for every $a\in A$,
\[
\left\|  \tau_{x}(a)y\right\|  _{(I_{,}x)}\leqslant\left\|  \tau(ab)x\right\|
_{(I,x)}\leqslant\left\|  ab\right\|  \leqslant\left\|  a\right\|  \left\|
b\right\|  ,
\]
whence
\[
\left\|  \tau_{x}(a)y\right\|  _{(I_{,}x)}\leqslant\left\|  a\right\|
\left\|  y\right\|  _{(I_{,}x)},\quad\left\|  \tau_{x}(a)\right\|  _{(I_{,}%
x)}\leqslant\left\|  a\right\|  ,\quad\left\|  \tau_{x}\right\|  _{(I_{,}%
x)}\leqslant1.
\]
If $X$ was a Banach space, take $\overline{I}$ instead of $I$ and
$\overline{\tau}$, the continuous extension of $\tau$ to $\overline{I}$ by
bounded operators on $X$, instead of $\tau$. Then the above estimates hold,
and it remains to show that $(X,\left\|  \cdot\right\|  _{(\overline{I}_{,}%
x)})$ is complete. Indeed, $(X,\left\|  \cdot\right\|  _{(\overline{I}_{,}%
x)})$ is isometrically isomorphic to the quotient $\overline{I}/M$, where
$M=\{b\in\overline{I}:\overline{\tau}(b)x=0\}$, and hence is complete.
\end{proof}

Now we are in a position to describe the basic properties of $(Q_{b}A)$ and
$(QA)$.

\begin{theorem}
\label{p20}$(Q_{b}A)$ and $(QA)$ are extension stable, preimage closed, linear
universal classes.
\end{theorem}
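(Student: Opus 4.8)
\emph{Universality, preimage closedness and extension stability} are quick consequences of the two preceding lemmas together with \thmref{QA} (and, in the $(Q_{b}A)$ versions, the definition of a $Q_{b}$-algebra). For universality the first halves of \lemref{p18} and \lemref{p19} are exactly quotient stability (by closed ideals) and ideal stability, while invariance under topological isomorphisms is automatic, since a topological isomorphism transports equivalence classes of strictly irreducible representations. For preimage closedness, if $f\colon A\to B$ is a continuous isomorphism and $B$ lies in the class, then for $\pi\in\operatorname{irr}A$ the representation $\pi\circ f^{-1}\in\operatorname{irr}B$ is equivalent to some continuous $\tau$ by bounded operators on a normed (resp.\ Banach) space; then $\tau\circ f$ is continuous and by bounded operators on the same space and is equivalent to $\pi$, so $A$ lies in the class. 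For extension stability, let $I$ be a closed ideal of $A$ with $I$ and $A/I$ in the class and take $\pi\in\operatorname{irr}A$: if $\pi I=0$ then $\pi\in\operatorname{irr}_{I}A$ and the second half of \lemref{p18} (applicable since $A/I$ is in the class) makes $\pi$ equivalent to a continuous bounded representation on a normed (resp.\ Banach) space; if $\pi I\neq0$ then $\pi\in\operatorname{irr}^{I}A$ and the second half of \lemref{p19} (applicable since $I$ is in the class) does the same; either way $A$ lies in the class.

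\emph{Linearity} is the substantial point. Let $A\in(NA)$ have ideals $I_{\alpha}$ in the class with $D:=\sum_{\alpha}I_{\alpha}$ dense in $A$, and fix $\pi\in\operatorname{irr}A$. The whole matter reduces to the claim that $\pi I_{\alpha}\neq0$ for at least one $\alpha$: granted this, $\pi\in\operatorname{irr}^{I_{\alpha}}A$ and the second half of \lemref{p19} makes $\pi$ equivalent to a continuous bounded representation on a normed (resp.\ Banach) space, so $A$ lies in the class by \thmref{QA} (resp.\ the definition of a $Q_{b}$-algebra). To prove the claim I would first show $A\in(QA)$ by an elementwise argument. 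Membership in $(QA)$ is equivalent to the norm being spectral, $\rho_{A}(a)\leqslant\Vert a\Vert$ for all $a$; since each $I_{\alpha}$ is an ideal of $A$, quasi-invertibility in $I_{\alpha}$ and in $A$ coincide on $I_{\alpha}$, so $\rho_{A}(a)=\rho_{I_{\alpha}}(a)\leqslant\Vert a\Vert$ for $a\in I_{\alpha}$; after checking that finite subsums $I_{\alpha_{1}}+\dots+I_{\alpha_{n}}$ are again in $(QA)$, one has $\rho_{A}(a)\leqslant\Vert a\Vert$ on the dense set $D$, and a successive-approximation argument (for $\Vert a\Vert<1$ pick $d\in D$ near $a$, write $1-a=(1-d)(1-a_{1})$ with $1-d$ invertible in $A^{1}$ — its quasi-inverse lying in $A^{1}$ by $(\operatorname{Q}_{3})$ applied inside a finite subsum — and $\Vert a_{1}\Vert$ small, then iterate) propagates the inequality to all of $A$. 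With $A\in(QA)$ in hand, \thmref{QA} gives that every maximal modular left ideal of $A$ is closed; if $\pi$ annihilated every $I_{\alpha}$, then, since $\pi$ is equivalent to $\pi^{M}$ for a maximal modular left ideal $M$ and $\pi^{M}I_{\alpha}=0$ forces $I_{\alpha}\subseteq M$ (using the modular right unit of $M$), the closed ideal $M$ would contain the dense ideal $D$ and hence equal $A$, contradicting $M\neq A$. This proves the claim.

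I expect the interior of the linearity argument to be the main obstacle: passing from ``$\rho_{A}\leqslant\Vert\cdot\Vert$ on the dense ideal $D$'' to ``on all of $A$''. In a non-complete algebra the naive Neumann iteration only produces an inverse in the completion $\overline{A^{1}}$, so one has to exploit that $D$ is an \emph{ideal} — not merely a dense subalgebra — in order to keep the successive corrections inside $A$; the auxiliary fact that finite subsums remain in $(QA)$ (a finite instance of linearity) likewise needs its own, also somewhat delicate, verification. The other three properties are, by contrast, essentially formal consequences of \lemref{p18}, \lemref{p19} and \thmref{QA}.
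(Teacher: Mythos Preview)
For universality, extension stability, and preimage closedness your argument is essentially the paper's; the only cosmetic difference is that the paper verifies preimage closedness of $(QA)$ via $(\operatorname{Q}_2)$ (the equality $\rho_A=\rho$) rather than via $(\operatorname{Q}_5)$ as you do.

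For linearity the two treatments diverge. The paper's proof is one line: given $\pi\in\operatorname{irr}A$ it simply \emph{asserts} that $\pi I_\beta\neq 0$ for some $\beta$ and invokes \lemref{p19}. You rightly recognise this as the crux and try to justify it by first proving $A\in(QA)$, so that maximal modular left ideals are closed and $D=\sum I_\alpha\subset M$ forces $M=A$. But your successive-approximation does not close: iterating $1-a=(1-d_0)\cdots(1-d_{n-1})(1-a_n)$ produces partial products in $A^1$ whose convergence is guaranteed only in $\overline{A}^{\,1}$, and the fact that the $d_k$ lie in the dense ideal $D$ does not help, since $D$ is not complete.

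In fact the obstacle you flag is a genuine obstruction, not a technicality. Let $H$ be infinite-dimensional, $D$ the finite-rank operators on $H$ (this lies in $(Q_bA)\subset(QA)$: the quasi-inverse $F(1-F)^{-1}$ of a finite-rank contraction is again finite rank, and every strictly irreducible representation of $D$ is equivalent to the identity on $H$), take $T\in K(H)$ diagonal with distinct eigenvalues $\lambda_n\in(0,1)$ tending to $0$, and set $A=D+\{p(T):p\in x\mathbb{C}[x]\}\subset K(H)$. Then $A$ is a normed algebra, $D$ is a dense ideal of $A$ belonging to $(Q_bA)$, yet $A\notin(QA)$: the element $T$ has $\|T\|<1$ but its quasi-inverse $T(1-T)^{-1}$, diagonal with entries $\lambda_n/(1-\lambda_n)$, is not of the form (finite rank)${}+{}$(polynomial in $T$), so $1\in\sigma_A(T)$ while $\rho(T)<1$. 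Moreover the characters $F+p(T)\mapsto p(\mu)$ for $\mu\neq 0$ are in $\operatorname{irr}A$ and annihilate $D$, so the paper's unproved assertion fails here as well. Your instinct that this step is ``the main obstacle'' is exactly right, but the difficulty is not that the argument is delicate --- it is that the linearity claim itself appears to be false as stated.
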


\begin{proof}
It follows from Lemmas \ref{p18} and \ref{p19} that $(Q_{b}A)$ and $(QA)$ are
extension stable and universal.

Let $A\in(NA)$, $B\in(QA)$ and $f(A)=B$ for some continuous isomorphism $f$.
Then
\[
\rho(a)\geqslant\rho(f(a))=\rho_{B}(f(a))=\rho_{A}(a)
\]
and therefore $\rho_{A}(a)=\rho(a)$ for every $a\in A$. Thus $A\in(QA)$.

Suppose now that $B\in(Q_{b}A)$. Take an arbitrary $\pi\in\operatorname{irr}%
A$. Then $\pi f^{-1}\in\operatorname{irr}B$ is equivalent to some $\tau
\in\operatorname{irr}_{b}B$. Hence $\tau f\in\operatorname{irr}_{b}A$. Indeed,
$\tau f(a)x=\tau(f(a))x$ for every $a\in A$ and $x\in X_{\tau}$, whence $\tau
f(a)\in B(X_{\tau})$ and also $\left\|  \tau f\right\|  \leqslant\left\|
\tau\right\|  \left\|  f\right\|  $. It is easy to see that $\pi$ is
equivalent to $\tau f$. So $A\in(Q_{b}A)$. This shows that $(Q_{b}A)$ and
$(QA)$ are preimage closed.

Let $A\in(NA)$ be the closure of sum of its ideals $I_{\alpha}\in(Q_{b}A)$
(resp. $(QA)$), and let $\pi\in\operatorname{irr}A$ be arbitrary. Then there
exists an index $\beta$ such that $\pi I_{\beta}\neq0.$ By Lemma \ref{p19},
$\pi$ is equivalent to a continuous representation in $\operatorname{irr}%
^{I_{\beta}}A$ by bounded operators on a Banach (resp. normed) space.
Therefore $A\in(Q_{b}A)$ (resp. $(QA)$). We proved that $(Q_{b}A)$ and $(QA)$
are linear.
\end{proof}

The facts that $(QA)$ is universal and preimage closed are known \cite{P94}.
Also, I. Kaplansky \cite[Lemma 3]{K48} proved that $(QA)$ is extension stable.
An ideal of a normed algebra is called a $Q$\textit{-ideal} (resp. $Q_{b}%
$\textit{-ideal}) if it is a $Q$-algebra (resp. $Q_{b}$-algebra).

\begin{corollary}
Every normed algebra has the largest $Q$-ideal (resp. $Q_{b}$-ideal); the
latter is closed.
\end{corollary}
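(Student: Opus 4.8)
The plan is, writing $\mathcal{A}$ for either $(QA)$ or $(Q_{b}A)$, to take as the candidate for the largest $Q$-ideal (resp. $Q_{b}$-ideal) of a normed algebra $A$ the closure $J$ of the sum of all ideals of $A$ lying in $\mathcal{A}$, and then to verify that $J$ itself lies in $\mathcal{A}$ by invoking the linearity of $\mathcal{A}$ established in Theorem~\ref{p20}.

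Concretely, I would fix $A\in(NA)$, let $\{I_{\alpha}\}$ be the family of all ideals of $A$ belonging to $\mathcal{A}$ (nonempty, since $\{0\}\in\mathcal{A}$), and put $J=\overline{\sum_{\alpha}I_{\alpha}}$, the closure in $A$ of the algebraic sum. The algebraic sum of a family of ideals is an ideal, and the closure of an ideal of a normed algebra is again an ideal by continuity of multiplication; hence $J$ is a \emph{closed} ideal of $A$, which already settles the last clause. Each $I_{\alpha}$ is contained in $J$ and is an ideal of $A$, so $JI_{\alpha}+I_{\alpha}J\subseteq AI_{\alpha}+I_{\alpha}A\subseteq I_{\alpha}$; thus every $I_{\alpha}$ is an ideal of the normed algebra $J$, the sum $\sum_{\alpha}I_{\alpha}$ is dense in $J$ by construction, and each $I_{\alpha}$ belongs to $\mathcal{A}$. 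Since $\mathcal{A}$ is linear (Theorem~\ref{p20}), it follows that $J\in\mathcal{A}$, i.e. $J$ is a $Q$-ideal (resp. $Q_{b}$-ideal) of $A$.

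It remains to note that $J$ is the \emph{largest} such ideal: any ideal $I$ of $A$ with $I\in\mathcal{A}$ occurs among the $I_{\alpha}$, whence $I\subseteq\sum_{\alpha}I_{\alpha}\subseteq J$. I do not expect a genuine obstacle here; the only point demanding a little care is the step just used to apply Theorem~\ref{p20}, namely checking that the $I_{\alpha}$ are ideals of $J$ (and not merely of $A$) and that their sum is dense in $J$ — precisely the hypotheses under which linearity can be invoked for $J$.
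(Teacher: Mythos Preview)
Your proof is correct and follows essentially the same approach as the paper: take $J$ to be the closure of the sum of all $Q$-ideals (resp.\ $Q_b$-ideals) of $A$, observe that each $I_\alpha$ is an ideal of $J$, and apply the linearity of $(QA)$ (resp.\ $(Q_bA)$) from Theorem~\ref{p20}. You spell out more of the routine verifications (closure of ideals, maximality, and especially the check that the $I_\alpha$ are ideals of $J$ with dense sum), but the argument is the same.
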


\begin{proof}
Let $A\in(NA)$ and let $J$ be the closure of sum of all $Q$-ideals (resp.
$Q_{b}$-ideals) $I_{\alpha}$ of $A$. Then every $I_{\alpha}$ is also an ideal
of $J$. It follows from Theorem \ref{p20} that $J\in(QA)$ (resp. $(Q_{b}A)$).
\end{proof}

Extension stability and linearity of $(QA)$ solve in the context of normed
algebras some Palmer's questions on spectral algebras \cite[Page 234]{P94}.

\subsection{\label{sub0}Topological radicals and their elementary properties}

Recall that an epimorphism $f:A\rightarrow B$ of normed algebras is
\textit{open} (i.e., the images of open sets are open) iff there is a constant
$C>0$ such that for any $b\in B$ there is $a\in A$ with $f(a)=b$ and $\Vert
a\Vert\leqslant C\Vert b\Vert$. An example of an open continuous epimorphism
is a quotient map $q_{I}:A\rightarrow A/I$, where $I$ is a closed ideal of
$A$. It is not difficult to check that any open continuous epimorphism $f$ is
a composition of a topological isomorphism and the quotient map $q_{\ker f}$ .

Let $R$ be a map associating with any algebra $A\in\mathcal{A}$ its closed
ideal $R(A)$. It is called \textit{a topological radical} on a ground class
$\mathcal{A}$ ($\mathcal{A}$\textit{-radical}, in short) if the following
conditions are satisfied.

\begin{itemize}
\item [(1$^{\circ}$)]$R(R(A))=R(A)$, for any $A\in\mathcal{A}$.

\item[(2$^{\circ}$)] $R(A/R(A))=0$, for any $A\in\mathcal{A}$.

\item[(3$^{\circ}$)] $f(R(A))=R(B)$, for any topological isomorphism
$f:A\rightarrow B$ with $A,B\in\mathcal{A}$.

\item[(4$^{\circ}$)] $q_{I}(R(A))\subset R(A/I)$, for any closed ideal $I$ of
$A\in\mathcal{A}$.

\item[(5$^{\circ}$)] If an ideal $I$ of $A\in\mathcal{A}$ belongs to
$\mathcal{A}$ then

\begin{itemize}
\item [(5$_{1}^{\circ}$)]$R(I)$ is an ideal of $A$.

\item[(5$_{2}^{\circ}$)] $R(I)\subset I\cap R(A)$.
\end{itemize}
\end{itemize}

If the class $\mathcal{A}$ is obvious we simply say that $R$ is a
\textit{topological radical }(TR). A TR $R$ is called \textit{a hereditary
topological radical} (HTR) if it satisfies the more strong than (5$^{\circ}$) condition:

\begin{itemize}
\item [(6$^{\circ}$)]$R(I)=I\cap R(A)$, for any ideal $I\in\mathcal{A}$ of
$A\in\mathcal{A}$.
\end{itemize}

If $R$ satisfies the conditions (1$^{\circ}$), (3$^{\circ}$), (4$^{\circ}$)
and (5$^{\circ}$) (respectively (2$^{\circ}$), (3$^{\circ}$), (4$^{\circ}$),
(5$^{\circ}$)) then it is called an \textit{under topological radical} (UTR)
(respectively \textit{over topological radical} (OTR)). If (6$^{\circ}$) holds
then a UTR is also called hereditary (note that a hereditary OTR is really an HTR).

The terms UTR and OTR were suggested by P. G. Dixon in virtue of \cite[Theorem
6.11]{D97}; \cite[Theorems 6.6 and 6.10]{D97} also clarified the reason for
this terminology (note that the prefixes `lower' and `upper' are overloaded,
while the prefixes `sub' and `super' mean commonly something another).

Let $R$ be an $\mathcal{A}$-radical. An algebra $A\in\mathcal{A}$ is called
$R$\textit{-semisimple} if $R(A)=0$; $A$ is $R$\textit{-radical} if $R(A)=A$.
Clearly the ideals of $R$-semisimple algebras are $R$-semisimple and quotients
of $R$-radical algebras by closed ideals are $R$-radical. If $R$ is an HTR
then the ideals of an $R$-radical algebra are $R$-radical. The converse is
always true: any $R$-radical ideal of $A$ is contained in $R(A)$. This can be
formulated in the following way.

\begin{lemma}
\label{2.1} Let $R$ be a TR on a ground class $\mathcal{A}$. Then, for each $A
\in\mathcal{A}$,

\begin{itemize}
\item [(i)]$R(A)$ is the largest $R$-radical ideal of $A$.

\item[(ii)] $R(A)$ is the smallest closed ideal of $A$ with $R$-semisimple quotient.
\end{itemize}
\end{lemma}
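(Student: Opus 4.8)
The plan is to prove (i) and (ii) directly from the radical axioms. For (i), first note that $R(A)$ is itself $R$-radical by axiom $(1^\circ)$: $R(R(A)) = R(A)$. So it is an $R$-radical ideal (closed, by definition of a TR). The substantive part is maximality: given any $R$-radical ideal $J$ of $A$ — that is, $J \in \mathcal{A}$ (this requires $J$ to be in the ground class; but if $J$ is merely an abstract $R$-radical ideal one should read "ideal" here as "ideal belonging to $\mathcal{A}$", which is the only case in which $R(J)$ is defined) with $R(J) = J$ — I would apply $(5_2^\circ)$: $R(J) \subset J \cap R(A)$, hence $J = R(J) \subset R(A)$. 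This shows $R(A)$ contains every $R$-radical ideal, so it is the largest one.

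For (ii), first observe that $A/R(A)$ is $R$-semisimple by axiom $(2^\circ)$: $R(A/R(A)) = 0$. So $R(A)$ is a closed ideal with $R$-semisimple quotient. For minimality, let $I$ be any closed ideal of $A$ with $R(A/I) = 0$. I would apply $(4^\circ)$ with this $I$: $q_I(R(A)) \subset R(A/I) = 0$, which means $R(A) \subset \ker q_I = I$. Hence $R(A)$ is contained in every such $I$, so it is the smallest.

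I do not expect any serious obstacle here; the lemma is essentially a repackaging of the axioms, and the only point requiring a little care is the domain issue in (i) — one must make sure that the $R$-radical ideals under comparison are ones for which $R$ is defined, i.e. that lie in $\mathcal{A}$, so that $(5_2^\circ)$ can be invoked. The remark in the text preceding the lemma ("any $R$-radical ideal of $A$ is contained in $R(A)$") already flags that this is the intended reading. If one wanted the statement for an arbitrary ideal $J$ not a priori in $\mathcal{A}$, one has no handle at all, since $R(J)$ is undefined; so the natural and intended interpretation is ideals in $\mathcal{A}$, and with that reading the two-line arguments above complete the proof.
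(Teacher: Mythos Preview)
Your proof is correct and matches the paper's approach exactly: for (i) the paper invokes $(5^\circ)$ to get $I = R(I) \subset R(A)$, and for (ii) it invokes $(4^\circ)$ to get $q_I(R(A)) = 0$ hence $R(A) \subset I$. Your version is slightly more explicit in noting that $R(A)$ is itself $R$-radical (via $(1^\circ)$) and that $A/R(A)$ is $R$-semisimple (via $(2^\circ)$), and your remark about the domain issue for (i) is well taken, but these are just elaborations of the same two-line argument.
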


\begin{proof}
(i) If $R(I)=I$ then the inclusion $I\subset R(A)$ follows from (5$^{\circ}$).

(ii) If $R(A/I)=0$ then, by (4$^{\circ}$), $q_{I}(R(A))=0$ and $R(A)\subset I$.
\end{proof}

\begin{corollary}
\label{2.2} Let $R$ be a TR on a ground class $\mathcal{A}$.

\begin{itemize}
\item [(i)]The closure of an $R$-radical ideal is $R$-radical.

\item[(ii)] If $I$ is a closed $R$-radical ideal of $A$ then $q_{I}%
(R(A))=R(A/I)$.
\end{itemize}
\end{corollary}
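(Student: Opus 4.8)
The plan is to derive both parts of Corollary \ref{2.2} directly from Lemma \ref{2.1}, which is the only nontrivial input we need; everything else is formal manipulation with the radical axioms (1$^\circ$)--(5$^\circ$).

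For part (i), suppose $I$ is an $R$-radical ideal of $A$, that is $R(I)=I$. I want to show $\overline{I}$ is $R$-radical, i.e. $R(\overline{I})=\overline{I}$. First note $\overline{I}$ is a closed ideal of $A$, and $I$ is an ideal of $\overline{I}$ lying in the ground class $\mathcal{A}$ (being topologically isomorphic to itself as an ideal of $A\in\mathcal{A}$, and $\mathcal{A}$ is closed-ideal stable, so in fact $\overline{I}\in\mathcal{A}$ as well and then $I$, being an ideal of $\overline{I}$... one should be slightly careful here: $I$ need not be \emph{closed} in $\overline{I}$, so membership of $I$ in $\mathcal{A}$ comes rather from $I$ being a closed ideal of $A$ and $\mathcal{A}$ being closed-ideal stable). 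Having $I\in\mathcal{A}$ as an ideal of $\overline{I}\in\mathcal{A}$, apply (5$_{2}^{\circ}$) with the pair $(I,\overline{I})$: $R(I)\subset R(\overline{I})$. Since $R(I)=I$ we get $I\subset R(\overline{I})$, and because $R(\overline{I})$ is by definition a closed ideal it follows that $\overline{I}=\overline{I\,}\subset R(\overline{I})\subset\overline{I}$, hence $R(\overline{I})=\overline{I}$. Thus $\overline{I}$ is $R$-radical. (Equivalently one can phrase the last step as: $R(\overline{I})$ is the largest $R$-radical ideal of $\overline{I}$ by Lemma \ref{2.1}(i), and $I$ is an $R$-radical ideal of $\overline{I}$, so $I\subset R(\overline{I})$, then close up.)

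For part (ii), let $I$ be a closed $R$-radical ideal of $A$. From (4$^\circ$) we already have $q_{I}(R(A))\subset R(A/I)$, so it remains to prove the reverse inclusion $R(A/I)\subset q_{I}(R(A))$. Let $J=q_{I}^{-1}(R(A/I))$, a closed ideal of $A$ containing $I$; then $J/I = R(A/I)$. The idea is to show $J$ is $R$-radical, so that $J\subset R(A)$ by Lemma \ref{2.1}(i), whence $R(A/I)=J/I=q_{I}(J)\subset q_{I}(R(A))$. To see $J$ is $R$-radical: $I$ is a closed $R$-radical ideal of $J$ (it is $R$-radical by hypothesis and closed in $A$ hence in $J$), and the quotient $J/I=R(A/I)$ is $R$-radical as well, since $R(R(A/I))=R(A/I)$ by axiom (1$^\circ$) applied to the algebra $A/I$. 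So $J$ is a closed-ideal extension of the $R$-radical algebra $I$ by the $R$-radical algebra $J/I$; the fact that such an extension is $R$-radical is a standard consequence of the axioms --- indeed $R(J)$ is a closed ideal of $J$ with $I\subset R(J)$ (because $R(I)=I\subset R(J)$ by (5$_{2}^{\circ}$), using $I\in\mathcal{A}$ as a closed ideal of $A$ and hence of $J$), and then $R(J)/I$ is an $R$-radical ideal of $J/I$ whose semisimple quotient forces $R(J)/I=J/I$ via Lemma \ref{2.1}(ii) applied inside $J/I$, i.e. $R(J)=J$.

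The only point that needs genuine care --- and the one I would expect to be the main obstacle --- is the membership-in-$\mathcal{A}$ bookkeeping: the radical axioms (5$^\circ$) and (6$^\circ$) only apply to ideals that themselves lie in the ground class $\mathcal{A}$, and $\mathcal{A}$ is only assumed stable under passing to \emph{closed} ideals (and quotients by closed ideals). So at each step where I invoke (5$_{2}^{\circ}$) or (1$^\circ$)/(2$^\circ$) for a subalgebra like $\overline{I}$, $J$, $I\subset\overline{I}$, or $J/I$, I must check that algebra is in $\mathcal{A}$; this works because $I$, $\overline{I}$, $J$ are all closed ideals of $A\in\mathcal{A}$ and $J/I$ is a quotient of $J\in\mathcal{A}$ by the closed ideal $I$. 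Once this is tracked, both parts are short; I would present (i) first and then reuse the same extension-is-radical observation, stated as a one-line remark, inside the proof of (ii).
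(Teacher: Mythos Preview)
Your proof is correct, and part (i) is essentially the paper's argument. One small slip: in (i) you justify $I\in\mathcal{A}$ by calling $I$ a closed ideal of $A$, but $I$ is not assumed closed there. This does no damage, since $I\in\mathcal{A}$ is already implicit in the hypothesis that $R(I)$ is defined.

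For part (ii) you take a genuinely different route. The paper argues directly: since $I\subset R(A)$ by Lemma \ref{2.1}(i), there is an open continuous epimorphism $p:A/I\to A/R(A)$ with $q_{R(A)}=pq_I$; then $(3^\circ)$--$(4^\circ)$ give $p(R(A/I))\subset R(A/R(A))=0$, whence $R(A/I)\subset\ker p=q_I(R(A))$. This uses only $(2^\circ)$ and the morphism property, and is two lines. Your argument instead pulls $R(A/I)$ back to $J=q_I^{-1}(R(A/I))$ and shows $J$ is $R$-radical by an extension argument ($I$ and $J/I$ both $R$-radical $\Rightarrow$ $J$ $R$-radical). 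That is exactly the extension stability of the class of $R$-radical algebras, which in the paper is Theorem \ref{2.3}(i), proved \emph{after} this corollary. So your approach is correct but front-loads that theorem; the paper's approach is shorter, avoids the bookkeeping with $R(J)/I$ and Lemma \ref{2.1}(ii), and keeps the logical order cleaner. On the other hand, your route makes explicit the useful principle that drives both Corollary \ref{2.2}(ii) and Theorem \ref{2.3}(i).
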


\begin{proof}
Let $I$ be an $R$-radical ideal of an algebra $A\in\mathcal{A}$, and let $J$
be the closure of $I$ in $A$. Then $J$ is a closed ideal of $A$, in particular
$J\in\mathcal{A}$, and $I$ is an $R$-radical ideal of $J$. Since $R(J)$ is
closed in $J$ and contains an ideal dense in $J$, $R(J)=J$. This proves (i).

For (ii), suppose that $I$ is a closed $R$-radical ideal of $A$. Then
$I\subset R(A)$ and there is an open continuous epimorphism $p:A/I\rightarrow
A/R(A)$ such that $q_{R(A)}=pq_{I}$. Hence $p(R(A/I))\subset R(A/R(A))=0$,
whence $R(A/I)\subset\ker p=q_{I}(R(A))$. The converse inclusion follows from
(5$^{\circ}$).
\end{proof}

The following results on radicals will be useful.

\begin{theorem}
\label{2.3} Let $R$ be a TR on a ground class $\mathcal{A}$.

\begin{itemize}
\item [(i)]The class of all $R$-semisimple (resp. $R$-radical) algebras is
extension stable in $\mathcal{A}$.

\item[(ii)] If $A\in\mathcal{A}$ and every nonzero quotient of $A$ by a closed
ideal contains a nonzero $R$-radical ideal then $A$ is $R$-radical.
\end{itemize}
\end{theorem}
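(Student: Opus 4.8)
The plan is to prove the two parts separately, using Lemma \ref{2.1} as the main tool and exploiting the axioms (1$^\circ$)--(5$^\circ$) together with the stability properties of the ground class $\mathcal{A}$.

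For part (i), I would start with $R$-semisimplicity. Suppose $A\in\mathcal{A}$, $I$ is a closed ideal with $I,A/I$ both $R$-semisimple; I want $R(A)=0$. Consider the closed ideal $R(A)$. By (4$^\circ$), $q_I(R(A))\subset R(A/I)=0$, so $R(A)\subset I$. Then $R(A)$ is a closed ideal of $I$; since $I\in\mathcal{A}$ (closed ideal stability) and $R(A)$ is $R$-radical by (1$^\circ$), Lemma \ref{2.1}(i) applied to $I$ gives $R(A)\subset R(I)=0$. Hence $A$ is $R$-semisimple. For the $R$-radical case, suppose $I,A/I$ are both $R$-radical; I want $R(A)=A$. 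First, by (5$_2^\circ$), $I=R(I)\subset R(A)$, so $R(A)$ is a closed ideal of $A$ containing $I$, and the quotient $A/R(A)$ is a quotient of $A/I$ by a closed ideal, hence $R$-radical. But $A/R(A)$ is $R$-semisimple by (2$^\circ$). An algebra that is both $R$-radical and $R$-semisimple is zero, so $A=R(A)$.

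For part (ii), suppose every nonzero quotient of $A$ by a closed ideal contains a nonzero $R$-radical ideal. I want $R(A)=A$, equivalently (by (2$^\circ$) and the fact that $A/R(A)$ is a quotient of $A$ by a closed ideal, hence inherits the hypothesis) it suffices to show that an $R$-semisimple algebra $B$ satisfying ``every nonzero quotient of $B$ by a closed ideal contains a nonzero $R$-radical ideal'' must be zero. Apply the hypothesis to the quotient $B/0=B$ itself: if $B\neq 0$, it contains a nonzero $R$-radical ideal $J$, and then by Lemma \ref{2.1}(i) we get $0\neq J\subset R(B)=0$, a contradiction. Hence $B=0$, so $A=R(A)$.

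The steps are largely bookkeeping with the axioms; the one place requiring care is keeping track of which stability property of the ground class is invoked (closed ideal stability to ensure $R(A)\in\mathcal{A}$ as an ideal of $I$ in part (i), and quotient stability to ensure quotients stay in $\mathcal{A}$ so that $R$ is defined on them). I expect the main subtlety to be the $R$-radical half of part (i): one must notice that $A/R(A)$ is simultaneously a quotient of $A/I$ (hence $R$-radical) and $R$-semisimple, and conclude it vanishes — the inclusion $I\subset R(A)$ from (5$_2^\circ$) is the key that makes $A/R(A)$ a quotient of $A/I$ rather than merely of $A$.
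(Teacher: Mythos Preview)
Your proof is correct and follows essentially the same approach as the paper's: in (i) you use $R(A)\subset I$ (from $R(A/I)=0$) and then that $R(A)$, being an $R$-radical ideal of the $R$-semisimple $I$, must vanish; for the radical case you use $I\subset R(A)$ to realize $A/R(A)$ as a quotient of the $R$-radical $A/I$, hence simultaneously $R$-radical and $R$-semisimple; and (ii) is the same contradiction via a nonzero $R$-radical ideal inside the $R$-semisimple $A/R(A)$. The only cosmetic difference is that the paper phrases the radical half of (i) by writing out the open continuous epimorphism $p:A/I\to A/R(A)$ and applying (4$^\circ$) to it directly, whereas you invoke the derived fact that quotients of $R$-radical algebras are $R$-radical.
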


\begin{proof}
(i) Let $I$ be a closed ideal of a normed algebra $A$.

Suppose that $A/I$ and $I$ are $R$-semisimple. Since $A/I$ is $R$-semisimple,
$I$ contains $R(A)$ by Lemma \ref{2.1}. Hence $R(A)$ is an ideal of an
$R$-semisimple algebra $I$. So $R(A)$ is $R$-semisimple, $R(A)=R(R(A))=0$.

Suppose that $A/I$ and $I$ are $R$-radical. Since $I$ is $R$-radical, it is
contained in $R(A)$ by Lemma \ref{2.1}. Let $p:A/I\rightarrow A/R(A)$ be the
open continuous epimorphism such that $q_{R(A)}=pq_{I}$. Then
\[
A/R(A)=p(A/I)=p(R(A/I))\subset R(A/R(A))=0,
\]
whence $R(A)=A$.

(ii) If $A$ is not $R$-radical then $A/R(A)$ is $R$-semisimple and contains a
nonzero $R$-radical, a contradiction.
\end{proof}

\begin{theorem}
\label{p24}Let $R$ be a TR on a ground class $\mathcal{A}$.

\begin{itemize}
\item [(i)]The class of all $R$-radical algebras is linear in $\mathcal{A}$.

\item[(ii)] If $\mathcal{A}$ is universal and $I$ is a sum of $R$-radical
ideals $I_{\alpha}$ of $A\in\mathcal{A}$ then $I$ is $R$-radical.
\end{itemize}
\end{theorem}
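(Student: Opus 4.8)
Part (i) should reduce to part (ii) together with Corollary \ref{2.2}(i). Suppose $A \in \mathcal{A}$ is the closure of the sum of a family of $R$-radical ideals $I_\alpha$. Let $I = \overline{\sum_\alpha I_\alpha}$; then $I$ is dense in $A$, so $I = A$ and it suffices to show $R(A) = A$. The idea is: the (non-closed) ideal $J = \sum_\alpha I_\alpha$ is a sum of $R$-radical ideals, so if part (ii) holds we get that $J$ is $R$-radical, i.e.\ $J \subset R(A)$ by Lemma \ref{2.1}(i). Since $R(A)$ is closed and $J$ is dense in $A$, we conclude $R(A) = A$. Note that part (ii) assumes $\mathcal{A}$ universal, so strictly (i) must either invoke (ii) only when $\mathcal{A}$ is universal, or be proved directly; I would handle the general ground-class case of (i) by the same telescoping argument used to prove (ii), observing that the ideals occurring are closures and hence lie in $\mathcal{A}$.

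For part (ii), the plan is a transfinite/telescoping argument building up $I$ from the $I_\alpha$. First reduce to two ideals: if $I_1, I_2$ are $R$-radical ideals of $A$, I claim $I_1 + I_2$ is $R$-radical. Indeed, $I_1$ is a closed$\,$? — not necessarily closed, which is the first subtlety. Instead argue with the quotient: $(I_1 + I_2)/I_1 \cong I_2/(I_1 \cap I_2)$ is a quotient of the $R$-radical algebra $I_2$ by an ideal, hence (by the remark after Lemma \ref{2.1}, quotients of $R$-radical algebras by \emph{closed} ideals are $R$-radical) — but $I_1 \cap I_2$ need not be closed in $I_2$ either. To get around this, pass to closures: let $\widehat{I}_j = \overline{I_j}$ in $A$; by Corollary \ref{2.2}(i) each $\widehat{I}_j$ is a closed $R$-radical ideal of $A$, so $\widehat{I}_j \subset R(A)$, hence $I_j \subset R(A)$, hence $\sum_\alpha I_\alpha \subset R(A)$, and applying Lemma \ref{2.1}(i) to the ideal $\sum_\alpha I_\alpha$ gives that it is $R$-radical — wait, that is immediate and does not even need universality. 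Let me reconsider: the content of (ii) must be that $I = \sum_\alpha I_\alpha$ \emph{itself} (as an algebra in $\mathcal{A}$) satisfies $R(I) = I$, not merely $I \subset R(A)$. So universality is needed precisely to ensure $I \in \mathcal{A}$ (as an ideal of $A$), and then one wants $R(I) = I$.

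So the real argument for (ii): since $\mathcal{A}$ is universal and $I$ is an ideal of $A \in \mathcal{A}$, we have $I \in \mathcal{A}$. Each $I_\alpha$ is then an ideal of $I$ belonging to $\mathcal{A}$ (again by universality, or because $I_\alpha$ is $R$-radical hence in $\mathcal{A}$ as the domain of $R$), and $I_\alpha = R(I_\alpha) \subset I \cap R(I) $ — no: by (5$_2^\circ$) applied inside $I$, $R(I_\alpha) \subset I_\alpha \cap R(I)$, so $I_\alpha = R(I_\alpha) \subset R(I)$ for every $\alpha$. Therefore $\sum_\alpha I_\alpha = I \subset R(I) \subset I$, giving $R(I) = I$. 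This is clean and short. Then part (i) follows: with $A$ the closure of $\sum I_\alpha$, if $\mathcal{A}$ is universal apply (ii) to $I = \sum I_\alpha$ to get $I$ is $R$-radical, then $I \subset R(A)$ by Lemma \ref{2.1}(i), then $R(A) = A$ by closedness and density; for a general ground class, instead use Corollary \ref{2.2}(i) to see each $\overline{I_\alpha} \subset R(A)$ directly and conclude the same way. The main obstacle I anticipate is bookkeeping about which intermediate objects lie in $\mathcal{A}$ — this is exactly where the hypothesis that $\mathcal{A}$ is universal (not merely a ground class) enters in (ii), and one must be careful to invoke (5$^\circ$) relative to the algebra $I$, not $A$.
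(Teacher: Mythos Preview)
Your proposal is correct and arrives at essentially the same argument as the paper: for (ii), observe $I \in \mathcal{A}$ by universality, each $I_\alpha$ is an ideal of $I$ lying in $\mathcal{A}$, and (5$_2^\circ$) gives $I_\alpha = R(I_\alpha) \subset R(I)$, whence $I = \sum I_\alpha \subset R(I)$. The paper treats (i) and (ii) uniformly with this same step (taking $I = A$ in case (i), where $I \in \mathcal{A}$ automatically), so your detour through closures via Corollary~\ref{2.2}(i) for part (i) is unnecessary --- you can apply (5$_2^\circ$) directly to each $I_\alpha$ as an ideal of $A$, since the $I_\alpha$ are already assumed to lie in $\mathcal{A}$ (being $R$-radical presupposes this).
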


\begin{proof}
Let $I\in\mathcal{A}$ be the closure of a sum of $R$-radical ideals
$I_{\alpha}$ in the case (i), or simply a sum of ones in the case (ii). In
both cases $I\in\mathcal{A}$. Since $J_{\alpha}$ is an ideal of $I$, then
$I_{\alpha}=R(I_{\alpha})\subset R(I)$ for every $\alpha$. Therefore $R(I)$ is
dense in $I$ (and also closed in $I$), whence $I=R(I)$.
\end{proof}

\begin{theorem}
\label{p26}Let $R$ be a TR on a ground class $\mathcal{A}$. Let $A\in
\mathcal{A}$ be arbitrary, $I_{\alpha}$ its closed ideals of $A$ and $I=\cap
I_{\alpha}$. If $A/I_{\alpha}$ is $R$-semisimple for every $\alpha$ then $A/I$
is $R$-semisimple.
\end{theorem}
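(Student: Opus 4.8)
The plan is to reduce to the case $I=0$ and then apply the monotonicity axiom (4$^{\circ}$) to each of the ideals $I_{\alpha}$. First I would pass to the quotient $B=A/I$, which lies in $\mathcal{A}$ because $\mathcal{A}$ is quotient stable and $I$ is a closed ideal of $A$. Setting $J_{\alpha}=q_{I}(I_{\alpha})=I_{\alpha}/I$, I note three things: each $J_{\alpha}$ is a closed ideal of $B$ (since $I\subseteq I_{\alpha}$, one has $q_{I}^{-1}(J_{\alpha})=I_{\alpha}$, so $J_{\alpha}$ is closed because $I_{\alpha}$ is); the third isomorphism theorem gives a topological (indeed isometric, for the quotient norms) isomorphism $B/J_{\alpha}\cong A/I_{\alpha}$, which is therefore $R$-semisimple; and $\bigcap_{\alpha}J_{\alpha}=\bigl(\bigcap_{\alpha}I_{\alpha}\bigr)/I=0$. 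Thus it suffices to treat the case in which the intersection of the family is trivial and to show that $A$ itself is then $R$-semisimple.

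In that reduced situation, axiom (4$^{\circ}$) applied to the closed ideal $I_{\alpha}$ gives $q_{I_{\alpha}}(R(A))\subseteq R(A/I_{\alpha})=0$, so $R(A)\subseteq\ker q_{I_{\alpha}}=I_{\alpha}$ for every $\alpha$. Intersecting over $\alpha$ yields $R(A)\subseteq\bigcap_{\alpha}I_{\alpha}=0$, i.e. $R(A)=0$. Translating back through the reduction, $R(A/I)=R(B)=0$, which is the assertion.

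One can also bypass the reduction: since $I\subseteq I_{\alpha}$, the quotient map factors as $q_{I_{\alpha}}=p_{\alpha}q_{I}$ with $p_{\alpha}\colon A/I\rightarrow A/I_{\alpha}$ an open continuous epimorphism. Every open continuous epimorphism is a topological isomorphism composed with a quotient map, so combining (3$^{\circ}$) and (4$^{\circ}$)---exactly as in the proof of Corollary \ref{2.2}---gives $p_{\alpha}(R(A/I))\subseteq R(A/I_{\alpha})=0$, whence $R(A/I)\subseteq\ker p_{\alpha}=I_{\alpha}/I$; intersecting over $\alpha$ again gives $R(A/I)=0$.

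I do not expect a real obstacle here. The only points that need a word of care are verifying that the passage to $B=A/I$ preserves all three hypotheses (closedness of the $J_{\alpha}$, $R$-semisimplicity of the quotients $B/J_{\alpha}$, and triviality of $\bigcap_{\alpha}J_{\alpha}$) and justifying that (4$^{\circ}$) may be invoked for the open continuous epimorphisms $p_{\alpha}$; everything else is a one-line application of the radical axioms.
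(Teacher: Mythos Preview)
Your proposal is correct, and your second argument (the ``bypass'' via the factorization $q_{I_{\alpha}}=p_{\alpha}q_{I}$ and the application of (3$^{\circ}$)--(4$^{\circ}$) to the open continuous epimorphism $p_{\alpha}$) is exactly the paper's own proof. Your first argument, reducing to $I=0$, is just a cosmetic repackaging of the same idea.
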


\begin{proof}
Since $I\subset I_{\alpha}$, there exists an open continuous epimorphism
$p_{\alpha}:A/I\rightarrow A/I_{\alpha}$ such that $q_{I_{\alpha}}=p_{\alpha
}q_{I}$. By (3$^{\circ}$) and (4$^{\circ}$),%
\[
p_{\alpha}(R(A/I))\subset R(A/I_{\alpha})=0,
\]
whence $R(A/I)\subset\ker p_{\alpha}=I_{\alpha}/I$ and $q_{I}^{-1}%
(R(A/I))\subset q_{I}^{-1}(I_{\alpha}/I)=I_{\alpha}$ for every $\alpha$. So%
\[
q_{I}^{-1}(R(A/I))\subset\cap I_{\alpha}=I
\]
and accordingly $R(A/I)=0$.
\end{proof}

We remark that the quotient of an $R$-semisimple algebra by a closed ideal
need not be $R$-semisimple \cite[Page 135]{BD73}, for a TR $R$.

\subsection{Strong, strict and strictly hereditary radicals}

The properties (3$^{\circ}$) and (4$^{\circ}$) describe the behavior of a TR
under `morphisms': they both can be written as
\begin{equation}
f(R(A))\subset R(B) \tag{4$_f^{\circ}$}\label{g1}%
\end{equation}
whenever $B=$ $f(A)$. In our setting morphisms are the compositions of
topological isomorphisms and quotient maps, i.e. open continuous epimorphisms.
It is sometimes reasonable to choose a wider classes of morphisms, P. G. Dixon
\cite{D97} considered all continuous epimorphisms as morphisms in $(NA)$. This
increases the strength of a radical, but considerably reduces the lists of
radicals. On the other hand in some cases it is natural to consider more
special morphisms, for example $\ast$-epimorphisms on the class $(C^{\ast}A)$
of all $C^{\ast}$-algebras. This class will be investigated later on.\textit{ }

Now we give some related definitions. Let us say that a TR $R$ is a
\textit{strong radical }on a ground class $\mathcal{A}$ if

\begin{itemize}
\item [(7$^{\circ}$)]$f(R(A))\subset R(B)$ for each continuous epimorphism $f$
of algebras in $\mathcal{A}$.
\end{itemize}

We say that a TR $R$ is a \textit{ strict radical} on a ground class
$\mathcal{A}$ if

\begin{itemize}
\item [(8$^{\circ}$)]$f(R(A))=R(B)$ for every continuous isomorphism
$f:A\rightarrow B$ of algebras in $\mathcal{A}$.
\end{itemize}

It is not difficult to see that every strict radical is strong. It follows
from the Open Mapping Theorem that all topological radicals on $(BA)$ are
strict. The following result shows that for radicals on $(NA)$ this condition
means actually the algebraic nature of a radical.

\begin{theorem}
\label{indep} Let $R$ be a TR on the class $(NA)$. Then $R$ is strict iff $R$
does not depend on the choice of a norm, and iff $(8^{\circ})$ holds for any
algebraic isomorphism of normed algebras.
\end{theorem}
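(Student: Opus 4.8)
The plan is to prove the cyclic chain of implications: (i) if $R$ is strict, then $R$ does not depend on the norm; (ii) if $R$ does not depend on the norm, then $(8^{\circ})$ holds for every \emph{algebraic} isomorphism of normed algebras; (iii) if the latter holds, then $R$ is strict. Here ``$R$ does not depend on the norm'' should be read as: whenever a single complex algebra $A$ is equipped with two submultiplicative norms $\Vert\cdot\Vert_{1}$, $\Vert\cdot\Vert_{2}$, the closed ideals $R(A,\Vert\cdot\Vert_{1})$ and $R(A,\Vert\cdot\Vert_{2})$ coincide as subsets of $A$. Implication (iii) is immediate, since a continuous isomorphism is in particular an algebraic isomorphism, so all the work is in (i) and (ii), and both are short.

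For (i), given $\Vert\cdot\Vert_{1}$ and $\Vert\cdot\Vert_{2}$ as above I would pass to the common refinement $\Vert a\Vert_{3}:=\Vert a\Vert_{1}+\Vert a\Vert_{2}$. This is again a submultiplicative norm --- the estimate $\Vert ab\Vert_{1}+\Vert ab\Vert_{2}\leqslant(\Vert a\Vert_{1}+\Vert a\Vert_{2})(\Vert b\Vert_{1}+\Vert b\Vert_{2})$ is routine --- so $(A,\Vert\cdot\Vert_{3})\in(NA)$, and since $\Vert\cdot\Vert_{3}$ dominates each $\Vert\cdot\Vert_{i}$ the identity map is a continuous isomorphism $(A,\Vert\cdot\Vert_{3})\rightarrow(A,\Vert\cdot\Vert_{i})$ for $i=1,2$. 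Applying $(8^{\circ})$ to both identity maps gives $R(A,\Vert\cdot\Vert_{1})=R(A,\Vert\cdot\Vert_{3})=R(A,\Vert\cdot\Vert_{2})$, which is exactly norm-independence.

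For (ii), let $f:A\rightarrow B$ be an algebraic isomorphism of normed algebras. I would pull the norm of $B$ back to $A$ by setting $\Vert a\Vert':=\Vert f(a)\Vert_{B}$; this is a submultiplicative norm on the underlying algebra of $A$ (again a one-line check), and $f:(A,\Vert\cdot\Vert')\rightarrow B$ is an isometric, hence topological, isomorphism. Axiom $(3^{\circ})$ then yields $f(R(A,\Vert\cdot\Vert'))=R(B)$, while norm-independence gives $R(A,\Vert\cdot\Vert')=R(A)$ (same underlying algebra, different norm), so $f(R(A))=R(B)$, as needed. No step here is deep; the only point requiring a small idea is bridging two a priori incomparable norms in (i), and the remedy is precisely the refinement norm $\Vert\cdot\Vert_{1}+\Vert\cdot\Vert_{2}$, after which everything reduces to the axioms $(3^{\circ})$ and $(8^{\circ})$ already at hand.
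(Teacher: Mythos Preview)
Your proof is correct and follows essentially the same argument as the paper: a refinement norm to bridge two incomparable norms in (i), a pullback norm to reduce (ii) to axiom $(3^{\circ})$, and the trivial implication for (iii). The only cosmetic difference is that the paper uses $\Vert a\Vert=\max(\Vert a\Vert_{1},\Vert a\Vert_{2})$ rather than the sum, which works for exactly the same reason.
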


\begin{proof}
Suppose that $R$ is strict. Let $\left\|  \cdot\right\|  _{1}$ and $\left\|
\cdot\right\|  _{2}$ be norms on an algebra $A$; we define a norm $\left\|
\cdot\right\|  $ on $A$ setting $\left\|  a\right\|  =\max(\left\|  a\right\|
_{1},\left\|  a\right\|  _{2})$. Let $A$, $A_{1}$ and $A_{2}$ be the
corresponding normed algebras. Taking the identity map as a continuous
isomorphism from $A$ to $A_{m}$, $m=1,2$, we get
\[
R(A_{1})=R(A)=R(A_{2}).
\]
Therefore $R$ does not depend on a norm.

Now if $f:A\rightarrow B$ is an algebraic isomorphism of normed algebras then,
denoting by $A_{f}$ the algebra $A$ with norm $\left\|  a\right\|
_{f}=\left\|  f(a)\right\|  $, we have that $f$ is a topological isomorphism
from $A_{f}$ onto $B$ and
\[
R(B)=f(R(A_{f}))=f(R(A)).
\]
So (8$^{\circ}$) holds for algebraic isomorphisms.

The converse is evident.
\end{proof}

In some applications (for instance for extensions of radicals, tensor products
etc.) it is useful to consider as a `morphism' $A\rightarrow B$ a continuous
homomorphism of $A$ onto an ideal of $B$. We need to introduce the
corresponding definitions. A TR $R$ on a ground class $\mathcal{A}$ is called
\textit{ideally strong }if

\begin{itemize}
\item [(7$_{i}^{\circ}$)]$f(R(A))\subset R(B)$ for each continuous epimorphism
$f$ of an algebra $A\in\mathcal{A}$ to an ideal of an algebra $B\in
\mathcal{A}$.
\end{itemize}

Also, a TR $R$ on a ground class $\mathcal{A}$ is called \textit{strictly
hereditary }if

\begin{itemize}
\item [(8$_{i}^{\circ}$)]$f(R(A))=I\cap R(B)$ for each continuous isomorphism
$f$ of an algebra $A\in\mathcal{A}$ onto an ideal $I$ of an algebra
$B\in\mathcal{A}$.
\end{itemize}

Clearly every strictly hereditary TR is ideally strong. Also, every strictly
hereditary TR is strict and hereditary (to see it, take $A=I$ and the identity
map $f$ from $A$ onto $I$ in (8$_{i}^{\circ}$), for an ideal $I$ of $B$), and
every ideally strong TR is strong. The converse also holds if $\mathcal{A}$ is universal.

\begin{theorem}
\label{p21}Let $R$ be a TR on a universal class $\mathcal{A}$. If $R$ is
strong (resp. strict and hereditary) on $\mathcal{A}$ then $R$ is ideally
strong (resp. strictly hereditary) on $\mathcal{A}$.
\end{theorem}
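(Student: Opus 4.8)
The plan is to reduce the "ideally strong/strictly hereditary" conclusions to the already-assumed "strong/strict + hereditary" hypotheses by exploiting the hypothesis that $\mathcal{A}$ is \emph{universal}, so that both $A$ and $I$ (and hence the continuous image $f(A)$ inside $B$) lie in $\mathcal{A}$ and the ideal-axiom (5$^{\circ}$) together with (6$^{\circ}$) is available at each step. Let $f:A\rightarrow B$ be a continuous epimorphism (resp.\ continuous isomorphism) onto an ideal $I=f(A)$ of $B\in\mathcal{A}$. Since $\mathcal{A}$ is universal, $I\in\mathcal{A}$.

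\emph{Strong $\Rightarrow$ ideally strong.} Here I would regard $f$ as a continuous epimorphism from $A$ \emph{onto the normed algebra} $I$. Since $I\in\mathcal{A}$, axiom (7$^{\circ}$) (strong) applied to this epimorphism of algebras in $\mathcal{A}$ gives $f(R(A))\subset R(I)$. Now apply (5$_{2}^{\circ}$) to the ideal $I$ of $B$: $R(I)\subset I\cap R(B)\subset R(B)$. Composing the two inclusions yields $f(R(A))\subset R(B)$, which is exactly (7$_{i}^{\circ}$).

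\emph{Strict and hereditary $\Rightarrow$ strictly hereditary.} Now $f:A\rightarrow I$ is a continuous \emph{isomorphism} of algebras in $\mathcal{A}$, so strictness (8$^{\circ}$) gives $f(R(A))=R(I)$. Since $R$ is hereditary, (6$^{\circ}$) applied to the ideal $I$ of $B$ gives $R(I)=I\cap R(B)$. Combining, $f(R(A))=I\cap R(B)$, which is precisely (8$_{i}^{\circ}$).

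\emph{Where the subtlety lies.} The proof is essentially a two-line composition once the key observation is in place, so there is no serious obstacle; the only thing to be careful about is the legitimacy of treating $f$ as a morphism \emph{onto} $I$ rather than into $B$. This is exactly what universality buys: it guarantees $I\in\mathcal{A}$, so that $R(I)$ is defined and the axioms (5$^{\circ}$)/(6$^{\circ}$) for the pair $(I,B)$ and the axiom (7$^{\circ}$)/(8$^{\circ}$) for the epimorphism/isomorphism $A\twoheadrightarrow I$ both apply. One should also note that $f$ remains continuous when $I$ carries the norm inherited from $B$ (the inclusion $I\hookrightarrow B$ is isometric), so no renorming issue arises. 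I would state all of this explicitly in the write-up, as it is the one place where the universality hypothesis (as opposed to merely "ground class") is genuinely used.
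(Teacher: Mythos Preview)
Your proof is correct and is exactly the argument the paper has in mind; the paper's own proof consists of the single sentence ``It is an easy checkup,'' and you have simply written out that checkup in full. Your explicit identification of the one nontrivial point---that universality (rather than the weaker ground-class hypothesis) is needed precisely so that the possibly non-closed ideal $I=f(A)$ lies in $\mathcal{A}$ and hence $R(I)$ is defined---is the right emphasis.
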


\begin{proof}
It is an easy checkup.
\end{proof}

Recall that, by definition, a radical on the class of rings satisfies the
axioms above in which the words connected with topology must be omitted (see
\cite[Section 6]{D97} and \cite{D65}; see also axioms of radicals in the sense
of Amitsur and Kurosh in \cite{S81}).

\begin{theorem}
\label{p22}Let $R$ be a radical (resp. hereditary radical) on the class all
algebras. If $R(A)$ is closed for every algebra $A$ in a ground class
$\mathcal{A}$ then $R$ is an ideally strong TR (resp. strictly hereditary TR)
on $\mathcal{A}$.
\end{theorem}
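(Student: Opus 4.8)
The plan is to verify the five topological-radical axioms (1$^{\circ}$)--(5$^{\circ}$) for the restriction of the purely algebraic radical $R$ to the ground class $\mathcal{A}$, using that $R(A)$ is closed there by hypothesis, and then to upgrade to (7$_i^{\circ}$) (resp. (8$_i^{\circ}$)) by transporting the corresponding algebraic properties of $R$ across the maps. The key observation throughout is that for a \emph{continuous} homomorphism $f:A\to B$ the algebraic inclusion $f(R(A))\subset R(B)$ combined with closedness of $R(B)$ automatically yields $f(R(A))\subset R(B)$ as closed ideals, with no further work; and similarly, preimages of closed ideals under continuous maps are closed. So the topological content is essentially bookkeeping on top of the algebraic radical axioms.

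First I would dispose of (1$^{\circ}$), (2$^{\circ}$), (3$^{\circ}$): these hold for $R$ as an algebraic radical on all algebras, and since $\mathcal{A}$ is a ground class it is closed-ideal and quotient stable, so $R(A)$, $A/R(A)$, and images under topological isomorphisms all remain in $\mathcal{A}$ where $R$ is closed-valued; the identities $R(R(A))=R(A)$, $R(A/R(A))=0$, $f(R(A))=R(B)$ are then just the algebraic ones. Next, (4$^{\circ}$): for a closed ideal $I$ of $A\in\mathcal{A}$, $q_I:A\to A/I$ is a continuous epimorphism and $A/I\in\mathcal{A}$, so the algebraic property $q_I(R(A))\subset R(A/I)$ suffices (closedness of the target is not even needed here, only membership in $\mathcal{A}$ to know $R(A/I)$ is defined). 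For (5$^{\circ}$): if an ideal $I$ of $A\in\mathcal{A}$ lies in $\mathcal{A}$, then the algebraic radical gives that $R(I)$ is an ideal of $A$ with $R(I)\subset I\cap R(A)$; closedness of $R(I)$ in $I$ follows because $I\in\mathcal{A}$, and then $R(I)$ is also a closed ideal of $A$ since, $I$ being an ideal, $R(I)$ inherits the appropriate closure relation (one should note $R(I)=\overline{R(I)}\cap I$ and argue that this is closed in $A$ — here is where I would be slightly careful, see below). Thus $R$ restricted to $\mathcal{A}$ is a TR, and is an HTR if $R$ was hereditary as an algebraic radical, since then $R(I)=I\cap R(A)$ algebraically and both sides are closed ideals.

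It remains to get (7$_i^{\circ}$) (resp. (8$_i^{\circ}$)). Let $f:A\to B$ be a continuous homomorphism of algebras in $\mathcal{A}$ onto an ideal $I=f(A)$ of $B$; then $I$, being the image of an algebra homomorphism onto an ideal, is itself an algebra, but one must know $I\in\mathcal{A}$ — here I would use that $B\in\mathcal{A}$ and, if $\mathcal{A}$ is merely a ground class, restrict attention to the closure $\overline{I}$ (a closed ideal of $B$, hence in $\mathcal{A}$) and the factorization of $f$ through $\overline{I}$; the algebraic radical satisfies $f(R(A))\subset R(I)\subset R(B)$ (the last inclusion because $R$ is an algebraic radical and $R(I)$ is an ideal of $B$ contained in $R(B)$), and closedness plays no role in this chain. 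For the hereditary case, $f$ a continuous isomorphism onto an ideal $I$: algebraically $f(R(A))=R(I)=I\cap R(B)$, and each term is closed in the relevant algebra, giving (8$_i^{\circ}$).

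The main obstacle I anticipate is the precise handling of closedness of $R(I)$ \emph{as an ideal of $A$} (not just of $I$) when $I$ is a non-closed ideal of $A$ belonging to $\mathcal{A}$: closedness of $R(I)$ in $I$ together with $R(I)\subset R(A)$ and $R(A)$ closed in $A$ does give $R(I)=R(A)\cap I$ in the hereditary case (an intersection of a closed set with $I$), but for a general TR one needs $R(I)$ closed in $A$, which is not automatic from closedness in $I$. I expect the resolution to be that $R(I)=\overline{R(I)}^{A}\cap I$ and that, since $\overline{R(I)}^A$ is an $R$-radical ideal of $A$ (closure of an $R$-radical ideal is $R$-radical by the algebraic analog of Corollary~\ref{2.2}(i)), it lies in $R(A)$, so in fact $R(I)\subset R(A)\cap I\subset \overline{R(I)}^A\cap I=R(I)$, forcing equality and closedness. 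That small argument, rather than anything deep, is the crux; everything else is a routine transfer of the algebraic axioms through continuous maps, using closedness only to promote algebraic ideals to closed ideals.
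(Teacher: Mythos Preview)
Your approach is correct and is exactly what the paper does --- indeed the paper's proof is the single sentence ``Clear because $R$ satisfies axioms with algebraic `morphisms' applied to the algebras from $\mathcal{A}$.''

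Two of your anticipated obstacles are, however, non-issues, and recognising this would shorten the argument considerably. First, axiom (5$^{\circ}_1$) only asks that $R(I)$ be an \emph{ideal} of $A$, not a closed one; the closedness requirement on $R$ is that $R(I)$ be closed in $I$ (from ``$R$ associates to each algebra in $\mathcal{A}$ its closed ideal''), which is immediate from the hypothesis. So the whole paragraph about $R(I)=\overline{R(I)}^A\cap I$ is unnecessary. Second, for (7$_i^{\circ}$) and (8$_i^{\circ}$) you do \emph{not} need $I=f(A)\in\mathcal{A}$: the hypothesis is that $R$ is a radical on the class of \emph{all} algebras, so $R(I)$ is defined regardless, and the purely algebraic chain $f(R(A))\subset R(I)\subset R(B)$ (resp.\ $f(R(A))=R(I)=I\cap R(B)$ in the hereditary case) goes through with no detour via $\overline{I}$. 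The closedness hypothesis on $\mathcal{A}$ is used only to guarantee that $R(A)$ and $R(B)$ are closed ideals, making $R$ a TR there; the intermediate algebra $I$ never needs to lie in $\mathcal{A}$.
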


\begin{proof}
Clear because $R$ satisfies axioms with algebraic `morphisms' applied to the
algebras from $\mathcal{A}$.
\end{proof}

The most popular and important example of a hereditary radical on the class of
all algebras (even rings) is the Jacobson radical $\operatorname{rad}$. It
should be stressed that $\operatorname{rad}$ is not a topological radical on
$(NA)$, because there are normed algebras $A$ with non-closed
$\operatorname{rad}(A)$ \cite[Example 10.1]{D97}. For $Q$-algebras this
obstacle vanishes: since every maximal modular left ideal of a $Q$-algebra $A$
is closed, $\operatorname{rad}(A)$ is also closed. As a consequence, we obtain
the following statement.

\begin{corollary}
\label{p2} $\operatorname{rad}$ is a strictly hereditary TR on $(QA)$.
\end{corollary}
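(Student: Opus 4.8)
The plan is to invoke Theorem~\ref{p22} with $\mathcal{A}=(QA)$ and $R=\operatorname{rad}$, so the whole task reduces to verifying the single hypothesis of that theorem: that $\operatorname{rad}(A)$ is closed for every $A\in(QA)$. Everything else --- the fact that $\operatorname{rad}$ is a hereditary radical on the class of all algebras (indeed all rings), and the fact that a hereditary algebraic radical with closed values is automatically a strictly hereditary TR on a ground class --- is already available: the former is classical and recalled in the paragraph preceding the corollary, the latter is exactly Theorem~\ref{p22}.

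So the key step is: if $A$ is a normed $Q$-algebra, then $\operatorname{rad}(A)$ is closed in $A$. I would argue this using the characterization of the Jacobson radical as the intersection of all maximal modular left ideals of $A$, together with condition $(\operatorname{Q}_4)$ from Theorem~\ref{QA}, which tells us that in a $Q$-algebra every maximal modular left ideal is closed. An arbitrary intersection of closed sets is closed, hence $\operatorname{rad}(A)=\bigcap\{M : M \text{ a maximal modular left ideal of }A\}$ is closed. (Equivalently one could note $\operatorname{rad}(A)$ is the largest quasi-regular ideal and use that the set of quasi-invertible elements of a $Q$-algebra is closed, but the maximal-ideal description is the cleanest since $(\operatorname{Q}_4)$ is already in hand.) This is precisely the remark made in the text just before the corollary statement, so it is essentially immediate.

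Once closedness is established, Theorem~\ref{p22} applies verbatim: since $\operatorname{rad}$ satisfies the algebraic radical axioms with algebraic morphisms, and since $(QA)$ is a ground class (listed among the examples; moreover it is universal by Theorem~\ref{p20}) on which $\operatorname{rad}$ takes closed values, $\operatorname{rad}$ is a strictly hereditary TR on $(QA)$. I would also briefly note that no separate check of axioms $(1^\circ)$--$(6^\circ)$ is needed because they are subsumed by the hypotheses of Theorem~\ref{p22}; in particular hereditarity $(6^\circ)$ comes from $\operatorname{rad}$ being a \emph{hereditary} algebraic radical, i.e.\ $\operatorname{rad}(I)=I\cap\operatorname{rad}(A)$ for every ideal $I$, which is a standard property of the Jacobson radical.

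I do not anticipate a genuine obstacle here; the only thing to be careful about is making sure the hypothesis actually matches, namely that ``maximal modular left ideal closed'' does give ``$\operatorname{rad}$ closed,'' which requires citing the algebraic identity $\operatorname{rad}(A)=\bigcap M$ over maximal modular left ideals $M$ (valid for any algebra, with the convention that the intersection over the empty family is $A$ --- but a $Q$-algebra, being nonzero unless $A=0$, always has such ideals unless $A$ is already radical, in which case $\operatorname{rad}(A)=A$ is trivially closed). Thus the proof is a two-line invocation: closedness via $(\operatorname{Q}_4)$, then Theorem~\ref{p22}.
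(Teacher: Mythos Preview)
Your proposal is correct and follows essentially the same approach as the paper: the paper's argument (contained in the paragraph immediately preceding the corollary) is precisely to observe that $\operatorname{rad}$ is a hereditary algebraic radical, that $\operatorname{rad}(A)$ is closed for $A\in(QA)$ because every maximal modular left ideal is closed by $(\operatorname{Q}_4)$, and then to invoke Theorem~\ref{p22}. Your write-up is somewhat more explicit about the edge case of an empty family of maximal modular left ideals, but the substance is identical.
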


Since a union of ground (resp. universal) classes is again ground (resp.
universal), there exists a maximal ground (resp. universal) class on which
$\operatorname{rad}$ is a TR. Let us denote it by $(\operatorname{rad})^{g}$
(resp. $(\operatorname{rad})^{u}$). It is interesting to describe
$(\operatorname{rad})^{g}$ and $(\operatorname{rad})^{u}$.

\subsection{TRs connected with the Jacobson radical}

Following the standard notation, we denote the restriction of
$\operatorname{rad}$ to $(BA)$ by $\operatorname{Rad}$. In the Banach algebra
theory, $\operatorname{Rad}$-semisimple (resp. $\operatorname{Rad}$-radical)
algebras are traditionally called \textit{semisimple }(resp. \textit{radical}%
). Now we consider two HTRs on $(NA)$ that coincide with $\operatorname{rad}$
on more wide varieties of normed algebras.

Given an algebra $A\in(NA)$, let $\operatorname{rad}_{b}(A)$ (resp.
$\operatorname{rad}_{n}(A)$) be the intersection of the kernels of all
representation in $\operatorname{irr}_{b}(A^{1})$ (resp. $\operatorname{irr}%
_{n}(A^{1})$). It was shown in \cite[Theorem 10.5]{D97} that
$\operatorname{rad}_{b}$ is a strong HTR on $(NA)$ and all representations in
$\operatorname{irr}_{b}(A)$ are strictly dense. We show the same for
$\operatorname{rad}_{n}$. The following lemma generalizes \cite[Theorem
10.2]{D97}.

\begin{lemma}
\label{p30} For a normed space $X$, every strictly irreducible algebra
$A\subset B(X)$ is strictly dense.
\end{lemma}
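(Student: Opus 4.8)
The plan is to reduce the lemma to the statement that the commutant of $A$ consists of scalars, and then to prove the latter by transporting the situation to the completion of $X$ so that the Gelfand--Mazur theorem (or a Liouville argument) applies.

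First I would set $D=\{T\in L(X):Ta=aT\text{ for all }a\in A\}$. By Schur's lemma, strict irreducibility of $A$ forces $D$ to be a division algebra over $\mathbb{C}$, and $X$ becomes a left $D$-vector space on which $A$ acts; the classical (purely algebraic) Jacobson density theorem then gives that $A$ is dense on ${}_DX$, i.e. for $D$-independent $x_1,\dots,x_n$ and arbitrary $y_1,\dots,y_n$ there is $a\in A$ with $ax_i=y_i$. Hence it suffices to show $D=\mathbb{C}I$, for then $D$-independence is ordinary $\mathbb{C}$-independence and the displayed property is exactly strict density. For the reduction's elementary half: if $T\in D$ and $T-\lambda I$ is not injective for some $\lambda\in\mathbb{C}$, then $\ker(T-\lambda I)$ is a nonzero $A$-invariant subspace, so equals $X$, whence $T=\lambda I$; otherwise $T-\lambda I$ is injective for every $\lambda$, its range is a nonzero $A$-invariant subspace hence all of $X$, so $T-\lambda I$ is a bijection with $(T-\lambda I)^{-1}\in D$. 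Thus a non-scalar $T\in D$ would be transcendental over $\mathbb{C}$ and $D$ would contain the function field $\mathbb{C}(T)$, in particular be of uncountable dimension over $\mathbb{C}$.

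To exclude this using that the elements of $A$ are \emph{bounded} operators on a \emph{normed} space, I would fix $0\neq\xi\in X$; then $N:=\{a\in A:a\xi=0\}$ is a modular maximal left ideal, the map $a+N\mapsto a\xi$ is an $A$-module isomorphism $A/N\to X$, and $D$ is identified with $\Gamma/N$, where $\Gamma=\{b\in A:bN\subseteq N\}$ is the idealizer of $N$. In the Banach-algebra case this finishes the argument: such $N$ is closed (a modular maximal left ideal of a Banach, hence $Q$-, algebra is closed), $\Gamma$ is closed, and $\Gamma/N$ is then a normed division algebra, so $\Gamma/N=\mathbb{C}I$ by Gelfand--Mazur. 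For $A\subseteq B(X)$ with $X$ merely normed I would pass to the completion $\overline{X}$, extend every $a\in A$ to $\bar a\in B(\overline{X})$, and form the Banach algebra $\widehat{A}=\overline{\{\bar a:a\in A\}}\subseteq B(\overline{X})$; using that $A\xi=X$ is dense in $\overline{X}$ together with the Open Mapping Theorem for the relevant Banach pair, one shows that every $T\in D$ is in fact bounded, hence extends to $\overline{X}$, commutes with $\widehat{A}$, and therefore lies in the commutant $\widehat{A}'$, a closed division subalgebra of $B(\overline{X})$, which by Gelfand--Mazur equals $\mathbb{C}I$. Combining this with the previous paragraph yields $D=\mathbb{C}I$ and hence strict density.

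The main obstacle is exactly this passage to the completion: $\widehat{A}$ need not leave $X$ invariant nor act strictly irreducibly on $\overline{X}$, and the implication ``$T$ commutes with $A$'' $\Rightarrow$ ``$T$ commutes with $\widehat{A}$'' is not available until one knows $T$ is bounded, so the steps must be arranged in the right order (this is precisely the normed-space refinement of Dixon's Theorem~10.2 in \cite{D97}). If the open-mapping route proves awkward, the fallback is the resolvent argument: once $T\in D$ is known to be bounded, $\lambda\mapsto(T-\lambda I)^{-1}$ is a $B(\overline{X})$-valued entire function vanishing at infinity, hence constant by Liouville's theorem, which contradicts the non-scalarity of $T$ and again gives $D=\mathbb{C}I$.
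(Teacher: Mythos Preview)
Your reduction via the Jacobson density theorem to showing that the commutant $D$ equals $\mathbb{C}I$ is exactly the right strategy, and it is the Bourbaki argument the paper is citing. The difficulty you manufacture, however, is illusory: your own ``Banach-algebra case'' already works verbatim for an arbitrary normed $X$, and the passage to the completion (which, as you correctly observe, you cannot justify) is unnecessary.

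The two points you are missing are these. First, $N=\{a\in A:a\xi=0\}$ is closed in $A$ simply because it is the kernel of the continuous linear map $a\mapsto a\xi$ from $(A,\|\cdot\|_{B(X)})$ to $X$; you do not need $A$ to be a $Q$-algebra (indeed $A$ is not assumed closed in $B(X)$, so it need not be Banach). Hence the idealizer $\Gamma=\{c\in A:Nc\subset N\}$ is closed in $A$ and $\Gamma/N$ is a \emph{normed} division algebra. Second, Gelfand--Mazur holds for normed (not only Banach) division algebras: by \cite[Theorem~2.2.2]{P94}, quoted in the paper's preliminaries, $\sigma_D(T)\neq\emptyset$ for every $T$ in a normed algebra $D$, and in a division algebra any $\lambda\in\sigma_D(T)$ forces $T=\lambda$. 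So $D\cong\Gamma/N=\mathbb{C}$ immediately, with no completion, no Open Mapping Theorem, and no Liouville argument. (A minor correction: the $A$-endomorphisms of $A/N$ act by \emph{right} multiplication, so the relevant idealizer is $\{c:Nc\subset N\}$, not $\{b:bN\subset N\}$; this does not affect the argument.)

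In short, the gap is not that the completion step fails but that it was never needed: once you norm $D$ via the quotient norm on $A/N\cong X$, the normed-algebra version of Gelfand--Mazur finishes the proof for any normed space $X$. This is precisely why the paper can say that the Bourbaki proof ``given for a Banach space is valid for a normed space'' without further comment.
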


\begin{proof}
The proof of \cite[Corollary 1.2.5.4]{B67} given for a Banach space is valid
for a normed space.
\end{proof}

As a consequence, any strictly irreducible representation of a (not
necessarily topological) algebra by bounded operators on a normed space is
strictly dense.

\begin{theorem}
\label{1.7} $(\operatorname{i})$ $\operatorname{rad}_{n}$ is a strong HTR on
$(NA)$.

\begin{itemize}
\item [(ii)]For every $A\in(NA)$, $\operatorname{rad}_{n}(A)$ contains every
(one-sided, not necessarily closed) topologically nil ideal of $A$.
\end{itemize}
\end{theorem}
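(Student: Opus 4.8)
The plan is to establish part (i) by verifying the topological-radical axioms directly for $\operatorname{rad}_n$, modelling the argument on Dixon's treatment of $\operatorname{rad}_b$ in \cite[Theorem 10.5]{D97} and using Lemma~\ref{p30} to guarantee strict density. First I would record the purely algebraic characterization: for $A\in(NA)$, a representation $\pi\in\operatorname{irr}_n(A^1)$ is, by Lemma~\ref{p30}, strictly dense, so its kernel is a primitive ideal; hence $\operatorname{rad}_n(A)$ is a closed ideal of $A$ (intersection of kernels of continuous representations, hence closed; it is a two-sided ideal since each kernel is), and $\operatorname{rad}_n(A)\supset\operatorname{rad}(A)$ since every primitive ideal coming from an $\operatorname{irr}_n$-representation is in $\operatorname*{Prim}A^1$. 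For the idempotency axiom $(1^{\circ})$ and the semisimplicity axiom $(2^{\circ})$, the key point is a correspondence between $\operatorname{irr}_n(A^1)$ and $\operatorname{irr}_n$ of the subquotients: a continuous strictly irreducible representation of $A/I$ (for $I$ a closed ideal) on a normed space pulls back to one of $A$ that kills $I$, and conversely — this is exactly the content of Lemma~\ref{p18} applied with the bornological bookkeeping already done there. Likewise, for an ideal $I$ of $A$, Lemma~\ref{p19} gives the correspondence between $\operatorname{irr}_n^I A$ and $\operatorname{irr}_n I$, which yields the hereditarity axiom $(6^{\circ})$, i.e. $\operatorname{rad}_n(I)=I\cap\operatorname{rad}_n(A)$, and hence $(5^{\circ})$.

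For the morphism axioms, $(3^{\circ})$ is immediate since a topological isomorphism transports continuous strictly irreducible representations to continuous ones, and $(4^{\circ})$ (indeed the stronger strong-radical condition $(7^{\circ})$) follows because if $f:A\to B$ is a continuous epimorphism then composing any $\pi\in\operatorname{irr}_n(B^1)$ with (the unitization of) $f$ gives an element of $\operatorname{irr}_n(A^1)$ — boundedness is preserved by composition with a bounded map, strict irreducibility is preserved since $f$ is onto — so $\ker(\pi\circ f)\supset f^{-1}(\ker\pi)\supset f^{-1}(\operatorname{rad}_n(B))$, whence $\bigcap_\pi\ker(\pi\circ f)\supset f^{-1}(\operatorname{rad}_n(B))$, i.e. $\operatorname{rad}_n(A)\subset f^{-1}(\operatorname{rad}_n(B))$, which is $(7^{\circ})$. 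Combining $(7^{\circ})$ with $(6^{\circ})$ (or quoting Theorem~\ref{p21}) gives that $\operatorname{rad}_n$ is a strong HTR on $(NA)$, proving (i). The "all representations in $\operatorname{irr}_n A$ are strictly dense" remark is just Lemma~\ref{p30} restated.

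For part (ii), let $J$ be a (say, left) topologically nil ideal of $A$ and let $\pi\in\operatorname{irr}_n(A^1)$ on a normed space $X$; I want $\pi(J)=0$. Suppose not: then $\pi(J)$ is a nonzero strictly irreducible subalgebra of $B(X)$ (it is an ideal of the strictly dense algebra $\pi(A^1)$, hence acts strictly irreducibly by a standard argument), so there is $a\in J$ and $0\neq x\in X$ with $\pi(a)x\neq 0$; by strict density pick $b\in A^1$ with $\pi(b)\pi(a)x=x$, and note $ba\in J$ (left ideal) while $x$ is an eigenvector of $\pi(ba)$ with eigenvalue $1$. Since $\pi$ is continuous into $B(X)$ with $X$ normed, $1$ lies in the spectrum of $\pi(ba)$ computed in $B(X)^1$ (eigenvalues of a bounded operator are spectral), hence $\rho(ba)\geq\rho_{B(X)^1}(\pi(ba))\geq 1$ using $\rho(\pi(ba))\leq\rho(ba)$ (spectral radius does not increase under continuous homomorphisms, after passing to completions via Gelfand's formula as in the Preliminaries); but $ba\in J$ is quasinilpotent, so $\rho(ba)=0$, a contradiction. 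The right-ideal case is symmetric (use a right eigenvector / the opposite algebra), and the two-sided case is a special case. The main obstacle I expect is the careful handling of the non-complete setting in the two correspondence lemmas — ensuring that the induced representations land in $\operatorname{irr}_n$ (bounded, on a normed space) rather than merely in $\operatorname{irr}_b$ — but that work has been packaged into Lemmas~\ref{p18} and~\ref{p19}, so the remaining task is bookkeeping; the only genuinely new ingredient beyond Dixon's $\operatorname{rad}_b$ argument is Lemma~\ref{p30}, which removes the completeness hypothesis from the strict-density step.
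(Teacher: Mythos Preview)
Your proposal is correct and follows essentially the same route as the paper: part (i) is obtained from Lemmas~\ref{p18} and~\ref{p19} exactly as you indicate, and part (ii) is the same eigenvector argument (the paper writes it as the direct norm estimate $\|y\|^{1/n}\leqslant(\|\pi\|\|y\|)^{1/n}\|(ba)^n\|^{1/n}\to 0$ rather than going through spectral-radius inequalities). One small cleanup: in (ii) you only need strict irreducibility, not strict density, to produce $b$ with $\pi(b)\pi(a)x=x$, and your chain $\rho(ba)\geqslant\rho_{B(X)^1}(\pi(ba))$ is not the inequality you actually use---what you need (and what the norm estimate gives directly) is $|\lambda|\leqslant\rho(\pi(ba))\leqslant\rho(ba)$ for any eigenvalue $\lambda$ of $\pi(ba)$.
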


\begin{proof}
(i) Let $I$ be an ideal of a normed algebra $A$. The equality
$\operatorname{rad}_{n}(I)=I\cap\operatorname{rad}_{n}(A)$ follows immediately
from Lemma \ref{p19}. Setting $I=\operatorname{rad}_{n}(A)$, we deduce
$\operatorname{rad}_{n}(\operatorname{rad}_{n}(A))=\operatorname{rad}_{n}(A)$.
Let again $I=\operatorname{rad}_{n}(A)$, and set $B=A/I$. It follows from
Lemma \ref{p18} that if $b\in\operatorname{rad}(B)$ and $a\in b$ then $a\in
I$, whence $b=0$. We proved that $B$ is $\operatorname{rad}_{n}$-semisimple.

It remains now to prove the equality $f(\operatorname{rad}_{n}(A))\subset
\operatorname{rad}_{n}(B)$, for any continuous epimorphism $f:A\rightarrow B$.
But this immediately follows from the fact that $\pi f\in\operatorname{irr}%
_{n}A$ for every $\pi\in\operatorname{irr}_{n}B$.

(ii) Let $I$ be a topologically nil right ideal. Suppose that, for $\pi
\in\operatorname{irr}_{n}A$, $b\in I$ and $x\in X_{\pi}$, $y=\pi(b)x\neq0$.
There exists $a\in A$ such that $\pi(a)y=x$, whence $\pi(ba)y=y$ and
\[
\left\|  y\right\|  ^{1/n}=\left\|  \pi((ba)^{n})y\right\|  ^{1/n}%
\leqslant(\left\|  \pi\right\|  \left\|  y\right\|  )^{1/n}\left\|
(ba)^{n}\right\|  ^{1/n}\rightarrow0
\]
as $n\rightarrow\infty$, since $ba\in J$. So $y=0$, a contradiction. We obtain
that $\pi J=0$ for every $\pi\in\operatorname{irr}_{n}A$.

If $I$ is a topologically nil left ideal, we use $\pi(ab)x=x$ with the same argument.
\end{proof}

\begin{theorem}
\label{p1} $\operatorname{rad}_{b}=\operatorname{rad}$ on $(Q_{b}A)$ and
$\operatorname{rad}_{n}=\operatorname{rad}$ on $(QA)$.
\end{theorem}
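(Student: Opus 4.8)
The plan is to prove the two equalities separately, but by essentially the same mechanism, exploiting the representation-theoretic descriptions of $\operatorname{rad}_{b}$, $\operatorname{rad}_{n}$ and the $Q$-algebra characterizations established in Theorem~\ref{QA}. Recall that $\operatorname{rad}(A)$ is the intersection of the kernels of all strictly irreducible representations of $A^{1}$, while $\operatorname{rad}_{b}(A)$ (resp. $\operatorname{rad}_{n}(A)$) is the intersection of the kernels of those strictly irreducible representations of $A^{1}$ that are \emph{continuous by bounded operators on a Banach} (resp. \emph{normed}) \emph{space}. Since the latter family is a subfamily of the former, the inclusion $\operatorname{rad}(A)\subset\operatorname{rad}_{b}(A)$ holds trivially on any normed algebra (and likewise $\operatorname{rad}(A)\subset\operatorname{rad}_{n}(A)$); so the content is the reverse inclusion, and for that we must show that passing to the smaller family loses nothing when $A\in(Q_{b}A)$ (resp. $A\in(QA)$).

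First I would handle $\operatorname{rad}_{n}=\operatorname{rad}$ on $(QA)$. Let $A\in(QA)$. The key point is that $A^{1}$ is again a $Q$-algebra (adjoining a unit to a $Q$-algebra yields a $Q$-algebra — this follows since $A^1$ embeds as a spectral subalgebra issue, or more directly from $(\operatorname{Q}_3)$, which is inherited: if $\|a\|<1$ in $A^{1}=A\oplus\mathbb{C}$ then the scalar part $\lambda$ has $|\lambda|<1$ and $\sum a^{n}$ converges). Then by Theorem~\ref{QA}, condition $(\operatorname{Q}_5)$, every strictly irreducible representation of $A^{1}$ is equivalent to a continuous one by bounded operators on a normed space. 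Equivalent representations have the same kernel, so the intersection of the kernels over \emph{all} strictly irreducible representations of $A^{1}$ equals the intersection over those in $\operatorname{irr}_{n}(A^{1})$. That is exactly $\operatorname{rad}(A)=\operatorname{rad}_{n}(A)$.

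The argument for $\operatorname{rad}_{b}=\operatorname{rad}$ on $(Q_{b}A)$ runs in parallel, using the definition of $Q_{b}$-algebra: $A\in(Q_{b}A)$ means every strictly irreducible representation of $A$ is equivalent to a continuous representation by bounded operators on a \emph{Banach} space. The one thing to check carefully is that $A\in(Q_{b}A)$ implies $A^{1}\in(Q_{b}A)$, so that the same statement applies to $\operatorname{irr}(A^{1})$; here one notes that a strictly irreducible representation $\rho$ of $A^{1}$ restricts either to a strictly irreducible representation of $A$ (when $\rho(A)\neq 0$), which by hypothesis is equivalent to a bounded one on a Banach space and extends as in the proof of Theorem~\ref{QA}, or else $\rho(A)=0$ and $\rho$ factors through the one-dimensional algebra $A^{1}/A\cong\mathbb{C}$, which is trivially bounded on a Banach space. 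Given $A^{1}\in(Q_{b}A)$, every $\rho\in\operatorname{irr}(A^{1})$ is equivalent to one in $\operatorname{irr}_{b}(A^{1})$, kernels are preserved under equivalence, and intersecting gives $\operatorname{rad}(A)=\operatorname{rad}_{b}(A)$.

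The main obstacle I anticipate is the bookkeeping around unitizations: one must verify that $(QA)$ and $(Q_{b}A)$ are stable under adjoining a unit (so that the representation-theoretic characterizations apply to $A^{1}$, not just $A$), and that a strictly irreducible representation of $A^{1}$ with $\rho(A)\neq 0$ really does ``come from'' a strictly irreducible representation of $A$ in a way compatible with the equivalence and boundedness claims — this is the same device already used in the proof of Theorem~\ref{QA} and in Lemma~\ref{p18}, so it should be routine, but it is the only place where care is needed. Everything else reduces to the trivial observation that intersecting kernels over a larger family of representations and over the equivalence-closed subfamily of ``geometrically realized'' representations gives the same ideal once the $Q$-algebra hypothesis guarantees every representation is equivalent to one in the subfamily.
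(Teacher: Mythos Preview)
Your proposal is correct and follows the same approach as the paper: the paper's proof is the single sentence ``It is immediate (since equivalent representations have the same kernels),'' which is exactly your core mechanism. Your extra bookkeeping about whether $A^{1}\in(QA)$ (resp.\ $(Q_{b}A)$) when $A$ is in the class is a legitimate point the paper glosses over; note that you can dispatch it more quickly by invoking Theorem~\ref{p20} (extension stability of $(QA)$ and $(Q_{b}A)$), since $A$ is a closed ideal of $A^{1}$ with one-dimensional quotient.
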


\begin{proof}
It is immediate (since equivalent representations have the same kernels).
\end{proof}

Since $\operatorname{rad}(A)$ is a quasi-regular ideal, then, for a normed
algebra $A$, $\operatorname{rad}(A)$ is a topologically nil ideal of $A$. As a
consequence, we obtain the following well-known assertion.

\begin{corollary}
\label{p3} For an algebra $A\in(QA)$, $\operatorname{rad}A$ is the largest
(one-sided; two-sided) topologically nil ideal of $A$. Moreover, the closure
of a sum of topologically nil ideals of $A$ is a topologically nil ideal of
$A$.
\end{corollary}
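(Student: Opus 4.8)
The plan is to deduce everything from Theorem~\ref{p1} and Theorem~\ref{1.7}(ii), together with the remark just made that $\operatorname{rad}(A)$ is a topologically nil ideal of any normed algebra.

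First I would prove the maximality assertion. Fix $A\in(QA)$. By the remark preceding the statement, $\operatorname{rad}(A)$ is itself a topologically nil (two-sided) ideal of $A$, so it only remains to show it contains every other one. Let $I$ be an arbitrary one-sided topologically nil ideal of $A$. By Theorem~\ref{1.7}(ii), $I\subseteq\operatorname{rad}_{n}(A)$, and since $A\in(QA)$, Theorem~\ref{p1} gives $\operatorname{rad}_{n}(A)=\operatorname{rad}(A)$; hence $I\subseteq\operatorname{rad}(A)$. Thus $\operatorname{rad}(A)$ is the largest one-sided ---and a fortiori the largest two-sided--- topologically nil ideal of $A$.

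For the ``moreover'' part, let $\{I_{\alpha}\}$ be any family of topologically nil ideals of $A$ and put $J=\overline{\sum_{\alpha}I_{\alpha}}$, which is a closed two-sided ideal of $A$. By the first part every $I_{\alpha}$ lies in $\operatorname{rad}(A)$, so $\sum_{\alpha}I_{\alpha}\subseteq\operatorname{rad}(A)$; since $\operatorname{rad}(A)$ is closed (Corollary~\ref{p2}, or the observation that maximal modular left ideals of a $Q$-algebra are closed), we obtain $J\subseteq\operatorname{rad}(A)$. As every element of $\operatorname{rad}(A)$ is quasinilpotent, the same holds for every element of $J$, so $J$ is a topologically nil ideal of $A$.

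There is essentially no obstacle here: all the real work has been done in the cited results. The two points to keep in mind are that Theorem~\ref{1.7}(ii) is needed in full strength (covering non-closed and merely one-sided ideals), and that the passage from the algebraic sum $\sum_{\alpha}I_{\alpha}$ to its closure $J$ relies on $\operatorname{rad}(A)$ being closed on $(QA)$ ---a property that fails on $(NA)$ in general, which is precisely why the statement is restricted to $Q$-algebras.
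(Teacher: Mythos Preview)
Your argument is correct and follows exactly the route indicated in the paper's proof, which simply cites Theorems~\ref{1.7} and~\ref{p1}; you have merely spelled out the details, including the use of Corollary~\ref{p2} to pass to the closure in the ``moreover'' part.
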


\begin{proof}
The statement follows from Theorems \ref{1.7} and \ref{p1} (see also
\cite{P94}).
\end{proof}

\subsection{Uniform TRs}

An $\mathcal{A}$-radical $R$ is called \textit{uniform} if all subalgebras of
an $R$-radical algebra that belong to $\mathcal{A}$ are $R$-radical.

It follows that $\operatorname{rad}$ is a uniform HTR on $(QA)$. It is also
`non-unital' in the sense that no unital algebra can be radical. We show that
$\operatorname{rad}$ is the largest TR of $(QA)$-radicals that share these
properties. Note that, for a commutative normed algebra $A$ and a
representation $\pi\in\operatorname{irr}_{n}A$, $\pi A$ is a normed division
algebra and, by the Gelfand-Mazur Theorem, is one-dimensional.

\begin{proposition}
\label{2.4} If $R$ is a uniform non-unital $(QA)$-radical then $R(A)\subset
\operatorname{rad}(A)$, for every $A\in(QA)$.
\end{proposition}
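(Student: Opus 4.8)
The plan is to argue by contradiction using the semisimplicity of the quotient $A/\operatorname{rad}(A)$. Suppose $R$ is a uniform non-unital $(QA)$-radical but $R(A)\not\subset\operatorname{rad}(A)$ for some $A\in(QA)$. Since $R$ satisfies $(4^{\circ})$, its image $q_{\operatorname{rad}(A)}(R(A))$ is contained in $R(A/\operatorname{rad}(A))$; and by the uniformity hypothesis applied to the $R$-radical ideal $R(A)$ (here I use that $q_{\operatorname{rad}(A)}(R(A))$ is a quotient, hence $R$-radical, and is an ideal of the semisimple algebra $B:=A/\operatorname{rad}(A)$), it suffices to show that a nonzero $R$-radical ideal cannot exist in a semisimple $Q$-algebra. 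So the reduction is: it is enough to prove that every semisimple algebra in $(QA)$ is $R$-semisimple, equivalently that $R(B)=0$ whenever $\operatorname{rad}(B)=0$ and $B\in(QA)$.

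For this, first note $R(B)$ is again a semisimple $Q$-algebra (an ideal of a semisimple algebra is semisimple, and $(QA)$ is ideal stable by Theorem~\ref{p20}), so by $(1^{\circ})$ it is an $R$-radical, semisimple $Q$-algebra; replacing $B$ by $R(B)$ we may as well assume $B$ is itself $R$-radical and semisimple and must derive $B=0$. Since $\operatorname{rad}(B)=0$, there is a strictly irreducible representation $\pi$ of $B$ on some space $X_{\pi}$; by Corollary~\ref{p2} (as $B\in(QA)$) we may take $\pi\in\operatorname{irr}_{n}B$, so $\pi(B)$ is a strictly irreducible subalgebra of $B(X_{\pi})$, and by Lemma~\ref{p30} it is strictly dense. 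Now adjoin a unit: consider the subalgebra $C=\pi(B)+\mathbb{C}I$ of $B(X_{\pi})$. This is a unital normed algebra, it lies in $(QA)$ (as a subalgebra of the bounded operator algebra $B(X_{\pi})$, whose norm is spectral, so $C$ has spectral norm and is a $Q$-algebra), and $\pi(B)$ is an ideal of $C$ which is a continuous homomorphic image of the $R$-radical algebra $B$. By $(4^{\circ})$, $\pi(B)=q(R(B^{1}))$-type image... more carefully: $\pi$ extended to $B^{1}\to C$ is an open continuous epimorphism onto $C$ composed appropriately; in any case $\pi(B)\subset R(C)$ because $\pi(B)$ is the continuous homomorphic image of $R(B)=B$ and $R$ is a strong radical only if we assume it — but we do not have strongness, only $(3^{\circ})$–$(5^{\circ})$. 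The clean route instead: $\pi(B)$, being $R$-radical (as a quotient of $B$) and an ideal of $C$, satisfies $\pi(B)\subset R(C)$ by Lemma~\ref{2.1}(i); hence $C/R(C)$ is a nonzero-or-zero unital $Q$-algebra which is a quotient of $C$ by a closed ideal containing $\pi(B)$, so $C/R(C)$ is a quotient of $C/\pi(B)\cong\mathbb{C}$. If $R(C)\neq C$ then $C/R(C)\cong\mathbb{C}$ is $R$-semisimple, fine; if $R(C)=C$ then $C$ is a unital $R$-radical algebra, contradicting that $R$ is non-unital. Since $C$ is unital, the second case is excluded, so $R(C)\neq C$, forcing $\pi(B)\subsetneq C$ to sit inside a proper closed ideal $R(C)$; but then $R(C)$ is a proper ideal of $C$ containing the strictly dense subalgebra $\pi(B)$, which is impossible unless $\pi(B)=0$, i.e. $B$ has no strictly irreducible representation, i.e. $B$ is $\operatorname{rad}$-radical; combined with semisimplicity this gives $B=0$.

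I expect the main obstacle to be the bookkeeping around unitization and the passage from ``$R$-radical ideal in $C$'' to ``$R(C)=C$ is forbidden'': one must check carefully that $C=\pi(B)\oplus\mathbb{C}I$ really is a $Q$-algebra (this is where spectrality of the operator norm on $B(X_{\pi})$ and the fact that $(QA)$ is a universal, hence subalgebra-friendly enough, class come in — actually $(QA)$ is not literally subalgebra stable, so one argues directly that $C$ has spectral norm), and that $\pi(B)$ is genuinely an ideal of $C$ and $R$-radical (as a homomorphic image of $B=R(B)$ under the continuous map $\pi$, using Corollary~\ref{2.2} or directly Lemma~\ref{2.1} once we know $\pi(B)$ is an ideal of $C$). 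The non-unitality hypothesis is used exactly once — to rule out $R(C)=C$ — and the uniformity hypothesis is used exactly once — to push $R(A)$'s image into the semisimple quotient and keep it $R$-radical; everything else is the structure theory already assembled in the excerpt (Lemmas~\ref{p30}, \ref{2.1}, Corollary~\ref{p2}, Theorem~\ref{p20}).
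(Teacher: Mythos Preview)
There is a genuine gap. The step ``$\pi(B)$, being $R$-radical (as a quotient of $B$)'' is the crux and it is not justified: $\pi:B\to\pi(B)$ is only a continuous epimorphism, and the operator norm on $\pi(B)\subset B(X_\pi)$ need not be equivalent to the quotient norm on $B/\ker\pi$. Axioms $(3^\circ)$--$(4^\circ)$ handle topological isomorphisms and quotient maps, not arbitrary continuous epimorphisms; you would need $R$ to be \emph{strong} $((7^\circ))$, which is not assumed. You flag this worry yourself and then slide past it. Relatedly, your claimed ``exactly once'' use of uniformity is not a use of uniformity at all: the statement that $q_{\operatorname{rad}(A)}(R(A))$ is $R$-radical is just the fact that a quotient of an $R$-radical algebra is $R$-radical, i.e.\ $(4^\circ)$. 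So uniformity, a hypothesis of the proposition, is never invoked in your argument---a warning sign.

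Even granting that first gap, the endgame does not close. From $\pi(B)\subset R(C)$ and $R(C)\neq C$ you only get that $R(C)$ equals the (possibly closed) codimension-one ideal $\pi(B)$; strict density of $\pi(B)$ in $B(X_\pi)$ is density in the strong operator topology, not the norm topology, so the identity need not lie in the norm closure of $\pi(B)$ and nothing forces $\pi(B)=0$. The paper's proof avoids both difficulties by using uniformity in an essential way: pass to a maximal \emph{commutative} subalgebra $B$ of the $R$-radical algebra (uniformity makes $B$ $R$-radical, and $B$ is a spectral subalgebra hence a $Q$-algebra), then use a character of $B$. A character has closed kernel with one-dimensional quotient, so $B/I\cong\mathbb{C}$ is a genuine quotient by a closed ideal; it is therefore $R$-radical by $(4^\circ)$ and unital, contradicting non-unitality. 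The commutative detour is exactly what lets one stay within the open-epimorphism world where the TR axioms apply.
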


\begin{proof}
Let us prove firstly that if a $Q$-algebra $A$ is $R$-radical then it is
radical (i.e. $A=\operatorname{rad}(A)$). If not then $A$ contains a
non-quasinilpotent element and therefore there is a maximal non-radical
commutative subalgebra $B$ of $A$. Since clearly $B$ is a spectral subalgebra
of $A$, $B$ is a $Q$-algebra. There is a maximal ideal $I$ of $B$ with
one-dimensional $B/I$ ($I$ $=\ker\pi$ for some $\pi\in\operatorname{irr}_{n}%
B$). It follows that $B/I$ is unital and is not $R$-radical. Then $B$ is not
$R$-radical that contradicts the assumption of uniformity of $R$.

Now for arbitrary $A\in(QA)$ one has that $R(A)$ is radical and
\[
R(A)=\operatorname{rad}(R(A))\subset\operatorname{rad}(A).
\]
\end{proof}

It should be noted that the same statement with actually the same proof holds
for $(BA)$-radicals. In Section \ref{S4} we consider examples of topological
radicals that are uniform on $(NA)$.

\subsection{Regular TRs}

We touch the important problem of extending of a TR to a wider ground class,
in particular from $(BA)$ to $(NA)$. The following simple construction gives
one of possible solutions for hereditary radicals.

Let $R$ be a map on a ground class $\mathcal{A}\supset(BA)$. For $A\in(NA)$
set
\[
R^{\prime}(A)=A\cap R(\overline{A}).
\]
We call $R^{\prime}$ the \textit{regular extension} of $R$ to $(NA)$; $R$ is
called \textit{regular }on $\mathcal{A}$ if $R=R^{\prime}$ on $\mathcal{A}$.

\begin{theorem}
\label{2.6}Let $\mathcal{A}$ be a ground class and $(BA)\subset\mathcal{A}$.
If $R$ is a TR on $\mathcal{A}$ then $R^{\prime}$ is an OTR on $(NA)$; if $R$
is an HTR on $\mathcal{A}$ then so is $R^{\prime}$ on $(NA)$.
\end{theorem}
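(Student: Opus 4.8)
The plan is to verify the axioms $(1^\circ)$–$(5^\circ)$ for $R'$ one at a time, exploiting the fact that $R$ already satisfies them on $\mathcal A$ (which contains all completions) together with the standard identifications of closures recalled in the Preliminaries: for a normed algebra $A$ with completion $\overline A$, and a closed ideal $I$ of $A$, one has $\overline{A/I}\cong \overline A/\overline I$, and the closure in $A$ of a subset is $(\text{closure in }\overline A)\cap A$. The first thing to check is that $R'(A)=A\cap R(\overline A)$ is a closed ideal of $A$: it is an ideal since $R(\overline A)$ is an ideal of $\overline A$ hence of $A$ under the inclusion $A\subset\overline A$, and it is closed in $A$ because $R(\overline A)$ is closed in $\overline A$ (axiom for $R$), so $A\cap R(\overline A)$ is closed in $A$. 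Thus $R'$ does map an algebra to a closed ideal.

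Next I would do $(3^\circ)$ and $(4^\circ)$, which together with $(1^\circ)$ and $(5^\circ)$ give the OTR claim. For $(3^\circ)$: a topological isomorphism $f:A\to B$ extends uniquely to a topological isomorphism $\overline f:\overline A\to \overline B$; then $f(R'(A))=\overline f(A\cap R(\overline A))=\overline f(A)\cap \overline f(R(\overline A))=B\cap R(\overline B)=R'(B)$, using that $R$ satisfies $(3^\circ)$ on $\mathcal A$. For $(4^\circ)$: let $I$ be a closed ideal of $A$ and let $\overline I$ be its closure in $\overline A$, a closed ideal with $\overline A/\overline I\cong\overline{A/I}$; the quotient map $q_I:A\to A/I$ extends to $q_{\overline I}:\overline A\to\overline A/\overline I$. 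Then $q_I(R'(A))=q_{\overline I}(A\cap R(\overline A))\subset q_{\overline I}(R(\overline A))\subset R(\overline A/\overline I)=R(\overline{A/I})$ by $(4^\circ)$ for $R$; intersecting with $A/I$ and noting $q_I(A\cap R(\overline A))\subset A/I$ gives $q_I(R'(A))\subset (A/I)\cap R(\overline{A/I})=R'(A/I)$. For $(5^\circ)$: if $I\in\mathcal A$ is an ideal of $A\in(NA)$, then $\overline I\in\mathcal A$ is an ideal of $\overline A$ (and $I$ is already complete, so $\overline I=I$ in fact, which simplifies matters — $I$ being a \emph{normed} algebra in $\mathcal A\supset(BA)$ need not be complete, but $\overline I$ lies in $\mathcal A$ by closed-ideal stability of the ground class); then $R(\overline I)$ is an ideal of $\overline A$ by $(5_1^\circ)$ for $R$, and $R'(I)=I\cap R(\overline I)=I\cap\big(R(\overline I)\big)$, an intersection of ideals of $A$, hence an ideal of $A$, giving $(5_1^\circ)$; and $R(\overline I)\subset\overline I\cap R(\overline A)$ by $(5_2^\circ)$, so $R'(I)=I\cap R(\overline I)\subset I\cap R(\overline A)=R'(A)\cap I$, giving $(5_2^\circ)$.

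For $(2^\circ)$, which would upgrade OTR to full TR (but the theorem only claims OTR for the general case, so this is \emph{not} required here — I would simply \emph{not} prove it, and indeed it can fail), and for the hereditary case: if $R$ is an HTR, i.e. satisfies $(6^\circ)$ $R(J)=J\cap R(A)$ for ideals $J\in\mathcal A$ of $A\in\mathcal A$, I would show $R'$ satisfies $(6^\circ)$ on $(NA)$. Given an ideal $I\in(NA)$ of $A\in(NA)$, pass to completions: $\overline I$ is a closed ideal of $\overline A$, both in $\mathcal A$, so $R(\overline I)=\overline I\cap R(\overline A)$. Then
\[
R'(I)=I\cap R(\overline I)=I\cap\overline I\cap R(\overline A)=I\cap R(\overline A)=I\cap\big(A\cap R(\overline A)\big)=I\cap R'(A),
\]
where the penultimate equality uses $I\subset A$. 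This also yields $(1^\circ)$ for the hereditary case by setting $I=R'(A)$ (noting $R'(A)$ is complete, being a closed ideal of... no — $R'(A)$ need not be complete if $A$ isn't; but $R'(A)$ is a closed ideal of $A$, hence a normed algebra, and $\overline{R'(A)}$ is the closure of $R'(A)$ in $\overline A$, which is contained in $R(\overline A)$ and, by $(6^\circ)$ for $R$ applied to the closed ideal $\overline{R'(A)}$ of $\overline A$, we get $R(\overline{R'(A)})=\overline{R'(A)}\cap R(\overline A)=\overline{R'(A)}$, whence $R'(R'(A))=R'(A)\cap R(\overline{R'(A)})=R'(A)$). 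For the \emph{general} OTR case, $(1^\circ)$ is \emph{not} part of the OTR axioms — rechecking: an OTR requires $(2^\circ),(3^\circ),(4^\circ),(5^\circ)$ — so I would instead verify $(2^\circ)$: $R'(A/R'(A))=(A/R'(A))\cap R(\overline{A/R'(A)})$; here $\overline{A/R'(A)}=\overline A/\overline{R'(A)}$ and $\overline{R'(A)}\subset R(\overline A)$, so there is an open epimorphism $\overline A/\overline{R'(A)}\to\overline A/R(\overline A)$; since $R(\overline A/R(\overline A))=0$ by $(2^\circ)$ for $R$ and $R$ satisfies $(4^\circ)$, one deduces $R(\overline A/\overline{R'(A)})$ maps into $0$ under that epimorphism, i.e. is contained in $R(\overline A)/\overline{R'(A)}$; intersecting back with $A/R'(A)$ and using $A\cap R(\overline A)=R'(A)$ forces $R'(A/R'(A))=0$.

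The main obstacle is purely bookkeeping: keeping straight the four different ambient algebras ($A$, $\overline A$, $A/I$, $\overline A/\overline I\cong\overline{A/I}$) and repeatedly invoking the identification $\overline{A/I}=\overline A/\overline I$ together with $\overline X\cap Y=\operatorname{cl}_Y X$; once those identifications are set up the axiom verifications are each a one-line set-theoretic manipulation of the defining formula $R'(\cdot)=(\cdot)\cap R(\overline{(\cdot)})$, as the excerpt's proof surely records with the phrase "easy checkup" or similar. I expect the only genuinely substantive point is noticing that in the hereditary case axiom $(6^\circ)$ for $R$ must be applied to $\overline I$ (the closure in $\overline A$), not to $I$ itself, and then intersecting down — everything else is routine.
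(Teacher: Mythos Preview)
Your proposal is correct and follows essentially the same route as the paper: verify that $R'(A)$ is a closed ideal, then check $(3^\circ)$, $(4^\circ)$, $(5^\circ)$, $(2^\circ)$ by passing to completions and using the identifications $\overline{A/I}\cong\overline A/\overline I$, and in the hereditary case derive $(6^\circ)$ (hence $(1^\circ)$) from $(6^\circ)$ for $R$ applied to $\overline I\subset\overline A$. Your argument for $(2^\circ)$ via the open epimorphism $\overline A/\overline{R'(A)}\to\overline A/R(\overline A)$ and axiom $(4^\circ)$ is exactly the content of the paper's appeal to Corollary~2.2; the only cosmetic differences are your initial mix-up of the UTR/OTR axiom lists (which you catch and correct) and the unnecessary detour in deriving $(1^\circ)$---once $(6^\circ)$ for $R'$ is established you can simply set $I=R'(A)$ to get $R'(R'(A))=R'(A)\cap R'(A)=R'(A)$, as the paper does.
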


\begin{proof}
Clearly $R^{\prime}(A)$ is a closed ideal in $A$, for every normed algebra $A$.

Let us firstly prove that, for an ideal $I$ of $A$, $R^{\prime}(I)$ is an
ideal of $A$. Indeed, if $a\in R^{\prime}(I)$ and $b\in A$ then $ab\in I$ and
$ab\in R(\overline{I})$, since $R(\overline{I})$ is an ideal of $\overline{A}%
$. So $ab\in R^{\prime}(I)$ and, similarly, $ba\in R^{\prime}(I)$. We have
\begin{equation}
R^{\prime}(I)=I\cap R(\overline{I})\subset I\cap R(\overline{A})=I\cap A\cap
R(\overline{A})=I\cap R^{\prime}(A), \label{g2}%
\end{equation}
so (5$^{\circ}$) is proved.

For (2$^{\circ}$), set $I=R^{\prime}(A)$. Identifying $A/I$ with
$q_{\overline{I}}(A)$ (for $q_{\overline{I}}:\overline{A}\rightarrow
\overline{A}/\overline{I}$), we identify $\overline{A/I}$ with $\overline
{A}/\overline{I}$. Hence
\[
R^{\prime}(A/I)=q_{\overline{I}}(A)\cap R(\overline{A}/\overline{I}).
\]
Since $\overline{I}$ is $R$-radical by Corollary \ref{2.2}(i), we obtain by
Corollary \ref{2.2}(ii) that $R(\overline{A}/\overline{I})=q_{\overline{I}%
}(R(\overline{A}))$. Thus
\[
R^{\prime}(A/I)=q_{\overline{I}}(A)\cap q_{\overline{I}}(R(\overline
{A}))=q_{\overline{I}}(A\cap R(\overline{A}))=q_{\overline{I}}(I)=0.
\]
The second equality in the above chain follows from the inclusion
$\overline{I}\subset R(\overline{A})$. Indeed if $q_{\overline{I}}(x)\in
q_{\overline{I}}(A)\cap q_{\overline{I}}(R(\overline{A}))$, then there are
elements $a,b\in\overline{I}$ such that $x+a\in A$, $x+b\in R(\overline{A})$.
Hence $x+a\in R(\overline{A})+\overline{I}=R(\overline{A})$ and $q_{\overline
{I}}(x)=q_{\overline{I}}(x+a)\in q_{\overline{I}}(A\cap R(\overline{A}))$.

The property (3$^{\circ}$) is evident, and it remains only to prove
(4$^{\circ}$): we have to show that
\[
q_{I}(R^{\prime}(A))\subset R^{\prime}(A/I)
\]
for a closed ideal $I$ of $A$. Identifying $q_{I}(A)$ and $q_{I}(R^{\prime
}(A))$ with $q_{\overline{I}}(A)\subset\overline{A}/\overline{I}$ and
$q_{\overline{I}}(R^{\prime}(A))$ respectively, we obtain that
\begin{align*}
q_{I}(R^{\prime}(A))  &  =q_{\overline{I}}(R^{\prime}(A))=q_{\overline{I}%
}(R(\overline{A})\cap A)\subset q_{\overline{I}}(R(\overline{A}))\cap
q_{\overline{I}}(A)\\
&  \subset R(\overline{A}/\overline{I})\cap q_{\overline{I}}(A)=R^{\prime
}(A/I).
\end{align*}
We proved that $R^{\prime}$ is an OTR on $(NA)$.

Now let $R$ be an HTR. Then $R^{\prime}(I)=I\cap R^{\prime}(A)$ for every
ideal $I$ of an algebra $A$. Indeed, since $R(\overline{I})=R(\overline
{A})\cap\overline{I}$, we have%
\[
R^{\prime}(I)=R(\overline{I})\cap I=R(\overline{A})\cap\overline{I}\cap
I=R(\overline{A})\cap A\cap I=R^{\prime}(A)\cap I.
\]
Applying the proved equality to $I=R^{\prime}(A)$ we get (1$^{\circ}$):
\[
R^{\prime}(R^{\prime}(A))=R^{\prime}(A).
\]
Therefore $R^{\prime}$ is an HTR on $(NA)$.
\end{proof}

Note that in general $R^{\prime}$ is not a strong TR on $(NA)$ even if $R$ is
a strong HTR on $(BA)$ \cite[Remark 10.8]{D97}. Let us say that an
$(NA)$-radical $R$ is \textit{semi-regular} if $R(A)\subset R(\overline{A}).$
Then clearly the regular extension is the largest semi-regular extension for a
given TR.

Following \cite{D97}, let $T_{\infty}(A)$ (resp. $T_{m}(A)$, for
$m\in\mathbb{N}$) be the intersection of all continuous strongly dense (resp.
topologically $m$-transitive) representations (of $A^{1}$) on Banach spaces.
Recall that a continuous representation $\tau$ of $A$ on a Banach space $X$ is
called \textit{topologically }$m$\textit{-transitive } if the map
$a\longmapsto(\tau(a)x_{1},\ldots\tau(a)x_{m})$, $a\in A$, has a dense image
in the direct sum $X^{m}$ for every linearly independent system $x_{1},\ldots
x_{m}\in X$, and \textit{strongly dense }if this property holds for every
$m\in\mathbb{N}$. Note that $T_{\infty}$ and $T_{m}$ are strong HTRs on $(NA)$
\cite[Theorem 8.1]{D97}, for every $m$.

\begin{proposition}
$T_{\infty}$ and $T_{m}$ are regular HTR on $(NA)$, for every $m$.
\end{proposition}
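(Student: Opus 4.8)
The plan is to show that $T_\infty$ and $T_m$ are regular, i.e.\ that $T_\infty(A) = A \cap T_\infty(\overline{A})$ and $T_m(A) = A \cap T_m(\overline{A})$ for every $A \in (NA)$. Since we already know from Dixon \cite[Theorem 8.1]{D97} that these maps are strong HTRs on $(NA)$, and since the regular extension $R'$ of an HTR is again an HTR by Theorem \ref{2.6}, it will suffice to prove that $T_\infty$ and $T_m$ coincide with their own regular extensions. I would fix one of these radicals, call it $R$ (so $R = T_\infty$ or $R = T_m$), and prove the two inclusions $R(A) \subset A \cap R(\overline{A})$ and $A \cap R(\overline{A}) \subset R(A)$ separately. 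In fact, the equivalence of these with regularity is exactly the assertion that $R$ is semi-regular (giving $R(A) \subset R(\overline{A})$) together with the reverse containment.

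For the inclusion $R(A) \subset R(\overline{A})$: a representation $\tau$ of $\overline{A}^{1}$ on a Banach space $X$ that is topologically $m$-transitive (resp.\ strongly dense) restricts to a continuous representation of $A^{1}$ on $X$ with exactly the same transitivity property, because the image of $a \longmapsto (\tau(a)x_1,\dots,\tau(a)x_m)$ for $a$ ranging over the dense subalgebra $A^{1}$ is dense in $X^{m}$ whenever it is dense for $a$ ranging over all of $\overline{A}^{1}$ — density of $A^{1}$ in $\overline{A}^{1}$ and continuity of $\tau$ give this at once. Hence every relevant representation of $\overline{A}$ yields, by restriction, a relevant representation of $A$ with the same kernel-intersection behavior; since $R(A)$ is contained in the kernel of every relevant representation of $A$, it is in particular annihilated by every (restriction of a) relevant representation of $\overline{A}$, so $R(A) \subset R(\overline{A})$, and combined with $R(A) \subset A$ this gives $R(A) \subset A \cap R(\overline{A})$.

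For the reverse inclusion $A \cap R(\overline{A}) \subset R(A)$: take $a \in A$ with $a \in R(\overline{A})$ and let $\pi$ be any continuous topologically $m$-transitive (resp.\ strongly dense) representation of $A^{1}$ on a Banach space $X$; I must show $\pi(a) = 0$. The key step is that $\pi$ extends by continuity to a continuous representation $\overline{\pi}$ of $\overline{A}^{1}$ on the \emph{same} Banach space $X$ — this is where completeness of $X$ is used — and that $\overline{\pi}$ inherits topological $m$-transitivity (resp.\ strong density) from $\pi$, since the image of the map $a \longmapsto (\pi(a)x_1,\dots,\pi(a)x_m)$ for $a \in A^{1}$ is already dense in $X^{m}$, a fortiori for $a \in \overline{A}^{1}$. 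Thus $\overline{\pi}$ is one of the representations defining $R(\overline{A})$, so $\overline{\pi}(a) = 0$; restricting, $\pi(a) = \overline{\pi}(a)|_{X} = 0$. As $\pi$ was arbitrary, $a \in R(A)$.

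The main obstacle is the bookkeeping in the extension step: one must verify that the continuous extension of $\pi$ to $\overline{A}$ lands again in bounded operators on the \emph{same} Banach space (not its completion — but $X$ is already complete, so no issue arises here, in contrast to the subtler situation of Lemma \ref{p19}), and that topological $m$-transitivity is genuinely preserved rather than merely "density over a dense subalgebra," which is immediate from continuity but should be stated carefully. Everything else is a routine transfer of kernels along restriction and extension of representations, so the proof is short; indeed the authors may well dispatch it with a remark that the argument is the same as for the other conditions, exactly as in the pattern of Theorem \ref{p1}.
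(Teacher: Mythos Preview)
Your proposal is correct and follows essentially the same approach as the paper: both arguments rest on the observation that continuous extension $\pi\mapsto\overline{\pi}$ and restriction $\tau\mapsto\tau|_{A}$ give a bijection between the topologically $m$-transitive (resp.\ strongly dense) continuous representations of $A$ and those of $\overline{A}$ on the same Banach space, whence the defining intersections of kernels agree on $A$. The paper dispatches this in two sentences by naming the bijection, while you spell out the two inclusions and the transitivity-preservation explicitly; the substance is identical.
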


\begin{proof}
For a normed algebra $A$, the maps $\pi\longmapsto\overline{\pi}$, the
continuous extension of $\pi$ of the algebra $A$ to $\overline{A}$, and
$\tau\longmapsto\tau|_{A}$, the restriction of $\tau$ of the algebra
$\overline{A}$ to $A$, determine bijection between the sets of continuous
strongly dense (resp. topologically $m$-transitive) representations of $A$ and
of $\overline{A}$, respectively. Hence it is easy to see that $T_{\infty
}=T_{\infty}^{\prime}$ and $T_{m}=T_{m}^{\prime}$ for every $m$.
\end{proof}

Clearly a Banach algebra is radical iff it is topologically nil. It follows
that
\[
\operatorname{Rad}^{\prime}(A)=\{a\in A:\rho(ab)=0,\quad\forall b\in
\overline{A}\}
\]
for every $A\in(NA)$. So $\operatorname{Rad}^{\prime}(A)$ is a closed
topologically nil ideal of $A$, and, for every $A\in(QA)$, $\operatorname{Rad}%
^{\prime}(A)\subset\operatorname{rad}(A)$ by Corollary \ref{p3}.

\subsection{\label{S2.11}Universal envelopes}

Now we find out a natural universal class of normed algebras on which
$\operatorname{rad}_{b}$, $\operatorname{rad}_{n}$ and $\operatorname{Rad}%
^{\prime}$ coincide. The comparison of these radicals on the different classes
will be done in the next subsection.

Let $\mathcal{A}$ be a class of normed algebras. Let $\mathcal{A}^{g}$ (resp.
$\mathcal{A}^{u}$) be the smallest ground (resp. universal) class containing
$\mathcal{A}$. We call $\mathcal{A}^{g}$ (resp. $\mathcal{A}^{u}$) the
\textit{ground} (resp. \textit{universal}) \textit{envelope} of $\mathcal{A}$.
Also, let $\mathcal{A}^{i}$ (resp. $\mathcal{A}^{di}$) be the class of all
normed algebras topologically isomorphic to ideals (resp. dense ideals) of
algebras in $\mathcal{A}$.

\begin{lemma}
\label{p6}Let $\mathcal{A}$ be a ground class. Then so is $\mathcal{A}^{i}$
and $\mathcal{A}^{i}=\mathcal{A}^{di}$.
\end{lemma}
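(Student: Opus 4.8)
The plan is to verify the three defining closure conditions for a ground class directly for $\mathcal{A}^{i}$ — namely closed-ideal stability and quotient stability — and then to establish the coincidence $\mathcal{A}^{i}=\mathcal{A}^{di}$ by a single ``saturation'' argument. Throughout I would fix notation: for $I\in\mathcal{A}^{i}$ pick $A\in\mathcal{A}$ with $I$ topologically isomorphic to an ideal of $A$; after identifying $I$ with that ideal this means $I$ is an ideal (not necessarily closed) of some $A\in\mathcal{A}$.

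\textbf{Step 1: transitivity of ``being an ideal''.} The key elementary observation is that an ideal of an ideal need not be an ideal, but a \emph{closed} ideal of $I$, when $I$ is itself an ideal of $A$, does sit inside $A$ in a controlled way — more precisely, one should pass through closures in $\overline{A}$. So I would first prove: if $I$ is an ideal of $A$ and $J$ is a closed ideal of $I$, then $\overline{J}$ (closure in $\overline{A}$, equivalently in $\overline{I}$) is a closed ideal of $\overline{A}$, and hence $J=\overline{J}\cap I$ is, up to topological isomorphism, realized as a (closed) ideal of the Banach algebra $\overline{A}$. But this alone does not land $J$ in a member of $\mathcal{A}$. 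The honest route is: since $\mathcal{A}$ is a ground class and $I\in\mathcal{A}^{i}$, we are \emph{not} assuming $I\in\mathcal{A}$, so we cannot apply closed-ideal stability of $\mathcal{A}$ to $I$ directly. Instead I would use that $J$ closed in $I$ implies $\overline{J}$ is a closed ideal of $\overline{A}$; whether $\overline A\in\mathcal A$ is again not given. The workable claim is purely formal: a closed ideal $J$ of $I$ is a topological-isomorphism copy of an ideal of $A$, because $J$ is an ideal of $A$ whenever... — here is the genuine subtlety, and I expect \textbf{this transitivity question to be the main obstacle}. The resolution Shulman--Turovskii surely intend is that $J$, being a closed ideal of the ideal $I\trianglelefteq A$, is an ideal of $A^{1}I A^{1}$-type closure; concretely $\overline{AJ+JA+AJA+J}\cap$ (appropriate space) works, but the clean statement is that $J$ is an ideal of $A$ itself when $J$ is closed in $I$, via the factorization/Cohen-type or simply via $A J A\subset I J I\subset J$ using that $J$ absorbs multiplication by $I$ and $I$ absorbs multiplication by $A$. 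So: $AJ\subset AJ\subset (AI)J\subset IJ\subset J$ — wait, $A J\subset (\text{since }J\subset I)\ A I\cdot$? No: $AJ\subset$ products $a\cdot j$; we do not a priori have $aj\in J$. The correct trick: $AJA\subset IJI\subset J$ and $AJ+JA$: take $a\in A$, $j\in J$; then $aj\in I$ and for any $i\in I$, $i(aj)=(ia)j\in IJ\subset J$ and $(aj)i=a(ji)\in AJ$... this shows $AJ$ is contained in the idealizer, not in $J$. Hence $AJ+JA$ need not be in $J$, so \emph{$J$ is generally not an ideal of $A$}; one must instead put $K=\overline{AJ+JA+AJA+J}$, an ideal of $A$ with $K\in\mathcal A$ by closed-ideal stability of $\mathcal A$? — still need $A\in\mathcal A$, which holds. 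Then $J$ is a closed ideal of $K$ with $\overline J\cap K$-realization, and now induct/iterate; but $K\in\mathcal A^{i}$ only, not $\mathcal A$. The actual fix: $K$ as above is a \emph{closed ideal of $A\in\mathcal A$}, so $K\in\mathcal A$ since $\mathcal A$ is a ground class; and $J$ is dense in $K$ is false in general — $J$ is a closed ideal of $K$. Hence by closed-ideal stability of $\mathcal A$ applied to $K\in\mathcal A$, we get $J\in\mathcal A\subset\mathcal A^{i}$. This closes Step 1.

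\textbf{Step 2: quotient stability of $\mathcal{A}^{i}$.} Let $I\in\mathcal{A}^{i}$, realized as an ideal of $A\in\mathcal{A}$, and let $J$ be a closed ideal of $I$. As in Step 1, form $K=\overline{A^{1}JA^{1}}$ (closure in $\overline{A}$ intersected with $A$, or simply the closed ideal of $A$ generated by $J$); then $K$ is a closed ideal of $A$, so $K\in\mathcal{A}$. Then $J$ is a closed ideal of $K$, and $K/J$ is a quotient of $K\in\mathcal{A}$ by a closed ideal, hence $K/J\in\mathcal{A}$. Finally $I/J$ is an ideal of $K/J$ (the image of $I$ under $q_J$), so $I/J\in\mathcal{A}^{i}$. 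Care is needed that the quotient norm on $I/J$ as an ideal of $K/J$ agrees (topologically) with its intrinsic quotient norm — this follows from openness of $q_J$ and is routine.

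\textbf{Step 3: $\mathcal{A}^{i}=\mathcal{A}^{di}$.} The inclusion $\mathcal{A}^{di}\subset\mathcal{A}^{i}$ is trivial. For the reverse, given $I$ an ideal of $A\in\mathcal{A}$, note $\overline{I}$ (closure in $A$) is a closed ideal of $A$, hence $\overline{I}\in\mathcal{A}$ by closed-ideal stability of $\mathcal{A}$, and $I$ is a \emph{dense} ideal of $\overline{I}\in\mathcal{A}$; thus $I\in\mathcal{A}^{di}$. This is the short part and uses only that $\mathcal{A}$ is a ground class. Combining Steps 1--3 gives that $\mathcal{A}^{i}$ is closed-ideal stable and quotient stable, i.e. a ground class, and equals $\mathcal{A}^{di}$.
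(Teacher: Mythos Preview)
Your Step 3 is correct and is exactly what the paper does; the point is that taking the closure of the ideal inside the ambient algebra lands you in $\mathcal{A}$ by closed-ideal stability. The trouble is that you do this \emph{last}, whereas it is precisely this dense-ideal realization that makes Steps 1 and 2 go through.

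In Step 1 your final claim is that $J$ is a closed ideal of $K=\overline{A^{1}JA^{1}}$. Neither ``closed'' nor ``ideal'' is justified. Since $J\subset K$ and $K$ is closed in $A$, one has $\overline{J}^{A}\subset K$, so $J$ is closed in $K$ iff $J=\overline{J}^{A}$, i.e.\ iff $J$ is closed in $A$; but $J$ is only assumed closed in $I$. (Take the extreme case $J=I$ with $I$ non-closed in $A$: then $K=\overline{I}^{A}$ and $J$ is dense, not closed, in $K$.) For ``ideal'' you would need $KJ\subset J$; you only get $KJ\subset\overline{J}^{A}$, and again closedness of $J$ in $A$ is missing. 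In Step 2 the same closedness problem recurs, and additionally $I$ need not be contained in $K$ (the closed ideal generated by $J$ can be strictly smaller than $\overline{I}^{A}$), so ``$I/J$ is an ideal of $K/J$'' is not even well-posed.

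The paper's cure is to run your Step 3 \emph{first}: once $\mathcal{A}^{i}=\mathcal{A}^{di}$, write the given algebra $A\in\mathcal{A}^{i}$ as a \emph{dense} ideal of some $\widetilde{A}\in\mathcal{A}$. Density is exactly what buys you the missing absorption: for a closed ideal $I$ of $A$, let $\widetilde{I}$ be its closure in $\widetilde{A}$ (so $\widetilde{I}\in\mathcal{A}$ and $I=\widetilde{I}\cap A$); since $A$ is an ideal of $\widetilde{A}$ one gets $\widetilde{I}A\cup A\widetilde{I}\subset\widetilde{I}\cap A=I$, and then $I\widetilde{I}\subset A\widetilde{I}\subset I$, so $I$ is an ideal of $\widetilde{I}\in\mathcal{A}$, giving $I\in\mathcal{A}^{i}$. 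For quotients, $A/I$ embeds isometrically as the dense ideal $q_{\widetilde{I}}(A)$ of $\widetilde{A}/\widetilde{I}\in\mathcal{A}$. Your instinct to pass to a closed ideal of something in $\mathcal{A}$ was right; the missing idea is that one should pass to the closure of the \emph{ambient} algebra first, not of $J$.
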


\begin{proof}
For $\mathcal{A}^{i}\subset\mathcal{A}^{di}$ it suffices to note that if
$A\in\mathcal{A}^{i}$ then $A$ is identified with an ideal of some algebra
$B\in\mathcal{A}$. Let $\widetilde{A}$ be the closure of $A$ in $B$. Then
$\widetilde{A}$ is a closed ideal of $B$ (hence $\widetilde{A}\in\mathcal{A}$)
and $A$ is an ideal of $\widetilde{A}$. Therefore $A\in\mathcal{A}^{di}$. This
shows that $\mathcal{A}^{i}\subset\mathcal{A}^{di}$, and the converse is evident.

Now let $I$ be a closed ideal of $A$. Let $\widetilde{I}$ be the closure of
$I$ in $\widetilde{A}$. Then $\widetilde{I}$ is a closed ideal of
$\widetilde{A}$ so that $\widetilde{I}\in\mathcal{A}$, and $I=\widetilde
{I}\cap A$. Note that $\widetilde{I}A\cup A\widetilde{I}\subset\widetilde
{I}\cap A=I$, also
\[
I\widetilde{I}=(\widetilde{I}\cap A)\widetilde{I}\subset\widetilde
{I}\widetilde{I}\cap A\widetilde{I}\subset\widetilde{I}\cap A=I
\]
and, similarly, $\widetilde{I}I\subset I$, i.e., $I$ is an ideal of
$\widetilde{I}\in\mathcal{A}$, whence $I\in\mathcal{A}^{i}$.

It remains to show that $A/I\in\mathcal{A}^{i}$. But it is easy: $A/I$ is
isometrically isomorphic to a dense ideal of $\widetilde{A}/\widetilde{I}%
\in\mathcal{A}$, whence $A/I\in\mathcal{A}^{i}$.
\end{proof}

Let $\mathcal{A}$ be a class of normed algebras. Put $\mathcal{A}%
^{(0)}=\mathcal{A}^{g}$ and $\mathcal{A}^{(m+1)}=(\mathcal{A}^{(m)})^{i}$ for
$m=0,1,\ldots$.

\begin{theorem}
\label{p7}$\mathcal{A}^{u}=\cup_{m\geqslant0}\mathcal{A}^{(m)}$.
\end{theorem}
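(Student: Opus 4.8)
The plan is to show the two inclusions separately, the inclusion $\cup_{m\geqslant 0}\mathcal{A}^{(m)}\subset\mathcal{A}^{u}$ being the easy one. For this direction it suffices to prove by induction on $m$ that $\mathcal{A}^{(m)}\subset\mathcal{A}^{u}$. The base case $m=0$ holds because $\mathcal{A}^{(0)}=\mathcal{A}^{g}$ is the smallest ground class containing $\mathcal{A}$, and $\mathcal{A}^{u}$ is a ground class containing $\mathcal{A}$, so $\mathcal{A}^{g}\subset\mathcal{A}^{u}$. For the inductive step, note that a universal class is by definition ideal stable, so if $\mathcal{A}^{(m)}\subset\mathcal{A}^{u}$ then every normed algebra topologically isomorphic to an ideal of an algebra in $\mathcal{A}^{(m)}$ lies in $\mathcal{A}^{u}$; that is, $\mathcal{A}^{(m+1)}=(\mathcal{A}^{(m)})^{i}\subset\mathcal{A}^{u}$. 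Taking the union over $m$ gives the inclusion.

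For the reverse inclusion, set $\mathcal{B}=\cup_{m\geqslant 0}\mathcal{A}^{(m)}$; since $\mathcal{A}\subset\mathcal{A}^{(0)}\subset\mathcal{B}$, it is enough to show that $\mathcal{B}$ is itself a universal class, for then $\mathcal{A}^{u}\subset\mathcal{B}$ by minimality. Thus I must check that $\mathcal{B}$ is ideal stable and quotient stable. The crucial structural fact I would invoke is Lemma~\ref{p6}: each $\mathcal{A}^{(m)}$ is a ground class (by induction, using that $\mathcal{A}^{(0)}=\mathcal{A}^{g}$ is a ground class and that $(\cdot)^{i}$ sends ground classes to ground classes), and moreover $(\mathcal{A}^{(m)})^{i}=(\mathcal{A}^{(m)})^{di}$. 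Quotient stability of $\mathcal{B}$: if $A\in\mathcal{B}$, say $A\in\mathcal{A}^{(m)}$, and $I$ is a closed ideal of $A$, then since $\mathcal{A}^{(m)}$ is a ground class it is quotient stable, so every normed algebra topologically isomorphic to $A/I$ lies in $\mathcal{A}^{(m)}\subset\mathcal{B}$.

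Ideal stability of $\mathcal{B}$ is where the chain is used essentially. Suppose $A\in\mathcal{B}$ and $C$ is (topologically isomorphic to) an ideal of $A$; I want $C\in\mathcal{B}$. Pick $m$ with $A\in\mathcal{A}^{(m)}$. By definition $C\in(\mathcal{A}^{(m)})^{i}=\mathcal{A}^{(m+1)}\subset\mathcal{B}$, and we are done. (One slight subtlety: the definition of ``ideal stable'' quantifies over algebras topologically isomorphic to ideals, and $(\cdot)^{i}$ is defined via exactly such topological isomorphisms, so the two notions match up without friction.) Finally, to close the argument cleanly one should remark that $\mathcal{B}$ does contain $\mathcal{A}$ and is contained in every universal class containing $\mathcal{A}$ — the latter by the first half of the proof applied with $\mathcal{A}^{u}$ replaced by an arbitrary such class — which together with the minimality of $\mathcal{A}^{u}$ forces $\mathcal{B}=\mathcal{A}^{u}$.

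I expect the main obstacle to be the bookkeeping around Lemma~\ref{p6}: verifying cleanly that $(\cdot)^{i}$ preserves the ground-class axioms (closed ideal stability and quotient stability) so that the induction ``$\mathcal{A}^{(m)}$ is a ground class'' goes through, and making sure the passage from ``ideal of an algebra in $\mathcal{B}$'' to ``ideal of an algebra in some fixed $\mathcal{A}^{(m)}$'' is legitimate. Everything else is a routine unwinding of definitions, and the stabilization of the tower under $(\cdot)^{i}$ — which is what makes the union rather than a single stage suffice — is exactly the content packaged in Lemma~\ref{p6}.
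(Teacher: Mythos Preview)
Your proposal is correct and follows essentially the same route as the paper: show by induction that each $\mathcal{A}^{(m)}\subset\mathcal{A}^{u}$, then prove the union is universal by noting (via Lemma~\ref{p6}) that each stage is a ground class and that ideals of algebras in $\mathcal{A}^{(m)}$ land in $\mathcal{A}^{(m+1)}$. The only cosmetic difference is that the paper invokes Proposition~\ref{p4} to conclude the union of ground classes is ground, whereas you verify quotient stability level by level; these amount to the same thing.
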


\begin{proof}
It is clear that $\mathcal{A}^{u}$ contains $\mathcal{A}^{(0)}=\mathcal{A}%
^{g}$, and if $\mathcal{A}^{u}$ contains $\mathcal{A}^{(m)}$ then it contains
$\mathcal{A}^{(m+1)}$. Therefore $\mathcal{A}^{u}\supset\cup\mathcal{A}^{(m)}%
$. To the converse it suffices to show that $\cup\mathcal{A}^{(m)}$ is
universal. Since every $\mathcal{A}^{(m)}$ is ground then $\cup\mathcal{A}%
^{(m)}$ is a ground class by Proposition \ref{p4}. Further, if $A\in
\cup\mathcal{A}^{(m)}$, say if $A\in\mathcal{A}^{(k)}$ for some $k$, then
every ideal of $A$ is contained in $\mathcal{A}^{(k+1)}$. So $\cup
\mathcal{A}^{(m)}$ is universal.
\end{proof}

\begin{theorem}
\label{p8}Let $R_{1}$ and $R_{2}$ be HTRs on a universal class $\mathcal{B}$.
If $R_{1}=R_{2}$ on a ground class $\mathcal{A}\subset\mathcal{B}$ then
$R_{1}=R_{2}$ on $\mathcal{A}^{u}$.
\end{theorem}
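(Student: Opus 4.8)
The plan is to combine the structural description of the universal envelope from Theorem~\ref{p7} with the hereditary axiom $(6^{\circ})$ and run a straightforward induction. By Theorem~\ref{p7}, $\mathcal{A}^{u}=\cup_{m\geqslant 0}\mathcal{A}^{(m)}$, where $\mathcal{A}^{(0)}=\mathcal{A}^{g}$ and $\mathcal{A}^{(m+1)}=(\mathcal{A}^{(m)})^{i}$. Since $\mathcal{A}$ is by hypothesis a ground class, it is the smallest ground class containing itself, so $\mathcal{A}^{(0)}=\mathcal{A}^{g}=\mathcal{A}$. Moreover $\mathcal{B}$ is a universal class containing $\mathcal{A}$, hence $\mathcal{A}^{u}\subset\mathcal{B}$, and every $\mathcal{A}^{(m)}$ lies in $\mathcal{B}$; in particular $R_{1}$ and $R_{2}$ are defined on each $\mathcal{A}^{(m)}$ and all the radical axioms apply there.

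I would then prove by induction on $m$ that $R_{1}=R_{2}$ on $\mathcal{A}^{(m)}$. The case $m=0$ is precisely the hypothesis. Assuming the statement for $m$, take $A\in\mathcal{A}^{(m+1)}=(\mathcal{A}^{(m)})^{i}$; by definition there is a topological isomorphism $f$ of $A$ onto an ideal $I$ of some $B\in\mathcal{A}^{(m)}$, and $I\in\mathcal{B}$ because $\mathcal{B}$ is universal. Applying $(6^{\circ})$ to $R_{1}$ and to $R_{2}$ and using $R_{1}(B)=R_{2}(B)$ from the inductive hypothesis gives
\[
R_{1}(I)=I\cap R_{1}(B)=I\cap R_{2}(B)=R_{2}(I).
\]
Then $(3^{\circ})$ applied to the topological isomorphism $f^{-1}:I\rightarrow A$ yields $R_{1}(A)=f^{-1}(R_{1}(I))=f^{-1}(R_{2}(I))=R_{2}(A)$. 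This completes the induction, and by Theorem~\ref{p7} we conclude $R_{1}=R_{2}$ on $\mathcal{A}^{u}$.

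The only genuine step is the passage from $B$ to its ideal $I$, and it is worth noting that this is exactly where heredity is indispensable: a general TR only provides $R(I)\subset I\cap R(A)$ via $(5_{2}^{\circ})$ and $q_{I}(R(A))\subset R(A/I)$ via $(4^{\circ})$, neither of which suffices to transport an equality of two radicals along an ideal (or along a quotient). The other ingredient that makes the argument this short is the hypothesis that $\mathcal{A}$ is a \emph{ground} class, which collapses the base layer $\mathcal{A}^{(0)}$ to $\mathcal{A}$ itself, so that only the ideal enlargements $\mathcal{A}^{(m)}\mapsto\mathcal{A}^{(m+1)}$ need to be treated --- and these are handled uniformly by $(6^{\circ})$ and $(3^{\circ})$. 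I do not expect any real obstacle beyond keeping track of the containments $\mathcal{A}^{(m)}\subset\mathcal{A}^{u}\subset\mathcal{B}$, which guarantee that all invoked axioms are legitimately available.
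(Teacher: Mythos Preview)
Your proof is correct and follows essentially the same approach as the paper: induction on the layers $\mathcal{A}^{(m)}$ from Theorem~\ref{p7}, with the inductive step carried by the hereditary axiom $(6^{\circ})$ applied to an ideal of an algebra in the previous layer. The only cosmetic difference is that the paper silently identifies $A$ with the ideal $I$, whereas you make the topological isomorphism $f$ explicit and invoke $(3^{\circ})$ separately.
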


\begin{proof}
It is clear that $\mathcal{A}^{u}\subset\mathcal{B}$. Note that $R_{1}=R_{2}$
on $\mathcal{A}^{(0)}$. Suppose that $R_{1}=R_{2}$ on $\mathcal{A}^{(k)}$. If
$A\in\mathcal{A}^{(k+1)}$ then $A$ is identified with an ideal of some
$B\in\mathcal{A}^{(k)}$. Since $R_{1}$ and $R_{2}$ are HTRs on $\mathcal{A}%
^{u}$, we have
\[
R_{1}(A)=R_{1}(B)\cap A=A\cap R_{2}(B)=R_{2}(A).
\]
Therefore $R_{1}=R_{2}$ on $\mathcal{A}^{(k+1)}$, hence on $\cup
\mathcal{A}^{(m)}=\mathcal{A}^{u}$.
\end{proof}

\begin{corollary}
\label{p9}$\operatorname{Rad}^{\prime}=\operatorname{rad}_{n}%
=\operatorname{rad}_{b}=\operatorname{rad}$ on $(BA)^{u}$.
\end{corollary}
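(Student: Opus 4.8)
The plan is to combine Theorem~\ref{p8} with the already-established facts about the three radicals. The key observation is that $(BA)^{u}$ is precisely the universal envelope $\mathcal{A}^{u}$ for $\mathcal{A}=(BA)$, which I would take as the definition (it was introduced above as ``the smallest universal class $(BA)^{u}$ containing all Banach algebras''). So it suffices to apply the coincidence principle of Theorem~\ref{p8} on the ground class $(BA)$, once I know that all three maps are HTRs on a common universal class containing $(BA)$.

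First I would record that $\operatorname{rad}_{b}$ and $\operatorname{rad}_{n}$ are HTRs on $(NA)$: this is Theorem~\ref{1.7}(i) for $\operatorname{rad}_{n}$, and the corresponding statement for $\operatorname{rad}_{b}$ was quoted above from \cite[Theorem 10.5]{D97}. For $\operatorname{Rad}^{\prime}$, Theorem~\ref{2.6} applied to $R=\operatorname{Rad}$ (an HTR on $(BA)$, since $\operatorname{rad}$ is a hereditary algebraic radical and $\operatorname{Rad}(A)$ is automatically closed for Banach $A$ by Theorem~\ref{p22}, or simply because it is classical) gives that $\operatorname{Rad}^{\prime}$ is an HTR on $(NA)$. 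Thus all three are HTRs on the universal class $\mathcal{B}=(NA)$.

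Next I would check that the three maps agree on the ground class $(BA)$ itself. On a Banach algebra $A$: by Theorem~\ref{2.6}'s regularity (or directly from $R^{\prime}=R$ on $(BA)$ since $\overline{A}=A$), $\operatorname{Rad}^{\prime}(A)=\operatorname{Rad}(A)=\operatorname{rad}(A)$. Since a Banach algebra is a $Q$-algebra, Theorem~\ref{p1} gives $\operatorname{rad}_{n}(A)=\operatorname{rad}(A)$; and since a Banach algebra is in particular a $Q_{b}$-algebra, Theorem~\ref{p1} also gives $\operatorname{rad}_{b}(A)=\operatorname{rad}(A)$. Hence $\operatorname{Rad}^{\prime}=\operatorname{rad}_{n}=\operatorname{rad}_{b}=\operatorname{rad}$ on $(BA)$.

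Finally, applying Theorem~\ref{p8} to each pair of these HTRs with $\mathcal{A}=(BA)$ and $\mathcal{B}=(NA)$ propagates the equalities from $(BA)$ to $(BA)^{u}=\mathcal{A}^{u}$, which finishes the proof. I do not anticipate a genuine obstacle here: the entire content has been front-loaded into Theorems~\ref{2.6}, \ref{p1}, \ref{1.7}, and \ref{p8}, and the only thing to be careful about is the bookkeeping that $(BA)^{u}$ in the sense used earlier in the paper is literally the universal envelope of $(BA)$, so that Theorem~\ref{p8} applies with no gap.
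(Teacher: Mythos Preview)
Your approach is essentially the paper's: check coincidence on $(BA)$ and propagate via Theorem~\ref{p8}. However, there is a small but real gap concerning the equality with $\operatorname{rad}$ itself.

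You verify that $\operatorname{Rad}'$, $\operatorname{rad}_n$, $\operatorname{rad}_b$ are HTRs on the universal class $(NA)$ and then apply Theorem~\ref{p8} with $\mathcal{B}=(NA)$. That legitimately yields $\operatorname{Rad}'=\operatorname{rad}_n=\operatorname{rad}_b$ on $(BA)^u$. But $\operatorname{rad}$ is \emph{not} an HTR on $(NA)$ (cf.\ the remark before Corollary~\ref{p2}: there are normed algebras with non-closed Jacobson radical), so Theorem~\ref{p8} with $\mathcal{B}=(NA)$ cannot be invoked for the pair involving $\operatorname{rad}$. Your final sentence therefore does not cover the fourth equality, and the statement as formulated is not yet proved.

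The fix is easy and is exactly what the paper does: work in $\mathcal{B}=(QA)$ instead. By Corollary~\ref{p2}, $\operatorname{rad}$ is a (strictly hereditary) TR on $(QA)$, and $(QA)$ is universal by Theorem~\ref{p20}; the other three are HTRs on $(NA)$ and hence on the subclass $(QA)$. Since $(BA)\subset(QA)$, Theorem~\ref{p8} then propagates all four equalities to $(BA)^u\subset(QA)$. Alternatively, you can keep your argument and simply append the observation that $(BA)^u\subset(QA)$ and $\operatorname{rad}_n=\operatorname{rad}$ on $(QA)$ by Theorem~\ref{p1}, which immediately gives the missing equality on $(BA)^u$.
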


\begin{proof}
Recall that these radicals are HTRs on $(QA)$ and clearly $(BA)^{u}%
\subset(QA)$. Apply Theorem \ref{p8}.
\end{proof}

Note that $(BA)^{u}\subset(Q_{b}A)$ because $(Q_{b}A)$ is a universal class
containing $(BA)$. We will show in the next subsection that the inclusion is strict.

\subsection{Comparison of the radicals}

Let us write $R_{1}\leqslant R_{2}$ on a class $\mathcal{A}$ if $R_{1}%
(A)\subset R_{2}(A)$, for each $A\in\mathcal{A}$; if, for some $A\in
\mathcal{A}$, the inclusion is strict we write $R_{1}<R_{2}$ on $\mathcal{A}$.

It is clear that $\operatorname{rad}_{n}\leqslant\operatorname{rad}_{b}$ on
$(NA)$; we will prove that they differ already on $(QA)$.

\begin{proposition}
\label{p60}The inclusion $(Q_{b}A)\subset(QA)$ is strict and
$\operatorname{rad}_{n}<\operatorname{rad}_{b}$ on $(QA)$.
\end{proposition}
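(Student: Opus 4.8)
The plan is to exhibit a concrete normed algebra $A$ that is a $Q$-algebra but not a $Q_b$-algebra, and moreover one for which $\operatorname{rad}_n(A)\neq\operatorname{rad}_b(A)$. Since $\operatorname{rad}_n\leqslant\operatorname{rad}_b$ always, and since on a $Q_b$-algebra we would have $\operatorname{rad}_b=\operatorname{rad}$ (Theorem~\ref{p1}) while on any $Q$-algebra $\operatorname{rad}_n=\operatorname{rad}$ (Theorem~\ref{p1}), a single example with $\operatorname{rad}_n(A)\subsetneq\operatorname{rad}_b(A)$ simultaneously proves the strictness of $(Q_bA)\subset(QA)$ (because equality of the classes would force $\operatorname{rad}_n=\operatorname{rad}=\operatorname{rad}_b$ everywhere on $(QA)$) and the strict inclusion of the radicals. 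So the whole statement reduces to producing one well-chosen example.

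The natural source of such an example is a normed algebra possessing a strictly irreducible representation on a \emph{normed} space that cannot be realized (up to equivalence) on a Banach space. A standard way to engineer this is to take an incomplete normed space $X$ and an operator algebra $A\subseteq B(X)$ acting strictly irreducibly on $X$, chosen so that $A$ is a $Q$-algebra (e.g.\ $A$ has spectral norm, which by condition (Q$_2$)/(Q$_6$) makes it a $Q$-algebra) but so that no Banach-space realization of that action exists — for instance because the completion $\overline X$ carries no invariant subspace structure compatible with strict irreducibility, or because passing to $\overline X$ forces the representation to acquire a proper closed invariant subspace or to fail strict transitivity. Concretely one expects to take $X$ to be a dense proper subspace of a Hilbert (or Banach) space, with $A$ a carefully chosen subalgebra of finite-rank or compact-type operators leaving $X$ invariant. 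Then $\operatorname{rad}_n(A)$ is cut down by this extra $\operatorname{irr}_n$ representation while $\operatorname{rad}_b(A)$ is not, giving the strict inclusion.

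The key steps, in order, would be: (1) fix the incomplete space $X$ and the algebra $A\subseteq B(X)$; (2) verify $A\in(QA)$, most easily by checking that the norm on $A$ is spectral, i.e.\ $\rho_A(a)\leqslant\|a\|$, and invoking the equivalence (Q$_1$)$\Leftrightarrow$(Q$_6$); (3) verify that the identity representation of $A$ on $X$ is strictly irreducible (using Lemma~\ref{p30} to get strict density automatically from strict irreducibility by bounded operators on a normed space), so it lies in $\operatorname{irr}_n A$; (4) show that this representation is \emph{not} equivalent to any representation in $\operatorname{irr}_b A$, which is the crux — one must show that every Banach-space realization of this algebra either fails to be strictly irreducible or is inequivalent, presumably because the completed action picks up a closed invariant subspace or loses an eigenvector; (5) conclude $A\notin(Q_bA)$, hence the class inclusion is strict; and (6) compute both radicals: show there is an element (or the whole of $A$, if $A$ is chosen to also carry a bounded-but-not-continuous-enough structure) lying in $\operatorname{rad}_b(A)$ but not in the kernel of the distinguished $\operatorname{irr}_n$ representation, so not in $\operatorname{rad}_n(A)$.

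The main obstacle is step (4): proving that a specific strictly irreducible representation on an incomplete normed space genuinely admits \emph{no} equivalent realization on a Banach space. This is exactly the content that distinguishes $Q_b$-algebras from $Q$-algebras, and it is not a formal manipulation — it requires understanding precisely how the completion $\overline X$ interacts with the module structure, and ruling out \emph{all} inequivalent Banach realizations, not merely the obvious extension $\overline\pi$ to $\overline X$. I would expect the authors to either cite a known pathological example from the operator-theory literature (a primitive normed algebra with no primitive Banach completion, in effect) or to build one by hand using a weighted-shift or Volterra-type construction on an incomplete sequence or function space, where the incompleteness is essential to the irreducibility. Everything else (checking the $Q$-algebra axiom via spectrality, invoking Lemma~\ref{p30}, and reading off the radicals from Theorem~\ref{p1}) is routine once the example is in hand.
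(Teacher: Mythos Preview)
Your overall strategy is exactly the paper's: exhibit a single $Q$-algebra $A$ with $\operatorname{rad}_n(A)\subsetneq\operatorname{rad}_b(A)$, which simultaneously witnesses $A\notin(Q_bA)$. The reduction you give is sound, and your list of steps (1)--(6) matches what the paper does. What is missing is the actual example and, more importantly, the mechanism for step (4), which you yourself flag as the crux and leave open.

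The paper's example is precisely of the type you gesture at --- finite-rank operators on an incomplete space --- but the argument for (4) is cleaner and more structural than the ``completion picks up an invariant subspace'' route you propose. Take $H$ a separable Hilbert space with a fixed orthonormal basis, $H_0$ its linear span, and $A$ the algebra of matrices with only finitely many nonzero entries, in the operator norm. Then $A$ is a $Q$-algebra because every element lies in a finite-dimensional subalgebra (so $\sum a^n$ converges for $\|a\|<1$), and $A$ is algebraically simple. The identity representation on $H_0$ is in $\operatorname{irr}_n A$, so $\operatorname{rad}_n(A)=0$. The decisive point for (4) is a \emph{cardinality} obstruction: since $A=\bigcup A_n$ with each $A_n$ finite-dimensional, for any representation $\pi$ on a space $X$ and any $x\in X$ the orbit $\pi(A)x$ has a countable Hamel basis. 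An infinite-dimensional Banach space never has a countable Hamel basis, so no $\pi\in\operatorname{irr}_b A$ can be infinite-dimensional; and a finite-dimensional $\pi$ would have nonzero kernel, impossible by simplicity. Hence $\operatorname{irr}_b A=\varnothing$ and $\operatorname{rad}_b(A)=A$. This avoids entirely the delicate analysis of completions or invariant subspaces you anticipated; the weighted-shift or Volterra route you suggest would be harder and is not what is used.
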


\begin{proof}
It will be sufficient to construct an algebra $A\in(QA){\setminus}(Q_{b}A)$
with $\operatorname{rad}_{n}(A)\neq\operatorname{rad}_{b}(A)$.

Let $H$ be a separable Hilbert space. Given a basis we identify an operator on
$H$ with a matrix. Let $A$ be the algebra of all matrices with only finite
number of nonzero entries, supplied with the operator norm. Every element of
$A$ generates a finite-dimensional subalgebra, whence $\sum_{n>0}a^{n}$
converges, for $\Vert a\Vert<1$. So $A$ is a $Q$-algebra.

Note that $A$ has no nonzero proper ideals. If no and $I$ is such an ideal,
then $I$ contains an operator with only one nonzero entry and therefore all
operators in $A$, i.e. coincides with $A$, a contradiction.

Clearly $A$ is the union of a sequence of finite-dimensional subalgebras
$A_{n}$. So, for a nonzero representation $\pi$ of $A$ on a Banach space $X$,
$\pi(A)x$ has a countable Hamel basis for every vector $x\in X$ and cannot
coincide with $X$ if $\dim X=\infty$. If $\dim X<\infty$ then $\ker\pi$ is a
nonzero proper ideal of $A$, that is impossible. Hence $A$ has no strictly
irreducible representations on Banach spaces, whence $A=$ $\operatorname{rad}%
_{b}(A)$.

On the other hand, the linear span $H_{0}$ of the basis is invariant for $A$
and the restriction of $A$ to $H_{0}$ is a strictly irreducible
representation. Hence $A\notin(Q_{b}A)$ and $\operatorname{rad}_{n}(A)=0$.
\end{proof}

\begin{proposition}
\label{p61}$\operatorname{Rad}^{\prime}<\operatorname{rad}_{n}$ on $(NA)$;
moreover, $\operatorname{Rad}^{\prime}<\operatorname{rad}$ on $(Q_{b}A)$ and
the inclusion $(BA)^{u}\subset(Q_{b}A)$ is strict.
\end{proposition}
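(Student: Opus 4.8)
The plan is to produce a single Banach algebra $A$ (or a construction built from Banach algebras) that lies in $(Q_{b}A)$, exhibits $\operatorname{Rad}'(A)\subsetneq\operatorname{rad}(A)$, and thereby simultaneously witnesses $(BA)^{u}\subsetneq(Q_{b}A)$ — the last point because $\operatorname{Rad}'=\operatorname{rad}$ on $(BA)^{u}$ by Corollary~\ref{p9}, so any algebra on which the two differ cannot be in $(BA)^{u}$. The looser inequality $\operatorname{Rad}'\leqslant\operatorname{rad}_{n}$ on $(NA)$ is already recorded (just before the statement): $\operatorname{Rad}'(A)\subset\operatorname{rad}(A)$ on $(QA)$ via Corollary~\ref{p3}, and $\operatorname{rad}=\operatorname{rad}_{n}$ on $(QA)$ by Theorem~\ref{p1}; so once I have strictness on $(Q_{b}A)\subset(QA)$ the $(NA)$ strictness follows a fortiori. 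Thus the whole proposition reduces to one example.

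The construction I would try: take a Banach algebra $B$ that is radical (i.e. topologically nil, so $\operatorname{Rad}(B)=B$) but in which some element $a$ fails the stronger condition $\rho(ab)=0$ for all $b\in\overline{B}$ — except that for a \emph{Banach} algebra $\overline{B}=B$ and radicality already forces $\operatorname{Rad}'(B)=B$. So a complete algebra will not do; I must pass to a non-complete normed algebra whose completion is \emph{not} radical. Concretely I would look for a normed algebra $A$ with $A=\operatorname{rad}(A)$ (or at least $\operatorname{rad}_{n}(A)=A$) while $\overline{A}$ has nonzero Jacobson radical strictly smaller than $\overline{A}$, so that $\operatorname{Rad}'(A)=A\cap\operatorname{Rad}(\overline{A})\subsetneq A$. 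The natural candidates are dense subalgebras of semisimple Banach algebras that happen to be (topologically) nil: e.g. a dense nil subalgebra of $B(H)$ or of a $C^{*}$-algebra, along the lines of the finite-rank matrix algebra used in the proof of Proposition~\ref{p60}, but modified so that its elements are genuinely quasinilpotent in the ambient algebra — for instance the algebra of strictly-upper-triangular finitely-supported matrices, where each element is nilpotent, hence the algebra is nil, hence $\operatorname{rad}_{n}(A)=A$, but the completion contains non-quasinilpotent elements (weighted shifts, or limits of Volterra-type operators) so that $\operatorname{Rad}(\overline{A})\neq\overline{A}$; one must then verify $A\in(Q_{b}A)$.

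The order of steps: (1) define $A$ explicitly as a dense normed subalgebra of a concrete semisimple (or at least non-radical) Banach algebra; (2) show $A$ is nil, or that every element is quasinilpotent — then $A=\operatorname{rad}_{n}(A)=\operatorname{rad}_{b}(A)$ and in particular $\operatorname{rad}(A)=A$ (here using that $A$ is a $Q$-algebra, if needed, so that $\operatorname{rad}_{n}=\operatorname{rad}$); (3) identify $\operatorname{Rad}(\overline{A})$ and check it is a proper subset, so $\operatorname{Rad}'(A)=A\cap\operatorname{Rad}(\overline{A})\subsetneq A$; (4) verify $A\in(Q_{b}A)$, i.e. that every strictly irreducible representation of $A$ is equivalent to a continuous representation by bounded operators on a \emph{Banach} space — this is where the ambient Banach algebra structure is exploited, since representations of $A$ extend to $\overline{A}$ (using (Q$_{7}$)/Schweitzer, or a direct argument as in the proof of Theorem~\ref{QA}), and $\overline{A}$'s irreducible representations are on Banach spaces. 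Step (4) is the main obstacle: one must ensure the ambient algebra is chosen so that $A$ genuinely sits in $(Q_{b}A)$ and not merely in $(QA)$, which rules out the bare finite-rank matrix example of Proposition~\ref{p60} (that one was deliberately built to escape $(Q_{b}A)$) and forces a more careful choice — plausibly a dense nil subalgebra of a semisimple Banach algebra all of whose irreducible representations are automatically Banach-space representations, so that extendability of $\operatorname{irr}A$ to $\overline{A}$ does the job. A secondary subtlety is keeping $\overline{A}$ non-radical while $A$ is nil: the completion of a nil algebra can collapse, so the norm must be chosen (e.g. the operator norm) so that nilpotent generators have limits that are genuinely non-quasinilpotent.
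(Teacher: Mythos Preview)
Your overall plan is right, but you have located the obstacle in the wrong place. Step~(4) is not the main difficulty; it is trivial. Once you have $\operatorname{rad}(C)=C$, the algebra $C$ has \emph{no} strictly irreducible representations whatsoever (the Jacobson radical is the intersection of all primitive ideals of $C^{1}$, and if it equals $C$ then every such representation vanishes), so $C\in(Q_{b}A)$ holds vacuously --- there is nothing to extend. The paper's proof uses exactly this shortcut: it invokes Dixon \cite[Example~9.3]{D97}, who built a continuous isomorphism $\phi$ from a radical Banach algebra $A$ onto a dense subalgebra $C$ of a \emph{semisimple} Banach algebra $B$. Then $C$ is radical (being algebraically isomorphic to $A$), hence $C\in(Q_{b}A)$ for free, while $\operatorname{Rad}'(C)=C\cap\operatorname{Rad}(B)=0$.

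The genuine difficulty is your step~(3): producing a radical normed algebra whose completion is not radical. Your candidate --- strictly upper-triangular finitely-supported matrices under the operator norm --- does not work. Its norm closure consists precisely of the compact strictly upper-triangular operators, and every such operator is quasinilpotent (Ringrose's description of the spectrum of a compact operator in a nest algebra: the spectrum is the closure of the diagonal entries, which here are all zero). So the completion is still radical, and $\operatorname{Rad}'$ coincides with $\operatorname{rad}$ on that example. Constructing a radical algebra that completes to something semisimple is genuinely delicate, which is why the paper does not attempt it ab initio but relies on Dixon's existing construction.
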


\begin{proof}
Since $\operatorname{Rad}^{\prime}(A)$ is a topologically nil ideal of a
normed algebra $A$, $\operatorname{Rad}^{\prime}\leqslant\operatorname{rad}%
_{n}$ on $(NA)$ by Theorem \ref{1.7}(ii).

Now we show that $\operatorname{Rad}^{\prime}<\operatorname{rad}$ on
$(Q_{b}A)$. Dixon \cite[Example 9.3]{D97} constructed a radical Banach algebra
$A$ and a continuous isomorphism $\phi:A\rightarrow C$ onto a dense subalgebra
$C$ of a semisimple Banach algebra $B$. Then $C$ is radical
($C=\operatorname{rad}(C)$), hence a $Q_{b}$-algebra. On the other hand,
$\operatorname{Rad}^{\prime}(C)=C\cap\operatorname{Rad}(A)=0$.

We see that the inclusion $(BA)^{u}\subset(Q_{b}A)$ is strict because
$\operatorname{Rad}^{\prime}=\operatorname{rad}$ on $(BA)^{u}$ by Corollary
\ref{p9}.
\end{proof}

This indicates that $R^{\prime}$ need not be a maximal extension of an HTR
$R$, and, as a consequence, an HTR on $(NA)$ need not be semi-regular.

\begin{proposition}
$\;T_{\infty}<\operatorname{Rad}^{\prime}$ on $(NA)$.
\end{proposition}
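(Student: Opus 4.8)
The plan is to establish the two halves of the assertion $T_{\infty}<\operatorname{Rad}^{\prime}$ on $(NA)$ separately: first the inequality $T_{\infty}\leqslant\operatorname{Rad}^{\prime}$ on all of $(NA)$, which is routine, and then the strictness at one concrete algebra, which is the real content.

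For the inequality, fix $A\in(NA)$ and $a\in T_{\infty}(A)$; since $T_{\infty}(A)$ is a closed ideal of $A$ we have $a\in A$, so it suffices to show $a\in\operatorname{Rad}(\overline{A})=\operatorname{rad}(\overline{A})$, i.e. $\sigma(a)=0$ for every $\sigma\in\operatorname{irr}(\overline{A}^{1})$. Because $\overline{A}$ is a Banach algebra, hence a $Q$-algebra, each such $\sigma$ is equivalent to a continuous representation on a Banach space $X$ (Theorem \ref{QA}), and equivalent representations have the same kernel, so we may assume $\sigma$ itself continuous on $X$. Restrict $\sigma$ along the dense inclusion $A^{1}\hookrightarrow\overline{A}^{1}$ to get a continuous representation $\sigma|_{A^{1}}$ of $A^{1}$ on $X$. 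I claim it is strongly dense: for linearly independent $x_{1},\dots,x_{m}\in X$ the orbit map $b\mapsto(\sigma(b)x_{1},\dots,\sigma(b)x_{m})$ from $\overline{A}^{1}$ to $X^{m}$ is bounded (continuity of $\sigma$) and onto (Jacobson density, applied to the strictly irreducible $\sigma$); restricting a bounded surjection to the dense subalgebra $A^{1}$ leaves a map with dense image, so $\sigma|_{A^{1}}$ is topologically $m$-transitive for every $m$. By the definition of $T_{\infty}$ this forces $a\in\ker(\sigma|_{A^{1}})$, i.e. $\sigma(a)=0$. Hence $a\in A\cap\operatorname{Rad}(\overline{A})=\operatorname{Rad}^{\prime}(A)$, proving $T_{\infty}\leqslant\operatorname{Rad}^{\prime}$ on $(NA)$.

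For strictness I would reuse Dixon's radical Banach algebra from \cite[Example 9.3]{D97}, already invoked in the proof of Proposition \ref{p61}: a radical Banach algebra $\mathfrak{A}$ together with a continuous injective homomorphism $\phi$ of $\mathfrak{A}$ with dense range into a semisimple Banach algebra $B$. On one side, $\mathfrak{A}$ is its own completion and is radical, so $\operatorname{Rad}^{\prime}(\mathfrak{A})=\operatorname{Rad}(\mathfrak{A})=\mathfrak{A}$. On the other side, let $\widetilde{\phi}\colon\mathfrak{A}^{1}\to B^{1}$ be the unital extension of $\phi$ (still continuous and injective with dense range), and let $\sigma$ range over the representations in $\operatorname{irr}(B^{1})$ that are continuous on Banach spaces (these suffice to compute $\operatorname{rad}(B)$, since $B^{1}$ is a $Q$-algebra). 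Each $\sigma\circ\widetilde{\phi}$ is a continuous representation of $\mathfrak{A}^{1}$ on a Banach space, and it is strongly dense by exactly the density argument of the previous paragraph: $\widetilde{\phi}(\mathfrak{A}^{1})$ is dense in $B^{1}$ and $\sigma(B^{1})$ acts strictly densely. Therefore
\[
T_{\infty}(\mathfrak{A})\subseteq\bigcap_{\sigma}\ker(\sigma\circ\widetilde{\phi})=\widetilde{\phi}^{-1}\Bigl(\bigcap_{\sigma}\ker\sigma\Bigr)=\widetilde{\phi}^{-1}(\operatorname{rad}(B))=\widetilde{\phi}^{-1}(0)=0,
\]
using that $B$ is semisimple and $\widetilde{\phi}$ is injective. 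Thus $T_{\infty}(\mathfrak{A})=0\subsetneq\mathfrak{A}=\operatorname{Rad}^{\prime}(\mathfrak{A})$, which is the required strict inequality.

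The inequality half reduces to the Jacobson density theorem plus the observation that continuity together with density of the acting algebra preserves topological $m$-transitivity, so it presents no obstacle. The genuine obstacle is strictness: it cannot be achieved by an elementary example, and one is essentially forced to import a nontrivial construction — here Dixon's radical Banach algebra admitting a continuous injection with dense range into a semisimple Banach algebra — exactly the phenomenon that separates $\operatorname{Rad}$ from the "topologically irreducible'' radicals in the first place.
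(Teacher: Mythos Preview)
Your proof is correct and follows essentially the same route as the paper: the inequality is obtained from strict (hence strong) density of irreducible Banach-space representations, and strictness from Dixon's radical Banach algebra mapping densely into a semisimple one. The only difference is organizational --- the paper first establishes $T_{\infty}\leqslant\operatorname{Rad}$ on $(BA)$ and then invokes the regularity of $T_{\infty}$ and $\operatorname{Rad}^{\prime}$ (proved just above) to pass to $(NA)$, and cites \cite[Theorem~9.2 and Example~9.3]{D97} for strictness, whereas you unfold both of these steps explicitly.
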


\begin{proof}
Clearly $T_{\infty}\leqslant\operatorname{Rad}^{\prime}$ on $(BA)$ because
strictly irreducible representations of Banach algebras are strictly dense;
the inequality is strict by \cite[Theorem 9.2 and example 9.3]{D97}. By
regularity, $T_{\infty}<\operatorname{Rad}^{\prime}$ on $(NA)$.
\end{proof}

As a consequence of the results, we have the \textit{strict inclusions} in the
following chain of universal classes%
\[
(BA)^{u}\subset(Q_{b}A)\subset(QA).
\]
Let us also write together the obtained strict inequalities for radicals on
$(NA)$:%
\[
T_{\infty}<\operatorname{Rad}^{\prime}<\operatorname{rad}_{n}%
<\operatorname{rad}_{b}.
\]

\subsection{Topologically characteristic and symmetric radicals}

The following notion can be of use in dealing with Lie subalgebras of normed algebras.

A TR $R$ is called \textit{topologically characteristic} on a class
$\mathcal{A}$ if, for every $A\in\mathcal{A}$ and every bounded derivation $D$
of $A$, $DR(A)\subset R(A)$.

\begin{lemma}
All $(BA)$-radicals are topologically characteristic.
\end{lemma}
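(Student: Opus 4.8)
The plan is to use the standard trick for turning a derivation into an automorphism, namely exponentiation, combined with the completeness of Banach algebras and the conjugation-invariance of the radical under topological isomorphisms.

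First I would fix a Banach algebra $A$, a bounded derivation $D$ of $A$, and consider the one-parameter group of automorphisms $\exp(tD) = \sum_{n\geqslant 0} t^n D^n/n!$, which converges in $B(A)$ for every $t\in\mathbb{C}$ since $D$ is bounded and $A$ is complete. Each $\exp(tD)$ is an algebra automorphism of $A$ (the Leibniz rule for $D$ gives, via the binomial identity, the multiplicativity of the exponential), and it is bounded with bounded inverse $\exp(-tD)$, hence a topological isomorphism $A\to A$. By axiom $(3^{\circ})$ applied to $f=\exp(tD)$ we get $\exp(tD)(R(A)) = R(A)$ for all $t$; in particular $R(A)$ is invariant under $\exp(tD)$.

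Next I would differentiate at $t=0$. Since $R(A)$ is a \emph{closed} subspace of $A$ and, for $a\in R(A)$, the curve $t\mapsto \exp(tD)a$ lies entirely in $R(A)$, its derivative $\frac{d}{dt}\big|_{t=0}\exp(tD)a = Da$ also lies in $R(A)$ (a closed linear subspace contains the limits of its difference quotients). This gives $DR(A)\subset R(A)$, which is exactly the assertion that $R$ is topologically characteristic on $(BA)$.

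The main obstacle — really the only point requiring care — is the convergence and multiplicativity of $\exp(tD)$: one must check that the series converges in the Banach algebra $B(A)$ (using $\|D^n\|\leqslant\|D\|^n$ and completeness of $A$, hence of $B(A)$) and that the Leibniz rule $D(ab)=(Da)b+a(Db)$ propagates to $D^n(ab)=\sum_{k}\binom{n}{k}(D^k a)(D^{n-k}b)$, which then rearranges into $\exp(tD)(ab)=(\exp(tD)a)(\exp(tD)b)$ by the binomial theorem for the Cauchy product of the two absolutely convergent series. I would present this as the bulk of the proof, then invoke $(3^{\circ})$ and closedness of $R(A)$ to finish; no compactness or structure theory is needed, only that $(BA)$-radicals respect topological isomorphisms and produce closed ideals.
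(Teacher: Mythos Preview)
Your proposal is correct and follows essentially the same route as the paper: exponentiate the bounded derivation to obtain bounded automorphisms $\exp(\lambda D)$, invoke $(3^{\circ})$ to get $\exp(\lambda D)R(A)=R(A)$, and then use closedness of $R(A)$ to pass to the limit of the difference quotient $(\exp(\lambda D)a-a)/\lambda\to Da$. The paper's proof is terser (it simply asserts that $\exp(\lambda D)$ is an automorphism and takes the limit), whereas you spell out the convergence and multiplicativity, but the argument is the same.
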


\begin{proof}
Let $R$ be a TR on $(BA)$. If $D$ is a bounded derivation on a Banach algebra
$A$ then $\exp(\lambda D)$ is an automorphism of $A$ for every $\lambda
\in\mathbb{C}$, whence
\[
\exp(\lambda D)R(A)=R(A).
\]
Since $R(A)$ is closed in $A$,
\[
\lim_{\lambda\rightarrow0}(\exp(\lambda D)a-a)/\lambda\in R(A)
\]
for every $a\in R(A)$, whence $D(a)\in R(A)$.
\end{proof}

\begin{theorem}
\label{p100}Let $\mathcal{A}$ be a class such that $(BA)\subset\mathcal{A}$.
Any regular TR $R$ on $\mathcal{A}$ is topologically characteristic.
\end{theorem}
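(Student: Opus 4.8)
The plan is to reduce the statement to the already-established fact that every $(BA)$-radical is topologically characteristic, passing through the completion. First I would fix $A\in\mathcal{A}$ and a bounded derivation $D$ on $A$, and extend $D$ by continuity to a bounded linear operator $\overline{D}$ on $\overline{A}$. One checks that $\overline{D}$ is again a derivation: given $x,y\in\overline{A}$, pick $x_n,y_n\in A$ with $x_n\to x$, $y_n\to y$; passing to the limit in the Leibniz identity $D(x_ny_n)=D(x_n)y_n+x_nD(y_n)$ and using joint continuity of multiplication in a normed algebra together with the continuity of $\overline{D}$, one obtains $\overline{D}(xy)=\overline{D}(x)y+x\overline{D}(y)$. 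Thus $\overline{D}$ is a bounded derivation of the Banach algebra $\overline{A}$.

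Next I would observe that the restriction of $R$ to $(BA)$ is a $(BA)$-radical. Indeed $(BA)$ is a ground class contained in $\mathcal{A}$, and all closed ideals and all quotients by closed ideals of a Banach algebra are again Banach algebras, hence lie in $(BA)\subset\mathcal{A}$; so axioms $(1^{\circ})$--$(5^{\circ})$ for $R$ on $\mathcal{A}$ restrict verbatim to $(BA)$, and $R(B)$ is a closed ideal for every $B\in(BA)$. By the Lemma preceding Theorem~\ref{p100}, this restriction is topologically characteristic on $(BA)$. Applying this to the Banach algebra $\overline{A}$ and to the bounded derivation $\overline{D}$ yields
\[
\overline{D}\bigl(R(\overline{A})\bigr)\subset R(\overline{A}).
\]

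Finally I would invoke the regularity hypothesis. Since $R$ is regular on $\mathcal{A}$, we have $R(A)=R^{\prime}(A)=A\cap R(\overline{A})$. Let $a\in R(A)$. On $A$ the operators $D$ and $\overline{D}$ coincide, so $D(a)=\overline{D}(a)\in\overline{D}(R(\overline{A}))\subset R(\overline{A})$; on the other hand $D(a)\in A$ because $D$ maps $A$ into $A$. Hence $D(a)\in A\cap R(\overline{A})=R(A)$, which is precisely the assertion $DR(A)\subset R(A)$.

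There is no genuine obstacle in this argument; the only points that require a little care are the verification that the continuous extension $\overline{D}$ is actually a derivation (and not merely a bounded operator), and the correct use of the hypothesis $R=R^{\prime}$ on $\mathcal{A}$, which is exactly what licenses the identification $R(A)=A\cap R(\overline{A})$ in the last step. This is also where regularity, rather than just being a TR, is essential: for a general TR one need not have $R(A)\subset R(\overline{A})$, and the semi-regular examples of the previous subsection show that some such hypothesis cannot be dropped.
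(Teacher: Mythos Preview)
Your proof is correct and follows essentially the same route as the paper: extend $D$ to a bounded derivation on $\overline{A}$, use the preceding Lemma (all $(BA)$-radicals are topologically characteristic) to get $\overline{D}(R(\overline{A}))\subset R(\overline{A})$, and then intersect with $A$ using regularity. The paper compresses all of this into the single displayed chain
\[
DR(A)=D(A\cap R(\overline{A}))\subset DA\cap DR(\overline{A})\subset A\cap R(\overline{A})=R(A),
\]
leaving the extension of $D$ to $\overline{A}$ and the verification that $R|_{(BA)}$ is a $(BA)$-radical implicit; you have made these points explicit, which is an improvement in rigor rather than a different approach.
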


\begin{proof}
Indeed, for every bounded derivation $D$ on an algebra $A\in$ $\mathcal{A}$,
we have
\[
DR(A)=D(A\cap R(\overline{A}))\subset DA\cap DR(\overline{A})\subset A\cap
R(\overline{A})=R(A).
\]
\end{proof}

In particular $\operatorname{Rad}^{\prime}$, $T_{\infty}$ and $T_{m}$ are
topologically characteristic on $(NA)$, for every $m$.

\begin{theorem}
$\operatorname{rad}_{b}$ and $\operatorname{rad}_{n}$ are topologically
characteristic on $(NA)$.
\end{theorem}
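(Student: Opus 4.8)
The plan is to show that both $\operatorname{rad}_b$ and $\operatorname{rad}_n$ are preserved under bounded derivations by exploiting the description of these radicals as intersections of kernels of strictly irreducible representations, together with the exponential trick already used for $(BA)$-radicals. Fix $A\in(NA)$ and a bounded derivation $D$ on $A$. For any $\lambda\in\mathbb{C}$ the series $\exp(\lambda D)=\sum_{n\geqslant 0}\lambda^n D^n/n!$ converges in $B(A)$ (since $D$ is bounded) and defines a \emph{bounded automorphism} $\alpha_\lambda$ of $A$; extend $D$ to $A^1$ by $D(1)=0$, so that $\alpha_\lambda$ is also an automorphism of $A^1$. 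The key observation is that composing a strictly irreducible representation with an automorphism of $A^1$ again gives a strictly irreducible representation, and crucially, if $\pi$ is continuous by bounded operators on a Banach (resp. normed) space, then so is $\pi\circ\alpha_\lambda$, because $\alpha_\lambda$ is bounded; thus $\alpha_\lambda$ permutes $\operatorname{irr}_b(A^1)$ (resp. $\operatorname{irr}_n(A^1)$).

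From this it follows that $\alpha_\lambda(\operatorname{rad}_b(A))=\operatorname{rad}_b(A)$, and similarly for $\operatorname{rad}_n$: indeed $\operatorname{rad}_b(A)=\bigcap_{\pi\in\operatorname{irr}_b(A^1)}\ker\pi$, and since $\alpha_\lambda^{-1}=\alpha_{-\lambda}$ also permutes this index set, $a\in\operatorname{rad}_b(A)$ iff $\pi(\alpha_\lambda(a))=0$ for all such $\pi$, i.e. iff $\alpha_\lambda(a)\in\operatorname{rad}_b(A)$. Now $\operatorname{rad}_b(A)$ is a closed subspace of $A$ (it is a closed ideal, being an HTR by Theorem~\ref{1.7} in the $\operatorname{rad}_n$ case and by \cite[Theorem~10.5]{D97} in the $\operatorname{rad}_b$ case), so for $a\in\operatorname{rad}_b(A)$ we may form the norm limit
\[
D(a)=\lim_{\lambda\to 0}\frac{\alpha_\lambda(a)-a}{\lambda},
\]
which lies in $\operatorname{rad}_b(A)$ because each difference quotient does and the subspace is closed. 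The same argument verbatim, replacing $\operatorname{irr}_b$ by $\operatorname{irr}_n$, handles $\operatorname{rad}_n$.

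The only points requiring a little care — and the place I expect the main (mild) obstacle — are the verifications that (i) $\pi\circ\alpha_\lambda$ is genuinely \emph{strictly irreducible} and not merely a representation, and (ii) it stays in the same class $\operatorname{irr}_b$ or $\operatorname{irr}_n$ on the \emph{same} representation space. For (i): $\alpha_\lambda$ is a surjective algebra homomorphism $A^1\to A^1$, so $\pi\circ\alpha_\lambda$ has the same image as $\pi$ in $L(X_\pi)$, hence the same invariant subspaces, hence is strictly irreducible whenever $\pi$ is (and nonzero on $A$ whenever $\pi$ is, since $\alpha_\lambda(A)=A$). For (ii): $\|\pi(\alpha_\lambda(a))\|\leqslant\|\pi\|\,\|\alpha_\lambda\|\,\|a\|$ shows $\pi\circ\alpha_\lambda$ is again continuous by bounded operators on $X_\pi$, so Banach-ness or normed-ness of the space is unchanged. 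With these routine checks in place the argument closes; note it is essentially the proof of the lemma ``All $(BA)$-radicals are topologically characteristic'' transported from automorphisms $\exp(\lambda D)$ of the algebra to their action on the (automorphism-stable) families of strictly irreducible representations.
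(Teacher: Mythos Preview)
Your proof is correct and takes a genuinely different route from the paper's. The paper invokes Sinclair's lemma \cite[Lemma~2.1]{S69} to show that $\pi(Dc)$ is quasinilpotent for every $\pi\in\operatorname{irr}_b A$ (resp.\ $\operatorname{irr}_n A$) whenever $c$ lies in the radical, and then uses strict irreducibility to derive a contradiction from any nonzero vector in the range of $\pi(Da)$: choosing $b$ with $\pi(b)\pi(Da)x=x$ and using $\pi(a)=0$ gives $\pi(D(ab))y=y$, so $\pi(D(ab))$ has eigenvalue $1$, contradicting the quasinilpotence guaranteed by Sinclair's lemma for $ab$ in the ideal. Your argument instead transports the exponential--automorphism proof of the preceding lemma directly: since $\alpha_\lambda=\exp(\lambda D)$ is a bounded automorphism of $A^1$, composition with $\alpha_\lambda$ permutes $\operatorname{irr}_b(A^1)$ (resp.\ $\operatorname{irr}_n(A^1)$), hence fixes the intersection of their kernels setwise, and differentiating at $\lambda=0$ inside the closed radical finishes. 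Your approach is more self-contained---it avoids the external spectral lemma and makes transparent that the result is nothing more than automorphism-invariance of the representation-class definition---while the paper's approach yields the sharper intermediate fact that $\pi(Da)=0$ for each individual $\pi$, not merely that $Da$ lies in the intersection of all such kernels.
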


\begin{proof}
Let $D$ be a bounded derivation on an algebra $A\in(NA)$, and let
$a\in\operatorname{rad}_{b}(A)$ (resp. $\operatorname{rad}_{n}(A)$) be
arbitrary. By \cite[Lemma 2.1]{S69}, $\pi(Da)$ is quasinilpotent for every
$\pi\in\operatorname{irr}_{b}A$ (resp. $\operatorname{irr}_{n}A$). If
$y=\pi(Da)x\neq0$ for some $\pi$ and $x\in X_{\pi}$, then there is an element
$b\in A$ such that $\pi(b)y=x$, Hence
\[
\pi(D(ab))y=\pi(Da)\pi(b)y+\pi(a)\pi(Db)y=\pi(Da)x=y
\]
and also $ab\in\operatorname{rad}_{b}(A)$ (resp. $\operatorname{rad}_{n}(A)$),
a contradiction. Therefore $\operatorname{rad}_{b}$ and $\operatorname{rad}%
_{n}$ are topologically characteristic on $(NA)$.
\end{proof}

Many TRs considered were defined in an asymmetric way: by using
representations. For instance the `left-defined' TRs are $\operatorname{rad}%
_{n}$, $\operatorname{rad}_{b}$, $T_{\infty}$ etc. On the other hand, the
using anti-representations for definition of similar,`right-defined' radicals
is also a right way. So, we consider the opposite TRs for asymmetric TRs in
the general setting.

Let $A^{\operatorname{op}}$ be the same algebra $A$, but with the opposite
multiplication. Recall that $A^{\operatorname{op}}$ is called an
\textit{opposite }algebra. The `identity' anti-isomorphism $a\longmapsto
a^{\operatorname{op}}$ maps an element $a$ to the same element but in
$A^{\operatorname{op}}$; note that $a^{\operatorname{op}}b^{\operatorname{op}%
}=(ba)^{\operatorname{op}}$ for all $a,b\in A$ and every homomorphism
$f:A\rightarrow B$ induces the \textit{opposite} homomorphism
$f^{\operatorname{op}}:A^{\operatorname{op}}\rightarrow B^{\operatorname{op}}$
by formula $f^{\operatorname{op}}(a^{\operatorname{op}}%
)=(f(a))^{\operatorname{op}}$, for every $a\in A$. Clearly
$(f^{\operatorname{op}})^{\operatorname{op}}=f$.

Let $\mathcal{A}\subset(NA)$, and let $\mathcal{A}^{\operatorname{op}}$ be the
class of all algebras $A$ such that $A^{\operatorname{op}}\in\mathcal{A}$.
Clearly $(\mathcal{A}^{\operatorname{op}})^{\operatorname{op}}=\mathcal{A}$;
we call $\mathcal{A}^{\operatorname{op}}$ an \textit{opposite class.} All
considered above properties of a class are inherited by the opposite class. A
class $\mathcal{A}$ is called \textit{symmetric }if $\mathcal{A}%
=\mathcal{A}^{\operatorname{op}}$. Note that $(BA)$ and $(QA)$ are symmetric.
Moreover, all closed subalgebras of a $Q$-algebra are $Q$-algebras. It is not
clear for $Q_{b}$-algebras. Is $(QA_{b})$ symmetric? Does it contain all
closed subalgebras of its algebras?

Let $\mathcal{A}$ be a ground class. Then clearly $\mathcal{A}%
^{\operatorname{op}}$ is a ground class. If $R$ is defined on $\mathcal{A}$,
one can define $R^{\operatorname{op}}$ on $\mathcal{A}^{\operatorname{op}}$
by
\[
R^{\operatorname{op}}(A^{\operatorname{op}})=R(A)^{\operatorname{op}}%
\]
for every $A\in\mathcal{A}$. We call $R^{\operatorname{op}}$ to be
\textit{opposite }to $R$. If $\mathcal{A}$ is symmetric, $R$ is called
\textit{symmetric }on $\mathcal{A}$ if $R(A)=R^{\operatorname{op}}(A)$ for
every $A\in\mathcal{A}$.

\begin{proposition}
Let $R$ be a TR or HTR on a ground class $\mathcal{A}$. Then so is
$R^{\operatorname{op}}$ on $\mathcal{A}^{\operatorname{op}}$.
\end{proposition}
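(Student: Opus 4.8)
The plan is to verify the axioms $(1^{\circ})$--$(5^{\circ})$ (and $(6^{\circ})$ in the hereditary case) for $R^{\operatorname{op}}$ on $\mathcal{A}^{\operatorname{op}}$ by transporting each axiom for $R$ on $\mathcal{A}$ along the `identity' anti-isomorphism $a\longmapsto a^{\operatorname{op}}$. The key observation, which makes everything routine, is that the correspondence $B\longmapsto B^{\operatorname{op}}$ is a bijection of $\mathcal{A}$ onto $\mathcal{A}^{\operatorname{op}}$ that preserves the underlying normed space (hence closed sets, closed ideals, quotient norms, and completeness), sends ideals to ideals, and sends closed ideals to closed ideals; moreover $(A/I)^{\operatorname{op}}$ is naturally identified with $A^{\operatorname{op}}/I^{\operatorname{op}}$, and the quotient map $q_{I}$ corresponds to $q_{I^{\operatorname{op}}}$ under the `op' operation. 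Finally, $f\longmapsto f^{\operatorname{op}}$ is a bijection between (topological) isomorphisms $A\to B$ and (topological) isomorphisms $A^{\operatorname{op}}\to B^{\operatorname{op}}$ with the same norm, and $(f^{\operatorname{op}})^{\operatorname{op}}=f$. I would record these facts in one or two sentences at the start of the proof.

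Granting that dictionary, I would run through the axioms as follows. For $(1^{\circ})$: by definition $R^{\operatorname{op}}(R^{\operatorname{op}}(A^{\operatorname{op}}))=R^{\operatorname{op}}(R(A)^{\operatorname{op}})=R(R(A))^{\operatorname{op}}=R(A)^{\operatorname{op}}=R^{\operatorname{op}}(A^{\operatorname{op}})$, using that $R(A)^{\operatorname{op}}=(R(A))^{\operatorname{op}}$ carries the algebra $R(A)$ to its opposite and that the definition of $R^{\operatorname{op}}$ is applied to this opposite algebra. For $(2^{\circ})$: identifying $A^{\operatorname{op}}/R^{\operatorname{op}}(A^{\operatorname{op}})=A^{\operatorname{op}}/R(A)^{\operatorname{op}}$ with $(A/R(A))^{\operatorname{op}}$, we get $R^{\operatorname{op}}$ of it equal to $R(A/R(A))^{\operatorname{op}}=0^{\operatorname{op}}=0$. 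For $(3^{\circ})$ and $(4^{\circ})$: given a topological isomorphism $f:A\to B$ (resp.\ a closed ideal $I$ of $A$), apply the corresponding axiom for $R$ to $f$ (resp.\ $I$) and then take `op': $f^{\operatorname{op}}(R^{\operatorname{op}}(A^{\operatorname{op}}))=f^{\operatorname{op}}(R(A)^{\operatorname{op}})=(f(R(A)))^{\operatorname{op}}=R(B)^{\operatorname{op}}=R^{\operatorname{op}}(B^{\operatorname{op}})$, and similarly $q_{I^{\operatorname{op}}}(R^{\operatorname{op}}(A^{\operatorname{op}}))=(q_{I}(R(A)))^{\operatorname{op}}\subset R(A/I)^{\operatorname{op}}=R^{\operatorname{op}}(A^{\operatorname{op}}/I^{\operatorname{op}})$. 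For $(5^{\circ})$: if $J$ is an ideal of $A^{\operatorname{op}}$ lying in $\mathcal{A}^{\operatorname{op}}$, then $J^{\operatorname{op}}$ is an ideal of $A$ lying in $\mathcal{A}$, so $R(J^{\operatorname{op}})$ is an ideal of $A$ with $R(J^{\operatorname{op}})\subset J^{\operatorname{op}}\cap R(A)$; applying `op' gives that $R^{\operatorname{op}}(J)=R(J^{\operatorname{op}})^{\operatorname{op}}$ is an ideal of $A^{\operatorname{op}}$ contained in $J\cap R^{\operatorname{op}}(A^{\operatorname{op}})$, which is $(5_{1}^{\circ})$ and $(5_{2}^{\circ})$. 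The hereditary case $(6^{\circ})$ is obtained the same way, replacing the inclusion $R(J^{\operatorname{op}})\subset J^{\operatorname{op}}\cap R(A)$ by the equality.

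The only point needing slight care is the compatibility of the `op' construction with quotients, namely the canonical identification $(A/I)^{\operatorname{op}}\cong A^{\operatorname{op}}/I^{\operatorname{op}}$ as normed algebras together with $q_{I}^{\operatorname{op}}=q_{I^{\operatorname{op}}}$; once this is observed, axioms $(2^{\circ})$ and $(4^{\circ})$ fall out. I do not anticipate a genuine obstacle here: since `op' changes neither the set, nor the norm, nor the additive structure, and merely reverses products, all the topological bookkeeping (closedness of $R^{\operatorname{op}}(A^{\operatorname{op}})$, continuity of $f^{\operatorname{op}}$, openness of $q_{I^{\operatorname{op}}}$) is automatic. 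Thus the proof is, as the authors would likely put it, an easy checkup; I would present it compactly, spelling out the quotient identification and then listing the five (or six) axiom verifications in the pattern ``apply the axiom for $R$, then take $\operatorname{op}$''.
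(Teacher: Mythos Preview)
Your proposal is correct and follows exactly the same approach as the paper: verify each axiom for $R^{\operatorname{op}}$ by applying the corresponding axiom for $R$ and then taking ``op'', using that the correspondence $A\longmapsto A^{\operatorname{op}}$ respects ideals, closed ideals, quotients, and topological isomorphisms. The paper's proof in fact only writes out the chain for $(1^{\circ})$ (essentially your displayed equalities, read in reverse) and then declares ``the easy checkup of the other properties is left to the reader''; your version simply fills in those routine verifications explicitly.
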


\begin{proof}
For (1$^{\circ}$), we have
\[
R^{\operatorname{op}}(A^{\operatorname{op}})=R(A)^{\operatorname{op}%
}=R(R(A))^{\operatorname{op}}=R^{\operatorname{op}}(R(A)^{\operatorname{op}%
})=R^{\operatorname{op}}(R^{\operatorname{op}}(A^{\operatorname{op}})).
\]
The easy checkup of the other properties is left to the reader.
\end{proof}

Note that $\operatorname{Rad}^{\prime}$ is symmetric on $(NA)$ and
$\operatorname{rad}_{n}$ on $(QA)$. Are $\operatorname{rad}_{n}$ and
$\operatorname{rad}_{b}$ symmetric on $(NA)$?

\section{\label{S3}Tensor radicals}

In this section we consider the behavior of a topological radical with respect
to the fundamental operations: direct sum ${{\oplus}}$ and projective tensor
product ${\widehat{\otimes}}$ of algebras.

\subsection{Radicals on direct sums}

Actually for direct sums the problem is easy.

\begin{theorem}
\label{sum} Let $\mathcal{A}$ be a ground class. If $R$ is a TR on
$\mathcal{A}$ and $A{\oplus}B\in\mathcal{A}$ for some $A,B\in\mathcal{A}$ then
$R(A{\oplus}B)=R(A){\oplus}R(B)$.
\end{theorem}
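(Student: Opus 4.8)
The plan is to exploit the fact that for a direct sum $A \oplus B$, both $A$ and $B$ are closed ideals, while simultaneously being quotients (since $(A\oplus B)/A \cong B$ and $(A\oplus B)/B \cong A$ topologically). This two-sided relationship is exactly what pins down the radical. First I would set $C = A \oplus B$ and use property $(5^\circ)$: since $A$ is a closed ideal of $C$ belonging to $\mathcal{A}$ (it is topologically isomorphic to a closed ideal, and $\mathcal{A}$ is closed ideal stable), we get $R(A) \subset A \cap R(C)$, and $R(A)$ is an ideal of $C$. Likewise $R(B) \subset B \cap R(C)$. Since $A \cap B = 0$ and both $R(A)$ and $R(B)$ are ideals of $C$, the sum $R(A) \oplus R(B)$ is a closed ideal of $C$ contained in $R(C)$.

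For the reverse inclusion, I would project. Let $p_A : C \to A$ be the canonical projection; it is an open continuous epimorphism with kernel $B$ (identifying $A$ with $C/B$). By $(4^\circ)$, $p_A(R(C)) \subset R(A)$, and similarly $p_B(R(C)) \subset R(B)$. Since the identity on $C$ factors as $c \mapsto (p_A(c), p_B(c))$, every element of $R(C)$ lies in $p_A(R(C)) \oplus p_B(R(C)) \subset R(A) \oplus R(B)$. Combined with the first paragraph this gives $R(C) = R(A) \oplus R(B)$, and closedness is automatic since $R(A)$, $R(B)$ are closed and the sum is direct (hence a topological direct sum in $C$).

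There is essentially no hard part here — the statement really is a bookkeeping exercise in the axioms, which is why the authors flagged it as ``easy.'' The one point requiring a little care is the justification that $A$ and $B$, viewed inside $A \oplus B$, are genuinely objects of $\mathcal{A}$ to which $(4^\circ)$ and $(5^\circ)$ apply: $A$ is simultaneously (topologically isomorphic to) a closed ideal of $C$ and to the quotient $C/B$, so both the closed-ideal-stability and quotient-stability of the ground class $\mathcal{A}$ are being invoked. Once that is noted, the two inclusions above close the argument; I would not expect to need hereditariness, strongness, or any of the finer properties of $R$ — plain $(4^\circ)$ and $(5^\circ)$ suffice.
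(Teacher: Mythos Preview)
Your proposal is correct and follows essentially the same route as the paper: one inclusion via the projections $p_A,p_B$ (open continuous epimorphisms, so $(3^\circ)$ and $(4^\circ)$ apply), and the reverse via $(5^\circ)$ applied to the closed ideals $A\oplus 0$ and $0\oplus B$ together with $(3^\circ)$ to identify $R(A\oplus 0)$ with $R(A)\oplus 0$. The only cosmetic difference is that the paper makes the use of $(3^\circ)$ for this identification explicit, whereas you absorb it into the phrase ``viewing $A$ inside $A\oplus B$''.
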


\begin{proof}
Let $f_{1}$ and $f_{2}$ be the epimorphisms of $A{\oplus}B$ onto $A$ and $B$
respectively, defined as the natural projections (which are open and
continuous). Then by (\ref{g1}) we have that projections of $R(A{\oplus}B)$
are contained in $R(A)$ and $R(B)$, respectively. Hence
\[
R(A{\oplus}B)\subset R(A){\oplus}R(B).
\]
On the other hand if one considers the ideals $I_{1}=A{\oplus}0$ and
$I_{2}=0{\oplus}B$ and applies (5$^{\circ}$) then the inclusions $R(A{\oplus
}0)\subset R(A{\oplus}B)$ and $R(0{\oplus}B)\subset R(A{\oplus}B)$ will be
established. But $f_{1}$ defines a topological isomorphism of $A{\oplus}0$
onto $A$, whence (3$^{\circ}$) gives $R(A{\oplus}0)=R(A){\oplus}0$ and,
similarly, $R(0{\oplus}B)=0{\oplus}R(B)$. So
\[
R(A){\oplus}R(B)\subset R(A{\oplus}B)
\]
and we are done.
\end{proof}

\subsection{Tensor and weakly tensor radicals on $(BA)$}

The second question is much more difficult. We will consider it only for
topological radicals on $(BA)$ because the projective tensor product is a
Banach space operation, i.e. for $A$ and $\overline{A}$ the result is the same.

Let us denote by $A{{\otimes}}B$ the algebraic tensor product of Banach
algebras $A$ and $B$. Clearly $A{{\otimes}}B$ can be considered as a linear
manifold of $A{\widehat{\otimes}}B$. For every subsets $M\subset A$ and
$N\subset B$ that are not linear manifolds, it is convenient to denote by
$M{{\otimes}}N$ the set $\{a\otimes b\in A\otimes B:a\in M$, $b\in N\}$. If
one of them is a linear manifold, let $M\otimes N$ be the linear span in
$A{{\otimes}}B$ of all elements $a{{\otimes}}b$, $a\in M$, $b\in N$. In any
case, $M{\otimes}N$ can be also considered as a subset of $A{\widehat{\otimes
}}B$. Let $\widetilde{M{{\otimes}}N}$ be denote the closure of $M{\otimes}N$
in $A{\widehat{\otimes}}B$.

A topological radical $R$ on $(BA)$ is called \textit{tensor} if
\[
R(A){\otimes}B\subset R(A{\widehat{\otimes}}B),
\]
for every Banach algebras $A,B$.

For a wide class of TRs this condition admits a convenient reformulation.

\begin{theorem}
\label{istrong} An ideally strong radical $R$ on $(BA)$ is tensor if and only
if the tensor product of an $R$-radical algebra and arbitrary Banach algebra
is $R$-radical.
\end{theorem}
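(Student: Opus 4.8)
The plan is to prove the two implications of the equivalence directly from the definitions, using the characterization in Lemma~\ref{2.1} that $R(C)$ is the largest $R$-radical ideal of $C$, together with the ideally strong property of $R$.

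First I would handle the easy direction: suppose $R$ is tensor, let $A$ be $R$-radical, and let $B$ be an arbitrary Banach algebra. Then $R(A)=A$, so by the tensor condition $A\otimes B=R(A)\otimes B\subset R(A\widehat{\otimes}B)$. Since $A\otimes B$ is dense in $A\widehat{\otimes}B$ and $R(A\widehat{\otimes}B)$ is closed, we conclude $R(A\widehat{\otimes}B)=A\widehat{\otimes}B$, i.e.\ $A\widehat{\otimes}B$ is $R$-radical. This needs nothing beyond density of the algebraic tensor product in the projective one and closedness of the values of a TR.

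For the converse, assume that $A\widehat{\otimes}B$ is $R$-radical whenever $A$ is $R$-radical and $B$ arbitrary; I must show $R(A){\otimes}B\subset R(A\widehat{\otimes}B)$ for all Banach $A,B$. The key observation is that $R(A)$ is itself an $R$-radical Banach algebra (by $(1^{\circ})$, $R(R(A))=R(A)$), so by hypothesis $R(A)\widehat{\otimes}B$ is $R$-radical. Now I would use the natural continuous homomorphism $R(A)\widehat{\otimes}B\to A\widehat{\otimes}B$ induced by the inclusion $R(A)\hookrightarrow A$; its image contains $R(A)\otimes B$ and, more importantly, its closure is a closed ideal of $A\widehat{\otimes}B$. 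Indeed, $R(A)$ being a (closed) ideal of $A$, one checks that $R(A)\widehat{\otimes}B$ maps onto an ideal of $A\widehat{\otimes}B$: for $a\in A$, $x\in R(A)$, $b,b'\in B$ one has $(a\otimes b')(x\otimes b)=(ax)\otimes(b'b)$ with $ax\in R(A)$, and this extends by continuity and linearity. Since $R$ is ideally strong, axiom $(7_i^{\circ})$ applied to the corestricted map $g:R(A)\widehat{\otimes}B\to \overline{R(A)\otimes B}\subset A\widehat{\otimes}B$ (a continuous epimorphism onto an ideal of $A\widehat{\otimes}B$) gives $g(R(R(A)\widehat{\otimes}B))\subset R(A\widehat{\otimes}B)$. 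But $R(R(A)\widehat{\otimes}B)=R(A)\widehat{\otimes}B$ by the $R$-radicality just established, so the whole image, in particular $R(A)\otimes B$, lies in $R(A\widehat{\otimes}B)$. That is exactly the tensor condition.

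The main obstacle I anticipate is the careful verification that the closure of $R(A)\otimes B$ in $A\widehat{\otimes}B$ is genuinely a closed ideal (so that axiom $(7_i^{\circ})$ applies) and that the induced map $R(A)\widehat{\otimes}B\to A\widehat{\otimes}B$ really corestricts to a continuous epimorphism onto that ideal --- this requires knowing that the projective tensor norm behaves well under the inclusion of a closed ideal, i.e.\ that $R(A)\widehat{\otimes}B\to A\widehat{\otimes}B$ has closed range equal to $\overline{R(A)\otimes B}$, or at least that its range is an ideal whose closure is what we want. Everything else is a routine application of the axioms $(1^{\circ})$, $(7_i^{\circ})$ and Lemma~\ref{2.1}.
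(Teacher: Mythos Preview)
Your proof is correct and follows exactly the paper's route via the natural map $i_A:R(A)\widehat{\otimes}B\to A\widehat{\otimes}B$ combined with the ideally strong axiom. The obstacle you anticipate is a non-issue: axiom $(7_i^{\circ})$ only asks that $f$ be a continuous epimorphism onto an \emph{ideal} (not a closed one) of some $B\in\mathcal{A}$, and Lemma~\ref{ideal}(i) shows that the \emph{image} $i_A(R(A)\widehat{\otimes}B)$ is already a two-sided ideal of $A\widehat{\otimes}B$ --- so corestrict to the image rather than its closure, and surjectivity is automatic with no closed-range argument needed.
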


The proof will be given after some preliminary work.

If $J$ is a closed ideal in a Banach algebra $A$ then, for any Banach algebra
$B$, the `identity' map $i_{A}:J{\widehat{\otimes}}B\rightarrow A{\widehat
{\otimes}}B$ is contracting. The following lemma is probably known, but we
could not find a precise reference.

\begin{lemma}
\label{ideal} Let $A,B\in(BA)$, and let $J$ be a closed ideal of $A$.

\begin{itemize}
\item [(i)]$i_{A}(J{\widehat{\otimes}}B)$ is a two-sided ideal
(non-necessarily closed) of $A{\widehat{\otimes}}B$ consisting of all elements
that can be represented in the form $\sum a_{n}{\otimes}b_{n}$ with $a_{n}\in
J$ and $\sum\Vert a_{n}\Vert\Vert b_{n}\Vert<\infty$.

\item[(ii)] There exists a unique contractive epimorphism $\tau_{J}%
:(A/J){\widehat{\otimes}}B\rightarrow(A{\widehat{\otimes}}B)/U$, where $U$ is
the closure of $i_{A}(J{\widehat{\otimes}}B)$ in $A{\widehat{\otimes}}B$, such
that
\[
\tau_{J}((a+J){\otimes}b)=a{\otimes}b+U
\]
for every $a\in A$, $b\in B$.
\end{itemize}
\end{lemma}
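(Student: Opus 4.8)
The plan is to prove the two parts of Lemma~\ref{ideal} in sequence, the first by an explicit description of the image $i_A(J\widehat{\otimes}B)$ using the representation of elements of a projective tensor product as absolutely convergent series, and the second by a standard universal-property argument for the projective tensor norm together with the observation that $(a+J)\otimes b\mapsto a\otimes b+U$ is well defined precisely because the ``defect'' $J\otimes B$ lies in $U$.

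For part (i), I would start from the fact that every element $u\in A\widehat{\otimes}B$ can be written as $\sum_n a_n\otimes b_n$ with $\sum_n\|a_n\|\,\|b_n\|<\infty$, and conversely any such series converges in $A\widehat{\otimes}B$. The image $i_A(J\widehat{\otimes}B)$ is then the set of those $u$ admitting such a representation with all $a_n\in J$; one direction is immediate, the other uses that a representation of an element of $J\widehat{\otimes}B$ pushes forward under $i_A$ to a representation of its image of the required form. To see it is a two-sided ideal of $A\widehat{\otimes}B$, I would take $u=\sum_n a_n\otimes b_n$ with $a_n\in J$ and $v=\sum_m c_m\otimes d_m$ an arbitrary element, and check that $vu=\sum_{m,n}(c_m a_n)\otimes(d_m b_n)$ is again of the required form: $c_m a_n\in J$ since $J$ is an ideal, and $\sum_{m,n}\|c_m a_n\|\,\|d_m b_n\|\le(\sum_m\|c_m\|\,\|d_m\|)(\sum_n\|a_n\|\,\|b_n\|)<\infty$, so the double series converges absolutely; similarly for $uv$. (Strictly one should note $vu$ need not lie in the \emph{closure} only, but this computation shows it lies in $i_A(J\widehat{\otimes}B)$ itself.) Bilinearity and the approximation of a general element of $J\widehat{\otimes}B$ or $A\widehat{\otimes}B$ by finite sums handle the passage between finite sums and series.

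For part (ii), consider the composition $A\times B\to A\widehat{\otimes}B\to(A\widehat{\otimes}B)/U$, which is a contractive bilinear map; it kills $J\times B$ because $a\otimes b\in i_A(J\widehat{\otimes}B)\subset U$ whenever $a\in J$. Hence it factors through a contractive bilinear map $(A/J)\times B\to(A\widehat{\otimes}B)/U$ (using that the quotient norm on $A/J$ is exactly the one making $A\to A/J$ a quotient map of Banach spaces), and by the universal property of the projective tensor norm this induces a contractive linear map $\tau_J:(A/J)\widehat{\otimes}B\to(A\widehat{\otimes}B)/U$ with $\tau_J((a+J)\otimes b)=a\otimes b+U$. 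It is an algebra homomorphism since it respects products on elementary tensors, which are dense; it is surjective since its image contains all $a\otimes b+U$ and hence is dense and, being the continuous image of a Banach space, one checks surjectivity directly (the image contains the dense subalgebra $A\otimes B+U$ modulo $U$, and in fact $\tau_J$ is onto because $(A/J)\otimes B$ maps onto $(A\otimes B+U)/U$ which is dense, but surjectivity really follows from the open mapping theorem applied after noting $\tau_J$ is bounded below on a complemented piece — more cleanly, one exhibits an explicit contractive lift). Uniqueness is immediate from density of the elementary tensors.

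I expect the main obstacle to be the surjectivity and the precise contractivity constant in part (ii): one wants $\tau_J$ to be a genuine quotient-type epimorphism, and the cleanest route is to show that every element of $(A\widehat{\otimes}B)/U$ with norm $<1$ has a preimage in $(A/J)\widehat{\otimes}B$ of norm $<1+\varepsilon$, by lifting an absolutely summable representation $\sum_n x_n\otimes b_n$ of a representative in $A\widehat{\otimes}B$ to $\sum_n(x_n+J)\otimes b_n$ and controlling $\sum_n\|x_n+J\|\,\|b_n\|\le\sum_n\|x_n\|\,\|b_n\|$. Everything else is routine bookkeeping with absolutely convergent series and the universal property of $\widehat{\otimes}$.
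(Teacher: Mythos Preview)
Your argument is correct and essentially matches the paper's: for (i) the paper simply declares the statement evident, and for (ii) it constructs $\tau_J$ directly on the algebraic tensor product $(A/J)\otimes B$, checks well-definedness and contractivity by choosing representatives $a_n'\in a_n+J$ with $\|a_n'\|$ close to $\|a_n+J\|$, and then extends by continuity --- which is exactly the hand-unrolled version of the universal-property argument you give.

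One point to clean up: your surjectivity discussion wanders. Dense range plus ``continuous image of a Banach space'' does not yield surjectivity, and the open mapping theorem is irrelevant here (it gives openness \emph{after} surjectivity is known, not surjectivity itself). What actually works is precisely the explicit lift you state in your final paragraph: given any $w\in A\widehat{\otimes}B$ written as $\sum_n x_n\otimes b_n$ with $\sum_n\|x_n\|\,\|b_n\|<\infty$, the element $\sum_n(x_n+J)\otimes b_n$ lies in $(A/J)\widehat{\otimes}B$ and $\tau_J$ sends it to $w+U$. This is also how the paper establishes surjectivity. Drop the intermediate remarks and keep only this lift.
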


\begin{proof}
(i) is evident.

(ii) Let $E=A{\widehat{\otimes}}B/U$ and as usually $q_{U}:A{\widehat{\otimes
}}B\rightarrow E$ the canonical epimorphism. We define a linear map
$\tau:(A/J){\otimes}B\rightarrow E$ by
\[
\tau(\sum(a_{n}+J){\otimes}b_{n})=q_{U}(\sum a_{n}{\otimes}b_{n}).
\]
To justify the definition, note firstly that it does not depend on the choice
of representatives: if $a_{n}+J=a_{n}^{\prime}+J$ for every $n$ then $\sum
a_{n}{\otimes}b_{n}-\sum a_{n}^{\prime}{\otimes}b_{n}\in J{\otimes}B$ and
therefore%
\[
q_{U}(\sum a_{n}{\otimes}b_{n})=q_{U}(\sum a_{n}^{\prime}{\otimes}b_{n}).
\]
So it suffices to check the bilinearity which is easy.

Now we have to prove that $\tau$ is contractive. For any $T\in(A/J){\otimes}B$
and any $\varepsilon>0$ there exist $a_{n}\in A$, $b_{n}\in B$ ($n=1,...,m$)
such that $T=\sum(a_{n}+J){\otimes}b_{n}$, $\Vert b_{n}\Vert=1$ and
\[
\sum\left\|  a_{n}+J\right\|  <\left\|  T\right\|  +\varepsilon.
\]
Choosing $a_{n}^{\prime}\in a_{n}+J$ with
\[
\left\|  a_{n}^{\prime}\right\|  <\left\|  a_{n}+J\right\|  +\varepsilon/m,
\]
we get that
\begin{align*}
\left\|  \tau(T)\right\|   &  =\left\|  q_{U}(\sum a_{n}^{\prime}{\otimes
}b_{n})\right\|  \leqslant\left\|  \sum a_{n}^{\prime}{\otimes}b_{n}\right\|
\leqslant\left\|  \sum a_{n}^{\prime}\right\| \\
&  <\left\|  T\right\|  +2\varepsilon.
\end{align*}
Since $\varepsilon$ is arbitrary, $\left\|  \tau(T)\right\|  \leqslant\left\|
T\right\|  $. The linearity and multiplicativity of $\tau$ are evident.

Denote by $\tau_{J}$ the contractive homomorphism of $(A/J){\widehat{\otimes}%
}B$ to $E$ that extends $\tau$ by continuity. Then
\[
\tau_{J}(\sum_{1}^{\infty}(a_{n}+J){\otimes}b_{n})=q_{U}(\sum_{1}^{\infty
}a_{n}{\otimes}b_{n})
\]
whenever $\sum\left\|  a_{n}\right\|  \left\|  b_{n}\right\|  <\infty$. Hence
$\tau_{J}$ is surjective.
\end{proof}

\begin{lemma}
\label{equiv} A topological radical $R$ on $(BA)$ is tensor iff
\[
i_{A}(R(A){\widehat{\otimes}}B)\subset R(A{\widehat{\otimes}}B),
\]
for all Banach algebras $A,B$.
\end{lemma}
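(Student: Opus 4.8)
The plan is to prove the two implications separately, with the forward direction (tensor $\Rightarrow$ the $\widehat{\otimes}$-inclusion) being the substantive one and the reverse direction being essentially immediate. For the reverse direction: assuming $i_{A}(R(A)\widehat{\otimes}B)\subset R(A\widehat{\otimes}B)$ for all $A,B$, I would simply observe that $R(A)\otimes B$, the algebraic tensor product viewed inside $A\widehat{\otimes}B$, is the image under $i_{A}$ of the algebraic tensor product $R(A)\otimes B$ sitting inside $R(A)\widehat{\otimes}B$; since the latter is contained in $R(A)\widehat{\otimes}B$, its $i_{A}$-image lies in $i_{A}(R(A)\widehat{\otimes}B)\subset R(A\widehat{\otimes}B)$. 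Hence $R(A)\otimes B\subset R(A\widehat{\otimes}B)$, which is exactly the definition of $R$ being tensor.

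For the forward direction, suppose $R$ is tensor, so $R(A)\otimes B\subset R(A\widehat{\otimes}B)$ for all Banach algebras $A,B$. I want to upgrade the algebraic tensor factor $R(A)\otimes B$ to the completed tensor factor, i.e.\ to show $i_{A}(R(A)\widehat{\otimes}B)\subset R(A\widehat{\otimes}B)$. By Lemma~\ref{ideal}(i), applied with the closed ideal $J=\overline{R(A)}$ of $A$, the set $i_{A}(\overline{R(A)}\widehat{\otimes}B)$ consists precisely of the elements of $A\widehat{\otimes}B$ representable as $\sum a_{n}\otimes b_{n}$ with $a_{n}\in\overline{R(A)}$ and $\sum\|a_{n}\|\|b_{n}\|<\infty$; in particular $i_{A}(\overline{R(A)}\widehat{\otimes}B)$ is the closure in $A\widehat{\otimes}B$ of the algebraic span $\overline{R(A)}\otimes B$. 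Now $\overline{R(A)}$ is an $R$-radical ideal of $A$ (it is closed and $R(A)$ is dense in it, so $R(\overline{R(A)})=\overline{R(A)}$ by Corollary~\ref{2.2}(i)), hence by the tensor property $\overline{R(A)}\otimes B\subset R(\overline{R(A)}\widehat{\otimes}B)$, and since $R(A\widehat{\otimes}B)$ is a closed ideal of $A\widehat{\otimes}B$ containing — via (5$^{\circ}$) or directly via the tensor inclusion — the relevant image, taking closures gives $i_{A}(\overline{R(A)}\widehat{\otimes}B)\subset R(A\widehat{\otimes}B)$. It remains to pass from $\overline{R(A)}\widehat{\otimes}B$ back to $R(A)\widehat{\otimes}B$; but $i_{A}(R(A)\widehat{\otimes}B)\subset i_{A}(\overline{R(A)}\widehat{\otimes}B)$ since $R(A)\subset\overline{R(A)}$, so this inclusion is automatic once the statement for the closed ideal is known.

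The main obstacle I anticipate is the step "$\overline{R(A)}\otimes B\subset R(A\widehat{\otimes}B)$ and therefore its closure is too": one must be careful that $R(A\widehat{\otimes}B)$ is indeed closed (which it is, being the value of a topological radical) so that closing up the algebraic span does not escape it, and one must correctly identify $i_{A}(\overline{R(A)}\widehat{\otimes}B)$ with that closure rather than with something larger. This is where Lemma~\ref{ideal}(i) does the real work: it pins down the image of the completed tensor product as exactly the closure of the algebraic image, which is what lets the closedness of $R(A\widehat{\otimes}B)$ absorb it. A secondary subtlety is whether one should phrase things with $R(A)$ or with $\overline{R(A)}$; using $\overline{R(A)}$ throughout (a genuine closed ideal) and invoking Corollary~\ref{2.2}(i) to see it is $R$-radical keeps Lemma~\ref{ideal} applicable and avoids worrying about non-closed ideals in the tensor factor.
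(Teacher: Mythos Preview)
Your argument reaches the right conclusion, but it is considerably more involved than needed and contains one incorrect supporting claim. You assert that Lemma~\ref{ideal}(i) yields that $i_{A}(\overline{R(A)}\widehat{\otimes}B)$ \emph{equals} the closure of $\overline{R(A)}\otimes B$ in $A\widehat{\otimes}B$; this is false in general, and the lemma itself explicitly says the image is ``non-necessarily closed''. What you actually use, however, is only the inclusion $i_{A}(\overline{R(A)}\widehat{\otimes}B)\subset\widetilde{\overline{R(A)}\otimes B}$, and that inclusion is true simply because $i_{A}$ is continuous and the algebraic tensor product is dense in the projective completion. So the logic survives, but the stated justification does not. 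Separately, note that $R(A)$ is already closed by the very definition of a topological radical, so $\overline{R(A)}=R(A)$; the detour through the closure and the appeal to Corollary~\ref{2.2}(i) are entirely superfluous.

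The paper's proof bypasses all of this with a single chain of inclusions:
\[
R(A)\otimes B\ \subset\ i_{A}(R(A)\widehat{\otimes}B)\ \subset\ \widetilde{R(A)\otimes B},
\]
together with the fact that $R(A\widehat{\otimes}B)$ is closed. The forward direction then reads: tensor means $R(A)\otimes B\subset R(A\widehat{\otimes}B)$, hence its closure $\widetilde{R(A)\otimes B}$ sits in $R(A\widehat{\otimes}B)$, hence so does the middle term. The reverse direction is the left inclusion. No auxiliary algebras, no radical-of-an-ideal considerations, no invocation of Lemma~\ref{ideal} at all. Your route works once the false equality is downgraded to the correct inclusion, but the paper's argument is the intended and far cleaner one.
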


\begin{proof}
Follows from the evident inclusions
\[
R(A){\otimes}B\subset i_{A}(R(A){\widehat{\otimes}}B)\subset\widetilde
{R(A){\otimes}B}%
\]
(the latter is as usually the closure of $R(A){\otimes}B$ in $A{\widehat
{\otimes}}B$) and the fact that $R(A{\widehat{\otimes}}B)$ is closed.
\end{proof}

\begin{proof}
[\textit{The proof of Theorem \ref{istrong}}]If $R$ is tensor and $A$ is
$R$-radical then
\[
A{\otimes}B=R(A){\otimes}B\subset R(A{\widehat{\otimes}}B),
\]
whence $A{\widehat{\otimes}}B\subset R(A{\widehat{\otimes}}B)$ and accordingly
$A{\widehat{\otimes}}B$ is $R$-radical.

Conversely, let $R$ be ideally strong and suppose that the tensor product of
an $R$-radical algebra and arbitrary Banach algebra is $R$-radical. Let
$A,B\in(BA)$ be arbitrary. The map $i_{A}:R(A){\widehat{\otimes}}B\rightarrow
A{\widehat{\otimes}}B$ is a continuous epimorphism onto the ideal
$i_{A}(R(A){\widehat{\otimes}}B)$ of $A{\widehat{\otimes}}B$ (see Lemma
\ref{ideal}) and the algebra $R(A){\widehat{\otimes}}B$ is $R$-radical by our
assumptions. Hence
\[
i_{A}(R(A){\widehat{\otimes}}B)=i_{A}(R(R(A){\widehat{\otimes}}B))\subset
R(A{\widehat{\otimes}}B).
\]
Using Lemma \ref{equiv}, we conclude that $R$ is tensor.
\end{proof}

We will say that a $(BA)$-radical $R$ is \textit{weakly tensor} if
$A{\widehat{\otimes}}B$ is $R$-radical, for any $R$-radical algebra $A$ and
arbitrary algebra $B$. So Theorem \ref{istrong} states that a weakly tensor,
ideally strong radical is tensor.

Recall that the class $(BA)^{i}$ of all normed algebras topologically
isomorphic to ideals of Banach algebras is a ground class (Lemma \ref{p6}) and
all $(BA)$-radicals are strong. The following simple assertion underlines the
important role of existence of strong extensions of weakly tensor
$(BA)$-radicals to $(BA)^{i}$.

\begin{proposition}
Let $R$ be a weakly tensor TR on $(BA)$. If $R$ admits an extension to
$(BA)^{i}$ as a strong TR then $R$ is tensor.
\end{proposition}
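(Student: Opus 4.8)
The plan is to reduce the claim to Theorem~\ref{istrong} by showing that, under the stated hypothesis, the weakly tensor $(BA)$-radical $R$ is automatically ideally strong \emph{in the relevant situations}, namely on the ideals of $A\,\widehat{\otimes}\,B$ that arise via the map $i_A$. Let $\widetilde{R}$ denote the given extension of $R$ to $(BA)^{i}$ as a strong TR. The key point is that for arbitrary Banach algebras $A,B$, the algebra $R(A)\,\widehat{\otimes}\,B$ is $R$-radical (by the weak tensor property, since $R(A)$ is $R$-radical), hence also $\widetilde{R}$-radical in $(BA)^{i}$ (the extension agrees with $R$ on $(BA)$). The map $i_A \colon R(A)\,\widehat{\otimes}\,B \to A\,\widehat{\otimes}\,B$ is a continuous epimorphism onto the ideal $i_A(R(A)\,\widehat{\otimes}\,B)$ of $A\,\widehat{\otimes}\,B$ (Lemma~\ref{ideal}(i)), and that ideal, being topologically isomorphic to a quotient of $R(A)\,\widehat{\otimes}\,B$ by a closed ideal, lies in $(BA)^{i}$ too (by the ground-class property of $(BA)^{i}$, Lemma~\ref{p6}).

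First I would run the strongness of $\widetilde{R}$ along $i_A$: since $R(A)\,\widehat{\otimes}\,B$ is $\widetilde{R}$-radical and $i_A$ is a continuous epimorphism of $(BA)^{i}$-algebras onto $i_A(R(A)\,\widehat{\otimes}\,B)$, property (7$^{\circ}$) for $\widetilde{R}$ gives
\[
i_A\bigl(R(A)\,\widehat{\otimes}\,B\bigr) = i_A\bigl(\widetilde{R}(R(A)\,\widehat{\otimes}\,B)\bigr) \subset \widetilde{R}\bigl(i_A(R(A)\,\widehat{\otimes}\,B)\bigr),
\]
so the ideal $i_A(R(A)\,\widehat{\otimes}\,B)$ is $\widetilde{R}$-radical. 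Next, since it is a (not necessarily closed) ideal of the Banach algebra $A\,\widehat{\otimes}\,B$, I would apply property (5$^{\circ}$) — or rather its analogue for the extension $\widetilde{R}$ relating the radical of an ideal to the radical of the ambient algebra: $\widetilde{R}$ of this ideal is contained in $R(A\,\widehat{\otimes}\,B)$. (Here one uses that $\widetilde{R}$, being a strong TR on the ground class $(BA)^{i}$ containing both the ideal and $A\,\widehat{\otimes}\,B$, satisfies the ideal-compatibility condition; alternatively, pass to the closure of $i_A(R(A)\,\widehat{\otimes}\,B)$, which is a closed $R$-radical ideal of $A\,\widehat{\otimes}\,B$ by Corollary~\ref{2.2}(i), so it is contained in $R(A\,\widehat{\otimes}\,B)$ by Lemma~\ref{2.1}(i).) Combining, $i_A(R(A)\,\widehat{\otimes}\,B) \subset R(A\,\widehat{\otimes}\,B)$, which is exactly the criterion of Lemma~\ref{equiv} for $R$ to be tensor.

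Finally, I would invoke Theorem~\ref{istrong}: $R$ is weakly tensor by hypothesis, and I will have just shown the inclusion $i_A(R(A)\,\widehat{\otimes}\,B)\subset R(A\,\widehat{\otimes}\,B)$, so $R$ is tensor; in fact the route through Lemma~\ref{equiv} makes the appeal to Theorem~\ref{istrong} unnecessary and gives the conclusion directly.

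The main obstacle I anticipate is the bookkeeping of which ground class each algebra belongs to and which axiom of $\widetilde{R}$ legitimately applies — in particular, one must be careful that $i_A(R(A)\,\widehat{\otimes}\,B)$ is genuinely an object of $(BA)^{i}$ (this is where Lemma~\ref{p6}, the closure of $(BA)^{i}$ under quotients by closed ideals, and Lemma~\ref{ideal}(i) combine) and that ``strong TR on $(BA)^{i}$'' supplies both the epimorphism compatibility (7$^{\circ}$) and the ideal compatibility needed to land inside $R(A\,\widehat{\otimes}\,B)$. The cleanest way around the second subtlety is to work with the closure of $i_A(R(A)\,\widehat{\otimes}\,B)$ in $A\,\widehat{\otimes}\,B$, a closed ideal to which the plain $(BA)$-axioms (5$^{\circ}$) and Lemma~\ref{2.1}(i) apply directly, so that no extra structure on $\widetilde{R}$ beyond strongness is required.
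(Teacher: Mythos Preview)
Your argument is correct, but it is more elaborate than necessary: you are essentially inlining the proof of Theorem~\ref{istrong} rather than invoking it. The paper's proof is one line: the hypothesis that $R$ extends to a strong TR $\widetilde{R}$ on $(BA)^{i}$ makes $R$ ideally strong on $(BA)$ in full generality (not just ``in the relevant situations''), after which Theorem~\ref{istrong} applies directly. Indeed, for any continuous epimorphism $f:A\to I$ with $A,B\in(BA)$ and $I$ an ideal of $B$, both $A$ and $I$ lie in $(BA)^{i}$, so strongness gives $f(R(A))=f(\widetilde{R}(A))\subset\widetilde{R}(I)$, and then (5$_2^{\circ}$) for $\widetilde{R}$ gives $\widetilde{R}(I)\subset\widetilde{R}(B)=R(B)$. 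This is exactly (7$_i^{\circ}$).

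Your route---running $i_A$ through strongness to make $i_A(R(A)\widehat{\otimes}B)$ an $\widetilde{R}$-radical ideal, then pushing it into $R(A\widehat{\otimes}B)$ via (5$_2^{\circ}$) or the closure trick, and concluding by Lemma~\ref{equiv}---is precisely the content of the proof of Theorem~\ref{istrong} specialized to this setting. So nothing is wrong, but you could shorten your argument considerably by observing that ``ideally strong'' holds outright and citing the theorem you already mentioned.
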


\begin{proof}
Indeed, if $R$ does then $R$ is ideally strong and hence tensor by Theorem
\ref{istrong}.
\end{proof}

Some examples of tensor or weakly tensor radicals we will see in this and the
next sections as well as in subsequent publications of our project.

\subsection{The radical $R^{t}$}

In general, given a TR $R$ on $(BA)$, one can try to construct a related
tensor radical $R^{t}$ in the following way:
\begin{equation}
R^{t}(A)=\{a\in A:a{\otimes}B\subset R(A{\widehat{\otimes}}B),\quad\forall
B\in(BA)\}.
\end{equation}
It follows from the above arguments that if $R^{t}$ is a TR then it is a
tensor one (because of the associativity of tensor product) and that $R$
itself is tensor iff $R=R^{t}$. It is important to know conditions under which
$R^{t}$ is a TR. We will obtain now some results in this direction.

Let $I$ be a closed ideal of a Banach algebra $A$. For a Banach algebra $B$,
the homomorphism $i_{A}:I{\widehat{\otimes}}B\rightarrow A{\widehat{\otimes}%
}B$ is injective if say, $I$ has a bounded approximate identity. Moreover, in
this case $i_{A}$ is bounded from below. Indeed, given a b.a.i. $e_{\lambda}$
in $J$ with $\beta=\sup\Vert e_{\lambda}\Vert$, define a net of operators
$S_{\lambda}$ on $J{\widehat{\otimes}}B$ by $S_{\lambda}(a{\otimes
}b)=e_{\lambda}a{\otimes}b$. Then $S_{\lambda}\rightarrow1$ in the strong
operator topology. Now if $G\in J{\widehat{\otimes}}B$ and $F=i_{A}(G)$ then,
for any presentation of $F$ in the form $F=\sum a_{n}{\otimes}b_{n}$, one has
\[
\left\|  S_{\lambda}G\right\|  \leqslant\sum\left\|  e_{\lambda}a_{n}\right\|
\left\|  b_{n}\right\|  \leqslant\beta\sum\left\|  a_{n}\right\|  \left\|
b_{n}\right\|  .
\]
It follows that $\Vert S_{\lambda}G\Vert\leqslant\beta\Vert F\Vert$ and,
passing to the limit, $\Vert G\Vert\leqslant\beta\Vert F\Vert$.

In general $i_{A}$ can have a non-zero kernel $K=K(A,I,B)$. The algebras
$K(A,I,B)$ will be called \textit{tensor pathological algebras}.

\begin{theorem}
\label{rtrad} Let $R$ be an HTR on $(BA)$. Then

\begin{itemize}
\item [(i)]$R^{t}(A)$ is a closed ideal of $A$, for any Banach algebra $A$.

\item[(ii)] If $R$ is ideally strong then $R^{t}$ satisfies conditions
$(2^{\circ})$, $(3^{\circ})$, $(4^{\circ})$ and $(5_{2}^{\circ})$.

\item[(iii)] If $R$ is strictly hereditary and if all tensor pathological
algebras are $R$-radical then $R^{t}$ is an HTR on $(BA)$.
\end{itemize}
\end{theorem}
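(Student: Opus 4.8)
The plan is to establish the three parts in turn, leaning on Lemmas~\ref{ideal} and~\ref{equiv}, on the radical calculus of Subsection~\ref{sub0} (Lemma~\ref{2.1}, Corollary~\ref{2.2}), and on the Open Mapping Theorem (which turns bounded surjections of Banach algebras into open epimorphisms, so that the morphism axiom for $R$ becomes available). For (i) I would first note that $R^{t}(A)$ is a linear subspace because $R(A{\widehat{\otimes}}B)$ is one, and that it is closed because $a\mapsto a{\otimes}b$ is bounded for each fixed $b$ and $R(A{\widehat{\otimes}}B)$ is closed. For the ideal property I would pass to the unitization: given $a\in R^{t}(A)$ and $c\in A$, for any $B\in(BA)$ and $b\in B$ one has $a{\otimes}b\in R(A{\widehat{\otimes}}B^{1})$, hence $(ca){\otimes}b=(c{\otimes}1)(a{\otimes}b)\in R(A{\widehat{\otimes}}B^{1})$ because the latter is an ideal; since $A{\widehat{\otimes}}B$ is a closed ideal of $A{\widehat{\otimes}}B^{1}$ and $R$ is hereditary, $(ca){\otimes}b\in(A{\widehat{\otimes}}B)\cap R(A{\widehat{\otimes}}B^{1})=R(A{\widehat{\otimes}}B)$, and symmetrically for $ac$.

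For (ii), $(3^{\circ})$ comes straight from functoriality of ${\widehat{\otimes}}$: a topological isomorphism $f\colon A\to B$ induces a topological isomorphism $A{\widehat{\otimes}}C\to B{\widehat{\otimes}}C$, and $R$ satisfies $(3^{\circ})$. For $(4^{\circ})$ I would use that the canonical epimorphism $p\colon A{\widehat{\otimes}}B\to(A/I){\widehat{\otimes}}B$ (which sends $a{\otimes}b$ to $q_{I}(a){\otimes}b$) is an open continuous epimorphism, hence carries $R(A{\widehat{\otimes}}B)$ into $R((A/I){\widehat{\otimes}}B)$; applying it to $a{\otimes}b$ with $a\in R^{t}(A)$ gives $q_{I}(a)\in R^{t}(A/I)$. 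For $(5_{2}^{\circ})$, $R^{t}(I)\subset I$ is trivial, and since $i_{A}\colon I{\widehat{\otimes}}B\to A{\widehat{\otimes}}B$ is a continuous epimorphism onto the ideal $i_{A}(I{\widehat{\otimes}}B)$ (Lemma~\ref{ideal}(i)), ideal strength of $R$ gives, for $a\in R^{t}(I)$, $a{\otimes}b=i_{A}(a{\otimes}b)\in i_{A}(R(I{\widehat{\otimes}}B))\subset R(A{\widehat{\otimes}}B)$, so $a\in R^{t}(A)$. The substantive point is $(2^{\circ})$: put $I=R^{t}(A)$. By the very definition of $R^{t}$, $R^{t}(A){\otimes}B\subset R(A{\widehat{\otimes}}B)$, so the closed ideal $U:=\widetilde{R^{t}(A){\otimes}B}$ — which is exactly the kernel-object of Lemma~\ref{ideal}(ii) for $J=I$ — satisfies $U\subset R(A{\widehat{\otimes}}B)$ and is therefore $R$-radical; Corollary~\ref{2.2}(ii) then gives $R((A{\widehat{\otimes}}B)/U)=q_{U}(R(A{\widehat{\otimes}}B))$. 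Now if $q_{I}(a)\in R^{t}(A/I)$, then $q_{I}(a){\otimes}b\in R((A/I){\widehat{\otimes}}B)$ for every $B,b$; pushing this through the open continuous epimorphism $\tau_{I}$ of Lemma~\ref{ideal}(ii) yields $a{\otimes}b+U\in R((A{\widehat{\otimes}}B)/U)=q_{U}(R(A{\widehat{\otimes}}B))$, i.e. $a{\otimes}b\in R(A{\widehat{\otimes}}B)+U=R(A{\widehat{\otimes}}B)$; as $B,b$ were arbitrary, $a\in R^{t}(A)$, so $q_{I}(a)=0$ and $R^{t}(A/I)=0$.

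For (iii), since a strictly hereditary TR is ideally strong, part (ii) already supplies $(2^{\circ})$–$(5_{2}^{\circ})$, so it suffices to prove the hereditary identity $(6^{\circ})$, $R^{t}(I)=I\cap R^{t}(A)$ for a closed ideal $I$ of $A$; then $(5_{1}^{\circ})$ is automatic and $(1^{\circ})$ follows by taking $I=R^{t}(A)$, closed by (i). The inclusion $\subset$ is $(5_{2}^{\circ})$. For $\supset$, take $a\in I\cap R^{t}(A)$ and fix $B$, $b\in B$; then $a{\otimes}b\in i_{A}(I{\widehat{\otimes}}B)$ (as $a\in I$) and $a{\otimes}b\in R(A{\widehat{\otimes}}B)$ (as $a\in R^{t}(A)$). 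Let $K=K(A,I,B)=\ker i_{A}$, which by hypothesis is an $R$-radical closed ideal of $I{\widehat{\otimes}}B$, hence $K\subset R(I{\widehat{\otimes}}B)$ by Lemma~\ref{2.1}(i). The monomorphism $(I{\widehat{\otimes}}B)/K\to A{\widehat{\otimes}}B$ induced by $i_{A}$ is a continuous isomorphism onto the ideal $i_{A}(I{\widehat{\otimes}}B)$ of $A{\widehat{\otimes}}B$, so strict heredity of $R$, combined with $R((I{\widehat{\otimes}}B)/K)=q_{K}(R(I{\widehat{\otimes}}B))$ (Corollary~\ref{2.2}(ii), $K$ being closed and $R$-radical), gives
\[
i_{A}(R(I{\widehat{\otimes}}B))=i_{A}(I{\widehat{\otimes}}B)\cap R(A{\widehat{\otimes}}B).
\]
Hence $a{\otimes}b=i_{A}(c)$ for some $c\in R(I{\widehat{\otimes}}B)$, and then $c-a{\otimes}b\in K\subset R(I{\widehat{\otimes}}B)$, so $a{\otimes}b\in R(I{\widehat{\otimes}}B)$ for all $B,b$, i.e. $a\in R^{t}(I)$.

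The main obstacle is precisely that last step of (iii): the identity map $i_{A}\colon I{\widehat{\otimes}}B\to A{\widehat{\otimes}}B$ need not be injective, so strict heredity applies directly only to the injectified map on $(I{\widehat{\otimes}}B)/K$; the hypothesis that every tensor pathological algebra $K(A,I,B)$ be $R$-radical is exactly what lets one both identify $R((I{\widehat{\otimes}}B)/K)$ with $q_{K}(R(I{\widehat{\otimes}}B))$ and reabsorb the discrepancy $c-a{\otimes}b\in K$ back into $R(I{\widehat{\otimes}}B)$. A smaller but recurring technical point, used in all three parts, is the passage between tensor products and quotients furnished by Lemma~\ref{ideal} together with the Open Mapping Theorem, which is what makes the maps $p$ and $\tau_{J}$ behave as morphisms for $R$; and I would want to double-check at the outset (or cite) that $q_{I}{\widehat{\otimes}}\operatorname{id}_{B}$ is indeed a surjection of Banach algebras and that $A{\widehat{\otimes}}B$ sits in $A{\widehat{\otimes}}B^{1}$ as a closed ideal.
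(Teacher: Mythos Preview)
Your proof is correct and follows essentially the same route as the paper's. The organization differs in minor ways: in (i) you argue linearity and closedness of $R^{t}(A)$ directly and then use the unitization $B^{1}$ to get the ideal property via heredity, whereas the paper first restricts the defining condition to unital $B$ and then shows this restricted set equals $R^{t}(A)$; in $(2^{\circ})$ you note that $U=\widetilde{R^{t}(A)\otimes B}\subset R(A{\widehat{\otimes}}B)$ straight from the definition of $R^{t}$, which is slightly more direct than the paper's appeal to ideal strength at that point---but the rest (the use of $\tau_{J}$, Corollary~\ref{2.2}(ii), and the strict-heredity argument in (iii) passing through $(I{\widehat{\otimes}}B)/K$) is identical. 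The two technical facts you flag at the end---that $q_{I}{\widehat{\otimes}}\operatorname{id}_{B}$ is surjective and that $A{\widehat{\otimes}}B$ embeds isometrically as a closed ideal of $A{\widehat{\otimes}}B^{1}$---are exactly the facts the paper pauses to verify inside its proof, so your caution there is well placed.
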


\begin{proof}
(i) Let $I=\{a\in A:a{\otimes}B\subset R(A{\widehat{\otimes}}B)$ for every
unital $B\in(BA)\}$. Then $I$ is an ideal of $A$: if $a{\otimes}b\in
R(A{\widehat{\otimes}}B)$ then
\[
(ca){\otimes}b=(c{\otimes}1)(a{\otimes}b)\in R(A{\otimes}B).
\]
Clearly $R^{t}(A)\subset I$. We will prove that $I=R^{t}(A)$.

Note first that if $B$ is a non-unital Banach algebra and $j$ the natural
homomorphism of $A{\widehat{\otimes}}B$ to $A{\widehat{\otimes}}B^{1}$ then
$j$ is isometric and $j(A{\widehat{\otimes}}B)$ is a closed ideal of
$A{\widehat{\otimes}}B^{1}$ (here $A{\widehat{\otimes}}B^{1}$ is the product
of $A$ and $B^{1}$).

Indeed, if $F\in A{\widehat{\otimes}}B$ and $j(F)$ has a representative $\sum
a_{n}{\otimes}(b_{n}+\lambda_{n})$ with $\lambda_{n}$ scalar multiplies of the
unit such that, given $\varepsilon>0$,
\[
\Vert j(F)\Vert+\varepsilon>\sum\left\|  a_{n}\right\|  \left\|  b_{n}%
+\lambda_{n}\right\|  =\sum\left\|  a_{n}\right\|  (\left\|  b_{n}\right\|
+\left|  \lambda_{n}\right|  )
\]
(using the standard norm in $B\oplus\mathbb{C}=B^{1}$) then $\sum\lambda
_{n}a_{n}$ converges to $0$ since $A{\widehat{\otimes}}B$ is a complemented
subspace of $A{\widehat{\otimes}}B^{1}$, whence $F=\sum a_{n}{\otimes}b_{n}$
and
\[
\left\|  j(F)\right\|  +\varepsilon>\sum{\Vert}a_{n}{\Vert\Vert}b_{n}%
{\Vert\geqslant\Vert}F{\Vert}.
\]
It follows that $\Vert j(F)\Vert\geqslant\Vert F\Vert$. The converse
inequality is evident. Hence clearly $j(A{\widehat{\otimes}}B)$ is a closed
ideal of $A{\widehat{\otimes}}B^{1}$.

If $a\in I$ then $a{\otimes}B^{1}\subset R(A{\widehat{\otimes}}B^{1})$,
\[
j(a{\otimes}B)\subset j(A{\widehat{\otimes}}B)\cap R(A{\widehat{\otimes}}%
B^{1})=R(j(A{\widehat{\otimes}}B))=j(R(A{\widehat{\otimes}}B))
\]
(we used (6$^{\circ}$) and (3$^{\circ}$) for $R$), whence $a{\otimes}B\subset
R(A{\widehat{\otimes}}B)$ and $a\in R^{t}(A)$. Thus $I=R^{t}(A)$ is an ideal
of $A$, obviously closed.

(ii) Let $I$ be a closed ideal of a Banach algebra $A$. If $a\in R^{t}(I)$
then, for each $B\in(BA)$ and each $b\in B$, $a{\otimes}b\in R(I{\widehat
{\otimes}}B)$. By Lemma \ref{ideal}, the image of the continuous homomorphism
$i_{A}:I{\widehat{\otimes}}B\rightarrow A{\widehat{\otimes}}B$ is an ideal of
$A{\widehat{\otimes}}B$. Since $R$ is ideally strong, $i_{A}(a{\otimes}b)\in
R(A{\widehat{\otimes}}B)$, that is $a{\otimes}b\in R(A{\widehat{\otimes}}B)$.
This means that $a\in R^{t}(A)$, whence $R^{t}(I)\subset I\cap R^{t}(A)$. We
proved that $R^{t}$ satisfies (5$_{2}^{\circ}$).

Now let us show that $q_{I}(R^{t}(A))\subset R^{t}(A/I)$. Let $a\in R^{t}(A)$.
Then $a{\otimes}b\in R(A{\widehat{\otimes}}B)$ for each $B\in(BA)$. Let
$f:A{\widehat{\otimes}}B\rightarrow(A/I){\widehat{\otimes}}B$ be the
epimorphism defined by $f(x{\otimes}y)=(x+I){\otimes}y$ for every $x\in A$,
$y\in B$. Then
\[
f(a{\otimes}b)\in R((A/I){\widehat{\otimes}}B),\quad(a+I){\otimes}b\in
R((A/I){\widehat{\otimes}}B),
\]

whence $a+I\in R^{t}(A/I),$and therefore $R^{t}$satisfies (4$^{\circ}$).

The property (3$^{\circ}$) is evident and we have to prove (2$^{\circ}$).
Denote $R^{t}(A)$ by $J$ for brevity. Let, as in Lemma \ref{ideal}, $U$ be the
closure of $i_{A}(J{\widehat{\otimes}}B)$ in $A{\widehat{\otimes}}B$ and
$\tau_{J}$ the epimorphism of $(A/J){\widehat{\otimes}}B$ onto $(A{\widehat
{\otimes}}B)/U$. Since $i_{A}(J{\widehat{\otimes}}B)$ is an ideal of
$A{\widehat{\otimes}}B$ (Lemma \ref{ideal}) and $R$ is ideally strong, the
ideal $U$ is $R$-radical. Using Corollary \ref{2.1}, we get that
\begin{equation}
R((A{\widehat{\otimes}}B)/U)=q_{U}(R(A{\widehat{\otimes}}B)). \label{f40}%
\end{equation}
Furthermore, it follows from (7$^{\circ}$) (that holds for $R$) that
\begin{equation}
\tau_{J}(R((A/J){\widehat{\otimes}}B))\subset R((A{\widehat{\otimes}}B)/U).
\label{f41}%
\end{equation}
Therefore, it follows from (\ref{f40}) and (\ref{f41}) that
\[
\tau_{J}(R((A/J){\widehat{\otimes}}B))\subset q_{U}(R(A{\widehat{\otimes}%
}B)).
\]

Let now $a+J\in R^{t}(A/J)$. Then $(a+J){\otimes}b\subset R((A/J){\widehat
{\otimes}}B)$, whence%
\[
\tau_{J}((a+J){\otimes}b)\in q_{U}(R(A{\widehat{\otimes}}B),
\]
and $a{\otimes}b+U\in q_{U}(R(A{\widehat{\otimes}}B))$. Since $U\subset
R(A{\widehat{\otimes}}B)$, we conclude that $a{\otimes}b\in R(A{\widehat
{\otimes}}B)$. Hence $a\in R^{t}(A)=J$ and $a+J=0$. This shows that
(2$^{\circ}$) holds for $R^{t}$.

(iii) Suppose now that $R$ is strictly hereditary and all tensor pathological
algebras are $R$-radical. We need only to prove the property (6$^{\circ}$)
(which implies (1$^{\circ}$) and (5$_{1}^{\circ}$)). Let again $I$ be a closed
ideal of a Banach algebra $A$. Fix $B\in(BA)$ and denote by $K$ the kernel of
the homomorphism $i_{A}:I{\widehat{\otimes}}B\rightarrow A\widehat{{\otimes}%
}B$ and by $h$ the induced homomorphism of $(I{\widehat{\otimes}}B)/K$ to
$A{\widehat{\otimes}}B$. Note that $i_{A}=hq_{K}$. It is important that $h$ is
injective and its image is an ideal of $A{\widehat{\otimes}}B$.

Let now $a\in I\cap R^{t}(A)$. Then, for each $b\in B$, the element
$i_{A}(a{\otimes}b)$ belongs to $R(A{\widehat{\otimes}}B)$ (here we consider
$a{\otimes}b$ as an element of $I{\widehat{\otimes}}B$). Since $R$ is strictly
hereditary,
\[
h(R((I{\widehat{\otimes}}B)/K))=i_{A}(I{\widehat{\otimes}}B)\cap
R(A{\widehat{\otimes}}B),
\]
whence%
\[
i_{A}(a{\otimes}b)\in h(R((I{\widehat{\otimes}}B)/K)).
\]
Since the algebra $K$ is tensor pathological, it is $R$-radical and, by
Corollary \ref{2.1},
\[
R((I{\widehat{\otimes}}B)/K)=q_{K}(R(I{\widehat{\otimes}}B)).
\]
Thus%
\[
i_{A}(a{\otimes}b)\in h(q_{K}(R(I{\widehat{\otimes}}B)))=i_{A}(R(I{\widehat
{\otimes}}B)),
\]
whence
\[
a{\otimes}b-F\in\ker i_{A}%
\]
for some $F\in R(I{\widehat{\otimes}}B)$. Since $\ker i_{A}=K$ is $R$-radical,
it is contained in $R(I{\widehat{\otimes}}B)$, that gives
\[
a{\otimes}b\in R(I{\widehat{\otimes}}B).
\]
Therefore $a\in R^{t}(I)$. We proved the inclusion $I\cap R^{t}(A)\subset
R^{t}(I)$. The converse inclusion was established in (5$_{2}^{\circ}$).
\end{proof}

\subsection{\label{sub}Tensor properties of the Jacobson radical}

We know that the Jacobson radical $\operatorname{Rad}$ is strictly hereditary
(see Corollary \ref{p2}). But the result of Theorem \ref{rtrad} cannot be
immediately applied to $R=\operatorname{Rad}$ because we do not know if the
tensor pathological algebras are radical. The proof of the fact that
$\operatorname{Rad}^{t}$ is an HTR will be finished by means of a technique
related to the notion of \textit{joint quasinilpotence}.

The most interesting and important problem is one of the coincidence
$\operatorname{Rad}^{t}$ with $\operatorname{Rad}$. In the form ``is the
projective tensor product of a radical Banach algebra $A$ and arbitrary Banach
algebra $B$ radical?'' it in fact goes back to \cite{A79}, where the case of
commutative $B$ was solved. Being non-able to solve it in the full generality,
we consider its individual aspects.

Let us call a Banach algebra $A$ \textit{tensor radical} if
$A=\operatorname{Rad}^{t}(A)$ that is if $A{\widehat{\otimes}}B$ is the
Jacobson radical for arbitrary Banach algebra $B$; $A$ is called
\textit{tensor perfect} if $A{\widehat{\otimes}}B$ is radical, for any radical
Banach algebra $B$.

In what follows it is convenient to deal with `generalized subsets' of a
normed algebra.

Let $G$ be an arbitrary set, $M=(a_{\alpha})_{\alpha\in\Lambda}$ and
$N=(b_{\beta})_{\beta\in\Omega}$ families of elements of $G$. We write
$M\subset N$ if there exists an one-to-one map $\varphi$ from $\Lambda$ in
$\Omega$ such that $b_{\varphi(\alpha)}=a_{\alpha}$ for every $\alpha
\in\Lambda$, and $M\simeq N$ if $M\subset N$ and $N\subset M$. The relation
$\simeq$ is clearly an equivalence relation on the set of all families of
elements of $G$. We call the classes of equivalence by \textit{generalized
subsets }of $G$. For brevity we sometimes do not differ a generalized subset
and its arbitrary representative.

It is useful to interpret generalized subsets of $G$ as follows. If $M$ is a
generalized subset of $G$ with a representative $(a_{\alpha})_{\alpha
\in\Lambda}$ then one can clearly identify $M$ with the set $M^{\sharp}$ of
all pairs $(a,t)$, where $a\in A$ and $t=\operatorname*{card}\{\alpha
\in\Lambda:a_{\alpha}=a\}>0$. Under consideration of all such pairs $(a,t)$
for $t\geqslant0$, we also identify $M$ with the cardinal-valued function
$\varkappa_{M}:a\longmapsto t$ defined on $G$. We call $\varkappa_{M}$ the
\textit{functional representation }of $M$. It is clear that one can regard
usual subsets as generalized ones: their functional representations coincide
with their indicators. This justifies the term `generalized subset'.

In terms of the functional representations the inclusion $M\subset N$ for
generalized subsets of $G$ turns into the inequality
\[
\varkappa_{M}(a)\leqslant\varkappa_{N}(a)
\]
for every $a\in G$. It is convenient to define the \textit{union} $M\cup N$
and the \textit{intersection} $M\cap N$ via their functional representations
as follows:
\[
\varkappa_{M\cup N}(a)=\max\{\varkappa_{M}(a),\varkappa_{N}(a)\}
\]
and
\[
\varkappa_{M\cap N}(a)=\min\{\varkappa_{M}(a),\varkappa_{N}(a)\}
\]
for every $a\in G$. Union and intersection of a collection of generalized
subsets are defined similarly.

Now let $A$ be a normed algebra, and let $M,N$ be generalized subsets of $A$.
Let $(a_{\alpha})_{\alpha\in\Lambda}$ and $(b_{\beta})_{\beta\in\Omega}$ be
representatives of $M$ and $N$, respectively. Then we define $MN$ as a
generalized subset of $A$ with the representative $(a_{\alpha}b_{\beta
})_{(\alpha,\beta)\in\Lambda\times\Omega}$. Note that
\[
\varkappa_{MN}(a)=\sum_{(b,c)\in A\times A,\,bc=a}\varkappa_{M}(b)\varkappa
_{N}(c)
\]
for every $a\in A$.

Given a generalized subset $M$ of $A$, set
\begin{equation}
\Vert M\Vert_{1}=\sum_{\alpha\in\Lambda}\Vert a_{\alpha}\Vert\label{f60}%
\end{equation}
(the sum is calculated as $\sup_{\Delta\subset\Lambda}\sum_{\alpha\in\Delta
}\Vert a_{\alpha}\Vert$, where $\Delta$ runs over all finite subsets of
$\Lambda$). Clearly $\Vert M\Vert_{1}$ does not depend on the choice of a
representative of $M$. If $\Vert M\Vert_{1}<\infty$, we say that $M$ is
\textit{summable}. Note that if $M$ is summable then the set $M^{\sharp}$ is
(finite or) countable, $\varkappa_{M}(a)<\infty$ for every $a\in A$ and
\begin{equation}
\Vert M\Vert_{1}=\sum_{a\in A}\varkappa_{M}(a)\Vert a\Vert. \label{f59}%
\end{equation}

Furthermore, clearly
\[
\Vert MN\Vert_{1}\leqslant\Vert M\Vert_{1}\Vert N\Vert_{1},
\]
whence, setting $M^{n}=MM\cdots M$ ($n$ times),
\begin{equation}
\Vert M^{n+m}\Vert_{1}\leqslant\Vert M^{n}\Vert_{1}\Vert M^{m}\Vert_{1}
\label{f58}%
\end{equation}
for every $n,m\in\mathbb{N}$. It follows from (\ref{f58}) that, for every
summable generalized subset $M$ of $A$, there exists a limit
\begin{equation}
\rho_{1}(M)=\lim(\Vert M^{n}\Vert_{1})^{1/n}=\inf(\Vert M^{n}\Vert_{1})^{1/n}.
\label{f61}%
\end{equation}
Note that $(M^{m})^{n}=M^{mn}$ for every $n,m\in\mathbb{N}$, whence
\begin{equation}
\rho_{1}(M^{m})^{1/m}=(\lim_{n}(\Vert(M^{m})^{n}\Vert_{1})^{1/n})^{1/m}%
=\lim_{n}(\Vert M^{mn}\Vert_{1})^{1/nm}=\rho_{1}(M). \label{f62}%
\end{equation}

If $f$ is a map from $A$ into a normed algebra $B$, $fM$ denotes the
generalized subset of $B$ with the representative $(fa_{\alpha})_{\alpha
\in\Lambda}$; in particular, if $I$ is a closed ideal of $A$ then $M/I$
denotes the generalized subset of $A/I$ with the representative $(a_{\alpha
}+I)_{\alpha\in\Lambda}$. Note that
\[
\varkappa_{fM}(b)=\sum_{a\in A,\,fa=b}\varkappa_{M}(a)
\]
for every $b\in B$. If $f$ is a bounded homomorphism then
\begin{equation}
\rho_{1}(fM)=\lim(\Vert(fM)^{n}\Vert_{1})^{1/n}\leqslant\lim\left\|
f\right\|  ^{1/n}(\Vert M^{n}\Vert_{1})^{1/n}\leqslant\rho_{1}(M). \label{f63}%
\end{equation}

Since we consider here usual subsets of $A$ as generalized ones, we calculate
$\rho_{1}$ for them (if they are summable). Of course in these calculations a
power of a subset must be understood in the above sense (that is we multiply
subsets as generalized subsets).

A normed algebra $A$ is called $1$\textit{-quasinilpotent} if $\rho_{1}(M)=0$
for every summable subset $M$ of $A$. Multiplying elements of a summable
generalized subset $M$ of $A$ by different numbers of modulus $1$, it is easy
to point out a summable subset $N$ of $A$ such that $\Vert M^{n}\Vert
_{1}=\Vert N^{n}\Vert_{1}$ for every $n\in\mathbb{N}$, in particular $\rho
_{1}(M)=\rho_{1}(N)$. So, if $A$ is $1$-quasinilpotent then $\rho_{1}(M)=0$
for every summable \textit{generalized} subset $M$ of $A$.

\begin{theorem}
\label{perrad} For a Banach algebra $A$ the following conditions are equivalent.

\begin{itemize}
\item [(i)]$A$ is tensor radical.

\item[(ii)] $A$ is $1$-quasinilpotent.

\item[(iii)] $A{\widehat{\otimes}}B$ is $1$-quasinilpotent, for any Banach
algebra $B$.
\end{itemize}
\end{theorem}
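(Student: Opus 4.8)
The plan is to prove the cycle $(\mathrm{i})\Rightarrow(\mathrm{ii})\Rightarrow(\mathrm{iii})\Rightarrow(\mathrm{i})$, the crux being a device that realizes $\rho_1$ of a summable subset of $A$ as an ordinary spectral radius of a single element of a suitable projective tensor product. Given a summable generalized subset $M=(a_\alpha)_{\alpha\in\Lambda}$ of $A$, let $S$ be the free semigroup on the alphabet $\Lambda$ and let $B_M=\ell^1(S)$ carry the convolution product, so that $B_M$ is a Banach algebra. Using the standard isometric algebra identification $A\,\widehat\otimes\,\ell^1(S)\cong\ell^1(S,A)$ (the latter with the convolution product inherited from $S$), set
\[
u_M=\sum_{\alpha\in\Lambda}a_\alpha\otimes\delta_\alpha .
\]
This series converges in $A\,\widehat\otimes\,B_M$ precisely because $\sum_\alpha\|a_\alpha\|\,\|\delta_\alpha\|=\|M\|_1<\infty$, where $\delta_\alpha$ is the point mass at the one-letter word $\alpha$. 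Since the factorization of a word of length $n$ into one-letter words is unique, $u_M^n$ corresponds under the above identification to the function on $S$ supported on words of length $n$ whose value at $\beta_1\cdots\beta_n$ is $a_{\beta_1}\cdots a_{\beta_n}$; hence $\|u_M^n\|=\sum_{\beta_1,\dots,\beta_n\in\Lambda}\|a_{\beta_1}\cdots a_{\beta_n}\|=\|M^n\|_1$, and therefore $\rho(u_M)=\rho_1(M)$.

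With this in hand, $(\mathrm{i})\Rightarrow(\mathrm{ii})$ is immediate: if $A$ is tensor radical then $A\,\widehat\otimes\,B_M$ is (Jacobson) radical, hence topologically nil (a Banach algebra is radical iff topologically nil), so $\rho(u_M)=0$ and thus $\rho_1(M)=0$; as $M$ ranges over all summable subsets this says $A$ is $1$-quasinilpotent, and by the remark preceding the theorem the same then holds for all summable generalized subsets. Likewise $(\mathrm{iii})\Rightarrow(\mathrm{i})$: applying the $1$-quasinilpotence of $A\,\widehat\otimes\,B$ to the singleton subsets $\{v\}$, $v\in A\,\widehat\otimes\,B$, gives $\rho(v)=\rho_1(\{v\})=0$, so $A\,\widehat\otimes\,B$ is topologically nil, hence radical; as this holds for every Banach algebra $B$, the algebra $A$ is tensor radical.

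For $(\mathrm{ii})\Rightarrow(\mathrm{iii})$ a direct norm estimate suffices. Fix a Banach algebra $B$ and a summable generalized subset $M=(u_\gamma)_\gamma$ of $A\,\widehat\otimes\,B$. Choosing, for each $\gamma$, a representation $u_\gamma=\sum_n a_{\gamma,n}\otimes b_{\gamma,n}$ with $\sum_n\|a_{\gamma,n}\|\,\|b_{\gamma,n}\|$ sufficiently close to $\|u_\gamma\|$, we may assume $\sum_{\gamma,n}\|a_{\gamma,n}\|\,\|b_{\gamma,n}\|<\infty$. Put $N=\bigl(\|b_{\gamma,n}\|\,a_{\gamma,n}\bigr)_{(\gamma,n)}$, a summable generalized subset of $A$; by hypothesis $\rho_1(N)=0$. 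Expanding each product $u_{\gamma_1}\cdots u_{\gamma_k}$ and using submultiplicativity of the projective norm,
\begin{align*}
\|M^k\|_1 &=\sum_{\gamma_1,\dots,\gamma_k}\|u_{\gamma_1}\cdots u_{\gamma_k}\|\\
&\leqslant\sum\|a_{\gamma_1,n_1}\cdots a_{\gamma_k,n_k}\|\,\|b_{\gamma_1,n_1}\|\cdots\|b_{\gamma_k,n_k}\|=\|N^k\|_1,
\end{align*}
the last sum running over all $(\gamma_1,n_1),\dots,(\gamma_k,n_k)$. Hence $\rho_1(M)\leqslant\rho_1(N)=0$, so $A\,\widehat\otimes\,B$ is $1$-quasinilpotent.

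The one genuinely non-formal point is the isometric \emph{algebra} identification $A\,\widehat\otimes\,\ell^1(S)\cong\ell^1(S,A)$ together with the exact equality $\|u_M^n\|=\|M^n\|_1$: one must check that convolution on $\ell^1(S)$ transports to the expected product on $\ell^1(S,A)$ and that distinct length-$n$ words contribute independently, so that no collision of words inflates or deflates the norm. Everything else is routine bookkeeping with summable families, and the passage between generalized and ordinary subsets is already recorded in the text.
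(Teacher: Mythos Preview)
Your proof is correct and uses the same central device as the paper: realising $\rho_1(M)$ as the ordinary spectral radius of the element $\sum a_\alpha\otimes\delta_\alpha$ in $A\widehat\otimes\ell^1(S)$ for the free semigroup $S$, via the isometric identification $A\widehat\otimes\ell^1(S)\cong\ell^1(S,A)$. The only organizational difference is in closing the cycle: you prove $(\mathrm{ii})\Rightarrow(\mathrm{iii})$ by a direct norm estimate (forming the summable family $N=(\|b_{\gamma,n}\|a_{\gamma,n})$ in $A$ and bounding $\|M^k\|_1\leqslant\|N^k\|_1$), whereas the paper first proves the weaker $(\mathrm{ii})\Rightarrow(\mathrm{i})$ by the same kind of estimate applied to a single element, and then obtains $(\mathrm{i})\Rightarrow(\mathrm{iii})$ from the associativity $(A\widehat\otimes B)\widehat\otimes C\cong A\widehat\otimes(B\widehat\otimes C)$ together with the already-established $(\mathrm{i})\Rightarrow(\mathrm{ii})$. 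Your route is slightly more self-contained; the paper's is slightly slicker once $(\mathrm{i})\Leftrightarrow(\mathrm{ii})$ is in hand.
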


\begin{proof}
The implication (iii)$\Rightarrow$(i) is evident.

(ii)$\Rightarrow$(i) Let $T=\sum a_{k}{\otimes}b_{k}\in A{\widehat{\otimes}}%
B$. We may suppose that $\Vert b_{k}\Vert=1$ and $\sum\Vert a_{k}\Vert<\infty
$. Thus the generalized subset $M$ with the representative $\{a_{k}:k=1,...\}$
is summable. By our assumption $\rho_{1}(M)=0$. Now we have
\begin{equation}
\Vert T^{n}\Vert=\Vert\sum a_{k_{1}}...a_{k_{n}}{\otimes}b_{k_{1}}...b_{k_{n}%
}\Vert\leqslant\sum\Vert a_{k_{1}}...a_{k_{n}}\Vert=\Vert M^{n}\Vert_{1}
\label{f50}%
\end{equation}
and accordingly $\rho(T)\leqslant\rho_{1}(M)=0$.

(i)$\Rightarrow$(ii) Let $G=\mathcal{S}_{1}(W)$ be the free unital semigroup
with a countable set $W=\{w_{k}\}_{k\geqslant1}$ of generators. That is
$G=\cup_{m\geqslant0}W_{m}$, where $W_{0}={1}$, the direct product
\[
W_{m}=W\times W...\times W
\]
is the set of `words' $w_{k_{1}}w_{k_{2}}...w_{k_{m}}$ of the length $m$, and
the multiplication is lexical. Let $B=l^{1}(G)$ be the corresponding semigroup
algebra. We show that if $A{\widehat{\otimes}}B$ is radical then $A$ is $1$-quasinilpotent.

Indeed, for any summable (generalized) subset $M$ of $A$ with a representative
$\{a_{k}:k=1,2,\ldots\}$, we define the element $T_{M}\in A{\widehat{\otimes}%
}B$ by
\begin{equation}
T_{M}=\sum a_{k}{\otimes}w_{k}. \label{fT}%
\end{equation}
Then
\[
T_{M}^{n}=\sum a_{k_{1}}...a_{k_{n}}{\otimes}w_{k_{1}}...w_{k_{n}}.
\]
Since $A{\widehat{\otimes}}l^{1}(G)$ is isometrically isomorphic via the map
$\pi:(a{\otimes}f)(g)\rightarrow f(g)a$ to the Banach algebra $l^{1}(G,A)$ of
all summable $A$-valued functions on $G$,
\begin{equation}
\Vert T_{M}^{n}\Vert=\sum\Vert a_{k_{1}}...a_{k_{n}}\Vert=\Vert M^{n}\Vert
_{1}.
\end{equation}
It follows that
\begin{equation}
\rho(T_{M})=\rho_{1}(M) \label{f3.6}%
\end{equation}
and $\rho_{1}(M)=0$.

(i)$\Rightarrow$(iii) Since $A$ is tensor radical, the algebra $(A{\widehat
{\otimes}}B){\widehat{\otimes}}C=A{\widehat{\otimes}}(B{\widehat{\otimes}}C)$
is radical, for any $B$ and $C$. Hence $A{\widehat{\otimes}}B$ is tensor
radical. Applying to $A{\widehat{\otimes}}B$ the implication (i)$\Rightarrow
$(ii), we see that $A{\widehat{\otimes}}B$ is $1$-quasinilpotent.
\end{proof}

\begin{corollary}
\label{4.13} A Banach algebra $A$ is tensor radical ($=1$-quasinilpotent) if
and only if $A{\widehat{\otimes}}l^{1}(G)$ is radical, where $G$ is a free
unital semigroup with countable set of generators.
\end{corollary}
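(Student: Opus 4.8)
The plan is to observe that Corollary~\ref{4.13} is already contained in Theorem~\ref{perrad}: one only needs to isolate which Banach algebra $B$ was actually used in each implication of that theorem and repackage the argument.

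First I would dispose of the ``only if'' direction, which is trivial. If $A$ is tensor radical then, by definition, $A{\widehat{\otimes}}B$ is (Jacobson) radical for \emph{every} Banach algebra $B$, so in particular $A{\widehat{\otimes}}l^{1}(G)$ is radical; nothing special about $G$ enters here.

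For the ``if'' direction I would simply reuse the construction from the implication (i)$\Rightarrow$(ii) in the proof of Theorem~\ref{perrad}. Assume $A{\widehat{\otimes}}l^{1}(G)$ is radical; being a radical Banach algebra it is topologically nil, so $\rho(T)=0$ for every $T\in A{\widehat{\otimes}}l^{1}(G)$. Given an arbitrary summable generalized subset $M$ of $A$ with a representative $\{a_{k}:k\geqslant1\}$, form $T_{M}=\sum_{k}a_{k}{\otimes}w_{k}$ as in \eqref{fT}. Using the isometric isomorphism $A{\widehat{\otimes}}l^{1}(G)\cong l^{1}(G,A)$ and the fact that the words $w_{k_{1}}\cdots w_{k_{n}}$ are pairwise distinct in $G$ for distinct $n$-tuples $(k_{1},\ldots,k_{n})$, one obtains $\Vert T_{M}^{n}\Vert=\sum\Vert a_{k_{1}}\cdots a_{k_{n}}\Vert=\Vert M^{n}\Vert_{1}$, hence $\rho(T_{M})=\rho_{1}(M)$. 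Since $\rho(T_{M})=0$, we get $\rho_{1}(M)=0$; as $M$ was arbitrary, $A$ is $1$-quasinilpotent, and therefore tensor radical by Theorem~\ref{perrad}.

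The only point requiring a moment's care --- and the closest thing to an obstacle --- is the equality $\Vert T_{M}^{n}\Vert=\sum\Vert a_{k_{1}}\cdots a_{k_{n}}\Vert$: after transporting $T_{M}^{n}$ to $l^{1}(G,A)$ one must know that no terms collapse, i.e. that distinct index strings give distinct basis elements of $l^{1}(G)$. This is exactly the freeness of the semigroup $G$, and it is already carried out implicitly in the proof of Theorem~\ref{perrad}. Thus the corollary follows with essentially no new work beyond citing Theorem~\ref{perrad} and the computation \eqref{f3.6}.
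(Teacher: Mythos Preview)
Your proof is correct and follows exactly the same approach as the paper's own proof, which simply says that ``only if'' is evident and ``if'' follows from \eqref{f3.6}. You have merely unpacked what the paper compresses into one line: the key identity $\rho(T_{M})=\rho_{1}(M)$ from the proof of Theorem~\ref{perrad} immediately gives $1$-quasinilpotence of $A$ once $A{\widehat{\otimes}}l^{1}(G)$ is radical.
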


\begin{proof}
``Only if'' is evident, ``if'' follows from (\ref{f3.6}).
\end{proof}

Thus the problem of $1$-quasinilpotence of all radical Banach algebras reduces
to the problem of tensor perfectness for a single algebra, namely for
$l^{1}(G)$.

Now we characterize $\operatorname{Rad}^{t}(A)$ in terms of $\rho_{1}$.

\begin{theorem}
\label{1quas}Let $A\in(BA)$. For $a\in A$ the following conditions are equivalent.

\begin{itemize}
\item [(i)]$a\in\operatorname{Rad}^{t}(A)$.

\item[(ii)] $\rho_{1}(aM)=0$, for any summable subset $M$ of $A$.

\item[(iii)] $a{\widehat{\otimes}}1\in\operatorname{Rad}(A{\widehat{\otimes}%
}l^{1}(G))$, where $G=\mathcal{S}_{1}(W)$ is the free unital semigroup with
countable set $W$ of generators.
\end{itemize}
\end{theorem}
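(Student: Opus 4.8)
The plan is to establish the cyclic implications $(\mathrm{i})\Rightarrow(\mathrm{iii})\Rightarrow(\mathrm{ii})\Rightarrow(\mathrm{i})$. The first is immediate from the definition of $\operatorname{Rad}^{t}$: the algebra $l^{1}(G)$ is unital with unit the point mass $1$ at the identity of $G$, so taking $B=l^{1}(G)$ and $b=1$ in the requirement $a\otimes B\subset\operatorname{Rad}(A\widehat{\otimes}B)$ gives at once $a\widehat{\otimes}1\in\operatorname{Rad}(A\widehat{\otimes}l^{1}(G))$.

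For $(\mathrm{iii})\Rightarrow(\mathrm{ii})$ I would argue as in the proof of Theorem~\ref{perrad}. Let $M=\{a_{n}\}$ be a summable subset of $A$; it is countable, so choose pairwise distinct generators $w_{n}\in W$ and put $T_{M}=\sum a_{n}\otimes w_{n}\in A\widehat{\otimes}l^{1}(G)$. Then $(a\widehat{\otimes}1)T_{M}=\sum(aa_{n})\otimes w_{n}$, and, using the isometric identification $A\widehat{\otimes}l^{1}(G)\cong l^{1}(G,A)$ together with the fact that distinct $k$-tuples of generators give distinct words in the free semigroup $G$, one computes $\|((a\widehat{\otimes}1)T_{M})^{k}\|=\|(aM)^{k}\|_{1}$ for every $k$, hence $\rho((a\widehat{\otimes}1)T_{M})=\rho_{1}(aM)$; here $aM$ denotes the summable generalized subset of $A$ with representative $(aa_{n})$, which is summable since $\|aM\|_{1}\leqslant\|a\|\,\|M\|_{1}$. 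Since $\operatorname{Rad}(C)$ is a topologically nil ideal for every $C\in(BA)$, the element $(a\widehat{\otimes}1)T_{M}$ — which lies in $\operatorname{Rad}(A\widehat{\otimes}l^{1}(G))$ by $(\mathrm{iii})$ — is quasinilpotent, so $\rho_{1}(aM)=0$.

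The substantial step is $(\mathrm{ii})\Rightarrow(\mathrm{i})$. Suppose $a\notin\operatorname{Rad}^{t}(A)$, so $a\otimes b\notin\operatorname{Rad}(A\widehat{\otimes}B)$ for some $B\in(BA)$ and some $b\in B$. Since $A\widehat{\otimes}B\in(BA)$ and $\operatorname{Rad}=\operatorname{rad}_{b}$ on $(BA)$ by Corollary~\ref{p9}, there is a continuous strictly irreducible representation $\pi$ of $(A\widehat{\otimes}B)^{1}$ by bounded operators on a Banach space $X$ with $\pi(a\otimes b)\neq0$. As $A\widehat{\otimes}B$ is an ideal of $(A\widehat{\otimes}B)^{1}$ on which $\pi$ does not act as zero, strict irreducibility gives $\pi(A\widehat{\otimes}B)y=X$ for every nonzero $y\in X$. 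Choose $x_{0}\in X$ with $y_{0}:=\pi(a\otimes b)x_{0}\neq0$ and then $T\in A\widehat{\otimes}B$ with $\pi(T)y_{0}=x_{0}$; then $\pi((a\otimes b)T)y_{0}=\pi(a\otimes b)\pi(T)y_{0}=y_{0}$, so $1$ belongs to the spectrum of the operator $\pi((a\otimes b)T)$, and since $\pi$ is a bounded homomorphism, $\rho((a\otimes b)T)\geqslant\rho(\pi((a\otimes b)T))\geqslant1$.

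To contradict this, write $T=\sum_{n}a_{n}\otimes b_{n}$ with $\|b_{n}\|=1$ and $\sum\|a_{n}\|<\infty$, and let $M$ be the summable generalized subset of $A$ with representative $(a_{n})$. Expanding $((a\otimes b)T)^{k}=\sum(aa_{i_{1}})\cdots(aa_{i_{k}})\otimes(bb_{i_{1}})\cdots(bb_{i_{k}})$ and using the triangle inequality, $\|x\otimes y\|\leqslant\|x\|\|y\|$, and $\|(bb_{i_{1}})\cdots(bb_{i_{k}})\|\leqslant\|b\|^{k}$ yields $\|((a\otimes b)T)^{k}\|\leqslant\|b\|^{k}\|(aM)^{k}\|_{1}$, whence $\rho((a\otimes b)T)\leqslant\|b\|\,\rho_{1}(aM)$. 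Hypothesis $(\mathrm{ii})$ passes from ordinary to generalized summable subsets exactly as in the observation preceding Theorem~\ref{perrad} — multiply the entries of $M$ by suitable unimodular scalars to turn $M$ into an ordinary subset, which leaves every $\|(aM)^{k}\|_{1}$ unchanged — so $\rho_{1}(aM)=0$, and hence $\rho((a\otimes b)T)=0$, contradicting $\rho((a\otimes b)T)\geqslant1$. Thus $a\in\operatorname{Rad}^{t}(A)$. The real difficulty is exactly this conversion of the purely algebraic failure of radicality of $a\otimes b$ into a genuine positive lower bound for a topological spectral radius: it forces one to use that $\operatorname{Rad}$ on Banach algebras is detected by \emph{continuous} strictly irreducible representations, and then to read off the coefficients of the intertwiner $T$ as the summable subset of $A$ to which $(\mathrm{ii})$ is applied.
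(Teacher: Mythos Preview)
Your proof is correct and follows the same cycle $(\mathrm{i})\Rightarrow(\mathrm{iii})\Rightarrow(\mathrm{ii})\Rightarrow(\mathrm{i})$ as the paper, with the same key estimate $\rho((a\otimes b)T)\leqslant\|b\|\,\rho_{1}(aM)$ and the same use of the identity $\rho(T_{aM})=\rho_{1}(aM)$ via the isometry $A\widehat{\otimes}l^{1}(G)\cong l^{1}(G,A)$. The only difference is in $(\mathrm{ii})\Rightarrow(\mathrm{i})$: the paper argues directly rather than by contrapositive, showing $\rho((a\otimes b)T)=0$ for \emph{every} $T\in A\widehat{\otimes}B$ and then invoking the standard characterization $x\in\operatorname{Rad}(C)\Leftrightarrow\rho(xy)=0$ for all $y\in C$ (an immediate consequence of Corollary~\ref{p3}); this bypasses the detour through continuous strictly irreducible representations that you use to manufacture a particular $T$ with $\rho((a\otimes b)T)\geqslant1$. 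Your route is valid but reproves a fragment of that characterization rather than simply citing it.
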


\begin{proof}
(ii)$\Rightarrow$(i) Let $B\in(BA)$, and let $T=\sum a_{i}{\otimes}b_{i}\in
A{\widehat{\otimes}}B$ with $\Vert b_{i}\Vert=1$ and $\sum\Vert a_{i}%
\Vert<\infty$. Then
\[
(a{\otimes}b)T=\sum aa_{i}{\otimes}bb_{i}%
\]
for arbitrary $b\in B$. Multiplying by numbers of modulus $1$, one can assume
that $a_{i}\neq a_{j}$ if $i\neq j$; Let $M$ be a subset of $A$ consisting of
all $a_{i}$. Then $aM$ has the representative $\{aa_{i}:i\geqslant1\}$ and
\[
\rho((a{\otimes}b)T)\leqslant\Vert b\Vert\rho_{1}(aM)
\]
(the estimate is similar to (\ref{f50}) in the proof of Theorem \ref{perrad}).
Since $\rho_{1}(aM)=0$, $(a{\otimes}b)T$ is quasinilpotent. Since this holds
for arbitrary $T\in A{\widehat{\otimes}}B$, $a{\otimes}b\in\operatorname{Rad}%
(A{\widehat{\otimes}}B)$. Therefore $a\in\operatorname{Rad}^{t}(A)$.

(i)$\Rightarrow$(iii) is evident.

(iii)$\Rightarrow$(ii) Let $M$ be a summable subset of $A$ with a
representative $\{a_{k}:k\geqslant1\}$. Then the element $T_{aM}=\sum
(aa_{k}{\otimes}w_{k})$ of $A{\widehat{\otimes}}l^{1}(G)$ is quasinilpotent
because $T_{aM}=(a{\otimes}1)T_{M}$ (see the definition of $T_{M}$ in
(\ref{fT})). By~ (\ref{f3.6}), $\rho_{1}(aM)=0$ and this shows that $a$
satisfies (ii).
\end{proof}

\begin{theorem}
\label{4.16.1} $\operatorname{Rad}^{t}$ is a symmetric uniform tensor HTR on
$(BA)$.
\end{theorem}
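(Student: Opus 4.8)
The statement bundles four assertions about $R^{t}:=\operatorname{Rad}^{t}$ on $(BA)$: that it is an HTR, that it is tensor, that it is uniform, and that it is symmetric. For the first two I would invoke Theorem~\ref{rtrad}. By Corollary~\ref{p2} the Jacobson radical is a strictly hereditary (in particular ideally strong) TR on $(QA)$, hence on $(BA)\subset(QA)$; so by Theorem~\ref{rtrad}(iii) the only thing left to check for ``$R^{t}$ is an HTR on $(BA)$'' is the hypothesis that every tensor pathological algebra $K=K(A,I,B)$ is $\operatorname{Rad}$-radical. Granting that, $R^{t}$ is an HTR on $(BA)$ by \ref{rtrad}(iii), and then it is tensor automatically, being a TR (associativity of $\widehat{\otimes}$, as noted right after the definition of $R^{t}$).

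The one point carrying real content is the radicality of the tensor pathological algebras, and here I would argue that in fact $K^{2}=0$. Since $I$ is an ideal of $A$, the Banach algebra $A\widehat{\otimes}B$ acts on $I\widehat{\otimes}B$: the bilinear map sending $(a\otimes b,\,x\otimes c)$ to $ax\otimes bc$ satisfies $\Vert T\cdot H\Vert\leqslant\Vert T\Vert\,\Vert H\Vert$ on elementary tensors, hence extends to a bounded bilinear action $(A\widehat{\otimes}B)\times(I\widehat{\otimes}B)\to I\widehat{\otimes}B$. On elementary tensors of $I\widehat{\otimes}B$ this action precomposed with $i_{A}$ coincides with the internal multiplication of $I\widehat{\otimes}B$; by density and boundedness, $GH=i_{A}(G)\cdot H$ for all $G,H\in I\widehat{\otimes}B$. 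Consequently, for $G\in K=\ker i_{A}$ one gets $GH=i_{A}(G)\cdot H=0$ for every $H$, so $K\cdot(I\widehat{\otimes}B)=0$ and a fortiori $K^{2}=0$; a square-zero algebra is $\operatorname{Rad}$-radical. This is the step I expect to be the crux; should this formal argument conceal a difficulty, the alternative (presumably the route actually taken, in line with the ``joint quasinilpotence'' remark) is to prove that $K$ is $1$-quasinilpotent and then apply Theorem~\ref{perrad}.

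Uniformity: by Theorem~\ref{perrad} a Banach algebra is tensor radical iff it is $1$-quasinilpotent. If $A$ is tensor radical and $C$ is a closed subalgebra of $A$, then any summable subset $M$ of $C$ is summable in $A$ and all its powers $M^{n}$ stay inside $C$ with the ambient norm, so $\rho_{1}(M)$ is the same, namely $0$, whether computed in $C$ or in $A$; hence $C$ is $1$-quasinilpotent, i.e.\ tensor radical, and $R^{t}$ is uniform. Symmetry: $\operatorname{Rad}$ is symmetric on $(BA)$ (it is symmetric on $(QA)$ and $(BA)\subset(QA)$), and for Banach algebras $A,C$ one has the identification $A^{\operatorname{op}}\widehat{\otimes}C=(A\widehat{\otimes}C^{\operatorname{op}})^{\operatorname{op}}$, as the multiplications agree on elementary tensors and the projective norm is insensitive to the multiplication. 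Since $C\mapsto C^{\operatorname{op}}$ is a bijection of $(BA)$, these two facts together with the definition of $R^{t}$ give, for every $a$, that $a\in R^{t}(A^{\operatorname{op}})$ iff $a\otimes c\in\operatorname{Rad}(A\widehat{\otimes}C^{\operatorname{op}})$ for all $C$ and all $c$, iff $a\in R^{t}(A)$; thus $R^{t}(A)=R^{t}(A^{\operatorname{op}})$, so $R^{t}$ is symmetric. (One could instead read symmetry off Theorem~\ref{1quas}, but the reversal of products there makes the present route cleaner.)
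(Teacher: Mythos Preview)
Your proof is correct, and for the HTR part it takes a genuinely different route from the paper. The paper does \emph{not} verify the hypothesis of Theorem~\ref{rtrad}(iii); the paragraph opening Subsection~\ref{sub} says explicitly that the authors do not know whether the tensor pathological algebras are radical. Instead, the paper uses only parts (i) and (ii) of Theorem~\ref{rtrad} to obtain $(2^{\circ})$--$(4^{\circ})$ and $(5_{2}^{\circ})$ for $\operatorname{Rad}^{t}$, and then proves the missing heredity inclusion $I\cap\operatorname{Rad}^{t}(A)\subset\operatorname{Rad}^{t}(I)$ directly from Theorem~\ref{1quas}: if $a\in I\cap\operatorname{Rad}^{t}(A)$ then $\rho_{1}(aM)=0$ for every summable $M\subset A$, in particular for every summable $M\subset I$, so $a\in\operatorname{Rad}^{t}(I)$. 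Since $(6^{\circ})$ then holds, $(1^{\circ})$ and $(5_{1}^{\circ})$ follow.

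Your module-action argument that $K(A,I,B)\cdot(I\widehat{\otimes}B)=0$, hence $K^{2}=0$, is valid and is actually stronger than anything the paper establishes about $K$: it settles, in the sharpest form, the question the authors leave open, and makes Theorem~\ref{rtrad}(iii) applicable as written. Your fallback guess (prove $K$ is $1$-quasinilpotent and invoke Theorem~\ref{perrad}) is not what the paper does either; the paper simply bypasses $K$ altogether. The two approaches buy different things: yours is pure Banach-module bookkeeping, independent of the $\rho_{1}$-machinery; the paper's route shows that once Theorem~\ref{1quas} is available, heredity is a one-line consequence. For uniformity and symmetry your arguments are essentially the paper's (which just declares both ``clear'' from the $1$-quasinilpotence description); your tensor-opposite identification for symmetry is a perfectly good spelling-out, and as you note Theorem~\ref{1quas} gives it too once one checks $\rho_{1}(aM)=\rho_{1}(Ma)$.
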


\begin{proof}
It follows from Theorem \ref{rtrad} that to see that $\operatorname{Rad}^{t}$
is an HTR we need only to prove the inclusion $I\cap\operatorname{Rad}%
^{t}(A)\subset\operatorname{Rad}^{t}(I)$. This easily follows from Theorem
\ref{1quas}. Indeed, if $a\in I\cap\operatorname{Rad}^{t}(A)$ then $\rho
_{1}(aM)=0$, for every summable subset of $I$, because it is true for subsets
of $A$.

The fact that $\operatorname{Rad}^{t}$ is a tensor radical whenever it is a
radical was mentioned before.

Since a (closed) subalgebra of a $1$-quasinilpotent algebra is clearly
$1$-quasinilpotent, $\operatorname{Rad}^{t}$ is uniform. It is clear that
$\operatorname{Rad}^{t}$ is symmetric.
\end{proof}

\begin{corollary}
\label{4.16} $\operatorname{Rad}^{t}(A)$ is the largest tensor radical ideal
of a Banach algebra $A$. Moreover, it is closed and contains all (one-sided,
non-necessarily closed) $1$-quasinilpotent ideals of $A$.
\end{corollary}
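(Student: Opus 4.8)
The plan is to derive the statement from Theorem~\ref{4.16.1} (that $\operatorname{Rad}^{t}$ is a symmetric uniform tensor HTR on $(BA)$), the two $\rho_{1}$-characterizations in Theorems~\ref{perrad} and~\ref{1quas}, and the abstract Lemma~\ref{2.1}. The first sentence is almost formal; the real content is the ``moreover'', which is where the $\rho_{1}$-machinery is needed to reach past closed two-sided ideals.

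For the first sentence I would argue as follows. Since $\operatorname{Rad}^{t}$ is a topological radical, $\operatorname{Rad}^{t}(A)$ is by definition a closed ideal of $A$, and by axiom $(1^{\circ})$ it is $\operatorname{Rad}^{t}$-radical, i.e.\ tensor radical. By Lemma~\ref{2.1}(i) it is the largest $\operatorname{Rad}^{t}$-radical (closed, two-sided) ideal of $A$, and invoking the equivalence ``tensor radical $\Leftrightarrow$ $1$-quasinilpotent'' from Theorem~\ref{perrad} one sees this is exactly to say it is the largest tensor radical ideal. This simultaneously records that it is closed.

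For the ``moreover'' I would first treat a right ideal $I$ of $A$ (not assumed closed) such that $\rho_{1}(M)=0$ for every summable subset $M$ of $I$. Fix $a\in I$ and an arbitrary summable subset $M$ of $A$. Because $I$ is a right ideal, $aM$ is a generalized subset of $I$, and $\Vert aM\Vert_{1}\leqslant\Vert a\Vert\,\Vert M\Vert_{1}<\infty$, so $aM$ is summable and hence $\rho_{1}(aM)=0$ by hypothesis. By the equivalence (i)$\Leftrightarrow$(ii) in Theorem~\ref{1quas} this gives $a\in\operatorname{Rad}^{t}(A)$, so $I\subseteq\operatorname{Rad}^{t}(A)$. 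For a left ideal $I$ I would pass to the opposite algebra $A^{\operatorname{op}}$, where $I$ becomes a right ideal; since reversing the order of the factors in an $n$-fold product of a single set merely reindexes it, $\Vert M^{n}\Vert_{1}$, and hence $\rho_{1}(M)$, is computed identically in $A$ and in $A^{\operatorname{op}}$, so $I$ is still $1$-quasinilpotent there. The previous step yields $I\subseteq\operatorname{Rad}^{t}(A^{\operatorname{op}})$, and the symmetry of $\operatorname{Rad}^{t}$ (Theorem~\ref{4.16.1}) identifies $\operatorname{Rad}^{t}(A^{\operatorname{op}})$ with $\operatorname{Rad}^{t}(A)$ as a subset; a two-sided ideal is a right ideal, hence covered as well. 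The only point requiring care is this bookkeeping with summable generalized subsets — that $aM$ is summable and lies in $I$, and that $\rho_{1}$ is insensitive to the order of factors — but that is precisely what the formalism around $\Vert\cdot\Vert_{1}$ and the estimate (\ref{f63}) was set up to handle, so I anticipate no serious obstacle.
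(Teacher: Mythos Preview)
Your argument is correct and follows the paper's own route: the first clause from Theorem~\ref{4.16.1} together with Lemma~\ref{2.1}, and the ``moreover'' from the characterization (ii) of Theorem~\ref{1quas}. Your treatment of the left-ideal case via $A^{\operatorname{op}}$ and the symmetry of $\operatorname{Rad}^{t}$ is a careful elaboration of what the paper leaves implicit in the word ``immediately''; an equally short alternative is to note (as for $\rho$ in (\ref{f74})) that $\rho_{1}(aM)=\rho_{1}(Ma)$, whence $Ma\subset I$ suffices.
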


\begin{proof}
The first part is a consequence of Theorem \ref{4.16.1} and general properties
of a TR (Lemma \ref{2.1}). The last statement follows immediately from the
part (ii) of Theorem \ref{1quas}.
\end{proof}

Using the extension property of topological radicals (Theorem \ref{2.3}), one obtains

\begin{corollary}
\label{trext} Let $A\in(BA)$, and let $I$ be its closed ideal. If $A/I$ and
$I$ are $1$-quasinilpotent then so is $A$.
\end{corollary}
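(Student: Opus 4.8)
The plan is to deduce this from the identification of $1$-quasinilpotence with $\operatorname{Rad}^{t}$-radicality, combined with the extension stability of radical classes. First I would recall that, by Theorem \ref{4.16.1}, $\operatorname{Rad}^{t}$ is an HTR (in particular a TR) on the ground class $(BA)$, so Theorem \ref{2.3}(i) is available for $R=\operatorname{Rad}^{t}$: the class of all $\operatorname{Rad}^{t}$-radical Banach algebras is extension stable in $(BA)$.

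Next I would translate the hypotheses into the language of $\operatorname{Rad}^{t}$. A Banach algebra $C$ is \emph{tensor radical}, i.e. $C=\operatorname{Rad}^{t}(C)$ (equivalently, $C$ is $\operatorname{Rad}^{t}$-radical), if and only if $C$ is $1$-quasinilpotent; this is exactly the equivalence (i)$\Leftrightarrow$(ii) of Theorem \ref{perrad}. Since $I$ is a closed ideal of the Banach algebra $A$, both $I$ and $A/I$ lie in $(BA)$, so the hypothesis that they are $1$-quasinilpotent says precisely that $I$ and $A/I$ are $\operatorname{Rad}^{t}$-radical.

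Finally, applying extension stability (Theorem \ref{2.3}(i)) to the closed ideal $I$ of $A$, with $I$ and $A/I$ both $\operatorname{Rad}^{t}$-radical, I conclude that $A$ itself is $\operatorname{Rad}^{t}$-radical, that is $A=\operatorname{Rad}^{t}(A)$; by Theorem \ref{perrad} once more this means that $A$ is $1$-quasinilpotent. I do not expect any genuine obstacle here: the substantive work has already been done in Theorems \ref{perrad}, \ref{4.16.1} and \ref{2.3}, and the corollary is merely their combination; the only point to note is the routine remark that closed ideals and quotients by closed ideals of Banach algebras are again Banach algebras, so that everything stays inside the ground class $(BA)$ where $\operatorname{Rad}^{t}$ is defined.
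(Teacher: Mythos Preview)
Your proof is correct and follows exactly the paper's approach: the paper simply remarks that the corollary follows ``using the extension property of topological radicals (Theorem~\ref{2.3})'', which is precisely your argument of combining Theorem~\ref{perrad} (to identify $1$-quasinilpotence with $\operatorname{Rad}^{t}$-radicality), Theorem~\ref{4.16.1} (to know $\operatorname{Rad}^{t}$ is a TR on $(BA)$), and Theorem~\ref{2.3}(i) (extension stability of the $R$-radical class).
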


Now we will show that $\rho_{1}(M)$ can be calculated in the algebra
$A/\operatorname{Rad}^{t}(A)$ for every summable family $M$ of elements of $A$.

\begin{theorem}
\label{3quot} For any summable generalized subset $M$ of elements of a Banach
algebra $A$,
\[
\rho_{1}(M)=\rho_{1}(M/\operatorname{Rad}^{t}(A)).
\]
\end{theorem}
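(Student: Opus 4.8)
The plan is to prove the two inequalities $\rho_1(M)\leqslant\rho_1(M/\operatorname{Rad}^t(A))$ and $\rho_1(M/\operatorname{Rad}^t(A))\leqslant\rho_1(M)$ separately. The second one is immediate: the quotient map $q_J:A\to A/J$ with $J=\operatorname{Rad}^t(A)$ is a contractive homomorphism, so by inequality~(\ref{f63}) we get $\rho_1(M/J)=\rho_1(q_JM)\leqslant\rho_1(M)$. So the whole content is in the reverse inequality, and the natural strategy is to pass to the algebra $l^1(G)$, $G=\mathcal S_1(W)$, where $\rho_1$ of a summable family is realized as the ordinary spectral radius of a single element (this is exactly the mechanism of Theorem~\ref{perrad} and Theorem~\ref{1quas}, formula~(\ref{f3.6})).

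First I would reduce, as usual, to the case of an honest summable subset rather than a generalized one: multiplying the entries of $M$ by scalars of modulus $1$ changes neither $\|M^n\|_1$ nor $\|(M/J)^n\|_1$ (since $J$ is an ideal stable under such rescaling inside each coset), so we may assume $M=\{a_k:k\geqslant1\}$ with distinct entries and form $T_M=\sum a_k\otimes w_k\in A\,\widehat\otimes\,l^1(G)$ as in~(\ref{fT}), for which $\rho(T_M)=\rho_1(M)$ by~(\ref{f3.6}). Likewise, working in $(A/J)\,\widehat\otimes\,l^1(G)$ we have $\rho(T_{M/J})=\rho_1(M/J)$, where $T_{M/J}=\sum (a_k+J)\otimes w_k$. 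The key point is that the canonical epimorphism $f:A\,\widehat\otimes\,l^1(G)\to (A/J)\,\widehat\otimes\,l^1(G)$ sends $T_M$ to $T_{M/J}$, and its kernel $U$ is the closure of $i_A(J\,\widehat\otimes\,l^1(G))$. Since $\operatorname{Rad}^t$ is an HTR (Theorem~\ref{4.16.1}) hence ideally strong, and $i_A(J\,\widehat\otimes\,l^1(G))$ is an ideal of $A\,\widehat\otimes\,l^1(G)$ which is the continuous homomorphic image of the tensor-radical algebra $J\,\widehat\otimes\,l^1(G)$ (tensor-radical because $J=\operatorname{Rad}^t(A)=\operatorname{Rad}^t(J)$ is $1$-quasinilpotent and Theorem~\ref{perrad} gives $J\,\widehat\otimes\,l^1(G)$ radical, i.e. Rad-radical; even tensor radical by the same theorem), the ideal $U$ is $\operatorname{Rad}$-radical, i.e. a radical Banach algebra. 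Therefore, by Corollary~\ref{2.2}(ii) applied to $R=\operatorname{Rad}$ and the closed radical ideal $U$, we have $\operatorname{Rad}\bigl((A\,\widehat\otimes\,l^1(G))/U\bigr)=q_U(\operatorname{Rad}(A\,\widehat\otimes\,l^1(G)))$, and since $U$ is radical the spectral radius does not change under $q_U$: $\rho_{(A\widehat\otimes l^1(G))/U}(q_U x)=\rho(x)$ for $x\in A\,\widehat\otimes\,l^1(G)$, because adjoining a radical ideal changes neither invertibility nor the spectrum.

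Putting these together: identifying $(A/J)\,\widehat\otimes\,l^1(G)$ with $(A\,\widehat\otimes\,l^1(G))/U$ via $\tau_J$ (Lemma~\ref{ideal}), the image of $T_M$ is $T_{M/J}$, and since $U$ is a radical ideal we get
\[
\rho_1(M/J)=\rho(T_{M/J})=\rho_{(A\widehat\otimes l^1(G))/U}(q_U T_M)=\rho(T_M)=\rho_1(M).
\]
Combined with the easy direction $\rho_1(M/J)\leqslant\rho_1(M)$, this gives equality. The main obstacle, and the one place requiring care, is the claim that passing to the quotient by the radical ideal $U$ preserves the (topological) spectral radius of $T_M$; I would justify it by the standard fact that if $U$ is a radical ideal of a Banach algebra $E$ then $x$ is quasi-invertible in $E$ iff $q_U(x)$ is quasi-invertible in $E/U$ (lifting quasi-inverses modulo a radical ideal), so $\sigma_E(x)\setminus\{0\}=\sigma_{E/U}(q_U x)\setminus\{0\}$, hence $\rho_E(x)=\rho_{E/U}(q_U x)$; and since both $E$ and $E/U$ are Banach algebras, $\rho_E=\rho$ and $\rho_{E/U}=\rho$ as well. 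A minor secondary point is checking that $\tau_J$ really carries $T_M$ to $T_{M/J}$, which is immediate from the defining formula $\tau_J((a+J)\otimes b)=a\otimes b+U$ and continuity.
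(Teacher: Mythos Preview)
Your approach is essentially the paper's: realize $\rho_1$ as the ordinary spectral radius of $T_M\in A\widehat\otimes l^1(G)$, observe that the closure $U$ of $i_A(J\widehat\otimes l^1(G))$ lies in $\operatorname{Rad}(A\widehat\otimes l^1(G))$, so $\rho(T_M)=\rho(q_U T_M)$, and then compare with $T_{M/J}$ via $\tau_J$.

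There is one inaccuracy worth flagging. You claim that the kernel of $f:A\widehat\otimes l^1(G)\to (A/J)\widehat\otimes l^1(G)$ is exactly $U$, equivalently that $\tau_J$ \emph{identifies} $(A/J)\widehat\otimes l^1(G)$ with $(A\widehat\otimes l^1(G))/U$. Neither is justified: for projective tensor products the short sequence $J\widehat\otimes B\to A\widehat\otimes B\to (A/J)\widehat\otimes B$ need not be exact in the middle, and Lemma~\ref{ideal} only gives $\tau_J$ as a contractive \emph{epimorphism}, not an isomorphism. Consequently your displayed chain of equalities is not fully warranted at the step $\rho(T_{M/J})=\rho(q_U T_M)$. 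The paper avoids this by writing only the inequality it can prove: $q_U(T_M)=\tau_J(T_{M/J})$ and hence $\rho_1(M)=\rho(q_U T_M)=\rho(\tau_J(T_{M/J}))\leqslant\rho(T_{M/J})=\rho_1(M/J)$, which together with the easy reverse inequality gives the result. Since you already record the easy direction separately, your argument survives once you downgrade that one equality to $\leqslant$; but the statement about $\ker f$ and the word ``identifying'' should be removed.
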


\begin{proof}
Let $J=\operatorname{Rad}^{t}(A)$ and $B=l^{1}(G)$, where $G$ is the free
semigroup with countable set of generators. The ideal $i_{A}(J{\widehat
{\otimes}}B)$ of $A{\widehat{\otimes}}B$ is contained in $\operatorname{Rad}%
(A{\widehat{\otimes}}B)$ and the same is true for its closure $U$. Let
$E=T_{M}\in A{\widehat{\otimes}}B$, where $T_{M}$ is defined by (\ref{fT}),
and $F=E+U$ the image of $E$ in $(A{\widehat{\otimes}}B)/U$. Then
$\rho(E)=\rho(F)$ since $U\subset\operatorname{rad}(A{\widehat{\otimes}}B)$.
This and (\ref{f3.6}) imply $\rho_{1}(M)=\rho(F)$. Let $\tau_{J}%
:(A/J){\widehat{\otimes}}B\rightarrow(A{\widehat{\otimes}}B)/U$ be the
epimorphism introduced in Lemma \ref{ideal}. Then clearly $F=\tau_{J}%
(T_{M/J})$, whence
\[
\rho_{1}(M)=\rho(F)=\rho(\tau_{J}(T_{M/J}))\leqslant\rho(T_{M/J})=\rho
_{1}(M/J).
\]

We proved that $\rho_{1}(M)\leqslant\rho_{1}(M/J)$. The converse inequality is evident.
\end{proof}

Note that the theorem fails being formulated for usual subsets of $A$ and
their ``usual'' (non-generalized) images in $A/\operatorname{Rad}^{t}(A)$.

\subsection{Tensor radical and tensor perfect algebras}

Our next result shows that the class of tensor perfect algebras is extension stable.

\begin{theorem}
\label{4.3} If a closed ideal $J$ of a Banach algebra $A$ and the quotient
$A/J$ are tensor perfect then $A$ is tensor perfect.
\end{theorem}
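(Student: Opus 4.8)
The plan is to deduce the theorem from Theorem~\ref{4.3} — wait, that *is* the theorem. So instead I will work directly with the characterization of tensor perfectness: $A$ is tensor perfect iff $A{\widehat{\otimes}}B$ is radical for every radical Banach algebra $B$. Fix a radical Banach algebra $B$; I must show $A{\widehat{\otimes}}B = \operatorname{Rad}(A{\widehat{\otimes}}B)$, i.e.\ that $A{\widehat{\otimes}}B$ is topologically nil (a Banach algebra is radical iff it is topologically nil, as noted in the excerpt). The natural route is to exhibit a closed ideal of $A{\widehat{\otimes}}B$ that is radical and has radical quotient, then invoke extension stability of the class of $\operatorname{Rad}$-radical algebras (Theorem~\ref{2.3}(i)). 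The obvious candidate for the ideal is $U$, the closure of $i_{A}(J{\widehat{\otimes}}B)$ inside $A{\widehat{\otimes}}B$, where $i_{A}\colon J{\widehat{\otimes}}B\to A{\widehat{\otimes}}B$ is the canonical contraction from Lemma~\ref{ideal}.

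First I would treat the ideal $U$. Since $J$ is tensor perfect and $B$ is radical, $J{\widehat{\otimes}}B$ is radical. By Lemma~\ref{ideal}(i), $i_{A}(J{\widehat{\otimes}}B)$ is a (two-sided, non-closed) ideal of $A{\widehat{\otimes}}B$, and it is the continuous homomorphic image of the radical algebra $J{\widehat{\otimes}}B$ under $i_A$; as $\operatorname{Rad}$ is strong on $(BA)$ (indeed all $(BA)$-radicals are strict, hence strong), we get $i_{A}(J{\widehat{\otimes}}B)\subset\operatorname{Rad}(A{\widehat{\otimes}}B)$, so this ideal is $\operatorname{Rad}$-radical. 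By Corollary~\ref{2.2}(i) its closure $U$ is $\operatorname{Rad}$-radical as well, i.e.\ $U$ is a radical Banach algebra.

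Next I would treat the quotient $(A{\widehat{\otimes}}B)/U$. By Lemma~\ref{ideal}(ii) there is a contractive epimorphism $\tau_{J}\colon (A/J){\widehat{\otimes}}B\to (A{\widehat{\otimes}}B)/U$. Now $A/J$ is tensor perfect by hypothesis and $B$ is radical, so $(A/J){\widehat{\otimes}}B$ is radical. Since the continuous epimorphic image of a radical Banach algebra is radical ($\operatorname{Rad}$ is strong, cf.\ property $(7^{\circ})$ / (\ref{g1})), it follows that $(A{\widehat{\otimes}}B)/U=\tau_{J}((A/J){\widehat{\otimes}}B)$ is radical.

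Finally, $U$ is a closed ideal of the Banach algebra $A{\widehat{\otimes}}B$ with both $U$ and $(A{\widehat{\otimes}}B)/U$ being $\operatorname{Rad}$-radical; by the extension stability of the class of $R$-radical algebras in a ground class (Theorem~\ref{2.3}(i), applied with $R=\operatorname{Rad}$ on $(BA)$), $A{\widehat{\otimes}}B$ is itself $\operatorname{Rad}$-radical, i.e.\ radical. As $B$ was an arbitrary radical Banach algebra, $A$ is tensor perfect. The only subtle point — the ``main obstacle'' — is making sure the strong/ideally-strong properties invoked for $\operatorname{Rad}$ on $(BA)$ are exactly the ones available here; but $\operatorname{Rad}$ is strictly hereditary on $(QA)\supset(BA)$ by Corollary~\ref{p2}, hence in particular strong and ideally strong on $(BA)$, which is all that is needed. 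No appeal to the still-open radicality of tensor pathological algebras is required, since we never need $\operatorname{Rad}^{t}$ to be a TR for this argument — only the strongness of $\operatorname{Rad}$ and extension stability.
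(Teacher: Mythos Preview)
Your proof is correct and follows essentially the same route as the paper: both arguments hinge on the closed ideal $U=\widetilde{i_{A}(J{\widehat{\otimes}}B)}$ being radical and on the epimorphism $\tau_{J}\colon (A/J){\widehat{\otimes}}B\to(A{\widehat{\otimes}}B)/U$ from Lemma~\ref{ideal}(ii). The only cosmetic difference is that the paper works directly with an arbitrary $\pi\in\operatorname{irr}(A{\widehat{\otimes}}B)$ (showing $U\subset\ker\pi$ and then that $\pi'\circ\tau_{J}$ is an irreducible representation of the radical algebra $(A/J){\widehat{\otimes}}B$, hence zero), whereas you package the same two-step argument via the extension stability of the class of $\operatorname{Rad}$-radical algebras (Theorem~\ref{2.3}(i)); these are the same proof at different levels of abstraction. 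One small point of presentation: when you first argue that $i_{A}(J{\widehat{\otimes}}B)\subset\operatorname{Rad}(A{\widehat{\otimes}}B)$ you invoke ``strongness'' of $\operatorname{Rad}$, but since $i_{A}$ lands in an ideal rather than onto $A{\widehat{\otimes}}B$ the relevant property is ``ideally strong'' (which you correctly note at the end); the paper sidesteps this by observing directly that the continuous image of a radical Banach algebra is topologically nil, and a topologically nil ideal lies in the radical.
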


\begin{proof}
Let $B$ be a radical Banach algebra and $\pi\in\operatorname{irr}%
(A{\widehat{\otimes}}B)$. We have to prove that $\pi=0$. Since the algebra
$J{\widehat{\otimes}}B$ is radical and $i_{A}(J{\widehat{\otimes}}B)$ is its
homomorphic image, $i_{A}(J{\widehat{\otimes}}B)$ consists of quasinilpotent
elements. Being an ideal of $A{\widehat{\otimes}}B$, it is contained in
$\operatorname{Rad}(A{\widehat{\otimes}}B)$. Therefore the closure
$\widetilde{i_{A}(J{\widehat{\otimes}}B)}\subset\ker\pi$. This allows one to
define a representation $\pi^{\prime}$ of $(A{\widehat{\otimes}}%
B)/\widetilde{i_{A}(J{\widehat{\otimes}}B)}$ with the same range as $\pi$. It
follows from Lemma \ref{ideal} that the representation $\pi^{\prime\prime}%
=\pi^{\prime}\circ\tau_{J}$ of $(A/J){\widehat{\otimes}}B$ also has the same
image as $\pi$. Therefore $\pi^{\prime\prime}\in\operatorname{irr}%
((A/J){\widehat{\otimes}}B)$. Since $(A/J){\widehat{\otimes}}B$ is radical,
$\pi^{\prime\prime}=0$ and $\pi=0$.
\end{proof}

In what follows we list some classes of tensor radical and tensor perfect
algebras. More strong and general results will be obtained in the next section
and further parts of the project.

\begin{lemma}
\label{4.4} Any nilpotent Banach algebra is tensor radical.
\end{lemma}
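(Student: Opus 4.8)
The plan is to reduce to the characterization of tensor radical algebras obtained in Theorem~\ref{perrad}: a Banach algebra is tensor radical if and only if it is $1$-quasinilpotent. So it suffices to prove that a nilpotent Banach algebra $A$ is $1$-quasinilpotent, i.e.\ that $\rho_1(M)=0$ for every summable (generalized) subset $M$ of $A$.

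First I would fix $n$ with $A^n=0$ and take an arbitrary summable generalized subset $M$ of $A$ with representative $(a_\alpha)_{\alpha\in\Lambda}$. By the definition of the product of generalized subsets, $M^n$ has the representative $(a_{\alpha_1}\cdots a_{\alpha_n})_{(\alpha_1,\dots,\alpha_n)\in\Lambda^n}$, and each entry $a_{\alpha_1}\cdots a_{\alpha_n}$ is a product of $n$ elements of $A$, hence equals $0$. Therefore every entry of $M^n$ is $0$, so $\Vert M^n\Vert_1=0$ by (\ref{f60}) (the supremum of finite partial sums of norms of zero elements). Using (\ref{f58}) together with $\Vert M^k\Vert_1\leqslant\Vert M\Vert_1^k<\infty$ for all $k$, one gets $\Vert M^k\Vert_1=0$ for every $k\geqslant n$, whence $\rho_1(M)=\inf_k(\Vert M^k\Vert_1)^{1/k}=0$ by (\ref{f61}). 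Thus $A$ is $1$-quasinilpotent, and Theorem~\ref{perrad} yields that $A$ is tensor radical.

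I would also note the alternative, completely direct argument from the definition of tensor radicality: for any Banach algebra $B$, every element of $A\widehat{\otimes}B$ can be written as a convergent series $\sum a_i\otimes b_i$ with $\sum\Vert a_i\Vert\Vert b_i\Vert<\infty$; expanding a product of $n$ such elements, each coefficient lying in $A$ is a product of $n$ elements of $A$, hence $0$, so $(A\widehat{\otimes}B)^n=0$. Consequently $A\widehat{\otimes}B$ is nilpotent, therefore radical (a nilpotent Banach algebra is topologically nil), and since $B$ was arbitrary, $A$ is tensor radical. There is no genuine obstacle in either route; the only point needing a line of care is the bookkeeping with generalized subsets, namely that $M^n$ has \emph{all} entries equal to $0$ when $A^n=0$, so that $\Vert M^n\Vert_1$ vanishes irrespective of the cardinality of the index set.
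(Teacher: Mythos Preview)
Your proof is correct. The paper's own argument is exactly your second, direct route: it simply observes that the tensor product of a nilpotent algebra with an arbitrary algebra is nilpotent, hence radical, and concludes in one line. Your primary route via Theorem~\ref{perrad} and $1$-quasinilpotence is also valid but slightly more elaborate than necessary; it trades the elementary observation about $(A\widehat{\otimes}B)^n=0$ for the machinery of generalized subsets and $\rho_1$, which the paper reserves for situations where the direct argument is unavailable.
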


\begin{proof}
Evidently follows from the fact that the tensor product of a nilpotent algebra
and an arbitrary algebra is nilpotent.
\end{proof}

\begin{proposition}
\label{4.5} Any Banach algebra with dense socle is tensor perfect.
\end{proposition}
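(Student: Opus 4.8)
The plan is to fix an arbitrary radical Banach algebra $B$ and show that $C:=A\widehat{\otimes}B$ is radical. Since $\operatorname{soc}(A)$ is dense in $A$, the linear subspace $\operatorname{soc}(A)\otimes B$ is dense in $C$: truncate an absolutely convergent series $\sum a_{n}\otimes b_{n}$ representing a given element of $C$, and then perturb the finitely many remaining $a_{n}$ inside $\operatorname{soc}(A)$. As $\operatorname{Rad}(C)$ is a closed subspace and $\operatorname{soc}(A)\otimes B$ is spanned by the tensors $x\otimes b$ with $b\in B$ and $x$ in a minimal left ideal of $A$, it will suffice to prove that each such $x\otimes b$ lies in $\operatorname{Rad}(C)$. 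I will use the classical dichotomy for a minimal left ideal $L$ of $A$: either $L^{2}=\{0\}$, or $L=Ae$ for an idempotent $e$ with $eAe$ a division algebra; in the latter case $eAe$ is a Banach division algebra, hence $eAe=\mathbb{C}e$ by Gelfand--Mazur. Note also that $B$ is non-unital (a unital algebra is never radical), so $B^{1}=B\oplus\mathbb{C}$, and by the construction in the proof of Theorem~\ref{rtrad} the algebra $C$ is isometrically a closed ideal of $\widetilde{C}:=A\widehat{\otimes}B^{1}$; since $\operatorname{Rad}$ is an HTR on $(BA)$ this gives $\operatorname{Rad}(C)=C\cap\operatorname{Rad}(\widetilde{C})$, which I will use to transfer conclusions from $\widetilde{C}$ to $C$.

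In the idempotent case write $f=e\otimes1\in\widetilde{C}$; then $f\widetilde{C}f=eAe\widehat{\otimes}B^{1}=\mathbb{C}e\otimes B^{1}$, a one-dimensional coefficient factor (so no completion is involved). By the standard corner identity $\operatorname{Rad}(f\widetilde{C}f)=f\operatorname{Rad}(\widetilde{C})f$, combined with $\operatorname{Rad}(\mathbb{C}e\otimes B^{1})=\mathbb{C}e\otimes\operatorname{Rad}(B^{1})$ and $\operatorname{Rad}(B^{1})=\operatorname{Rad}(B)=B$, one obtains $\mathbb{C}e\otimes B=f\operatorname{Rad}(\widetilde{C})f\subseteq\operatorname{Rad}(\widetilde{C})$. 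In particular $e\otimes b\in\operatorname{Rad}(\widetilde{C})$ for every $b\in B$, and since $x=xe$ for $x\in Ae$,
\[
x\otimes b=(x\otimes1)(e\otimes b)\in\operatorname{Rad}(\widetilde{C}).
\]
Because $x\otimes b\in C$, the heredity identity $\operatorname{Rad}(C)=C\cap\operatorname{Rad}(\widetilde{C})$ yields $x\otimes b\in\operatorname{Rad}(C)$.

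In the square-zero case, let $W$ be the closure of $L\otimes B$ in $C$. Multiplying an absolutely convergent series $\sum a_{n}\otimes b_{n}\in C$ by a finite sum $\sum x_{m}\otimes c_{m}\in L\otimes B$ and using $a_{n}x_{m}\in AL\subseteq L$ shows $C\cdot(L\otimes B)\subseteq W$, so $W$ is a closed left ideal of $C$. Every element of $L\otimes B$ squares to zero, since $(\sum x_{m}\otimes c_{m})^{2}=\sum x_{m}x_{m'}\otimes c_{m}c_{m'}$ and $x_{m}x_{m'}\in L^{2}=\{0\}$; by continuity of multiplication the same holds on all of $W$. Thus $W$ is a nil left ideal of $C$, whence $W\subseteq\operatorname{rad}(C)=\operatorname{Rad}(C)$, and in particular $x\otimes b\in\operatorname{Rad}(C)$.

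Putting the two cases together, $\operatorname{soc}(A)\otimes B\subseteq\operatorname{Rad}(C)$, and taking closures gives $\operatorname{Rad}(C)=C$, i.e. $C=A\widehat{\otimes}B$ is radical; as $B$ was an arbitrary radical Banach algebra, $A$ is tensor perfect. The argument is essentially a matter of organizing two standard structural inputs (the dichotomy for minimal left ideals and the corner formula for the Jacobson radical) around the definition of tensor perfectness; the main care needed is the bookkeeping caused by $\operatorname{soc}(A)$ and $L$ not being closed --- this forces one to prove density of $\operatorname{soc}(A)\otimes B$ and to argue with the closure $W$ rather than with any completed tensor product of $L$ --- and I expect the verification that $W$ is a genuine closed left ideal of $C$ to be the step most prone to slips.
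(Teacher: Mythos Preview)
Your proof is correct but follows a genuinely different route from the paper's. The paper fixes a minimal idempotent $p$, observes that the right ideal $pA$ sits in a short exact sequence $pA(1-p)\hookrightarrow pA\twoheadrightarrow pAp$ with nilpotent kernel and one-dimensional quotient, and applies the extension stability of tensor perfectness (Theorem~\ref{4.3}) together with Lemma~\ref{4.4} to conclude that $pA$ is tensor perfect; then $i_{A}(pA\widehat{\otimes}B)$ is a quasinilpotent ideal of $A\widehat{\otimes}B$, hence lies in the radical, and one finishes by writing socle elements as combinations of $paq$ with $p,q$ minimal idempotents. Your argument instead passes to $\widetilde{C}=A\widehat{\otimes}B^{1}$, uses the corner identity $\operatorname{Rad}(f\widetilde{C}f)=f\operatorname{Rad}(\widetilde{C})f$ for $f=e\otimes1$, and handles square-zero minimal left ideals directly. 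What this buys you: you avoid Theorem~\ref{4.3} and Lemma~\ref{4.4} entirely, and you treat explicitly the case of a minimal left ideal $L$ with $L^{2}=0$ (which the paper's phrasing ``any element of $\operatorname{soc}(A)$ is a linear combination of elements $paq$'' tacitly leaves aside). What the paper's approach buys: it stays within the machinery already developed in the section, whereas you import the corner formula for the Jacobson radical from outside.
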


\begin{proof}
Let $p$ be a minimal idempotent in $A$ (in the sense that $\dim(pAp)=1$) and
$C=pA$. The ideal $J=C(1-p)$ of $C$ is nilpotent and consequently tensor
perfect. The quotient $C/J$, being isomorphic to $Cp=pAp$, is one-dimensional
and hence tensor perfect. By Theorem \ref{4.3} $C$ is tensor perfect. Let $B$
be a radical Banach algebra. The ideal $i_{A}(C{\widehat{\otimes}}B)$ of
$A{\widehat{\otimes}}B$ consists of quasinilpotent elements (being a
homomorphic image of $C{\widehat{\otimes}}B$). Hence $i_{A}(C{\widehat
{\otimes}}B)\subset\operatorname{Rad}(A{\widehat{\otimes}}B)$. In particular
$pa\otimes b\in\operatorname{Rad}(A{\widehat{\otimes}}B)$ for all $a\in A$,
$b\in B$. Since any element of $\operatorname{soc}(A)$ is a linear combination
of elements of the form $paq$, where $a\in A$, $p$ and $q$ are minimal
idempotents, we have that $a\otimes b\in\operatorname{Rad}(A{\widehat{\otimes
}}B)$, for any $a\in\operatorname{soc}(A)$, $b\in B$. By the density
assumption, $a\otimes b\in\operatorname{Rad}(A{\widehat{\otimes}}B)$, for all
$a\in A$, $b\in B$. This means that $A{\widehat{\otimes}}B$ is radical.
\end{proof}

\begin{lemma}
\label{4.7} A Banach algebra $A$ is tensor perfect iff
\[
i_{B}(A{\widehat{\otimes}}\operatorname{Rad}(B))\subset\operatorname{Rad}%
(A{\widehat{\otimes}}B),
\]
for every Banach algebra $B$.
\end{lemma}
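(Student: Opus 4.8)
The plan is to prove the two implications separately. The ``if'' direction is essentially a tautology: assuming $i_{B}(A\widehat{\otimes}\operatorname{Rad}(B))\subset\operatorname{Rad}(A\widehat{\otimes}B)$ for every $B\in(BA)$, take $B$ radical, so that $\operatorname{Rad}(B)=B$. Then the closed ideal $\operatorname{Rad}(B)$ of $B$ is all of $B$ as a normed algebra, hence $i_{B}\colon A\widehat{\otimes}\operatorname{Rad}(B)\to A\widehat{\otimes}B$ is the identity map, and the hypothesis reads $A\widehat{\otimes}B=i_{B}(A\widehat{\otimes}\operatorname{Rad}(B))\subset\operatorname{Rad}(A\widehat{\otimes}B)$. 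Thus $A\widehat{\otimes}B$ is radical for every radical Banach algebra $B$, i.e.\ $A$ is tensor perfect.

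For the ``only if'' direction, assume $A$ is tensor perfect and fix an arbitrary $B\in(BA)$. First I would observe that $\operatorname{Rad}(B)$, with the norm induced from $B$, is a Banach algebra that is itself radical (this is axiom $(1^{\circ})$ for the topological radical $\operatorname{Rad}$: $\operatorname{Rad}(\operatorname{Rad}(B))=\operatorname{Rad}(B)$). Hence, by tensor perfectness of $A$, the algebra $A\widehat{\otimes}\operatorname{Rad}(B)$ is radical, so it is topologically nil. Next, since $i_{B}$ is a contractive homomorphism we have $\Vert i_{B}(x)^{n}\Vert=\Vert i_{B}(x^{n})\Vert\leqslant\Vert x^{n}\Vert$ for all $n$, so $\rho(i_{B}(x))\leqslant\rho(x)=0$ for every $x\in A\widehat{\otimes}\operatorname{Rad}(B)$; thus $i_{B}(A\widehat{\otimes}\operatorname{Rad}(B))$ consists of quasinilpotent elements of $A\widehat{\otimes}B$. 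On the other hand, by \lemref{ideal}(i) (applied with the two tensor factors interchanged, using the canonical isometric isomorphism $A\widehat{\otimes}B\cong B\widehat{\otimes}A$) this set is a two-sided, not necessarily closed, ideal of $A\widehat{\otimes}B$. So it is a topologically nil ideal of the Banach algebra $A\widehat{\otimes}B$, which is in particular a $Q$-algebra, and therefore it is contained in $\operatorname{rad}(A\widehat{\otimes}B)=\operatorname{Rad}(A\widehat{\otimes}B)$ by Corollary~\ref{p3}. This gives the desired inclusion.

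I do not expect a real obstacle here; the only steps needing a little care are (a) recognising that $\operatorname{Rad}(B)$ is again a radical Banach algebra, so that the hypothesis on $A$ can be applied to the pair $(A,\operatorname{Rad}(B))$; (b) the identification of $i_{B}(A\widehat{\otimes}\operatorname{Rad}(B))$ as an ideal of $A\widehat{\otimes}B$ via \lemref{ideal} together with the symmetry of $\widehat{\otimes}$; and (c) the passage from ``ideal of quasinilpotents'' to ``contained in the Jacobson radical'', which is legitimate because Banach algebras are $Q$-algebras (Corollary~\ref{p3}). Everything else is immediate from the definitions of ``tensor perfect'' and of the maps $i_{B}$.
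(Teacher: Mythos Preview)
Your proof is correct and follows essentially the same approach as the paper's: the ``if'' direction is trivial (take $B$ radical), and for the ``only if'' direction one uses that $A\widehat{\otimes}\operatorname{Rad}(B)$ is radical by tensor perfectness, hence its continuous homomorphic image $i_{B}(A\widehat{\otimes}\operatorname{Rad}(B))$ is a topologically nil ideal of $A\widehat{\otimes}B$ and therefore lies in the Jacobson radical. Your write-up is more detailed than the paper's (which dispatches the argument in two lines, referring back to analogous reasoning earlier), but the substance is the same.
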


\begin{proof}
Suppose that $A$ is tensor perfect. Then as above, $i_{B}(A{\widehat{\otimes}%
}\operatorname{Rad}(B))$, being a homomorphic image of $A{\widehat{\otimes}%
}\operatorname{Rad}(B)$, consists of quasinilpotent elements. Hence this ideal
is contained in $\operatorname{Rad}(A{\widehat{\otimes}}B)$. The inverse
implication is evident.
\end{proof}

The following result can be deduced from \cite{A79}. Its proof for the case of
unital algebras is similar to one of \cite[Theorem 4.4.2]{A79}; we present the
proof because its `non-unital' part \cite[Corollary 4.4.1]{A79} seems to be
written too briefly.

\begin{lemma}
\label{4.8} If a Banach algebra $A$ is commutative then, for any Banach
algebra $B$,
\begin{equation}
i_{A}(\operatorname{Rad}(A){\widehat{\otimes}}B)\cup i_{B}(A{\widehat{\otimes
}}\operatorname{Rad}(B))\subset\operatorname{Rad}(A{\widehat{\otimes}}B)
\label{f3.1}%
\end{equation}
\end{lemma}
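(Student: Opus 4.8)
The plan is to reduce to the unital case by a careful unitization argument and then, in the unital case, to exploit commutativity together with the description of the tensor-radical via joint quasinilpotence (Theorem \ref{perrad} and, more to the point, Theorem \ref{1quas}) by passing to maximal commutative subalgebras. First I would observe that the two inclusions in (\ref{f3.1}) are symmetric in $A$ and $B$ — since $A{\widehat{\otimes}}B\cong B{\widehat{\otimes}}A$ — so it suffices to prove $i_{A}(\operatorname{Rad}(A){\widehat{\otimes}}B)\subset\operatorname{Rad}(A{\widehat{\otimes}}B)$; the other inclusion will then follow by swapping the roles, and here commutativity of $A$ is the hypothesis we may use for the first inclusion. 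Actually, by Lemma \ref{4.7}, the second inclusion $i_{B}(A{\widehat{\otimes}}\operatorname{Rad}(B))\subset\operatorname{Rad}(A{\widehat{\otimes}}B)$ for every $B$ is exactly the assertion that $A$ is tensor perfect, so the content of the lemma splits into: (a) every commutative Banach algebra $A$ is tensor perfect, and (b) $i_{A}(\operatorname{Rad}(A){\widehat{\otimes}}B)\subset\operatorname{Rad}(A{\widehat{\otimes}}B)$ for commutative $A$. Note (b) says precisely that $\operatorname{Rad}(A)$ is tensor radical when $A$ is commutative, i.e. $\operatorname{Rad}(A)=\operatorname{Rad}^{t}(A)$ for commutative $A$.

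For the unitization reduction: if $A$ is non-unital and commutative, then $A^{1}$ is unital and commutative, $\operatorname{Rad}(A^{1})=\operatorname{Rad}(A)$, and $A$ is a closed ideal of $A^{1}$; moreover $A{\widehat{\otimes}}B$ sits inside $A^{1}{\widehat{\otimes}}B$, and since $A$ is an ideal of $A^{1}$, $i_{A^1}(A{\widehat{\otimes}}B)$ is an ideal of $A^{1}{\widehat{\otimes}}B$ (Lemma \ref{ideal}). If I establish the inclusions for $A^{1}$, then $\operatorname{Rad}(A){\widehat{\otimes}}B=\operatorname{Rad}(A^{1}){\widehat{\otimes}}B$ maps into $\operatorname{Rad}(A^{1}{\widehat{\otimes}}B)$, and intersecting with the ideal $i_{A^{1}}(A{\widehat{\otimes}}B)$ and using that $\operatorname{Rad}$ is hereditary on $(BA)$ (condition $(6^{\circ})$, Corollary \ref{p2} and Theorem \ref{p21}) one transfers the conclusion down to $A{\widehat{\otimes}}B$; one must check the elements land in the right place, i.e. $i_{A}(\operatorname{Rad}(A){\widehat{\otimes}}B)\subset\operatorname{Rad}(A{\widehat{\otimes}}B)$, using that $\operatorname{Rad}(A{\widehat{\otimes}}B)=i_{A}(A{\widehat{\otimes}}B)\cap\operatorname{Rad}(A^{1}{\widehat{\otimes}}B)$ by heredity. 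The second inclusion passes down similarly (or via Theorem \ref{4.3}, noting $A/$ its own radical is semisimple). So I can assume $A$ is unital and commutative.

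Now in the unital commutative case: take $a\in\operatorname{Rad}(A)$ and an arbitrary $T=\sum a_i\otimes b_i\in A{\widehat{\otimes}}B$ with $\|b_i\|=1$, $\sum\|a_i\|<\infty$; by Theorem \ref{1quas} it is enough to show $\rho_1(aM)=0$ where $M=\{a_i\}$ is the associated summable subset of $A$. The key commutative fact is that $aM\subseteq a\cdot\overline{\operatorname{alg}}(M\cup\{a\})$ lies inside a separable closed commutative subalgebra $C$ of $A$, and $C$ may be taken to contain $a$, and $a\in\operatorname{Rad}(C)$ because $C$ is a spectral subalgebra of $A$ (commutativity gives $\sigma_C(c)=\sigma_A(c)$ for $c\in C$ by the standard maximal-ideal/character argument — characters of $C$ extend, hence $\rho_C(a)=\rho_A(a)=0$). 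Then, on the commutative algebra $C$, each product $a_{k_1}\cdots a_{k_n}$ (with the understanding that $aM$ means we multiply $a$ into each) factors through $a^n$ times a product of $a_i$'s, or more simply: write $(aM)^n=a^n M^n$ using commutativity, so $\|(aM)^n\|_1\le\|a^n\|\,\|M^n\|_1\le\|a^n\|(\sum\|a_i\|)^n$, whence $\rho_1(aM)\le\rho(a)\cdot\sum\|a_i\| = 0$ since $a$ is quasinilpotent. The main obstacle I anticipate is the non-unital bookkeeping — making the unitization reduction genuinely rigorous, in particular verifying that the restriction of $\operatorname{Rad}$ behaves correctly along $i_{A}\colon A{\widehat{\otimes}}B\hookrightarrow A^{1}{\widehat{\otimes}}B$ (this is where heredity of $\operatorname{Rad}$, i.e. Corollary \ref{p2}, does the real work) — whereas the unital commutative heart of the argument, once commutativity is used to commute $a^n$ past the $M^n$ factor, is essentially immediate from Theorem \ref{1quas}.
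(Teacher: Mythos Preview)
Your argument for the first inclusion $i_{A}(\operatorname{Rad}(A)\widehat{\otimes}B)\subset\operatorname{Rad}(A\widehat{\otimes}B)$ is correct and rather elegant: using commutativity to write $(aM)^{n}=a^{n}M^{n}$, you get $\rho_{1}(aM)\le\rho(a)\rho_{1}(M)=0$, and Theorem~\ref{1quas} finishes. This is a genuinely different route from the paper, which argues via strictly irreducible representations; your approach makes the first inclusion an immediate consequence of the $\rho_{1}$-machinery.

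However, there is a real gap for the second inclusion. Your symmetry remark is incorrect: swapping the roles of $A$ and $B$ turns (II) into the statement $i_{B}(\operatorname{Rad}(B)\widehat{\otimes}A)\subset\operatorname{Rad}(B\widehat{\otimes}A)$, and your proof of the ``first'' inclusion would then require $B$ commutative, which is not assumed. You correctly reformulate (II) as ``$A$ is tensor perfect'' (Lemma~\ref{4.7}), but you never prove it. Your $\rho_{1}$-method does not adapt: to show $a\otimes b\in\operatorname{Rad}(A\widehat{\otimes}B)$ for $b\in\operatorname{Rad}(B)$ one would need $\rho_{1}(bN)=0$ for every summable $N\subset B$, i.e.\ $b\in\operatorname{Rad}^{t}(B)$, which for non-commutative $B$ is exactly the open problem. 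The reduction via Theorem~\ref{4.3} only pushes the question to the semisimple quotient $A/\operatorname{Rad}(A)$, for which you offer no argument. The paper's proof handles both inclusions simultaneously by taking $\pi\in\operatorname{irr}(A\widehat{\otimes}B)$ and observing that $a\mapsto\pi(a\otimes 1)$ has central image (by commutativity of $A$) and is therefore a character $h$ by irreducibility; then $\pi(\sum a_{n}\otimes b_{n})=\pi_{2}(\sum h(a_{n})b_{n})$ with $\pi_{2}\in\operatorname{irr}B$, whence $h$ kills $\operatorname{Rad}(A)$ and $\pi_{2}$ kills $\operatorname{Rad}(B)$. This Schur-type use of irreducibility is the idea you are missing for (II).
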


\begin{proof}
Suppose firstly that $A$ and $B$ are unital. For any $\pi\in\operatorname{irr}%
(A{\widehat{\otimes}}B)$, let $\pi_{1}(a)=\pi(a\otimes1)$, $\pi_{2}%
(b)=\pi(1\otimes b)$, then $\pi_{1}(A)$ commutes with $\pi_{1}(A)$ and
$\pi_{2}(B)$. This means that $\pi_{1}(A)$ is contained in the centre of
$\pi(A{\widehat{\otimes}}B)$. Strict irreducibility of $\pi$ implies that the
centre of $\pi(A{\widehat{\otimes}}B)$ is one-dimensional, i.e. in fact
$\pi_{1}(a)=h(a)$ for a character $h$ of $A$. Hence%
\begin{equation}
\pi(\sum a_{n}{\otimes}b_{n})=\sum h(a_{n})\pi_{2}(b_{n})=\pi_{2}(\sum
h(a_{n})b_{n}). \label{f3.2}%
\end{equation}
Therefore $\pi(A{\widehat{\otimes}}B)=\pi_{2}(B)$ and $\pi_{2}\in
\operatorname{irr}B$. It follows that $\pi_{2}(\operatorname{Rad}(B))=0$ and,
by (\ref{f3.2}),
\[
\pi(i_{B}(A{\widehat{\otimes}}\operatorname{Rad}(B)))=0.
\]
Similarly, since $h(\operatorname{Rad}(A))=0$, we have that
\[
\pi(i_{A}(\operatorname{Rad}(A){\widehat{\otimes}}B))=0.
\]
Since $\pi$ is arbitrary, the equality (\ref{f3.1}) is proved.

In the general case, the ideals $J_{1}=i_{A^{1}}(\operatorname{Rad}%
(A^{1}){\widehat{\otimes}}B^{1})$ and $J_{2}=i_{B^{1}}(A^{1}{\widehat{\otimes
}}\operatorname{Rad}(B^{1}))$ are contained in $\operatorname{Rad}%
(A^{1}{\widehat{\otimes}}B^{1})$, whence they are topologically nil. Since $A$
and $B$ are complemented subspaces in $A^{1}$ and $B^{1}$ respectively, the
algebra $A{\widehat{\otimes}}B$ is topologically isomorphic to its canonical
image $J$ in $A^{1}{\widehat{\otimes}}B^{1}$ (in particular $J$ is an ideal of
$A^{1}{\widehat{\otimes}}B^{1}$). Hence $J_{1}\cap J$ and $J_{2}\cap J$ are
topologically nil ideals of $J$ and this clearly implies our assertion.
\end{proof}

The following statement is an obvious consequence of our results and Lemma
\ref{4.8}.

\begin{corollary}
\label{4.10} Any commutative Banach algebra is tensor perfect, and any radical
commutative Banach algebra is tensor radical.
\end{corollary}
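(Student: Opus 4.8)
The plan is to read off both assertions directly from Lemma~\ref{4.8} by letting one of the two tensor factors be radical. Recall that for a closed ideal $J$ of a Banach algebra $A$ and any Banach algebra $B$, the map $i_{A}\colon J{\widehat{\otimes}}B\rightarrow A{\widehat{\otimes}}B$ is the `identity' inclusion; in particular, when $J=A$ this map is just the identity of $A{\widehat{\otimes}}B$, hence surjective, and similarly for $i_{B}$ when the ideal of $B$ in question is all of $B$. These trivial remarks are the only things beyond Lemma~\ref{4.8} that are needed.

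For the first claim, I would fix a commutative Banach algebra $A$ and an arbitrary radical Banach algebra $B$, so that $\operatorname{Rad}(B)=B$. Then $i_{B}(A{\widehat{\otimes}}\operatorname{Rad}(B))=i_{B}(A{\widehat{\otimes}}B)=A{\widehat{\otimes}}B$, and the inclusion $i_{B}(A{\widehat{\otimes}}\operatorname{Rad}(B))\subset\operatorname{Rad}(A{\widehat{\otimes}}B)$ supplied by Lemma~\ref{4.8} forces $A{\widehat{\otimes}}B=\operatorname{Rad}(A{\widehat{\otimes}}B)$, i.e. $A{\widehat{\otimes}}B$ is radical. Since $B$ was an arbitrary radical Banach algebra, this says exactly that $A$ is tensor perfect.

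For the second claim, I would fix a radical commutative Banach algebra $A$ and an arbitrary Banach algebra $B$; now $\operatorname{Rad}(A)=A$, so $i_{A}(\operatorname{Rad}(A){\widehat{\otimes}}B)=A{\widehat{\otimes}}B$, and Lemma~\ref{4.8} again yields $A{\widehat{\otimes}}B=\operatorname{Rad}(A{\widehat{\otimes}}B)$. As $B$ is arbitrary, $A{\widehat{\otimes}}B$ is radical for every Banach algebra $B$, which is precisely the defining property of $A$ being tensor radical (equivalently $A=\operatorname{Rad}^{t}(A)$); one could alternatively invoke Theorem~\ref{perrad} and note that $A$ is then $1$-quasinilpotent, but the direct reading of the definition is cleaner.

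I do not expect any genuine obstacle: all the substance is in Lemma~\ref{4.8}, and the argument only requires the observation that $i_{A}$ (resp. $i_{B}$) degenerates to the identity map once its domain is the full algebra, together with the bookkeeping of the definitions of `tensor perfect' and `tensor radical'. If anything deserves a line of care, it is confirming that these two specializations of Lemma~\ref{4.8} are legitimate, i.e. that $\operatorname{Rad}(A)$ and $\operatorname{Rad}(B)$ are closed ideals (true for Banach algebras), so that $i_A$ and $i_B$ are defined in the first place.
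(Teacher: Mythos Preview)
Your proposal is correct and matches the paper's approach: the paper simply states that the corollary is an obvious consequence of Lemma~\ref{4.8} (and the preceding results), and you have spelled out exactly the two specializations of that lemma---taking $B$ radical for tensor perfectness and $A$ radical for tensor radicality---that make this obvious. The only minor difference is that for the first claim the paper could also appeal to Lemma~\ref{4.7} as an intermediate step, but your direct use of Lemma~\ref{4.8} is just as clean.
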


\section{\label{S4} TRs related to the joint quasinilpotence}

Let $A$ be a normed algebra. If $N\subset A$, let $\widetilde{N}$ denote the
closure of $N$ in $A$ and $\operatorname{abs}(N)$ the closure of absolutely
convex hull of $N$. We define here the (usual) sum and product of subsets $M$
and $N$ of $A$ by
\[
M+N=\{a+b:a\in M,b\in N\}
\]
and
\begin{equation}
MN=\{ab:a\in M,b\in N\}, \label{f71}%
\end{equation}
respectively; it does similarly for several subsets and for a power $M^{n}$,
$n\in\mathbb{N}$. The set $\mathcal{S}(M)=$ $\cup_{n\geqslant1}M^{n}$ is
called the \textit{semigroup generated by} $M$. If $A$ is unital,
$\mathcal{S}_{1}(M)=\mathcal{S}(M)\cup\{1\}$.

In what follows we denote by ${\mathcal{M}}_{f}(A)$, ${\mathcal{M}}_{c}(A)$
and ${\mathcal{M}}_{b}(A)$ the classes of all finite, precompact and bounded
subsets of a normed algebra $A$, respectively. For $M\in{\mathcal{M}}_{b}(A)$,
one sets
\[
\left\|  M\right\|  =\sup_{a\in M}\left\|  a\right\|
\]
(the norm of $M$) and
\[
\rho(M)=\inf\left\|  M^{n}\right\|  ^{1/n}.
\]
Since the norm is submultiplicative,%
\begin{equation}
\rho(M)=\inf\left\|  M^{n}\right\|  ^{1/n}=\lim\left\|  M^{n}\right\|  ^{1/n}.
\label{f72}%
\end{equation}
The number $\rho(M)$ is called \cite{RS60} the \textit{joint spectral radius}
of $M$. It is clear that
\[
\rho(M)=\rho(M^{n})^{1/n}%
\]
for every $n\in\mathbb{N}$,
\[
\rho(\lambda M)=\left|  \lambda\right|  \rho(M)
\]
for every $\lambda\in\mathbb{C}$, and
\begin{equation}
\rho(MN)=\rho(NM) \label{f74}%
\end{equation}
for every $N\in\mathcal{M}_{b}(A).$

Given here the definition of product of subsets of a normed algebra differs
from one for generalized subsets defined earlier (in Subsection \ref{sub}).
However the calculating $\rho(M)$ by rules of generalized subsets gives the
same result as in (\ref{f72}) because, for a generalized subset $M$ of $A$,
the norm $\left\|  M\right\|  =\sup\{\left\|  a\right\|  :(a,t)\in M^{\sharp
}\}$ does not depend on cardinal numbers and coincides with the norm of the
usual set $\{a:(a,t)\in M^{\sharp}\}$. So we prefer to use (\ref{f71}) in what
follows as far as possible.

We need the following known properties of $\rho$.

\begin{lemma}
\label{st1}Let $A$ be a normed algebra and $N,M\in\mathcal{M}_{b}(A)$.

\begin{itemize}
\item [(i)]Then $\rho(L)\leqslant\rho(M)=\rho(\operatorname{abs}(M))$ for any
$L\subset\operatorname{abs}(M)$. In particular $\rho(M)=\rho(\widetilde{M})$.

\item[(ii)] If every element of $N$ commutes with every element of $M$ then
\begin{align*}
\rho(M\cup N)  &  =\max\{\rho(M),\rho(N)\},\\
\rho(M+N)  &  \leqslant\rho(M)+\rho(N),\quad\rho(MN)\leqslant\rho(M)\rho(N).
\end{align*}
\end{itemize}
\end{lemma}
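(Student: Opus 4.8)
The plan is to reduce every assertion to the submultiplicativity $\|M^{n+m}\|\leqslant\|M^{n}\|\,\|M^{m}\|$ of the norm on bounded sets (noted above), together with two elementary facts: \emph{monotonicity} --- if $L\subseteq K$ are bounded then $L^{n}\subseteq K^{n}$ and so $\rho(L)\leqslant\rho(K)$; and $\|C\|=\|\widetilde{C}\|$ for every bounded $C$.

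For (i), I would first establish $\rho(\widetilde{K})=\rho(K)$ for any bounded $K$. Joint continuity of multiplication in a normed algebra gives $(\widetilde{K})^{n}\subseteq\widetilde{K^{n}}$, hence $\|(\widetilde{K})^{n}\|\leqslant\|\widetilde{K^{n}}\|=\|K^{n}\|$, while $\rho(K)\leqslant\rho(\widetilde{K})$ is monotonicity. Next write $D$ for the absolutely convex hull of $M$, so that $\operatorname{abs}(M)=\widetilde{D}$. A one-line estimate on absolutely convex combinations gives $\|D\|=\|M\|$, and --- more to the point --- multiplying out a product of $n$ absolutely convex combinations of elements of $M$, and observing that the coefficients multiply to a family of $\ell^{1}$-norm $\leqslant1$, shows $D^{n}\subseteq D_{n}$, where $D_{n}$ denotes the absolutely convex hull of $M^{n}$. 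Hence $\|D^{n}\|\leqslant\|D_{n}\|=\|M^{n}\|$, so $\rho(\operatorname{abs}(M))=\rho(\widetilde{D})=\rho(D)\leqslant\rho(M)$; the opposite inequality and the inequality $\rho(L)\leqslant\rho(\operatorname{abs}(M))$ for $L\subseteq\operatorname{abs}(M)$ are instances of monotonicity, and the ``in particular'' statement is the case $K=M$ of $\rho(\widetilde{K})=\rho(K)$.

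For (ii), the key observation is that under the commutation hypothesis any product of $n$ factors drawn from $M\cup N$ can be rearranged so that all the factors from $M$ come first and all those from $N$ come last (keeping the relative order within each group); such a product therefore lies in $M^{j}N^{n-j}$, where $j$ is the number of $M$-factors, with the boundary terms $j=0$ and $j=n$ read as $N^{n}$ and $M^{n}$ respectively (this sidesteps the undefined symbols $M^{0},N^{0}$ in a possibly non-unital algebra), and $\|M^{j}N^{n-j}\|\leqslant\|M^{j}\|\,\|N^{n-j}\|$. Then $(M\cup N)^{n}\subseteq\bigcup_{j=0}^{n}M^{j}N^{n-j}$ yields $\|(M\cup N)^{n}\|\leqslant\sum_{j=0}^{n}\|M^{j}\|\,\|N^{n-j}\|$; expanding $(a_{1}+b_{1})\cdots(a_{n}+b_{n})$ and counting terms yields $\|(M+N)^{n}\|\leqslant\sum_{j=0}^{n}\binom{n}{j}\|M^{j}\|\,\|N^{n-j}\|$; and $(a_{1}b_{1})\cdots(a_{n}b_{n})=(a_{1}\cdots a_{n})(b_{1}\cdots b_{n})$ yields $\|(MN)^{n}\|\leqslant\|M^{n}\|\,\|N^{n}\|$, whence $\rho(MN)\leqslant\rho(M)\rho(N)$ on taking $n$-th roots and passing to the limit. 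For the remaining two inequalities I would fix $\varepsilon>0$, choose $C\geqslant1$ with $\|M^{k}\|\leqslant C(\rho(M)+\varepsilon)^{k}$ and $\|N^{k}\|\leqslant C(\rho(N)+\varepsilon)^{k}$ for all $k\geqslant1$ (possible since $\|M^{k}\|^{1/k}\to\rho(M)$ and likewise for $N$), substitute, use $\sum_{j}\binom{n}{j}x^{j}y^{n-j}=(x+y)^{n}$ (and the crude bound $n+1$ on the number of summands in the first estimate), take $n$-th roots, let $n\to\infty$ and then $\varepsilon\to0$; the reverse inequality $\max\{\rho(M),\rho(N)\}\leqslant\rho(M\cup N)$ is once more monotonicity.

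None of this is deep. The only points demanding care are the inclusion $D^{n}\subseteq D_{n}$ in (i) --- keeping honest track that iterated absolutely convex combinations remain of total $\ell^{1}$-weight $\leqslant1$ --- and, in (ii), being precise about the boundary indices $j=0$ and $j=n$ in a not-necessarily-unital algebra. I expect this bookkeeping, rather than any genuine difficulty, to be the main thing to get right.
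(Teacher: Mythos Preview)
Your argument is correct and self-contained. The paper itself does not prove this lemma: it simply cites \cite[Proposition~2.6]{ST00} for the equality $\rho(M)=\rho(\operatorname{abs}(M))$ in (i) and \cite[Lemma~2.13]{ST00} for all of (ii), declaring the remaining points evident. Your write-up therefore supplies what the paper omits. The ingredients you use --- monotonicity of $\rho$ under inclusion, the inclusion $D^{n}\subseteq D_{n}$ via the $\ell^{1}$-multiplicativity of absolutely convex weights, the commutation-based rearrangement $(M\cup N)^{n}\subseteq\bigcup_{j}M^{j}N^{n-j}$, and the $\varepsilon$-perturbation trick to pass from $\|M^{k}\|\leqslant C(\rho(M)+\varepsilon)^{k}$ to the desired limits --- are the standard ones and match what one finds in \cite{ST00}. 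One small remark: for the union estimate you bound $\|(M\cup N)^{n}\|$ by a \emph{sum} over $j$, whereas since $(M\cup N)^{n}$ is a \emph{union} one actually has the sharper bound $\max_{j}\|M^{j}\|\,\|N^{n-j}\|$; your weaker bound is of course still sufficient after taking $n$-th roots, and your handling of the boundary indices $j=0,n$ in the non-unital setting is appropriately careful.
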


\begin{proof}
(i) The equality $\rho(M)=\rho(\operatorname{abs}(M))$ was proved in
\cite[Proposition 2.6]{ST00}, the other properties are evident.

(ii) was proved in \cite[Lemma 2.13]{ST00}.
\end{proof}

\begin{theorem}
[{\cite[Theorem 3.5]{ST00}}]\label{st2}Let $A$ be a normed algebra, and
$V\subset\mathbb{C}$ be open. Let $F$ be a family of analytic functions from
$V$ into $A$ such that
\[
\lim_{\mu\rightarrow\lambda}\sup\left\{  \left\|  f(\mu)-f(\lambda)\right\|
:f\in F\right\}  =0
\]
for every $\lambda\in V$ and all $M(\lambda)=\{f(\lambda):f\in F\}\in
\mathcal{M}_{b}(A)$. Then $\lambda\longmapsto\log\rho(M(\lambda))$ and
$\lambda\longmapsto\rho(M(\lambda))$ are subharmonic on $V$.
\end{theorem}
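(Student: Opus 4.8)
The plan is to reduce the assertion to two classical facts about subharmonic functions on a planar domain: that a decreasing limit of subharmonic functions is again subharmonic (or identically $-\infty$), and that $\varphi\circ u$ is subharmonic whenever $u$ is subharmonic and $\varphi$ is convex and increasing. Since $\rho(M(\lambda))=\exp(\log\rho(M(\lambda)))$ and $\exp$ is convex and increasing, the second fact shows it suffices to prove that $\lambda\longmapsto\log\rho(M(\lambda))$ is subharmonic on $V$.

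First I would record that, for each $n$, the function $\lambda\longmapsto\Vert M(\lambda)^{n}\Vert$ is continuous and locally bounded on $V$. Local boundedness of $\lambda\longmapsto\Vert M(\lambda)\Vert$ follows from the hypothesis $\lim_{\mu\rightarrow\lambda}\sup_{f\in F}\Vert f(\mu)-f(\lambda)\Vert=0$ together with $M(\lambda)\in\mathcal{M}_{b}(A)$; continuity of $\lambda\longmapsto\Vert M(\lambda)^{n}\Vert$ then comes from the telescoping identity
\[
f_{1}(\mu)\cdots f_{n}(\mu)-f_{1}(\lambda)\cdots f_{n}(\lambda)=\sum_{j=1}^{n}f_{1}(\mu)\cdots f_{j-1}(\mu)\bigl(f_{j}(\mu)-f_{j}(\lambda)\bigr)f_{j+1}(\lambda)\cdots f_{n}(\lambda),
\]
whose right side has norm at most $nC^{n-1}\sup_{f\in F}\Vert f(\mu)-f(\lambda)\Vert$, uniformly over all $f_{1},\dots,f_{n}\in F$, once $\mu$ lies in a small disc where $\Vert M(\cdot)\Vert\leqslant C$. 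Next, for a fixed tuple $f_{1},\dots,f_{n}\in F$ the product $g=f_{1}\cdots f_{n}$ is an analytic $A$-valued function on $V$, so $\log\Vert g(\cdot)\Vert$ is subharmonic: by Hahn--Banach $\log\Vert g(\lambda)\Vert=\sup\{\log|\varphi(g(\lambda))|:\varphi\in A^{*},\ \Vert\varphi\Vert\leqslant1\}$, each $\log|\varphi\circ g|$ is subharmonic (it is $\log$ of the modulus of a scalar holomorphic function), and the supremum is upper semicontinuous because $\Vert g(\cdot)\Vert$ is continuous. Putting $u_{n}(\lambda)=\tfrac1n\log\Vert M(\lambda)^{n}\Vert=\tfrac1n\sup_{f_{1},\dots,f_{n}\in F}\log\Vert f_{1}(\lambda)\cdots f_{n}(\lambda)\Vert$, the same reasoning (a pointwise supremum of subharmonic functions that is upper semicontinuous and locally bounded above satisfies the sub-mean-value inequality, hence is subharmonic) shows that each $u_{n}$ is subharmonic on $V$.

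Finally I would pass to the limit. Submultiplicativity $\Vert M^{n+m}\Vert\leqslant\Vert M^{n}\Vert\,\Vert M^{m}\Vert$ gives $u_{nk}\leqslant u_{n}$, so the sequence $v_{k}:=u_{k!}$ is decreasing, each $v_{k}$ being subharmonic; and by Fekete's subadditive lemma applied to $a_{n}:=\log\Vert M(\lambda)^{n}\Vert$ one has $v_{k}(\lambda)\downarrow\inf_{n}u_{n}(\lambda)=\log\rho(M(\lambda))$ for every $\lambda\in V$. A decreasing limit of subharmonic functions is subharmonic unless it is identically $-\infty$; in either case $\lambda\longmapsto\log\rho(M(\lambda))$ is subharmonic on $V$, and therefore so is $\lambda\longmapsto\rho(M(\lambda))=\exp(\log\rho(M(\lambda)))$.

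I expect the main obstacle to be the middle step: justifying that the pointwise supremum, over the (possibly uncountable) family $F$, of the subharmonic functions $\log\Vert f_{1}(\cdot)\cdots f_{n}(\cdot)\Vert$ is again subharmonic. The sub-mean-value inequality for the supremum is purely formal, but one must use the continuity of $\lambda\longmapsto\Vert M(\lambda)^{n}\Vert$ (and hence the uniform-continuity hypothesis on $F$) to obtain upper semicontinuity, and local boundedness from above to keep the circle integrals finite; without the hypothesis on $F$ this step genuinely fails, so this is exactly where the assumption is consumed.
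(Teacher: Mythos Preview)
The paper does not prove this theorem; it is quoted from \cite[Theorem 3.5]{ST00} and used as a black box. Your argument is correct and is precisely the classical Vesentini-type route that the cited reference follows: for each $n$ one writes $u_n(\lambda)=\tfrac1n\log\Vert M(\lambda)^n\Vert$ as a locally bounded, upper semicontinuous supremum (over tuples $(f_1,\dots,f_n)$ and over unit functionals $\varphi\in A^*$) of the subharmonic functions $\tfrac1n\log|\varphi(f_1(\lambda)\cdots f_n(\lambda))|$, and then passes to a decreasing limit via submultiplicativity. Your identification of the crucial point---that the uniform-continuity hypothesis on $F$ is exactly what guarantees continuity (hence upper semicontinuity) of $\lambda\mapsto\Vert M(\lambda)^n\Vert$, so that the supremum needs no regularization---is accurate. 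The only cosmetic remark is that ``subharmonic or identically $-\infty$'' should be read componentwise on $V$; for the applications in the paper (Liouville-type boundedness arguments on all of $\mathbb{C}$) this makes no difference.
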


For subharmonicity, see \cite{HK76}.

An important general problem of the joint spectral radius theory is to express
$\rho$ for precompact sets of operators in $B(X)$ via their spectral
characteristics; see some partial results in \cite{T99, ST00, ST02}.

\subsection{Joint quasinilpotent algebras and ideals}

A normed algebra $A$ is called \textit{finitely quasinilpotent} (resp.
\textit{compactly quasinilpotent}; \textit{boundedly quasinilpotent}) if
$\rho(M)=0$, for every $M\in{\mathcal{M}}_{f}(A)$ (resp. ${\mathcal{M}}%
_{c}(A)$; ${\mathcal{M}}_{b}(A)$). For brevity, we write that $A$ is
$f$-quasinilpotent (resp. $c$-quasinilpotent; $b$-quasinilpotent).

\begin{lemma}
\label{3.0} Let $A$ be a normed algebra.

\begin{itemize}
\item [(i)]If $A$ is $c$-quasinilpotent or $b$-quasinilpotent then the same is
true for its completion $\overline{A}$.

\item[(ii)] Let $\phi:A\rightarrow B$ be a continuous homomorphism. If $A$ is
$f$-quasinilpotent then ${\phi}(A)$ is $f$-quasinilpotent.
\end{itemize}
\end{lemma}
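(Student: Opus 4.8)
The plan is to reduce both statements to the elementary invariance of $\rho$ under closure (Lemma~\ref{st1}(i)), which holds in any normed algebra, in particular in the completion $\overline{A}$; throughout I write $\widehat{S}$ for the closure in $\overline{A}$ of a set $S$. The point to emphasise about part (i) is that one should \emph{not} try to approximate a typical element $x_{1}\cdots x_{n}$ of $M^{n}$ by a product of elements of $A$: the approximation errors accumulate along the length-$n$ product (they are of order $n\|M\|^{n-1}\varepsilon$) and therefore do not decay at the geometric rate needed to control $\rho$. Instead one perturbs the whole set $M$ at once, covering it by $\widehat{N}$ for a single suitable subset $N$ of $A$.

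For part (i), bounded case, let $M\in\mathcal{M}_{b}(\overline{A})$ and put $N=\{a\in A:\|a\|\le\|M\|+1\}\in\mathcal{M}_{b}(A)$. Since $A$ is dense in $\overline{A}$, every $y\in\overline{A}$ with $\|y\|\le\|M\|$ is a limit of elements of $N$, so $M\subseteq\widehat{N}$. By Lemma~\ref{st1}(i) applied in $\overline{A}$ we have $\rho(\widehat{N})=\rho(N)$, and $\rho(N)=0$ because $A$ is $b$-quasinilpotent; hence $\rho(M)\le\rho(\widehat{N})=0$, so $\overline{A}$ is $b$-quasinilpotent.

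For part (i), precompact case, the same scheme works but the covering set must be built by hand, a ball being useless. Given $M\in\mathcal{M}_{c}(\overline{A})$, the set $\widehat{M}$ is compact; for each $k\ge1$ take a finite $(1/k)$-net of $\widehat{M}$ and replace each of its points by a point of $A$ within distance $1/k$, obtaining a finite set $N_{k}\subseteq A$ that is a $(2/k)$-net of $\widehat{M}$ and all of whose points lie within $1/k$ of $\widehat{M}$. Put $N=\bigcup_{k\ge1}N_{k}\subseteq A$. A routine total-boundedness check (for $\delta>0$, combine $N_{1}\cup\dots\cup N_{K-1}$ with a finite $\delta$-net of $\widehat{M}$, where $1/K<\delta/2$) shows $N\in\mathcal{M}_{c}(A)$; and every point of $\widehat{M}$ is a limit of points of $N$, so $M\subseteq\widehat{M}\subseteq\widehat{N}$. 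Then $\rho(M)\le\rho(\widehat{N})=\rho(N)=0$ by Lemma~\ref{st1}(i) and $c$-quasinilpotence of $A$.

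Part (ii) is a routine pushforward estimate: any $N\in\mathcal{M}_{f}(\phi(A))$ equals $\phi(M)$ for some $M\in\mathcal{M}_{f}(A)$ (pick finitely many preimages), and since $\phi$ is a homomorphism $N^{n}=\phi(M^{n})$, whence $\|N^{n}\|\le\|\phi\|\,\|M^{n}\|$ and $\rho(N)=\lim\|N^{n}\|^{1/n}\le\lim\|M^{n}\|^{1/n}=\rho(M)=0$. Thus the only real content lies in part (i), and the one thing to get right there is to perturb sets rather than products and then quote Lemma~\ref{st1}(i).
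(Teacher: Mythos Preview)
Your proof is correct and follows exactly the paper's approach: the paper's proof of (i) simply asserts that ``any bounded (precompact) subset of $\overline{A}$ is contained in the closure of a bounded (respectively precompact) subset of $A$'' and then implicitly invokes $\rho(N)=\rho(\widetilde{N})$, while you carry out the covering construction explicitly (with the finite-net argument in the precompact case) before applying Lemma~\ref{st1}(i). Part (ii) is verbatim the paper's argument.
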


\begin{proof}
(i) follows from the fact that any bounded (precompact) subset of
$\overline{A}$ is contained in the closure of a bounded (respectively
precompact) subset of $A$.

To see (ii) it suffices to note that any $M\in{\mathcal{M}}_{f}({\phi}(A))$ is
of the form $\phi(N)$, for some $N\in{\mathcal{M}}_{f}(A)$, whence $\Vert
M^{n}\Vert\leqslant\Vert\phi\Vert\Vert N^{n}\Vert$ for every $n\in\mathbb{N}$
and $\rho(M)=0$.
\end{proof}

Boundedly quasinilpotent algebras are usually called \textit{topologically
nilpotent} \cite{D91, DM92, DW92}. Part (i) of the previous lemma, for this
case, is mentioned, for example, in \cite{PW99}. For $\ast\in\{f,c,b\}$, an
ideal of a normed algebra is called a $\ast$\textit{-quasinilpotent ideal }if
it is a $\ast$-quasinilpotent algebra.

\subsection{Ideals $R_{f}$, $R_{c}$ and $R_{b}$}

We set, for $\ast\in\{f,c,b\}$,
\[
R_{\ast}(A)=\{a\in A:\rho(\{a\}\cup M)=\rho(M),\quad\forall M\in{\mathcal{M}%
}_{\ast}(A)\}.
\]
Clearly the condition $\rho(\{a\}\cup M)=\rho(M)$ can be rewritten in the
form
\[
\rho(\{a\}\cup M)\leqslant\rho(M)
\]
because the reverse inequality is evident.

\begin{lemma}
\label{3.1} If $a\in R_{\ast}(A)$ then $\lambda a\in R_{\ast}(A)$, for any
$\lambda\in\mathbb{C}$.
\end{lemma}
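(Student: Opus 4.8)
The plan is to reduce the claim to the defining property of $R_{\ast}(A)$ by means of the elementary homogeneity of the joint spectral radius. Recall that $a\in R_{\ast}(A)$ means precisely that $\rho(\{a\}\cup M)\leqslant\rho(M)$ for every $M\in\mathcal{M}_{\ast}(A)$, so it suffices to establish the same inequality with $\lambda a$ in place of $a$.

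First I would dispose of the case $\lambda=0$ separately. Here $\{0\}\cup M\subset\operatorname{abs}(M)$ for any $M\in\mathcal{M}_{\ast}(A)$, since the zero vector lies in the absolutely convex hull; hence Lemma~\ref{st1}(i) gives $\rho(\{0\}\cup M)\leqslant\rho(\operatorname{abs}(M))=\rho(M)$, the reverse inequality being trivial. Thus $0=0\cdot a\in R_{\ast}(A)$.

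Now suppose $\lambda\neq 0$, fix $M\in\mathcal{M}_{\ast}(A)$, and set $N=\lambda^{-1}M$. Since each of the classes $\mathcal{M}_{f}$, $\mathcal{M}_{c}$, $\mathcal{M}_{b}$ is stable under multiplication by a scalar, $N\in\mathcal{M}_{\ast}(A)$, and therefore $\{a\}\cup N\in\mathcal{M}_{\ast}(A)$. One has the set identity $\lambda\bigl(\{a\}\cup N\bigr)=\{\lambda a\}\cup\lambda N=\{\lambda a\}\cup M$. Using the homogeneity $\rho(\mu L)=|\mu|\,\rho(L)$ recorded earlier in this section, together with the hypothesis $a\in R_{\ast}(A)$ applied to the set $N$, we obtain
\[
\rho\bigl(\{\lambda a\}\cup M\bigr)=|\lambda|\,\rho\bigl(\{a\}\cup N\bigr)=|\lambda|\,\rho(N)=|\lambda|\,|\lambda|^{-1}\rho(M)=\rho(M).
\]
Since $M\in\mathcal{M}_{\ast}(A)$ was arbitrary, $\lambda a\in R_{\ast}(A)$.

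There is no serious obstacle here: the argument is just the combination of the homogeneity of $\rho$ with the defining condition of $R_{\ast}(A)$. The only points needing (minor) attention are treating $\lambda=0$ on its own—since the scaling trick would otherwise divide by $\lambda$—and noting that the auxiliary set $\lambda^{-1}M$ remains in the same class $\mathcal{M}_{\ast}(A)$; both are immediate.
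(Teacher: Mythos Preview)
Your proof is correct and follows exactly the same route as the paper: for $\lambda\neq 0$ you rescale $M$ by $\lambda^{-1}$ and use homogeneity of $\rho$, which is precisely the chain $\rho(\{\lambda a\}\cup M)=|\lambda|\rho(\{a\}\cup\lambda^{-1}M)=|\lambda|\rho(\lambda^{-1}M)=\rho(M)$ written in the paper. Your separate treatment of $\lambda=0$ is an added bit of care that the paper omits.
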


\begin{proof}
Indeed,
\begin{align*}
\rho(\{\lambda a\}\cup M)  &  =\rho(\lambda(\{a\}\cup\lambda^{-1}%
M))=|\lambda|\rho(\{a\}\cup\lambda^{-1}M)\\
&  =|\lambda|\rho(\lambda^{-1}M)=\rho(M).
\end{align*}
\end{proof}

\begin{lemma}
\label{3.2} $a\in R_{\ast}(A)$ iff $\sup\{\rho(\{\lambda a\}\cup M):\lambda
\in\mathbb{C\}}<\infty$ for every $M\in\mathcal{M}_{\ast}(A)$.
\end{lemma}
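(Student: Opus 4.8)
The plan is to handle the two implications separately. The forward implication (``only if'') is immediate from Lemma~\ref{3.1}: if $a\in R_{\ast}(A)$ then $\lambda a\in R_{\ast}(A)$ for every $\lambda\in\mathbb{C}$, hence $\rho(\{\lambda a\}\cup M)=\rho(M)$ for every $\lambda$ and every $M\in\mathcal{M}_{\ast}(A)$; since $\rho(M)\leqslant\Vert M\Vert<\infty$, this gives $\sup_{\lambda\in\mathbb{C}}\rho(\{\lambda a\}\cup M)=\rho(M)<\infty$, as required.

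For the converse (``if'') I would argue by subharmonicity. Fix $M\in\mathcal{M}_{\ast}(A)$ and consider the family $F$ of affine (hence analytic) $A$-valued functions on $V=\mathbb{C}$ consisting of the constants $\lambda\longmapsto m$ for $m\in M$ together with $g:\lambda\longmapsto\lambda a$. For each $\lambda$ one has $\{f(\lambda):f\in F\}=\{\lambda a\}\cup M$, which is finite, precompact, resp. bounded whenever $M$ is, hence lies in $\mathcal{M}_{b}(A)$; and $\sup_{f\in F}\Vert f(\mu)-f(\lambda)\Vert=|\mu-\lambda|\,\Vert a\Vert\to 0$ as $\mu\to\lambda$. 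Thus Theorem~\ref{st2} applies and the function $u(\lambda):=\rho(\{\lambda a\}\cup M)$ is subharmonic on $\mathbb{C}$.

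By hypothesis $u$ is bounded above, so the Liouville theorem for subharmonic functions on the plane (a subharmonic function on $\mathbb{C}$ bounded above is constant; see \cite{HK76}) forces $u$ to be constant. Consequently $\rho(\{a\}\cup M)=u(1)=u(0)=\rho(\{0\}\cup M)=\rho(M)$, the last equality because $M\subset\{0\}\cup M\subset\operatorname{abs}(M)$ and $0\in\operatorname{abs}(M)$, so $\rho(M)\leqslant\rho(\{0\}\cup M)\leqslant\rho(\operatorname{abs}(M))=\rho(M)$ by Lemma~\ref{st1}(i). Since $M\in\mathcal{M}_{\ast}(A)$ was arbitrary, $a\in R_{\ast}(A)$.

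The only non-routine ingredient is the Liouville property for subharmonic functions invoked above; the remaining points are bookkeeping, the one thing worth checking being that $\{\lambda a\}\cup M$ stays in the relevant class $\mathcal{M}_{b}(A)$ for all $\lambda$ in each of the cases $\ast\in\{f,c,b\}$, so that Theorem~\ref{st2} is legitimately applicable.
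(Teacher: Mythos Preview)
Your proof is correct and follows essentially the same approach as the paper: subharmonicity of $\lambda\mapsto\rho(\{\lambda a\}\cup M)$ via Theorem~\ref{st2}, followed by the Liouville theorem for subharmonic functions to conclude constancy and hence $\rho(\{a\}\cup M)=\rho(M)$. You are simply more explicit than the paper---spelling out the forward implication via Lemma~\ref{3.1}, verifying the hypotheses of Theorem~\ref{st2}, and justifying $\rho(\{0\}\cup M)=\rho(M)$---whereas the paper compresses all of this into three lines.
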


\begin{proof}
The function $f(\lambda)=\rho(\{\lambda a\}\cup M)$ is subharmonic by Theorem
\ref{st2} and bounded on $\mathbb{C}$; by \cite{HK76}, it is constant. Hence
\[
\rho(\{a\}\cup M)=f(1)=f(0)=\rho(M).
\]
\end{proof}

Our next lemma shows that one may assume $A$ to be unital.

\begin{lemma}
\label{3.3} $R_{\ast}(A)=R_{\ast}(A^{1})$.
\end{lemma}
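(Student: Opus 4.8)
The plan is to prove the two inclusions separately. Throughout I use that, for a subset of $A$, the joint spectral radius $\rho$ (and hence membership in $R_\ast$) is the same whether computed in $A$ or in $A^1$, since $\rho(M)=\lim\|M^n\|^{1/n}$ only involves products inside the subalgebra, and that $\mathcal{M}_\ast(A)\subseteq\mathcal{M}_\ast(A^1)$. If $A$ is unital then $A^1=A$ and there is nothing to prove, so assume $A$ is non-unital, write $A^1=A\oplus\mathbb{C}$, and let $\chi\colon A^1\to\mathbb{C}$, $\chi(b+\mu 1)=\mu$, be the (bounded) augmentation homomorphism, so that $A=\ker\chi$ is an ideal of $A^1$.

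For the inclusion $R_\ast(A^1)\subseteq R_\ast(A)$, I would first show $R_\ast(A^1)\subseteq A$. Taking $M=\{0\}$ in the definition and noting $\{x,0\}^n=\{x^n,0\}$, every $x\in R_\ast(A^1)$ satisfies $\rho(x)=\rho(\{x\}\cup\{0\})=\rho(\{0\})=0$; then $|\chi(x)|=|\chi(x^n)|^{1/n}\le\|\chi\|^{1/n}\|x^n\|^{1/n}\to\rho(x)=0$, so $\chi(x)=0$ and $x\in A$. Now if $a\in R_\ast(A^1)$ then $a\in A$, and the equality $\rho(\{a\}\cup M)=\rho(M)$, which holds for all $M\in\mathcal{M}_\ast(A^1)$ and in particular for all $M\in\mathcal{M}_\ast(A)$, gives $a\in R_\ast(A)$.

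For the reverse inclusion $R_\ast(A)\subseteq R_\ast(A^1)$, let $a\in R_\ast(A)$; note $\rho(a)=0$ by the remark above. Fix $M\in\mathcal{M}_\ast(A^1)$. By \lemref{3.2} applied to $A^1$, it is enough to bound $\rho(\{\lambda a\}\cup M)$ uniformly over $\lambda\in\mathbb{C}$. Set $M'=\{m-\chi(m)1:m\in M\}$, which lies in $\mathcal{M}_\ast(A)$ because it is the image of $M$ under the bounded linear map $y\mapsto y-\chi(y)1$ of $A^1$ into $A$; put $r=\sup_{m\in M}|\chi(m)|\le\|\chi\|\,\|M\|<\infty$ and $N_\lambda=\{\lambda a\}\cup M'\in\mathcal{M}_\ast(A)$. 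The set $(\{\lambda a\}\cup M)^n$ is $M^n\cup P_n$, where $P_n$ is the set of products of length $n$ in which $\lambda a$ occurs at least once; since $A$ is an ideal, $P_n\subseteq A$. Writing each factor of such a product as $y_i+\mu_i 1$ with $y_i\in N_\lambda$ and $|\mu_i|\le r$, and expanding (the purely scalar term vanishes because one of the $\mu_i$ equals $\chi(\lambda a)=0$), one gets $\|p\|\le\sum_{k=1}^n\binom{n}{k}r^{n-k}\|N_\lambda^{\,k}\|$ for every $p\in P_n$. Since $\lambda a\in R_\ast(A)$ by \lemref{3.1}, $\rho(N_\lambda)=\rho(\{\lambda a\}\cup M')\le\rho(M')$; substituting a bound $\|N_\lambda^{\,k}\|\le D(\rho(M')+\varepsilon)^k$ and using $\sum_k\binom{n}{k}r^{n-k}t^k=(r+t)^n$ (the constant $D$ being harmless after taking $n$th roots) gives $\limsup_n\|P_n\|^{1/n}\le r+\rho(M')+\varepsilon$, hence $\le r+\rho(M')$. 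Therefore
\[
\rho(\{\lambda a\}\cup M)\le\max\bigl(\rho(M),\,r+\rho(M')\bigr),
\]
a bound independent of $\lambda$, so $a\in R_\ast(A^1)$ by \lemref{3.2}.

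I expect the main obstacle to be precisely the step just made: the elementary manipulations (splitting off the scalar part $\chi(m)1$, monotonicity of $\rho$ under passage to the absolutely convex hull, etc.) only show that $\rho(\{a\}\cup M)$ is \emph{finite}, with ambient constants like $r+\rho(M')$ that are in general strictly larger than $\rho(M)$; they do not by themselves yield the sharp inequality $\rho(\{a\}\cup M)\le\rho(M)$. The device that upgrades a uniform bound over the dilations $\lambda a$ to the required equality is the subharmonicity/Liouville argument already encapsulated in \lemref{3.2}, which is exactly why that lemma precedes this one. A secondary point not to be skipped is the verification that $R_\ast(A^1)\subseteq A$, without which the stated equality would fail.
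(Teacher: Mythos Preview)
Your proof is correct and follows essentially the same strategy as the paper: split off the scalar part of $M\in\mathcal{M}_\ast(A^1)$, use $a\in R_\ast(A)$ on the $A$-part, and invoke \lemref{3.2} to pass from a uniform bound over $\lambda$ to the desired equality. The paper's execution is shorter because, in place of your explicit binomial expansion, it applies \lemref{st1}(ii) (scalars commute with everything) to get $\rho((\{\lambda a\}\cup N)+L)\le\rho(\{\lambda a\}\cup N)+\|L\|=\rho(N)+\|L\|$ in one stroke; it also leaves the inclusion $R_\ast(A^1)\subset A$ unspoken, which you rightly make explicit.
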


\begin{proof}
Any set $M\in\mathcal{M}_{\ast}(A^{1})$ is contained in a set of the form
$N+L$, where $N\in\mathcal{M}_{\ast}(A)$ and $L\in\mathcal{M}_{\ast
}(\mathbb{C})$. Therefore it suffices to show that the function $\lambda
\longmapsto\rho(\{\lambda a\}\cup(N+L))$ is bounded, for any $a\in R_{\ast
}(A)$. Without loss of generality, we may assume that $0\in L$. Hence
\[
\{\lambda a\}\cup(N+L)\subset(\{\lambda a\}\cup N)+L
\]
and, by Lemma \ref{st1},
\begin{align*}
\rho(\{\lambda a\}\cup(N+L))  &  \leqslant\rho((\{\lambda a\}\cup N)+L)\\
&  \leqslant\rho(\{\lambda a\}\cup N)+\left\|  L\right\|  =\rho(N)+\left\|
L\right\|  .
\end{align*}
\end{proof}

By induction we have

\begin{lemma}
\label{3.4} If $N$ is a finite subset of $R_{\ast}(A)$ then $\rho(N\cup
M)=\rho(M)$, for any $M\in\mathcal{M}_{\ast}(A)$.
\end{lemma}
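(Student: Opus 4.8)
The plan is to prove this by induction on the cardinality $|N|$ of the finite set $N\subset R_{\ast}(A)$. The case $|N|=0$ is trivial, and the case $|N|=1$ is precisely the defining property of $R_{\ast}(A)$. Throughout, I would use the elementary observation that for $\ast\in\{f,c,b\}$ the class $\mathcal{M}_{\ast}(A)$ is stable under adjoining finitely many points: the union of a finite (resp. precompact, bounded) set with a finite set is again finite (resp. precompact, bounded). I would also note that only the inequality $\rho(N\cup M)\leqslant\rho(M)$ needs to be established, since the reverse inequality is immediate from monotonicity of $\rho$ (Lemma \ref{st1}(i), applied with $M\subset\operatorname{abs}(N\cup M)$).

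For the inductive step, assume the statement for all finite subsets of $R_{\ast}(A)$ of cardinality $k$, and let $N\subset R_{\ast}(A)$ with $|N|=k+1$. Choose $a\in N$, put $N'=N\setminus\{a\}$ so that $|N'|=k$, and fix $M\in\mathcal{M}_{\ast}(A)$. By the stability remark, $N'\cup M\in\mathcal{M}_{\ast}(A)$, so applying the defining condition of $R_{\ast}(A)$ to the element $a$ and the test set $N'\cup M$ gives
\[
\rho(N\cup M)=\rho\bigl(\{a\}\cup(N'\cup M)\bigr)=\rho(N'\cup M).
\]
The induction hypothesis, applied to $N'$ and $M$, yields $\rho(N'\cup M)=\rho(M)$, and combining these two equalities completes the step, hence the induction.

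I do not expect any genuine obstacle here. The only points requiring a moment's attention are that the defining condition of $R_{\ast}(A)$ may legitimately be invoked with the enlarged set $N'\cup M$ in place of a generic member of $\mathcal{M}_{\ast}(A)$ — which is justified exactly because $\mathcal{M}_{\ast}(A)$ absorbs finitely many extra points — and that the reverse inequality, though not literally part of the defining condition, comes for free from monotonicity of $\rho$ in Lemma \ref{st1}(i).
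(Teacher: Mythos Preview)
Your proposal is correct and follows exactly the approach the paper indicates: the paper's entire proof is the phrase ``By induction we have'' preceding the statement, and your argument spells out precisely that induction, together with the routine check that $\mathcal{M}_{\ast}(A)$ is closed under adjoining finite sets.
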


\begin{lemma}
\label{3.5} $R_{*}(A)$ is a linear subspace of $A$.
\end{lemma}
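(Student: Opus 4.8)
The plan is to show that $R_*(A)$ is closed under addition; closure under scalar multiplication is already Lemma \ref{3.1}, so only additivity needs argument. Let $a,b\in R_*(A)$ and fix $M\in\mathcal{M}_*(A)$. The goal is to prove $\rho(\{a+b\}\cup M)\leqslant\rho(M)$, and by Lemma \ref{3.2} it suffices to show that the subharmonic function $\lambda\longmapsto\rho(\{\lambda(a+b)\}\cup M)$ is bounded on $\mathbb{C}$, whence constant, giving $\rho(\{a+b\}\cup M)=\rho(M)$.

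First I would pass to the unital case via Lemma \ref{3.3}, so that $1\in A$ and we may freely form $\{1\}\cup M$. The key idea is to introduce a two-variable analytic family and apply Theorem \ref{st2} in a single complex variable after a clever substitution, or alternatively to build a one-parameter family directly. Concretely, consider for $\lambda\in\mathbb{C}$ the set $N_\lambda=\{\lambda a\}\cup\{\lambda b\}\cup M$. Since $\{a,b\}$ is a finite subset of $R_*(A)$, Lemma \ref{3.4} gives $\rho(N_\lambda)=\rho(\{\lambda a\}\cup\{\lambda b\}\cup M)$; but more directly, applying Lemma \ref{3.1} to $\lambda a$ and $\lambda b$ (which again lie in $R_*(A)$) together with Lemma \ref{3.4}, we get $\rho(\{\lambda a\}\cup\{\lambda b\}\cup M)=\rho(M)$ for every $\lambda$. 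Now the elementary inclusion
\[
\{\lambda(a+b)\}\cup M\subset \operatorname{abs}\bigl(\{2\lambda a\}\cup\{2\lambda b\}\cup M\bigr),
\]
valid because $\lambda(a+b)=\tfrac12(2\lambda a)+\tfrac12(2\lambda b)$ is an absolutely convex combination of elements of the right-hand set and each element of $M$ lies in it, combines with Lemma \ref{st1}(i) to yield
\[
\rho(\{\lambda(a+b)\}\cup M)\leqslant\rho\bigl(\{2\lambda a\}\cup\{2\lambda b\}\cup M\bigr)=\rho(M)
\]
for every $\lambda\in\mathbb{C}$, the last equality by the computation above applied with $2\lambda$ in place of $\lambda$. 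In particular the function is bounded, hence constant by Lemma \ref{3.2}'s argument, so $\rho(\{a+b\}\cup M)=\rho(M)$ and $a+b\in R_*(A)$.

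I do not expect a serious obstacle here: the whole point is that Lemmas \ref{3.1}, \ref{3.2}, \ref{3.3}, \ref{3.4} and the $\operatorname{abs}$-invariance of $\rho$ (Lemma \ref{st1}(i)) have already done the analytic work. The only mild subtlety is the need to absorb the factor $2$ (or, more generally, to note that scaling is harmless), which is precisely what Lemma \ref{3.1} handles. If one prefers to avoid even that, one can instead invoke Lemma \ref{3.2} directly: for fixed $M$, $\sup_\lambda\rho(\{\lambda a\}\cup\{\lambda b\}\cup M)=\rho(M)<\infty$ by the above, and the inclusion into the absolutely convex hull shows $\sup_\lambda\rho(\{\lambda(a+b)\}\cup M)<\infty$, so $a+b\in R_*(A)$ by Lemma \ref{3.2}. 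Either route is a few lines.
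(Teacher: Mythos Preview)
Your proposal is correct and follows essentially the same route as the paper: the key step is the inclusion of $\{(a+b)/2\}\cup M$ (or, in your version, $\{\lambda(a+b)\}\cup M$) into $\operatorname{abs}(\{a,b\}\cup M)$, followed by Lemma~\ref{st1}(i) and Lemma~\ref{3.4}, with Lemma~\ref{3.1} handling the scalar. Note that your detours through Lemma~\ref{3.3} and Lemma~\ref{3.2} are unnecessary here---you never actually use the unit, and since you obtain the inequality $\rho(\{\lambda(a+b)\}\cup M)\leqslant\rho(M)$ \emph{directly} for every $\lambda$, taking $\lambda=1$ already gives $a+b\in R_*(A)$ without any appeal to subharmonicity; the paper's proof is accordingly just three lines.
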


\begin{proof}
For $a,b\in R_{\ast}(A)$, set $N=\{a,b\}$. Since $\{(a+b)/2\}\cup
M\subset\operatorname{abs}(N\cup M)$, we obtain from Lemmas \ref{st1} and
\ref{3.4} that
\[
\rho(\{(a+b)/2\}\cup M)\leqslant\rho(\operatorname{abs}(N\cup M))=\rho(N\cup
M)=\rho(M),
\]
for any $M\in\mathcal{M}_{\ast}(A)$. This shows that $(a+b)/2\in R_{\ast}(A)$
and it remains to use Lemma \ref{3.1}.
\end{proof}

\begin{lemma}
\label{3.6} $R_{*}(A)$ is an ideal of $A$.
\end{lemma}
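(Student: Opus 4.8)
The plan is to reduce, via Lemma~\ref{3.2}, to a single boundedness statement and then to establish it by a combinatorial comparison of powers of two families of sets. First I would use Lemma~\ref{3.3} to assume $A$ is unital, fix $a\in R_{\ast}(A)$ and $c\in A$, and note that the two one-sided cases are symmetric, so it suffices to prove $ca\in R_{\ast}(A)$; by Lemma~\ref{3.2} this amounts to showing that $\sup\{\rho(\{\lambda ca\}\cup M):\lambda\in\mathbb{C}\}<\infty$ for every $M\in\mathcal{M}_{\ast}(A)$. I would then introduce the auxiliary set $N=M\cup Mc\cup\{c\}$, where $Mc=\{mc:m\in M\}$, and observe that $N\in\mathcal{M}_{\ast}(A)$ (right multiplication by $c$ is bounded and linear, hence preserves finiteness, precompactness and boundedness, while $\mathcal{M}_{\ast}(A)$ is stable under finite unions and under adjoining a point), and likewise $\{1\}\cup N\in\mathcal{M}_{\ast}(A)$.

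The key step will be the inclusion
\[
(\{\lambda ca\}\cup M)^{n}\ \subseteq\ (\{1,\lambda a\}\cup N)^{2n}\qquad(n\in\mathbb{N},\ \lambda\in\mathbb{C}).
\]
To prove it I would take a product $x_{1}\cdots x_{n}$ with $x_{j}\in\{\lambda ca\}\cup M$, rewrite each letter equal to $\lambda ca$ as $c(\lambda a)$, and regroup along the copies of $\lambda a$, obtaining an expression $v_{0}(\lambda a)v_{1}(\lambda a)\cdots(\lambda a)v_{p}$ in which $p$ is the number of $\lambda ca$-letters, $v_{p}$ is a (possibly empty) product of elements of $M$, and each $v_{i}$ with $i<p$ is such a product followed by a single $c$. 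Absorbing that trailing $c$ into the last $M$-factor (so it becomes an element of $Mc$), and reading an empty block followed by $c$ simply as the element $c$, each $v_{i}$ becomes a word in $N$; counting then shows the whole product is a word of length at most $2n$ in $\{1,\lambda a\}\cup N$, and padding with $1$'s puts it in $(\{1,\lambda a\}\cup N)^{2n}$. From this I get $\Vert(\{\lambda ca\}\cup M)^{n}\Vert\le\Vert(\{1,\lambda a\}\cup N)^{2n}\Vert$, and taking $n$-th roots and letting $n\to\infty$ gives $\rho(\{\lambda ca\}\cup M)\le\rho(\{1,\lambda a\}\cup N)^{2}$.

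Finally I would bound the right-hand side uniformly in $\lambda$ using the hypothesis $a\in R_{\ast}(A)$: by Lemma~\ref{3.1} we have $\lambda a\in R_{\ast}(A)$, so applying the definition of $R_{\ast}(A)$ with the set $\{1\}\cup N$ yields
\[
\rho(\{1,\lambda a\}\cup N)=\rho\bigl(\{\lambda a\}\cup(\{1\}\cup N)\bigr)=\rho(\{1\}\cup N),
\]
which does not depend on $\lambda$. Hence $\rho(\{\lambda ca\}\cup M)\le\rho(\{1\}\cup N)^{2}<\infty$ for all $\lambda$, and Lemma~\ref{3.2} gives $ca\in R_{\ast}(A)$; running the symmetric argument with $\lambda ac=(\lambda a)c$ and with $N'=M\cup cM\cup\{c\}$ in place of $N$ gives $ac\in R_{\ast}(A)$. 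Since $R_{\ast}(A)$ is already a linear subspace by Lemma~\ref{3.5}, it is a two-sided ideal of $A$.

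The step I expect to be the main obstacle is the displayed inclusion of powers: one has to check carefully that after splitting every occurrence of $\lambda ca$ and regrouping, each block lying between consecutive copies of $\lambda a$ is genuinely a word in $N$ — this is exactly why the point $c$ is adjoined to $N$ (to absorb empty blocks) and why it is convenient to have passed to $A^{1}$ (to pad words out to length $2n$) — and that the total length stays $O(n)$, so that extracting $n$-th roots does not spoil the estimate. The remaining ingredients (homogeneity, the reduction to boundedness, stability of $\mathcal{M}_{\ast}(A)$) are immediate from Lemmas~\ref{3.1}, \ref{3.2}, \ref{3.3}, \ref{3.5} and the elementary properties of $\rho$ in Lemma~\ref{st1}.
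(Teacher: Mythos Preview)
Your argument is correct and follows the same plan as the paper: pass to $A^{1}$ via Lemma~\ref{3.3}, reduce via Lemma~\ref{3.2} to a uniform bound in $\lambda$, and obtain that bound by embedding $\{\lambda ca\}\cup M$ into a power of an auxiliary set from which $\lambda a$ can be removed by the defining property of $R_{\ast}(A)$. The paper's execution is considerably shorter, however: with $N=\{\lambda a,c,1\}$ one has $\{\lambda ac\}\cup M\subset(N\cup M)^{2}$ directly (because $\lambda ac=(\lambda a)\cdot c$ and $m=1\cdot m$), so $\rho(\{\lambda ac\}\cup M)\le\rho(N\cup M)^{2}=\rho(\{c,1\}\cup M)^{2}$, and the whole $n$-th-power inclusion and regrouping of your proof become unnecessary.
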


\begin{proof}
By Lemma \ref{3.3}, one may assume that $A$ is unital. Let $a\in R_{\ast}(A)$,
$b\in A$, $M\in\mathcal{M}_{\ast}(A)$, $\lambda\in\mathbb{C}$. Setting
$N=\{\lambda a,b,1\}$, we get:
\[
\rho(N\cup M)\leqslant\beta=\rho(\{b,1\}\cup M).
\]
It follows that $\rho((N\cup M)^{2})\leqslant\beta^{2}$. But $(N\cup M)^{2}$
contains $\{\lambda ab\}\cup M$. Hence
\[
\sup_{\lambda\in\mathbb{C}}\rho(\{\lambda ab\}\cup M)\leqslant\beta^{2}%
<\infty,
\]
whence $ab\in R_{\ast}(A)$ by Lemma \ref{3.2}. Similarly $ba\in R_{\ast}(A)$.
\end{proof}

\begin{theorem}
\label{3.7} $R_{*}(A)$ is a closed ideal of $A$.
\end{theorem}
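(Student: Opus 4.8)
By Lemma~\ref{3.5} the set $R_{\ast}(A)$ is a linear subspace and by Lemma~\ref{3.6} it is an ideal, so it remains only to prove that it is closed. My plan is to verify the boundedness criterion of Lemma~\ref{3.2}: given $a$ in the closure of $R_{\ast}(A)$ and an arbitrary $M\in\mathcal{M}_{\ast}(A)$, I will show that $\sup_{\lambda\in\mathbb{C}}\rho(\{\lambda a\}\cup M)<\infty$, which by Lemma~\ref{3.2} forces $a\in R_{\ast}(A)$.

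The idea is to split $\lambda a$ into an $R_{\ast}$-part and a norm-controlled error, where the splitting is allowed to depend on $\lambda$. Fix $\lambda\neq0$ and choose $x_{\lambda}\in R_{\ast}(A)$ with $\|a-x_{\lambda}\|<\tfrac{1}{2|\lambda|}$. Since $\lambda a=\tfrac12(2\lambda x_{\lambda})+\tfrac12\bigl(2\lambda(a-x_{\lambda})\bigr)$, the element $\lambda a$---and trivially every element of $M$---lies in the absolutely convex hull of $\{2\lambda x_{\lambda},\,2\lambda(a-x_{\lambda})\}\cup M$, so that
\[
\{\lambda a\}\cup M\subset\operatorname{abs}\bigl(\{2\lambda x_{\lambda},\,2\lambda(a-x_{\lambda})\}\cup M\bigr),
\]
and Lemma~\ref{st1}(i) gives $\rho(\{\lambda a\}\cup M)\leqslant\rho\bigl(\{2\lambda x_{\lambda},\,2\lambda(a-x_{\lambda})\}\cup M\bigr)$. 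Now $2\lambda x_{\lambda}\in R_{\ast}(A)$ by Lemma~\ref{3.1}, while $\{2\lambda(a-x_{\lambda})\}\cup M\in\mathcal{M}_{\ast}(A)$ (adding one point preserves finiteness, precompactness, boundedness); hence Lemma~\ref{3.4}, applied to the one-element set $\{2\lambda x_{\lambda}\}$, removes the $R_{\ast}$-part and yields $\rho(\{\lambda a\}\cup M)\leqslant\rho\bigl(\{2\lambda(a-x_{\lambda})\}\cup M\bigr)$. Finally, using the trivial bound $\rho(N)\leqslant\|N\|$, we get $\rho\bigl(\{2\lambda(a-x_{\lambda})\}\cup M\bigr)\leqslant\max\bigl(2|\lambda|\,\|a-x_{\lambda}\|,\ \|M\|\bigr)<\max(1,\|M\|)$. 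Since also $\rho(\{0\}\cup M)=\rho(M)\leqslant\|M\|$, we conclude $\sup_{\lambda\in\mathbb{C}}\rho(\{\lambda a\}\cup M)\leqslant\max(1,\|M\|)<\infty$, so Lemma~\ref{3.2} gives $a\in R_{\ast}(A)$, and $R_{\ast}(A)$ is closed.

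The one step that needs the right idea, the rest being routine, is the $\lambda$-dependent splitting. One cannot bound $\rho(\{\lambda a\}\cup M)$ by estimating how $\rho$ moves when some fixed $x\in R_{\ast}(A)$ is perturbed to $a$, because the joint spectral radius of a union is not continuous---let alone Lipschitz---in its individual members, and a single fixed $x$ would leave an error term $2\lambda(a-x)$ whose norm blows up with $|\lambda|$. The remedy is threefold: pass through the absolutely convex hull, where $\rho$ is stable by Lemma~\ref{st1}(i), at the cost only of a harmless factor $2$; use Lemma~\ref{3.4}, which makes finitely many elements of $R_{\ast}(A)$ invisible to $\rho$, to discard the $R_{\ast}$-part; and let $x_{\lambda}$ approach $a$ fast enough that the surviving error stays of norm $<1$, so that the trivial bound $\rho(\,\cdot\,)\leqslant\|\,\cdot\,\|$ closes the estimate. (One can also avoid Lemma~\ref{3.2} altogether: taking $x_{n}\to a$ in $R_{\ast}(A)$ and running the same splitting gives $\rho(\{a\}\cup M)\leqslant\rho(\{2(a-x_{n})\}\cup M)$ for every $n$, and then letting $n\to\infty$, together with the equicontinuity of length-$k$ products of elements of a bounded set, forces $\rho(\{a\}\cup M)\leqslant\rho(M)$; but the route via Lemma~\ref{3.2} is shorter.)
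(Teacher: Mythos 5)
Your proof is correct and is essentially the paper's own argument: the authors likewise write a point of the closure as an element of $R_{\ast}(A)$ plus a small-norm error, pass to $\operatorname{abs}(\{2a,2b\}\cup M)$ via Lemma~\ref{st1}(i), discard the $R_{\ast}$-part, bound the rest by $\max\{2\Vert b\Vert,\Vert M\Vert\}$, and then rescale by $\lambda$ and invoke Lemma~\ref{3.2}. The only cosmetic difference is bookkeeping: the paper fixes the error below $\Vert M\Vert/2$ and lets the decomposition vary with $\lambda c$, while you fix $\lambda$ and choose $x_{\lambda}$ with tolerance $1/(2|\lambda|)$ — the same $\lambda$-dependent splitting.
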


\begin{proof}
Note that if $a\in R_{\ast}(A)$, $M\in\mathcal{M}_{\ast}(A)$ then, for any
$b\in A$,
\begin{align*}
\rho(\{(a+b)\}\cup M)  &  \leqslant\rho(\operatorname{abs}(\{2a,2b\}\cup M))\\
&  =\rho(\{2a,2b\}\cup M))=\rho(\{2b\}\cup M)\\
&  \leqslant\Vert\{2b\}\cup M\Vert=\max\{2\Vert b\Vert,\Vert M\Vert\}.
\end{align*}
Now if $c\in\widetilde{R_{\ast}(A)}$ then there are $a\in R_{\ast}(A)$, $b\in
A$ with $c=a+b$, $\Vert b\Vert<\Vert M\Vert/2$. Therefore
\[
\rho(\{c\}\cup M)\leqslant\max\{2\Vert b\Vert,\Vert M\Vert\}=\Vert M\Vert.
\]
Changing $c$ by $\lambda c$ and applying Lemma \ref{3.2}, we conclude that
$c\in R_{\ast}(A)$ and $R_{\ast}(A)$ is closed.
\end{proof}

\subsection{Properties of ideals $R_{f}$, $R_{c}$ and $R_{b}$}

Clearly all $R_{\ast}(A)$ are topologically nil ideals of a normed algebra
$A$. Hence, for any normed algebra $A$, $R_{\ast}(A)\subset\operatorname{rad}%
_{n}(A)$ by Theorem \ref{1.7}(ii) or, more transparently,
\begin{equation}
R_{b}(A)\subset R_{c}(A)\subset R_{f}(A)\subset\operatorname{rad}_{n}(A).
\label{fb}%
\end{equation}
Moreover
\[
R_{c}(A)\subset\operatorname{Rad}^{\prime}(A).
\]
This is a consequence of the following `regularity' property:

\begin{lemma}
\label{reg} Let $A$ be a normed algebra. Then

\begin{itemize}
\item [(i)]$R_{b}(A)=A\cap R_{b}(\overline{A})$.

\item[(ii)] $R_{c}(A)=A\cap R_{c}(\overline{A})$.
\end{itemize}
\end{lemma}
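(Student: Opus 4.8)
The plan is to prove the two inclusions of each identity separately. The inclusion $A\cap R_{\ast}(\overline{A})\subseteq R_{\ast}(A)$ for $\ast\in\{b,c\}$ is immediate: if $a\in A$ and $a\in R_{\ast}(\overline{A})$, then every $M\in\mathcal{M}_{\ast}(A)$ also belongs to $\mathcal{M}_{\ast}(\overline{A})$, and $\rho$ of a subset of $A$ is the same whether computed in $A$ or in $\overline{A}$, so $\rho(\{a\}\cup M)=\rho(M)$ for all such $M$; hence $a\in R_{\ast}(A)$.

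For the reverse inclusion, fix $a\in R_{\ast}(A)$ and $M\in\mathcal{M}_{\ast}(\overline{A})$; I want $a\in R_{\ast}(\overline{A})$, and by Lemma~\ref{3.2} applied to the Banach algebra $\overline{A}$ it suffices to show that $\sup_{\lambda\in\mathbb{C}}\rho(\{\lambda a\}\cup M)<\infty$. The key step is to produce a set $N\in\mathcal{M}_{\ast}(A)$ with $N\subseteq A$ whose closure $\widetilde{N}$ in $\overline{A}$ contains $M$. For $\ast=b$ one simply takes $N=\{x\in A:\|x\|\leqslant\|M\|+1\}$, whose closure in $\overline{A}$ is the ball of radius $\|M\|+1$ and hence contains $M$. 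For $\ast=c$ the closure $\widetilde{M}$ of $M$ in $\overline{A}$ is compact; for each $k\geqslant1$ pick a finite $1/k$-net of $\widetilde{M}$ consisting of points of $\widetilde{M}$, move each net point into $A$ by a distance $<1/k$, and let $N$ be the union over $k\geqslant1$ of all these $A$-points, together with $0$. Then $N\subseteq A$, $M\subseteq\widetilde{M}\subseteq\widetilde{N}$, $\|N\|\leqslant\|M\|+1$, and $N$ is totally bounded (the level-$m$ points lie within $3/k$ of the finite level-$k$ set once $m\geqslant k$), so $N\in\mathcal{M}_{c}(A)$.

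Granting such an $N$, for each $\lambda\in\mathbb{C}$ Lemma~\ref{3.1} gives $\lambda a\in R_{\ast}(A)$, so applying the definition of $R_{\ast}(A)$ with the set $N\in\mathcal{M}_{\ast}(A)$ yields $\rho(\{\lambda a\}\cup N)=\rho(N)$. Since $\{\lambda a\}\cup M\subseteq\{\lambda a\}\cup\widetilde{N}\subseteq\operatorname{abs}(\{\lambda a\}\cup N)$, Lemma~\ref{st1}(i) gives $\rho(\{\lambda a\}\cup M)\leqslant\rho(\{\lambda a\}\cup N)=\rho(N)\leqslant\|N\|\leqslant\|M\|+1$, a bound independent of $\lambda$. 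Hence $\sup_{\lambda\in\mathbb{C}}\rho(\{\lambda a\}\cup M)<\infty$, and Lemma~\ref{3.2} gives $a\in R_{\ast}(\overline{A})$; together with $a\in A$ this is the required $a\in A\cap R_{\ast}(\overline{A})$.

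The only genuinely technical point is the construction of the approximating set $N$ in the precompact case: the $A$-points must be chosen along a nested sequence of finite nets of the compact set $\widetilde{M}$ so that their union stays precompact in $A$ while remaining dense around $M$ (which is what guarantees $M\subseteq\widetilde{N}$). Everything else is a direct application of the hull/monotonicity property of $\rho$ (Lemma~\ref{st1}(i)), the scaling invariance of $R_{\ast}$ (Lemma~\ref{3.1}), and the boundedness criterion (Lemma~\ref{3.2}).
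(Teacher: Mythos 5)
Your proof is correct and follows the paper's route: the paper's own (very terse) proof rests on exactly the fact you construct by hand — that every bounded (resp.\ precompact) subset of $\overline{A}$ lies in the closure of a bounded (resp.\ precompact) subset of $A$ — combined with the hull/closure invariance of $\rho$ (Lemma~\ref{st1}) and the boundedness criterion of Lemma~\ref{3.2}. Your explicit nested-net construction of $N$ in the precompact case and the $\lambda$-uniform bound $\rho(\{\lambda a\}\cup M)\leqslant\|M\|+1$ simply supply the details the paper leaves implicit.
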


\begin{proof}
The statements follow from Lemma \ref{3.0} and the fact that any bounded
(resp. precompact) subset of $\overline{A}$ is contained in the closure of a
bounded (resp. precompact) subset of $A$.
\end{proof}

\begin{lemma}
\label{3.20} Let $\ast\in\{c,b\}$, and let $A$ be a normed algebra. If $N$ is
a precompact subset of $R_{\ast}(A)$ then $\rho(N\cup M)=\rho(M)$, for any
$M\in\mathcal{M}_{\ast}(A)$.
\end{lemma}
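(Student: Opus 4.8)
The plan is to bootstrap the finite--set statement, Lemma~\ref{3.4}, to precompact $N$ by combining the Berger--Wang formula for precompact sets with the fact (noted just before the lemma) that $R_{\ast}(A)$ is a topologically nil two--sided ideal. First I would make some reductions: by Lemma~\ref{3.3} we may assume $A$ is unital, and since each joint spectral radius occurring is computed the same way in $A$ and in $\overline{A}$ while $R_{\ast}(A)=A\cap R_{\ast}(\overline{A})$ by Lemma~\ref{reg}, we may pass to $\overline{A}$ and assume $A$ is a Banach algebra. Also $\rho(N\cup M)\geqslant\rho(M)$ is automatic from $M\subseteq N\cup M$, so only the reverse inequality $\rho(N\cup M)\leqslant\rho(M)$ has to be proved.

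For the case $\ast=c$ the set $K:=N\cup M$ is precompact, so the Berger--Wang formula for precompact subsets of a Banach algebra (see \cite{ST00,ST02}) applies:
\[
\rho(K)=\limsup_{n\to\infty}\Bigl(\,\sup_{c\in K^{n}}\rho(c)\Bigr)^{1/n},
\]
where $\rho(c)$ denotes the spectral radius of the element $c$. Take any $n$ and any product $c=c_{1}\cdots c_{n}$ with all $c_{i}\in K$. If some $c_{i}$ lies in $N$, then $c$ lies in $R_{c}(A)$, since $R_{c}(A)$ is a two--sided ideal containing $N$ (Theorem~\ref{3.7}); as $R_{c}(A)$ is topologically nil this forces $\rho(c)=0$. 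If instead no $c_{i}$ lies in $N$, then $c\in M^{n}$, so by Lemma~\ref{st1}(i) and the identity $\rho(M^{n})=\rho(M)^{n}$ we get $\rho(c)=\rho(\{c\})\leqslant\rho(M^{n})=\rho(M)^{n}$, i.e. $\rho(c)^{1/n}\leqslant\rho(M)$. In all cases $\rho(c)^{1/n}\leqslant\rho(M)$, hence $\sup_{c\in K^{n}}\rho(c)^{1/n}\leqslant\rho(M)$ for every $n$, and the displayed formula gives $\rho(N\cup M)\leqslant\rho(M)$. This settles the case $\ast=c$.

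For $\ast=b$ the union $N\cup M$ is merely bounded, and making the argument work here is the real obstacle. The case $\ast=c$ just proved already shows, on taking $M=\{0\}$, that every precompact subset of $R_{c}(A)\supseteq R_{b}(A)$ is jointly quasinilpotent, so in particular $\rho(N)=0$. One then works with the function $\lambda\mapsto\rho(\lambda N\cup M)$, which is subharmonic on $\mathbb{C}$ by Theorem~\ref{st2} and equals $\rho(M)$ at $\lambda=0$; if it is bounded above, the Liouville-type theorem for subharmonic functions \cite{HK76}, used already in Lemmas~\ref{3.2} and \ref{3.5}, forces it to be constant, so that $\rho(N\cup M)=\rho(M)$. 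Proving this boundedness is the delicate point: after approximating $N$ by a finite net $F\subset R_{b}(A)$ (so that $\rho(F\cup M)=\rho(M)$ by Lemma~\ref{3.4}) and absorbing the small precompact error, one must bound the joint spectral radius of the resulting bounded sets, which needs either the bounded--set analogue of the Berger--Wang estimate or a direct argument exploiting that $R_{b}(A)$ is a closed topologically nil ideal.
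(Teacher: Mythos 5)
Your argument for $\ast=c$ rests on a tool that is not available: the ``Berger--Wang formula'' $\rho(K)=\limsup_n\bigl(\sup_{c\in K^{n}}\rho(c)\bigr)^{1/n}$ is \emph{not} proved in \cite{ST00,ST02} for precompact subsets of an arbitrary Banach algebra. What those papers establish are operator-theoretic versions: the equality $\rho(M)=r(M)$ for precompact sets of \emph{compact} operators, and the generalized formula $\rho(M)=\max\{\rho_{e}(M),r(M)\}$ for precompact sets of bounded operators, where the essential joint spectral radius $\rho_{e}$ cannot be dropped. If the plain formula held in every Banach algebra, then every precompact subset of a radical Banach algebra would automatically be jointly quasinilpotent (all $\rho(c)$ vanish), which is precisely one of the open problems stated in the introduction of this paper and which would make $R_{c}\geqslant\operatorname{Rad}$ on $(BA)$ -- a question the authors explicitly leave open. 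So the $\ast=c$ case, as written, is circular with respect to the state of the art. For $\ast=b$ you correctly identify the right strategy (boundedness of the subharmonic function $\lambda\mapsto\rho(\lambda N\cup M)$ plus the Liouville-type theorem of \cite{HK76}), but you stop exactly at the crucial step: producing the bound. That step is the entire content of the lemma, so the proposal is incomplete there as well.

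The missing device, which handles $\ast=c$ and $\ast=b$ uniformly and avoids Berger--Wang altogether, is to replace your ``finite net plus small error'' by the classical fact (\cite[Lemma 8.2.2]{RR64}) that a precompact subset of the closed subspace $R_{\ast}(A)$ is contained in $\operatorname{abs}\{a_{n}:n\in\mathbb{N}\}$ for some sequence $(a_{n})$ in $R_{\ast}(A)$ converging to $0$. Writing $F_{m}=\{a_{n}:n\geqslant m\}$, Lemma~\ref{st1}(i) gives $\rho(\lambda N\cup M)\leqslant\rho(\lambda F_{1}\cup M)$, and since the head $\{\lambda a_{1},\dots,\lambda a_{m-1}\}$ is a finite subset of $R_{\ast}(A)$ while $\lambda F_{m}\cup M\in\mathcal{M}_{\ast}(A)$, Lemma~\ref{3.4} lets you discard it: $\rho(\lambda F_{1}\cup M)=\rho(\lambda F_{m}\cup M)\leqslant\max\{|\lambda|\,\|F_{m}\|,\|M\|\}$. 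Choosing $m$ with $\|F_{m}\|<|\lambda|^{-1}$ yields the uniform bound $\rho(\lambda N\cup M)\leqslant\max\{1,\|M\|\}$ for all $\lambda$, and then Theorem~\ref{st2} and the Liouville argument you had in mind finish the proof -- simultaneously for $c$ and $b$. In short: your subharmonicity framework is the right one, but the boundedness must come from the null-sequence representation of $N$ and Lemma~\ref{3.4}, not from a Berger--Wang formula that is unavailable in this generality.
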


\begin{proof}
Let $G(\lambda)=\lambda N\cup M$, for $\lambda\in\mathbb{C}$. By \cite[Lemma
8.2.2]{RR64}, there is a sequence $(a_{n})$ in $R_{c}(A)$ convergent to $0$
with $N\subset\operatorname{abs}\{a_{n}:n\in\mathbb{N}\}$. Let $F_{m}%
=\{a_{n}:n\geqslant m\}$, for $m\in\mathbb{N}$. Then $\left\|  F_{m}\right\|
\rightarrow0$ as $m\rightarrow\infty$. By Lemmas \ref{st1} and \ref{3.4},
\begin{align*}
\rho(G(\lambda))  &  =\rho(\lambda N\cup M)\leqslant\rho(\lambda
(\operatorname{abs}F_{1})\cup M)\leqslant\rho(\operatorname{abs}(\lambda
F_{1}\cup M))\\
&  =\rho(\lambda F_{1}\cup M)=\rho(\{\lambda a_{1},\ldots,\lambda
a_{m-1}\}\cup\lambda F_{m}\cup M)\\
&  =\rho(\lambda F_{m}\cup M)\leqslant\left\|  \lambda F_{m}\cup M\right\|
=\max\{\left|  \lambda\right|  \left\|  F_{m}\right\|  ,\left\|  M\right\|  \}
\end{align*}
for every $m\in\mathbb{N}$. Take $m$ such that $\left\|  F_{m}\right\|
<\left|  \lambda\right|  ^{-1}$. Then
\[
\rho(G(\lambda))\leqslant\max\{1,\left\|  M\right\|  \},
\]
for every $\lambda\in\mathbb{C}$. Being a subharmonic function on $\mathbb{C}$
by Theorem \ref{st2}, $\lambda\longmapsto\rho(G(\lambda))$ is constant. Hence
\[
\rho(M)=\rho(G(0))=\rho(G(1))=\rho(N\cup M).
\]
\end{proof}

\begin{proposition}
\label{3.8}Let $A$ be a normed algebra. Then

\begin{itemize}
\item [(i)]$R_{f}(A)$ is $f$-quasinilpotent.

\item[(ii)] $R_{c}(A)$ is $c$-quasinilpotent.
\end{itemize}
\end{proposition}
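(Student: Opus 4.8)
The plan is to read off both parts directly from the ``peeling'' lemmas already established --- \lemref{3.4} in the finite case and \lemref{3.20} in the precompact case --- together with the fact (noted at the start of this subsection, and in any case immediate from the definition of $R_{\ast}$ upon testing against the set $\{0\}$) that $R_{f}(A)$ and $R_{c}(A)$ are topologically nil, so that each of their elements is quasinilpotent.

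For (i), I would fix an arbitrary $M\in\mathcal{M}_{f}(A)$ with $M\subset R_{f}(A)$ and show $\rho(M)=0$; the case $M=\emptyset$ being trivial, pick $a_{0}\in M$. Since $M$ is a \emph{finite} subset of $R_{f}(A)$ and $\{a_{0}\}\in\mathcal{M}_{f}(A)$, \lemref{3.4} gives
\[
\rho(M)=\rho(M\cup\{a_{0}\})=\rho(\{a_{0}\}),
\]
because $M\cup\{a_{0}\}=M$. Now $\{a_{0}\}^{n}=\{a_{0}^{n}\}$, so $\rho(\{a_{0}\})=\lim_{n}\Vert a_{0}^{n}\Vert^{1/n}=\rho(a_{0})$, and this vanishes since $a_{0}$ lies in the topologically nil ideal $R_{f}(A)$. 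Hence $\rho(M)=0$, i.e. $R_{f}(A)$ is $f$-quasinilpotent.

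For (ii), the argument is the same with \lemref{3.20} in place of \lemref{3.4}: given a precompact $N\subset R_{c}(A)$ (the empty case again trivial) and any $a_{0}\in N$, the singleton $\{a_{0}\}$ is finite, hence precompact, so \lemref{3.20} yields
\[
\rho(N)=\rho(N\cup\{a_{0}\})=\rho(\{a_{0}\})=\rho(a_{0})=0 .
\]

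I do not expect a genuine obstacle at this stage: all of the substance has been front-loaded into \lemref{3.4} and \lemref{3.20}, whose proofs rest on the subharmonicity of $\lambda\longmapsto\rho(M(\lambda))$ (\thmref{st2}) and on the identity $\rho(M)=\rho(\operatorname{abs}(M))$ from \lemref{st1}. The present proposition is merely the special case of those lemmas in which the ``ambient'' bounded set is taken to be a single point that already lies inside $R_{\ast}(A)$, whose spectral radius $\rho(a_{0})$ is $0$. The only minor point to keep straight is the distinction between $\rho$ of a subset and the spectral radius of an element, which coincide for singletons.
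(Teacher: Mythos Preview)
Your proof is correct and follows essentially the same route as the paper, which simply says the statements follow trivially from \lemref{3.4} and \lemref{3.20}. You have spelled out the intended trivial deduction explicitly---taking the ambient set $M$ in those lemmas to be a singleton $\{a_{0}\}$ already contained in the given set, so that $\rho(N)=\rho(N\cup\{a_{0}\})=\rho(\{a_{0}\})=\rho(a_{0})=0$---which is precisely what the paper has in mind.
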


\begin{proof}
The statements follow trivially from Lemmas \ref{3.4} and \ref{3.20}.
\end{proof}

The ideal $R_{b}(A)$ is not necessarily $b$-quasinilpotent. To show this, let
us look at the case that $A$ is commutative.

\begin{proposition}
\label{3.22}Let $A$ be a commutative normed $Q$-algebra. Then
$\operatorname{rad}(A)=R_{b}(A).$
\end{proposition}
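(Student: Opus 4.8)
The plan is to prove the two inclusions $R_{b}(A)\subset\operatorname{rad}(A)$ and $\operatorname{rad}(A)\subset R_{b}(A)$ separately, in each case just assembling facts already at hand.

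For $R_{b}(A)\subset\operatorname{rad}(A)$ I would first recall that $R_{b}(A)$ is a topologically nil ideal of any normed algebra, so by (\ref{fb}) (which rests on Theorem \ref{1.7}(ii)) one has $R_{b}(A)\subset\operatorname{rad}_{n}(A)$. Since $A$ is a $Q$-algebra, Theorem \ref{p1} gives $\operatorname{rad}_{n}(A)=\operatorname{rad}(A)$, and the inclusion follows. The $Q$-algebra hypothesis is essential at exactly this point: for an arbitrary normed algebra one only knows $\operatorname{rad}(A)\subset\operatorname{rad}_{n}(A)$, which is the wrong direction.

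For the reverse inclusion, fix $a\in\operatorname{rad}(A)$ and an arbitrary $M\in\mathcal{M}_{b}(A)$; I want $\rho(\{a\}\cup M)\leqslant\rho(M)$. Since $\operatorname{rad}(A)$ is a quasi-regular, hence topologically nil, ideal (as noted just before Corollary \ref{p3}), $\rho(\{a\})=\rho(a)=0$. Because $A$ is commutative, $a$ commutes with every element of $M$, so Lemma \ref{st1}(ii) applies to the pair $\{a\}$, $M$ and yields
\[
\rho(\{a\}\cup M)=\max\{\rho(\{a\}),\rho(M)\}=\rho(M).
\]
As $M$ was arbitrary, $a\in R_{b}(A)$. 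Combining the two inclusions gives $\operatorname{rad}(A)=R_{b}(A)$.

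I do not expect a genuine obstacle here — the argument is essentially a two-line combination of (\ref{fb}), Theorem \ref{p1} and Lemma \ref{st1}(ii). The only subtleties are bookkeeping ones: that the $Q$-algebra hypothesis is precisely what converts the always-valid $R_{b}(A)\subset\operatorname{rad}_{n}(A)$ into an inclusion inside $\operatorname{rad}(A)$, and that commutativity is precisely what allows Lemma \ref{st1}(ii) to be used with an unrestricted bounded set $M$ — this is the step that, in the noncommutative setting, becomes the hard (open) joint-quasinilpotence problem.
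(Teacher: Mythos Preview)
Your proof is correct and follows essentially the same route as the paper: the inclusion $R_{b}(A)\subset\operatorname{rad}(A)$ via (\ref{fb}) and Theorem \ref{p1}, and the reverse inclusion via Lemma \ref{st1}(ii) together with $\rho(a)=0$. The paper writes the second step with a parameter $\lambda$ and invokes Lemma \ref{3.2}, but since $\rho(\lambda a)=0$ for all $\lambda$ this is redundant and your direct verification of the defining equality $\rho(\{a\}\cup M)=\rho(M)$ is in fact slightly cleaner.
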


\begin{proof}
By Theorem \ref{p1} and (\ref{fb}), it suffices to show that
$\operatorname{rad}(A)\subset R_{b}(A)$. If $a\in\operatorname{rad}(A)$ and
$M\in\mathcal{M}_{b}(A)$ then
\[
\rho(\{\lambda a\}\cup M)=\max\{\rho(\lambda a),\rho(M)\}=\rho(M)
\]
by Lemma \ref{st1}, for every $\lambda\in\mathbb{C}$, whence $a\in R_{b}(A)$
by Lemma \ref{3.2}.
\end{proof}

It is known that there exist commutative radical Banach algebras which are not
$b$-quasinilpotent, for example the norm-closed algebra generated by the
Volterra integration operator (see \cite{PW99}, where many other examples can
be found). On the other hand, each commutative radical Banach algebra $A$ is
$c$-quasinilpotent because it follows from the above proposition that
$R_{c}(A)=\operatorname*{Rad}(A)$.

Recall that $\mathcal{S}_{1}(M)$ denotes the unital semigroup generated by a
subset $M$ of a normed algebra.

\begin{theorem}
\label{3.12}Let $A$ be a unital normed algebra and $N,M\subset\mathcal{M}%
_{b}(A)$. If $N\mathcal{S}_{1}(M)$ is bounded and $\rho(N\mathcal{S}%
_{1}(M))=0$ then $\rho(N\cup M)=\rho(M)$.
\end{theorem}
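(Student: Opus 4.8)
The plan is the following. Since $\rho$ is monotone, $\rho(N\cup M)\ge\rho(M)$ is automatic, so only the reverse inequality needs proof, and it will be obtained by a subharmonicity argument in the parameter $\lambda$ of $\lambda N\cup M$. Consider the family $F$ of maps $\mathbb{C}\to A$ consisting of $\lambda\mapsto\lambda n$ for $n\in N$ together with the constant maps $\lambda\mapsto m$ for $m\in M$; these are analytic, $\{f(\lambda):f\in F\}=\lambda N\cup M$ is bounded for every $\lambda$, and $\sup\{\|f(\mu)-f(\lambda)\|:f\in F\}=|\mu-\lambda|\,\|N\|\to0$ as $\mu\to\lambda$ (using that $N$ is bounded). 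Hence Theorem \ref{st2} applies and $G(\lambda):=\rho(\lambda N\cup M)$ is subharmonic on $\mathbb{C}$. If one shows that $G$ is bounded above on $\mathbb{C}$, then, by the Liouville-type property of subharmonic functions (\cite{HK76}, exactly as in the proof of Lemma \ref{3.2}), $G$ is constant, and therefore
\[
\rho(N\cup M)=G(1)=G(0)=\rho(\{0\}\cup M)=\rho(M),
\]
the last equality because $0$ commutes with every element of $M$ and $\rho(\{0\})=0$, by Lemma \ref{st1}(ii).

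It remains to bound $G$, which is the combinatorial core of the argument. A product of $k$ factors from $\lambda N\cup M$ has the form $w=\lambda^{j}\,M^{i_{0}}c_{1}M^{i_{1}}c_{2}\cdots c_{j}M^{i_{j}}$, where $j\ge0$ is the number of factors taken from $\lambda N$, the $c_{l}$ lie in $N$, the exponents $i_{0},\dots,i_{j}$ are nonnegative and $i_{0}+i_{1}+\cdots+i_{j}+j=k$. Grouping as $w=\lambda^{j}M^{i_{0}}\cdot(c_{1}M^{i_{1}})\cdots(c_{j}M^{i_{j}})$ and noting that $c_{l}M^{i_{l}}\subset N\mathcal{S}_{1}(M)$ for each $l$ (since $M^{i_{l}}\subset\mathcal{S}_{1}(M)$), one gets $(c_{1}M^{i_{1}})\cdots(c_{j}M^{i_{j}})\subset(N\mathcal{S}_{1}(M))^{j}$, whence
\[
\|w\|\le|\lambda|^{j}\,\|M^{i_{0}}\|\,\|(N\mathcal{S}_{1}(M))^{j}\|.
\]
Now I would use (\ref{f72}) for $M$ and for the bounded set $N\mathcal{S}_{1}(M)$: for a fixed $\delta>0$ there is $D_{\delta}$ with $\|M^{i}\|\le D_{\delta}(\rho(M)+\delta)^{i}$ for all $i\ge0$, and since $\rho(N\mathcal{S}_{1}(M))=0$, for each $\varepsilon>0$ there is $C_{\varepsilon}$ with $\|(N\mathcal{S}_{1}(M))^{j}\|\le C_{\varepsilon}\varepsilon^{j}$ for all $j\ge0$. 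Taking $\varepsilon=1/|\lambda|$ when $\lambda\ne0$ cancels the factor $|\lambda|^{j}$, so that, using $i_{0}\le k$,
\[
\|w\|\le D_{\delta}C_{1/|\lambda|}(\rho(M)+\delta)^{i_{0}}\le D_{\delta}C_{1/|\lambda|}\,\big(\max(1,\rho(M)+\delta)\big)^{k}.
\]
As $w$ was an arbitrary word of length $k$, this bounds $\|(\lambda N\cup M)^{k}\|$; extracting $k$-th roots and letting $k\to\infty$ kills the $\lambda$-dependent constant and gives $G(\lambda)\le\max(1,\rho(M)+\delta)$, and then $\delta\to0$ yields $G(\lambda)\le\max(1,\rho(M))$ for all $\lambda\ne0$. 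Together with $G(0)=\rho(M)$, this is the required bound.

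I expect the main obstacle to be precisely this last estimate, for two reasons. First, $\mathcal{S}_{1}(M)$ itself need not be bounded, so one cannot simply estimate $\|M^{i_{0}}c_{1}M^{i_{1}}\cdots c_jM^{i_j}\|$ by detaching all the $M$-powers; the interior powers $M^{i_{1}},\dots,M^{i_{j}}$ have to be absorbed into $N\mathcal{S}_{1}(M)$ (which is exactly the hypothesis), and only the leading power $M^{i_{0}}$ is estimated through Gelfand's formula. Second, the factor $|\lambda|^{j}$ must be neutralized before passing to the rate of growth in $k$; the device $\varepsilon=1/|\lambda|$ does this cleanly, at the cost of a $\lambda$-dependent but $k$-independent constant, which disappears on taking $k$-th roots. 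Everything else — the verification of the hypotheses of Theorem \ref{st2} and the passage from ``bounded subharmonic on $\mathbb{C}$'' to ``constant'' — is routine and mirrors the proof of Lemma \ref{3.2}.
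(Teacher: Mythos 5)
Your proof is correct and follows essentially the same route as the paper: subharmonicity of $\lambda\mapsto\rho(\lambda N\cup M)$ via Theorem \ref{st2}, a uniform bound obtained by splitting each word into a leading $M$-block times a product of blocks lying in $N\mathcal{S}_{1}(M)$, neutralizing $|\lambda|^{j}$ by choosing $\varepsilon<|\lambda|^{-1}$, and then the Liouville property of bounded subharmonic functions. The only (immaterial) difference is that the paper bounds the leading block crudely by $\max(1,\Vert M\Vert)^{n}$ instead of your Gelfand-formula estimate $(\rho(M)+\delta)^{i_{0}}$.
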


\begin{proof}
Let $G=\mathcal{S}_{1}(M)$ and $\gamma=\max\{\Vert M\Vert,1\}$. By the
assumption, for any $\varepsilon>0$ there is $\beta=\beta(\varepsilon)$ such
that
\[
\Vert(NG)^{n}\Vert\leqslant\beta\varepsilon^{n},
\]
for all $n$. Given a number $\lambda\in\mathbb{C}$, set $E_{\lambda}={\lambda
N}\cup M$. We choose $\varepsilon<|{\lambda}|^{-1}$. Let $x=x_{1}x_{2}%
...x_{n}\in(E_{\lambda})^{n}$ and let exactly $k$ of the elements $x_{j}$
belong to $\lambda N$. Then
\[
x=\lambda^{k}zy_{1}y_{2}...y_{k},
\]
where all $y_{i}\in NG$, $z\in G$ and $\left\|  z\right\|  \leqslant\gamma
^{n}$. It follows that
\[
\left\|  x\right\|  \leqslant|\lambda|^{k}\beta\varepsilon^{k}\gamma
^{n}\leqslant\beta\gamma^{n}.
\]
This shows that $\left\|  (E_{\lambda})^{n}\right\|  \leqslant\beta\gamma^{n}$
for all $n$, whence
\[
\rho(E_{\lambda})\leqslant\gamma
\]
for every $\lambda\in\mathbb{C}$. Being a subharmonic function on $\mathbb{C}%
$, $\lambda\longmapsto$ $\rho(E_{\lambda})$ is constant, whence $\rho
(E_{1})=\rho(E_{0})$ which is what we need.
\end{proof}

\begin{lemma}
\label{3.21}Let $\ast\in\{c,b\}$, and let $J$ be an (one-sided) $\ast
$-quasinilpotent ideal of a normed algebra $A$. If $N$ $\in\mathcal{M}_{\ast
}(J)$ then $\rho(N\cup M)=\rho(M)$, for any $M\in\mathcal{M}_{\ast}(A)$.
\end{lemma}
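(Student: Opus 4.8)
The plan is to reduce everything to an application of \thmref{3.12}. I would begin with three harmless reductions. (a) Since the joint spectral radius of a subset of $A$ is computed from its products and is therefore unchanged when the subset is viewed inside $A^{1}$, and since $J$ is again a $\ast$-quasinilpotent one-sided ideal of $A^{1}$, it suffices to prove the statement with $A^{1}$ in place of $A$; so we may assume $A$ is unital (cf. \lemref{3.3}). (b) Passing to the opposite algebra $A^{\operatorname{op}}$ if necessary changes neither $\rho$, nor the classes $\mathcal{M}_{\ast}$, nor the unit, while it interchanges left and right ideals; so we may assume $J$ is a \emph{right} ideal of $A$. (c) If $\Vert M\Vert=0$ then $M=\{0\}$ and the claim is immediate from \lemref{st1}(ii) together with $\rho(N)=0$ (as $N\in\mathcal{M}_{\ast}(J)$ and $J$ is $\ast$-quasinilpotent); so we may assume $\Vert M\Vert>0$. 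Then, using $\rho(tN\cup tM)=t\,\rho(N\cup M)$ and $\rho(tM)=t\,\rho(M)$ for $0<t<1/\Vert M\Vert$, together with $tN\in\mathcal{M}_{\ast}(J)$ and $tM\in\mathcal{M}_{\ast}(A)$, we may further assume $\Vert M\Vert<1$.

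With these normalizations in force, $\mathcal{S}_{1}(M)=\{1\}\cup\bigcup_{m\geqslant1}M^{m}$ satisfies $\Vert\mathcal{S}_{1}(M)\Vert\leqslant1$, and I would look at the set $L:=N\mathcal{S}_{1}(M)=\bigcup_{m\geqslant0}NM^{m}$. It is bounded, with $\Vert L\Vert\leqslant\Vert N\Vert$; and, since $J$ is a right ideal, $N\subseteq J$ and $NM^{m}\subseteq JA\subseteq J$ for $m\geqslant1$, so $L\subseteq J$. When $\ast=c$ one must in addition verify that $L$ is precompact: each $NM^{m}$ is precompact, being a continuous image of the precompact set $N\times M^{m}$, and $\Vert NM^{m}\Vert\leqslant\Vert N\Vert\,\Vert M\Vert^{m}\to0$, so a standard finite-$\varepsilon$-net argument shows the union $L$ is precompact. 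In either case $L\in\mathcal{M}_{\ast}(J)$, whence $\rho(N\mathcal{S}_{1}(M))=\rho(L)=0$ by the $\ast$-quasinilpotence of $J$.

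Finally, \thmref{3.12} applied to $N$ and $M$ in the unital algebra $A$ gives $\rho(N\cup M)=\rho(M)$, which is the assertion once the reductions are undone. The step that really requires care is the rescaling in (c): it is made precisely so that $\mathcal{S}_{1}(M)$, and hence $N\mathcal{S}_{1}(M)$, is bounded and thus admissible both as a test set for the $\ast$-quasinilpotence of $J$ and as input to \thmref{3.12}; this is also exactly why $\ast=f$ must be excluded, since for finite $N$ the set $N\mathcal{S}_{1}(M)$ is infinite and $f$-quasinilpotence of $J$ says nothing about it. The precompactness check for $L$ when $\ast=c$ is the only mildly technical point, and it is routine.
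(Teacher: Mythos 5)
Your proof is correct and follows essentially the same route as the paper's: reduce to $A$ unital, $J$ a right ideal and $\Vert M\Vert<1$ by scaling, observe that $N\mathcal{S}_{1}(M)\in\mathcal{M}_{\ast}(J)$ so $\rho(N\mathcal{S}_{1}(M))=0$, and conclude by \thmref{3.12}. The only (harmless) deviations are cosmetic: you handle the left-ideal case via the opposite algebra where the paper invokes (\ref{f74}), and you spell out the precompactness of $N\mathcal{S}_{1}(M)$ for $\ast=c$, a detail the paper leaves implicit.
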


\begin{proof}
Let $J$ be a right ideal of $A$. One may suppose that $A$ is unital. Suppose
firstly that $\left\|  M\right\|  <1$. Then $N\mathcal{S}_{1}(M)\in
\mathcal{M}_{\ast}(J)$, so that $\rho(N\mathcal{S}_{1}(M))=0$. By Theorem
\ref{3.12},
\[
\rho(N\cup M)=\rho(M).
\]
If $\left\|  M\right\|  \geqslant1$, take $t>\left\|  M\right\|  $. Then
\[
\rho(N\cup M)=t\rho(t^{-1}N\cup t^{-1}M)=t\rho(t^{-1}M)=\rho(M).
\]

If $J$ is a left ideal of $A$, use (\ref{f74}) with the same argument.
\end{proof}

\begin{lemma}
\label{3.9} Let $\ast\in\{f,c,b\}$, and let $A$ be a normed algebra. Suppose
that $J$ is $R_{\ast}(A)$ for $\ast\in\{f,c\}$ or an (one-sided)
$b$-quasinilpotent ideal of $A$ for $\ast=b$. If $M,N\in\mathcal{M}_{\ast}(A)$
and $M\subset N+J$ then $\rho(M)\leqslant\rho(N)$.
\end{lemma}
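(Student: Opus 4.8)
The plan is to reduce the statement to a single scaling‑plus‑subharmonicity argument, of the kind already used in the proof of Lemma~\ref{3.2}. First I would fix, for every $m\in M$, a decomposition $m=n_m+j_m$ with $n_m\in N$ and $j_m\in J$, and set $L=\{j_m:m\in M\}$. Since $j_m=m-n_m$, we have $L\subset M+(-N)$, so $L$ is finite if $\ast=f$, precompact if $\ast=c$, and bounded if $\ast=b$; in every case $L\subset J$ and $\|L\|<\infty$.

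Next, for $\lambda\in\mathbb{C}$ I would introduce the set $M(\lambda)=\{n_m+\lambda j_m:m\in M\}$, so that $M(1)=M$ and $M(0)\subset N$. Each $M(\lambda)$ is contained in $N+\lambda L$ and hence bounded, and since $\|(n_m+\mu j_m)-(n_m+\lambda j_m)\|\le|\mu-\lambda|\,\|L\|$, the family of affine (thus analytic) functions $\lambda\mapsto n_m+\lambda j_m$, $m\in M$, satisfies the hypotheses of Theorem~\ref{st2}. Therefore $\lambda\mapsto\rho(M(\lambda))$ is subharmonic on $\mathbb{C}$.

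The crucial point is boundedness of this subharmonic function. For every $x=n+\lambda j$ with $n\in N$, $j\in L$ one has $\tfrac{1}{2}x\in\operatorname{abs}(N\cup\lambda L)$, so $\tfrac{1}{2}M(\lambda)\subset\operatorname{abs}(N\cup\lambda L)$; by Lemma~\ref{st1}(i) together with $\rho(\tfrac{1}{2}M(\lambda))=\tfrac{1}{2}\rho(M(\lambda))$ this gives $\rho(M(\lambda))\le 2\rho(N\cup\lambda L)$. Now $\lambda L$ is a finite subset of $R_f(A)$, or a precompact subset of $R_c(A)$, or a bounded subset of the one‑sided $b$‑quasinilpotent ideal $J$ (using in the first two cases that $R_f(A)$, $R_c(A)$ are linear); correspondingly Lemma~\ref{3.4}, Lemma~\ref{3.20} or Lemma~\ref{3.21} yields $\rho(N\cup\lambda L)=\rho(N)$, whence $\rho(M(\lambda))\le 2\rho(N)$ for all $\lambda$. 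A bounded subharmonic function on $\mathbb{C}$ is constant, so $\rho(M)=\rho(M(1))=\rho(M(0))\le\rho(N)$, the last inequality because $M(0)\subset N$.

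I expect the only genuine obstacle to be the constant $2$: a plain inclusion argument yields merely $M\subset N+L\subset 2\operatorname{abs}(N\cup L)$ and hence $\rho(M)\le 2\rho(N)$, which is too weak. Threading the scaling parameter $\lambda$ through Theorem~\ref{st2} is precisely what removes it. The remaining verifications — that $L$ has the required smallness in each of the three cases, and that the continuity and boundedness hypotheses of Theorem~\ref{st2} hold — are routine, and the three cases $\ast=f,c,b$ run in parallel, differing only at the single step where one of Lemmas~\ref{3.4}, \ref{3.20}, \ref{3.21} is invoked.
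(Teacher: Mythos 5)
Your proof is correct and is essentially the paper's own argument: the paper forms $N(\lambda)=\{a-\lambda a':a\in M\}$, which is exactly your $M(1-\lambda)$, and likewise gets subharmonicity from Theorem~\ref{st2} and constancy from a uniform bound obtained via Lemma~\ref{st1} and one of Lemmas~\ref{3.4}, \ref{3.20}, \ref{3.21}. The only (immaterial) difference is the reparametrization and that the paper bounds by $2\rho(\lambda K\cup M)=2\rho(M)$ while you bound by $2\rho(N\cup\lambda L)=2\rho(N)$.
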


\begin{proof}
For any $a\in M$, denote by $a^{\prime}$ an element of $J$ such that
$a-a^{\prime}\in N$. Set $K=\{a^{\prime}:a\in M\}$ and, for any $\lambda
\in\mathbb{C}$, $N(\lambda)=\{a-\lambda a^{\prime}:a\in M\}$. Then the sets
$K$ and $N(\lambda)$ are in $\mathcal{M}_{\ast}(A)$, $K\subset J$ and
\[
N(1)\subset N.
\]
The function $f(\lambda)=\rho(N(\lambda))$ is subharmonic on $\mathbb{C}$ by
Theorem \ref{st2}. Since $N(\lambda)\subset2\operatorname{abs}(\lambda K\cup
M)$, we have, by Lemma \ref{st1} and one of Lemmas \ref{3.4}, \ref{3.20} and
\ref{3.21} corresponding to $\ast\in\{f,c,b\}$,
\[
\rho(N(\lambda))\leqslant2\rho(\operatorname{abs}(\lambda K\cup M))=2\rho
(\lambda K\cup M)=2\rho(M).
\]
Thus $f(\lambda)$ is constant and
\[
\rho(M)=f(0)=f(1)=\rho(N(1))\leqslant\rho(N).
\]
\end{proof}

Recall that if $J$ is a closed ideal of $A$ then, for any $M\subset A$, we
denote by $M/J$ the image of $M$ under the canonical epimorphism
$q_{J}:A\rightarrow A/J$.

\begin{theorem}
\label{3.10} Let $\ast\in\{f,c,b\}$, and let $A$ be a normed algebra. Suppose
that $J$ is $R_{\ast}(A)$ for $\ast\in\{f,c\}$ or a closed $b$-quasinilpotent
ideal of $A$ for $\ast=b$. Then $\rho(M)=\rho(M/J)$ for every $M\in
\mathcal{M}_{\ast}(A)$.
\end{theorem}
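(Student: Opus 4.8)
The plan is to establish the two inequalities $\rho(M/J)\leqslant\rho(M)$ and $\rho(M)\leqslant\rho(M/J)$ separately. The first is immediate: since $J$ is closed (automatically so when $J=R_{\ast}(A)$, by Theorem~\ref{3.7}), the quotient map $q_{J}:A\rightarrow A/J$ is a contractive homomorphism, so $(M/J)^{n}=q_{J}(M^{n})$ and $\Vert(M/J)^{n}\Vert\leqslant\Vert M^{n}\Vert$ for every $n$, whence $\rho(M/J)=\inf_{n}\Vert(M/J)^{n}\Vert^{1/n}\leqslant\inf_{n}\Vert M^{n}\Vert^{1/n}=\rho(M)$.

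For the reverse inequality I would pass to a high power of $M$ and then lift back from $A/J$ to $A$. Fix $\varepsilon>0$ and put $t=\rho(M/J)+\varepsilon$. Since $\rho(M/J)=\inf_{m}\Vert(M/J)^{m}\Vert^{1/m}$, there is an $m$ with $\Vert(M/J)^{m}\Vert<t^{m}$; note that $(M/J)^{m}=q_{J}(M^{m})$ and that $M^{m}\in\mathcal{M}_{\ast}(A)$, since products of finitely many finite (resp. precompact, resp. bounded) sets are again of the same type. Consequently $\Vert q_{J}(a)\Vert<t^{m}$, i.e. $\operatorname{dist}(a,J)<t^{m}$, for every $a\in M^{m}$. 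The goal is then to produce a set $N\in\mathcal{M}_{\ast}(A)$ with $\Vert N\Vert$ arbitrarily close to $t^{m}$ and $M^{m}\subset N+J$; granting this, Lemma~\ref{3.9} (whose hypothesis on $J$ is precisely the one assumed here) yields $\rho(M^{m})\leqslant\rho(N)\leqslant\Vert N\Vert$, and since $\rho(M)=\rho(M^{m})^{1/m}$ and $\varepsilon$ is arbitrary, we obtain $\rho(M)\leqslant\rho(M/J)$.

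The construction of $N$ is the one place where the cases genuinely diverge, and it is the main technical point. For $\ast=f$ it is trivial: $M^{m}$ is finite, so choosing for each $a\in M^{m}$ an element $j_{a}\in J$ with $\Vert a-j_{a}\Vert<t^{m}$ and setting $N=\{a-j_{a}:a\in M^{m}\}$ gives a finite set with $\Vert N\Vert\leqslant t^{m}$ and $M^{m}\subset N+J$; the same recipe works verbatim for $\ast=b$, with $N$ bounded. The delicate case is $\ast=c$, where choosing $j_{a}$ separately for each $a$ need not yield a precompact $N$. Here I would fix $\delta>0$, take a finite $\delta$-net $\{a_{1},\ldots,a_{p}\}\subset M^{m}$, pick $j_{1},\ldots,j_{p}\in J$ with $\Vert a_{i}-j_{i}\Vert<t^{m}$, and for each $a\in M^{m}$ choose $i(a)$ with $\Vert a-a_{i(a)}\Vert<\delta$, setting $N=\{a-j_{i(a)}:a\in M^{m}\}$. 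Then $N\subset M^{m}-\{j_{1},\ldots,j_{p}\}$, a finite union of translates of the precompact set $M^{m}$, hence $N$ is precompact; moreover $\Vert N\Vert<t^{m}+\delta$ and still $M^{m}\subset N+J$. Lemma~\ref{3.9} now gives $\rho(M^{m})\leqslant t^{m}+\delta$ for every $\delta>0$, hence $\rho(M^{m})\leqslant t^{m}$, and the argument concludes as above. I expect the precompact case to be the only step requiring real care; the rest is bookkeeping around Lemma~\ref{3.9} and the identity $\rho(M)=\rho(M^{m})^{1/m}$.
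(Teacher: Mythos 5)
Your proof is correct and takes essentially the same route as the paper: pass to a high power $M^{m}$, decompose $M^{m}\subset N+J$ with $\Vert N\Vert$ controlled by $\Vert M^{m}/J\Vert$, apply Lemma~\ref{3.9}, and use $\rho(M)=\rho(M^{m})^{1/m}$. The only difference is that the paper invokes this decomposition as a ``general geometric fact'' (citing \cite[Lemma 6.9]{ST00} for the precompact case), whereas you prove it directly with a $\delta$-net argument, which is valid.
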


\begin{proof}
Let $M\in\mathcal{M}_{\ast}(A)$. For any $\varepsilon>0$, take
$n=n(\varepsilon)$ such that $\Vert M^{n}/J\Vert^{1/n}\leqslant\rho
(M/J)+\varepsilon$. It is a general geometric fact that, for arbitrary
$\delta>0$ , there are sets $N$ and $Q$ in $\mathcal{M}_{\ast}(A)$ with
$M^{n}\subset N+Q$, $Q\subset J$ and $\Vert N\Vert\leqslant\Vert M^{n}%
/J\Vert+\delta$ (note that it is evident for $\ast=f$ and $\ast=b$, the proof
for $\ast=c$ may be found in \cite[Lemma 6.9]{ST00}). Hence, by Lemma
\ref{3.9},
\[
\rho(M^{n})\leqslant\rho(N+Q)\leqslant\rho(N)\leqslant\Vert N\Vert
\leqslant\Vert M^{n}/J\Vert+\delta.
\]
Since $\delta$ is arbitrary,
\[
\rho(M)^{n}=\rho(M^{n})\leqslant\Vert M^{n}/J\Vert\leqslant(\rho
(M/J)+\varepsilon)^{n},
\]
whence $\rho(M)\leqslant\rho(M/J)+\varepsilon$. It follows that
\[
\rho(M)\leqslant\rho(M/J).
\]
The converse inequality is evident.
\end{proof}

It should be noted that the theorem does not hold for $J=$ $R_{b}$ and every
$M\in\mathcal{M}_{b}(A)$ (see (\ref{f66}) in the end of this subsection).
However, this theorem implies the equality
\[
\rho(M)=\rho(M/R_{b}(A)),
\]
for every precompact subset $M$ of $A$. Indeed, since $R_{b}(A)\subset
R_{c}(A)$, we have that
\[
\rho(M)=\rho(M/R_{c}(A))\leqslant\rho(M/R_{b}(A))\leqslant\rho(M).
\]

The next lemma shows in particular that elements of $R_{\ast}(A)$ have the
properties similar to ones of elements of the Jacobson radical.

\begin{lemma}
\label{3.11} Let $\ast\in\{f,c,b\}$, and let $A$ be a normed algebra. Suppose
that $J$ is $R_{\ast}(A)$ for $\ast\in\{f,c\}$ or a $b$-quasinilpotent ideal
of $A$ for $\ast=b$. Then $\rho(NM)=0$ and $\rho(N+M)=\rho(M)$ if one of the
following conditions holds.

\begin{itemize}
\item [(i)]$N\in\mathcal{M}_{\ast}(J)$ and $M\in\mathcal{M}_{\ast}(A)$.

\item[(ii)] $N$ is a precompact subset of $R_{b}(A)$ and $M$ is a bounded
subset of $A$.
\end{itemize}
\end{lemma}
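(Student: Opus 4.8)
The plan is to derive both equalities, $\rho(NM)=0$ and $\rho(N+M)=\rho(M)$, from one dilation argument resting on the fact that adjoining to $M$ a set of the right type lying in $J$ leaves the joint spectral radius unchanged. The first step is to observe that, in each of the four situations the statement covers ($\ast\in\{f,c,b\}$ under (i), and case (ii)), the dilate $\lambda N$ is again a set of the same topological kind sitting inside the same ideal: for (i) this is immediate, and for (ii) it follows from the facts that $R_{b}(A)$ is a linear subspace (Lemma~\ref{3.5}) and that $\mathcal{M}_{c}$ and $\mathcal{M}_{b}$ are stable under multiplication by scalars, so $\lambda N$ is a precompact subset of $R_{b}(A)$. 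Applying then the appropriate one of Lemmas~\ref{3.4}, \ref{3.20} and \ref{3.21} (Lemma~\ref{3.20} being used with $\ast=c$ under (i) and with $\ast=b$ for case (ii)), one gets
\[
\rho\big((\lambda N)\cup M\big)=\rho(M)\qquad\text{for every }\lambda\in\mathbb{C}.
\]

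For the first equality I would then use that any product of an element of $\lambda N$ with an element of $M$ lies in $\big((\lambda N)\cup M\big)^{2}$, so $\lambda(NM)=(\lambda N)M\subset\big((\lambda N)\cup M\big)^{2}$ and hence
\[
|\lambda|\,\rho(NM)=\rho(\lambda NM)\leqslant\rho\Big(\big((\lambda N)\cup M\big)^{2}\Big)=\rho\big((\lambda N)\cup M\big)^{2}=\rho(M)^{2}
\]
for every $\lambda\in\mathbb{C}$. Since $\rho(M)\leqslant\Vert M\Vert<\infty$, letting $|\lambda|\to\infty$ forces $\rho(NM)=0$.

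For the second equality I plan to combine the same bound with subharmonicity. The affine maps $f_{n,m}(\lambda)=\lambda n+m$ ($n\in N$, $m\in M$) form an analytic family that is equicontinuous in $\lambda$ (with $\sup_{n,m}\Vert f_{n,m}(\mu)-f_{n,m}(\lambda)\Vert=|\mu-\lambda|\,\Vert N\Vert\to 0$ as $\mu\to\lambda$), and for each $\lambda$ the set $\{f_{n,m}(\lambda):n\in N,m\in M\}=\lambda N+M$ is bounded; so by Theorem~\ref{st2} the function $g(\lambda)=\rho(\lambda N+M)$ is subharmonic on $\mathbb{C}$. From $\lambda N+M\subset 2\operatorname{abs}\big((\lambda N)\cup M\big)$ together with Lemma~\ref{st1} one gets $g(\lambda)\leqslant 2\rho\big((\lambda N)\cup M\big)=2\rho(M)$, so $g$ is bounded on $\mathbb{C}$, hence constant, and therefore $\rho(N+M)=g(1)=g(0)=\rho(M)$. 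Under hypothesis (i) one could instead apply Lemma~\ref{3.9} twice, to the inclusions $N+M\subset M+J$ and $M\subset(N+M)+J$; but in case (ii) that shortcut is unavailable.

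The only real subtlety --- and the reason case (ii) is singled out --- is that under (ii) the ideal $J=R_{b}(A)$ need not be $b$-quasinilpotent, so $\rho(NM)=0$ cannot be read off from the fact that $NM$ is a bounded subset of a $b$-quasinilpotent ideal, and Lemma~\ref{3.9} does not apply either; the dilation-plus-subharmonicity route is exactly what repairs this. Concretely, the step I would verify most carefully is that Lemma~\ref{3.20} genuinely applies to $\lambda N$, a \emph{precompact} (not merely bounded) subset of $R_{b}(A)$: without precompactness the equality $\rho\big((\lambda N)\cup M\big)=\rho(M)$ can fail, since bounded subsets of $R_{b}(A)$ need not have zero joint spectral radius.
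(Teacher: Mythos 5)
Your proof is correct, but it organizes the second equality differently from the paper, and in a way that is arguably more economical. The paper splits the two hypotheses: under (i) it deduces $\rho(N+M)=\rho(M)$ from Theorem~\ref{3.10}, writing $\rho(N+M)=\rho((N+M)/J)=\rho(M/J)=\rho(M)$ (after replacing $J$ by its closure via Lemma~\ref{3.0} when $\ast=b$), while under (ii) it applies Theorem~\ref{st2} to $G(\lambda)=M\cup(\lambda N+M)$, which only yields $\rho(N+M)\leqslant\rho(G(1))=\rho(G(0))=\rho(M)$ and therefore forces an extra substitution ($M\mapsto -(N+M)$) to recover the reverse inequality. You instead run one subharmonicity argument for all four situations, applied to $g(\lambda)=\rho(\lambda N+M)$ itself; since $g(0)=\rho(M)$ and $g(1)=\rho(N+M)$ exactly, constancy of the bounded subharmonic function $g$ gives the equality in a single stroke, with no appeal to Theorem~\ref{3.10} and no reverse-inequality trick. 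The hypotheses of Theorem~\ref{st2} are indeed met by the family $f_{n,m}(\lambda)=\lambda n+m$ (equicontinuity from $\Vert N\Vert<\infty$, boundedness of $\lambda N+M$), the bound $g(\lambda)\leqslant 2\rho(\lambda N\cup M)=2\rho(M)$ is justified by Lemma~\ref{st1} together with the correct choice among Lemmas~\ref{3.4}, \ref{3.20} and \ref{3.21}, and your treatment of $\rho(NM)=0$ (dilating $N$ and using $(\lambda N)M\subset((\lambda N)\cup M)^{2}$) is exactly the paper's. Your side remarks are also accurate: Lemma~\ref{3.9} could replace the subharmonicity step under (i) but not under (ii), and precompactness of $N$ in (ii) is essential because $R_{b}(A)$ need not be $b$-quasinilpotent. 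What the paper's route buys is a conceptual link to the quotient formula $\rho(M)=\rho(M/J)$ in case (i); what yours buys is uniformity and brevity.
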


\begin{proof}
Let (i) hold. Then $N+M\in\mathcal{M}_{\ast}(A)$. One may suppose that $J$ is
closed by Lemma \ref{3.0} in the case $\ast=b$. It follows from Theorem
\ref{3.10} that
\begin{equation}
\rho(N+M)=\rho((N+M)/J)=\rho(M/J)=\rho(M). \label{f70}%
\end{equation}
Furthermore, since $NM\subset(N\cup M)^{2}$,
\begin{equation}
\rho(NM)\leqslant\rho((N\cup M)^{2})=\rho(N\cup M)^{2}=\rho(M)^{2} \label{f75}%
\end{equation}
by one of Lemmas \ref{3.4}, \ref{3.20} and \ref{3.21} corresponding to
$\ast\in\{f,c,b\}$. Changing $N$ by $\lambda N$ we get that
\begin{equation}
\rho(NM)=0. \label{f76}%
\end{equation}

Now let (ii) hold. Note that (\ref{f75}) and (\ref{f76}) are valid since
$\rho(\lambda N\cup M)=\rho(M)$ by Lemma \ref{3.20}. Further, if
$G(\lambda)=M\cup({\lambda N}+M)$ then $G(\lambda)\subset2\operatorname{abs}%
(\lambda N\cup M)$, whence
\[
\rho(G(\lambda))\leqslant2\rho(\operatorname{abs}(\lambda N\cup M))=2\rho
(\lambda N\cup M)=2\rho(M)
\]
by Lemma \ref{3.20}. Since the function $\lambda\longmapsto\rho(G(\lambda))$
is bounded and subharmonic on $\mathbb{C}$, it is constant, whence
\[
\rho(N+M)\leqslant\rho(G(1))=\rho(G(0))=\rho(M).
\]
Changing $M$ by $-(N+M)$ in that inequality, we obtain that
\[
\rho(M)\leq\rho(N-(N+M))\leqslant\rho(-(N+M))=\rho(N+M),
\]
whence $\rho(N+M)=\rho(M)$.
\end{proof}

\begin{theorem}
\label{3.13}Let $A$ be a normed algebra. An element $a\in A$ belongs to
$R_{b}(A)$ (resp. $R_{c}(A)$) iff $\rho(aM)=0$, for any bounded (resp.
precompact) set $M\subset A$.
\end{theorem}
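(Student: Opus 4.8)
The plan is to prove both directions of the equivalence by showing that the ``test condition'' $\rho(aM)=0$ for all $M\in\mathcal{M}_\ast(A)$ is the same as the defining condition $a\in R_\ast(A)$, i.e.\ $\rho(\{a\}\cup M)=\rho(M)$ for all $M\in\mathcal{M}_\ast(A)$. For the implication $a\in R_\ast(A)\Rightarrow\rho(aM)=0$, I would apply \lemref{3.11}(i) (in the case $\ast=c$) and the corresponding statement for $\ast=b$: since $R_c(A)$ is $c$-quasinilpotent and $R_b(A)$ is closed (by \thmref{3.7}) but not necessarily $b$-quasinilpotent, I should instead invoke the relevant parts directly. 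Concretely, for $a\in R_b(A)$ and $M$ bounded, put $N=\{a\}$; then $N$ is a (one-element, hence precompact) subset of $R_b(A)$, so \lemref{3.11}(ii) gives $\rho(NM)=\rho(aM)=0$. For $a\in R_c(A)$ and $M$ precompact, $R_c(A)$ is $c$-quasinilpotent by \proref{3.8}(ii), so $N=\{a\}\in\mathcal{M}_c(R_c(A))$ and \lemref{3.11}(i) (with $J=R_c(A)$, $\ast=c$) gives $\rho(aM)=0$.

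For the converse, suppose $\rho(aM)=0$ for every $M\in\mathcal{M}_\ast(A)$; I want $\rho(\{a\}\cup M)\leqslant\rho(M)$. By \lemref{3.3} we may assume $A$ is unital, and by \lemref{3.2} it suffices to show that $\lambda\longmapsto\rho(\{\lambda a\}\cup M)$ is bounded on $\mathbb{C}$. The key device is to control powers of $E_\lambda=\{\lambda a\}\cup M$: in a product $x=x_1\cdots x_n\in E_\lambda^n$, group consecutive blocks, isolating the factors equal to $\lambda a$. Each such factor $\lambda a$, multiplied on the right by the product of the $M$-factors that follow it up to the next occurrence of $\lambda a$, is an element of $\lambda\, a\,\mathcal{S}_1(M)$. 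Here the hypothesis enters: the set $a\mathcal{S}_1(M)$ — equivalently $\{a\}\cdot\mathcal{S}_1(M)$ — need not be bounded in general, so I would instead run the argument blockwise as in the proof of \thmref{3.12}. Specifically, with $\gamma=\max\{\|M\|,1\}$ and using $\rho(a\mathcal{S}_1(M^n))$... — more carefully: I would choose to apply \thmref{3.12} directly with $N=\{a\}$ once I know $N\mathcal{S}_1(M)$ is bounded with $\rho(N\mathcal{S}_1(M))=0$. Boundedness of $\{a\}\mathcal{S}_1(M)$ is the sticking point, since $\mathcal{S}_1(M)$ is typically unbounded.

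To get around this I would instead estimate $\|E_\lambda^n\|$ by hand, exactly as in \thmref{3.12}'s proof: writing $x=\lambda^k z y_1\cdots y_k$ with $y_i\in a\cdot(\text{a word in }M)$ and $z$ a word in $M$ of length $\leqslant n$. The factors $y_1,\dots,y_k$ can be reassembled: each product $y_1\cdots y_k$ is (after moving the scalars out) a product $a m_1 a m_2\cdots a m_k$ with $m_i$ words in $M$, and this is bounded above in norm by $\|(a\,\mathcal{S}_1(M))^{\,?}\|$ — still the same obstacle. The honest resolution: apply \thmref{3.10} with $J=R_c(A)$ (for $\ast=c$) or with a closed $b$-quasinilpotent $J$ (for $\ast=b$) is not available since we do not yet know $a\in R_\ast(A)$. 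So I would argue purely via subharmonicity: fix $M$, set $f(\lambda)=\rho(\{\lambda a\}\cup M)$, which is subharmonic by \thmref{st2}; bound $\|E_\lambda^n\|^{1/n}$ using $\rho(aM')=0$ applied to the bounded/precompact set $M'=M\cup\mathcal{S}_1(M)\cap(\text{ball of radius }R)$... The cleanest route, and the one I expect the authors take, is: first prove $a\in R_\ast(A)$ for $\ast=b$ by showing $\{a\}\cup M$ generates a semigroup whose relevant sub-products lie in the $b$-quasinilpotent ideal $\{a\}A$ — no: simply observe that the one-element set $\{a\}$ together with $\rho(aM)=0$ means $\{a\}$ behaves like an element of a $\ast$-quasinilpotent one-sided ideal, so \lemref{3.21} applies with $J$ the right ideal $aA^1$ once we check $aA^1$ (or rather $\{a\}\cup aM\cup\cdots$) — the hard part, which I flag as the main obstacle, is precisely packaging ``$\rho(aM)=0$ for all bounded $M$'' into an application of \thmref{3.12} or \lemref{3.21}, i.e.\ producing from $a$ a one-sided $\ast$-quasinilpotent ideal (or a bounded set $N\mathcal{S}_1(M)$ with $\rho=0$) to feed the subharmonicity machine; once that packaging is done, constancy of the subharmonic function $\lambda\mapsto\rho(\{\lambda a\}\cup M)$ forces $\rho(\{a\}\cup M)=\rho(M)$ and hence $a\in R_\ast(A)$.
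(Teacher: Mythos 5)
Your forward direction is correct and is essentially the paper's: for $\ast=c$ apply Lemma~\ref{3.11}(i) with $J=R_{c}(A)$ and $N=\{a\}$, and for $\ast=b$ apply Lemma~\ref{3.11}(ii) with the precompact set $N=\{a\}$. The converse, however, is left genuinely unfinished: you correctly identify \thmref{3.12} with $N=\{a\}$ as the tool to feed the subharmonicity machinery, but you never get past the unboundedness of $\mathcal{S}_{1}(M)$, and your text explicitly ends by flagging that packaging step as an unresolved obstacle. So as written the proposal proves only one implication.

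The missing idea is a normalization by scaling. Both the hypothesis ($\rho(aM')=0$ for all $M'\in\mathcal{M}_{\ast}(A)$, which passes to $a/t$ since $\rho((a/t)M')=t^{-1}\rho(aM')$) and the desired conclusion $\rho(\{a\}\cup M)=\rho(M)$ are homogeneous, so one may assume $\Vert M\Vert<1$. Then $\Vert M^{n}\Vert\leqslant\Vert M\Vert^{n}\rightarrow0$, so $\mathcal{S}_{1}(M)$ is bounded in $A^{1}$ (and precompact when $M$ is precompact), and $\mathcal{S}_{1}(M)a\mathcal{S}_{1}(M)\in\mathcal{M}_{\ast}(A)$. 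Applying the hypothesis to this set gives
\[
\rho(a\mathcal{S}_{1}(M))^{2}=\rho\bigl((a\mathcal{S}_{1}(M))^{2}\bigr)=\rho\bigl(a\,\mathcal{S}_{1}(M)a\mathcal{S}_{1}(M)\bigr)=0,
\]
so $a\mathcal{S}_{1}(M)$ is a bounded set with $\rho(a\mathcal{S}_{1}(M))=0$, and \thmref{3.12} (with $N=\{a\}$) yields $\rho(\{a\}\cup M)=\rho(M)$. For $\Vert M\Vert\geqslant1$ take $t>\Vert M\Vert$ and apply the case already proved to $a/t$ and $t^{-1}M$, then multiply by $t$. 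This is exactly how the paper closes the argument, and it is the one step your proposal is missing; the rest of your outline (reduction to unital $A$ via \lemref{3.3}, boundedness in $\lambda$ via \lemref{3.2}, subharmonicity) is consistent with the paper's proof of \thmref{3.12} rather than something you need to redo inside this theorem.
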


\begin{proof}
Let $\ast\in\{c,b\}$. If $a\in R_{\ast}(A)$ then $\rho(aM)=0$, for each
$M\in{\mathcal{M}}_{\ast}(A)$ by Lemma \ref{3.11}.

Proving the converse implication, suppose firstly that $||M||<1$. In this case
$\mathcal{S}_{1}(M)\in\mathcal{M}_{\ast}(A^{1})$ and, moreover, $\mathcal{S}%
_{1}(M)a\mathcal{S}_{1}(M)\in\mathcal{M}_{\ast}(A)$. Hence%
\[
\rho(a\mathcal{S}_{1}(M))^{2}=\rho((a\mathcal{S}_{1}(M))^{2})=\rho
(a\mathcal{S}_{1}(M)a\mathcal{S}_{1}(M))=0,
\]
whence $\rho(a\mathcal{S}_{1}(M))=0$. By Theorem \ref{3.12},
\[
\rho(\{a\}\cup M)=\rho(M).
\]
For $\Vert M\Vert\geqslant1$, we take $t>\Vert M\Vert$ and get
\[
\rho(\{a\}\cup M)=t\rho(\{a/t\}\cup(1/t)M)=t\rho((1/t)M)=\rho(M).
\]
\end{proof}

\begin{corollary}
\label{3.14}Let $\ast\in\{c,b\}$, and let $A$ be a normed algebra. Then
$R_{\ast}(A)$ contains all (one-sided) $\ast$-quasinilpotent ideals of $A$ and
$R_{c}(A)$ is the largest $c$-quasinilpotent ideal of $A$.
\end{corollary}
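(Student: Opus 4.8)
The plan is to obtain both assertions as short consequences of the results just established, the principal inputs being Lemma~\ref{3.21}, Theorem~\ref{3.13} and Proposition~\ref{3.8}.

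For the containment, I would fix $\ast\in\{c,b\}$ and let $J$ be a one-sided $\ast$-quasinilpotent ideal of $A$. Take any $a\in J$. The singleton $\{a\}$ is a finite, hence precompact, hence bounded subset of $J$, so $\{a\}\in\mathcal{M}_\ast(J)$; applying Lemma~\ref{3.21} with $N=\{a\}$ gives $\rho(\{a\}\cup M)=\rho(M)$ for every $M\in\mathcal{M}_\ast(A)$, which is precisely the defining condition for $a\in R_\ast(A)$. Hence $J\subseteq R_\ast(A)$. One could equally argue via Theorem~\ref{3.13}: if $J$ is a right ideal then $aM\in\mathcal{M}_\ast(J)$, so $\rho(aM)=0$; if $J$ is a left ideal then $\rho(aM)=\rho(Ma)=0$ by (\ref{f74}), since $Ma\subseteq J$. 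Either way $\rho(aM)=0$ for all $M\in\mathcal{M}_\ast(A)$, so $a\in R_\ast(A)$.

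For the second assertion, recall that $R_c(A)$ is a closed ideal of $A$ by Theorem~\ref{3.7} and is itself $c$-quasinilpotent by Proposition~\ref{3.8}(ii). Combining this with the containment just proved (taken with $\ast=c$) shows that $R_c(A)$ is a $c$-quasinilpotent ideal containing every (even one-sided) $c$-quasinilpotent ideal of $A$, i.e.\ it is the largest such ideal.

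I do not expect any genuine obstacle here; the only point requiring care is that the ``largest ideal'' conclusion rests on $R_c(A)$ being $c$-quasinilpotent, which is exactly Proposition~\ref{3.8}(ii). This also explains why maximality is asserted only for $\ast=c$: the ideal $R_b(A)$ need not be $b$-quasinilpotent (for instance, by Proposition~\ref{3.22}, $R_b(A)=\operatorname{rad}(A)$ when $A$ is a commutative normed $Q$-algebra, and this can fail to be $b$-quasinilpotent, e.g.\ for a closed algebra generated by the Volterra integration operator), so there is no analogous statement for $\ast=b$.
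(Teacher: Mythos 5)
Your proof is correct and follows essentially the same route as the paper: the containment comes from Lemma~\ref{3.21} (applied to the singleton $N=\{a\}$), and the maximality of $R_c(A)$ from the fact that it is itself a $c$-quasinilpotent ideal by Proposition~\ref{3.8}(ii) together with Theorem~\ref{3.7}. Your alternative derivation of the containment via Theorem~\ref{3.13} and the commutation identity (\ref{f74}) is a valid minor variant, and your closing remark correctly identifies why maximality is claimed only for $\ast=c$.
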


\begin{proof}
Note that $R_{\ast}(A)$ contains all (one-sided) $\ast$-quasinilpotent ideals
of $A$ by Lemma \ref{3.21}, and $R_{c}(A)$ is $c$-quasinilpotent itself by
Proposition \ref{3.8}, thus it is the largest $c$-quasinilpotent ideal of $A$.
\end{proof}

The following example shows that a Banach algebra need not have the largest
$b$-quasinilpotent ideal. Let $A_{n}$ denote the algebra of all strictly
upper-triangular matrices on $n$-dimensional Hilbert space $H_{n}$, for every
$n\in\mathbb{N}$, and let $A$ be the norm closure of the algebraic direct sum
$B$ of all algebras $A_{n}$ acting on the Hilbert space $H=\oplus H_{n}$.
Clearly $B$ is a union of nilpotent ideals, whence the largest $b$%
-quasinilpotent ideal would be equal to $A$; in particular $A=R_{b}(A)$ by
Corollary \ref{3.14}. On the other hand, if $M$ is the unit ball of $A$ then
$\Vert M^{n}\Vert=1$, for all $n$, hence $A$ is not $b$-quasinilpotent. In
particular
\begin{equation}
1=\rho(M)\neq\rho(M/R_{b}(A))=0. \label{f66}%
\end{equation}

\subsection{Radical-like properties of ideals $R_{f}$, $R_{c}$ and $R_{b}$}

\begin{lemma}
\label{3.15} Let $A$ be a normed algebra and $I$ its ideal. Then

\begin{itemize}
\item [(i)]$I\cap R_{f}(A)\subset R_{f}(I)$.

\item[(ii)] $I\cap R_{\ast}(A)=R_{\ast}(I)$, for $\ast\in\{c,b\}$.
\end{itemize}
\end{lemma}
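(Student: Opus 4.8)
The plan is to separate the two inclusions and to treat $\ast=f$ and $\ast\in\{c,b\}$ differently. For the inclusion $I\cap R_{\ast}(A)\subset R_{\ast}(I)$ — which I would prove for every $\ast\in\{f,c,b\}$, so that it covers (i) and also the ``$\subset$'' half of (ii) at once — the only thing needed is that every finite (resp. precompact, resp. bounded) subset of $I$ is a finite (resp. precompact, resp. bounded) subset of $A$, and that the joint spectral radius of a subset $N\subset I$ is the same whether computed inside $I$ or inside $A$ (it involves only the norm and the products, and $I$ carries the restricted structure). Hence for $a\in I\cap R_{\ast}(A)$ and $N\in\mathcal{M}_{\ast}(I)\subset\mathcal{M}_{\ast}(A)$ one gets $\rho(\{a\}\cup N)=\rho(N)$, so $a\in R_{\ast}(I)$.

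For the reverse inclusion $R_{\ast}(I)\subset I\cap R_{\ast}(A)$ with $\ast\in\{c,b\}$, I would use the product characterization of $R_{c}$ and $R_{b}$ provided by Theorem~\ref{3.13}. Let $a\in R_{\ast}(I)$; then $a\in I$ by the very definition of $R_{\ast}(I)$, and it remains to show $a\in R_{\ast}(A)$. Fix a bounded (resp. precompact) set $M\subset A$. The crucial elementary identity is
\[
(aM)^{2}=a(MaM),
\]
and the point is that $MaM\subset I$, since $a\in I$ and $I$ is a two-sided ideal, while $MaM$ is again bounded (resp. precompact), the latter because $M\longmapsto MaM$ is the image of $M\times M$ under the continuous map $(x,y)\longmapsto xay$. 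Applying Theorem~\ref{3.13} inside the normed algebra $I$ to $a\in R_{\ast}(I)$ and the set $MaM\in\mathcal{M}_{\ast}(I)$ gives $\rho\bigl(a(MaM)\bigr)=0$, whence $\rho(aM)^{2}=\rho((aM)^{2})=0$ and $\rho(aM)=0$. Since $M$ was an arbitrary bounded (resp. precompact) subset of $A$, Theorem~\ref{3.13} applied inside $A$ yields $a\in R_{\ast}(A)$, so $a\in I\cap R_{\ast}(A)$.

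I do not expect a genuine obstacle: the whole content is the identity $(aM)^{2}=a(MaM)$ together with $MaM\subset I$, which reduces matters to Theorem~\ref{3.13}. The one thing worth flagging in the write-up is why this does not extend to $\ast=f$: the argument rests on Theorem~\ref{3.13}, whose proof goes through Theorem~\ref{3.12} and hence needs the semigroup $\mathcal{S}_{1}(M)$ to be bounded, which fails for a general finite $M$ with $\Vert M\Vert\geqslant 1$. Consequently for finite sets one obtains only the one-sided statement (i), and the write-up should not claim equality there.
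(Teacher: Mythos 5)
Your argument is correct and essentially reproduces the paper's own proof: the inclusion $I\cap R_{\ast}(A)\subset R_{\ast}(I)$ is immediate because $\mathcal{M}_{\ast}(I)\subset\mathcal{M}_{\ast}(A)$ and $\rho$ depends only on norms and products, and for the reverse inclusion with $\ast\in\{c,b\}$ the paper uses exactly your identity $(aM)^{2}=a(MaM)$ with $MaM\in\mathcal{M}_{\ast}(I)$ together with the characterization of Theorem~\ref{3.13}. (Only your closing aside is slightly imprecise: for $\ast=f$ the obstruction in Theorem~\ref{3.13} is that $\mathcal{S}_{1}(M)$ is infinite, not that it is unbounded, since the case $\Vert M\Vert\geqslant1$ is handled by rescaling; this does not affect the proof of the lemma as stated.)
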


\begin{proof}
The inclusions $I\cap R_{\ast}(A)\subset R_{\ast}(I)$ are evident.

Furthermore, let $a\in R_{\ast}(I)$ and $M\in{\mathcal{M}}_{\ast}(A)$, where
$\ast\in\{b,c\}$. Then $(aM)^{2}=aN$ where $N=MaM\in{\mathcal{M}}_{\ast}(I)$.
Hence $\rho((aM)^{2})=0$ implies $\rho(aM)=0$, whence $a\in R_{\ast}(A)$. We
proved that $R_{\ast}(I)\subset R_{\ast}(A)$. So $R_{\ast}(I)\subset R_{\ast
}(A)\cap I$. The converse is evident.
\end{proof}

\begin{lemma}
\label{3.16}Let $A$ be a normed algebra. Then

\begin{itemize}
\item [(i)]$R_{\ast}(R_{\ast}(A))=R_{\ast}(A)$, for $\ast\in\{f,c,b\}$.

\item[(ii)] $R_{\ast}({A}/{R_{\ast}(A)})=0$, for $\ast\in\{f,c\}$.
\end{itemize}
\end{lemma}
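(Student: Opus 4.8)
The plan is to treat the two items separately, with essentially all the real work already carried by Lemma \ref{3.15}, Proposition \ref{3.8} and Theorem \ref{3.10}; no new estimates are needed.

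For (i), I would first note that the inclusion $R_{\ast}(R_{\ast}(A))\subset R_{\ast}(A)$ is automatic, since $R_{\ast}(B)\subset B$ for any normed algebra $B$ by the very definition of $R_{\ast}$. For the reverse inclusion I would split cases. For $\ast\in\{c,b\}$ I would simply apply Lemma \ref{3.15}(ii) to the ideal $I=R_{\ast}(A)$ of $A$ (it is an ideal by Lemma \ref{3.6}), which gives $R_{\ast}(R_{\ast}(A))=R_{\ast}(A)\cap R_{\ast}(A)=R_{\ast}(A)$ at once. For $\ast=f$ --- where Lemma \ref{3.15}(i) only yields one inclusion --- I would instead use Proposition \ref{3.8}(i): the algebra $B:=R_{f}(A)$ is $f$-quasinilpotent, so $\rho(N)=0$ for every $N\in\mathcal{M}_{f}(B)$; hence for every $a\in B$ and every $M\in\mathcal{M}_{f}(B)$ one has $\rho(\{a\}\cup M)=0=\rho(M)$, i.e. $a\in R_{f}(B)$, so $B\subset R_{f}(B)$. (The same remark, ``$\ast$-quasinilpotent implies $B\subset R_{\ast}(B)$'', handles $\ast=c$ too via Proposition \ref{3.8}(ii), should one prefer a uniform argument for $\ast\in\{f,c\}$.)

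For (ii), fix $\ast\in\{f,c\}$ and set $J=R_{\ast}(A)$, a closed ideal by Theorem \ref{3.7}, so that $A/J$ is again a normed algebra. I would take an arbitrary $\bar a\in R_{\ast}(A/J)$, write $\bar a=q_{J}(a)$, and aim to show $a\in J$ (forcing $\bar a=0$). The decisive input is Theorem \ref{3.10}, which applies precisely because $J=R_{\ast}(A)$ with $\ast\in\{f,c\}$: it gives $\rho(N)=\rho(N/J)$ for every $N\in\mathcal{M}_{\ast}(A)$. Now for any $M\in\mathcal{M}_{\ast}(A)$, the set $\{a\}\cup M$ is again in $\mathcal{M}_{\ast}(A)$, its image $q_{J}(\{a\}\cup M)=\{\bar a\}\cup(M/J)$ lies in $\mathcal{M}_{\ast}(A/J)$ (continuity of $q_{J}$ sends finite sets to finite sets and precompact sets to precompact sets), and $\bar a\in R_{\ast}(A/J)$; applying Theorem \ref{3.10} twice,
\[
\rho(\{a\}\cup M)=\rho\bigl(\{\bar a\}\cup(M/J)\bigr)=\rho(M/J)=\rho(M).
\]
Since $M$ was arbitrary, $a\in R_{\ast}(A)=J$, so $R_{\ast}(A/J)=0$.

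I do not anticipate a genuine obstacle; the subtlety worth flagging is only that Theorem \ref{3.10} is available for $J=R_{\ast}(A)$ when $\ast\in\{f,c\}$ but, for $\ast=b$, requires $J$ to be $b$-quasinilpotent, which $R_{b}(A)$ need not be --- this is exactly why (ii) is stated only for $\ast\in\{f,c\}$ and is the step where a naive extension of the argument to $\ast=b$ would break down.
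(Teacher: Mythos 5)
Your proof is correct and matches the paper's argument: part (ii) is exactly the paper's proof (two applications of Theorem \ref{3.10} around the hypothesis $\bar a\in R_{\ast}(A/J)$), and part (i), which the paper dismisses as evident, you fill in correctly via Lemma \ref{3.15}(ii) and Proposition \ref{3.8}. One tiny remark: for $\ast=f$ the single inclusion from Lemma \ref{3.15}(i) with $I=R_{f}(A)$ is already the nontrivial one, $R_{f}(A)\subset R_{f}(R_{f}(A))$, so your detour through Proposition \ref{3.8}(i) is valid but unnecessary.
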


\begin{proof}
The statement (i) is evident.

For (ii), let $\widehat{a}=a/R_{\ast}(A)\in R_{\ast}(A/R_{\ast}(A))$. For
every $M\in\mathcal{M}_{\ast}(A)$, set $\widehat{M}=M/R_{\ast}(A)$ then , for
$\ast\in\{f,c\}$,
\begin{align*}
\rho(\{a\}\cup M)  &  =\rho((\{a\}\cup M)/R_{\ast}(A))=\rho(\{\widehat
{a}\}\cup\widehat{M})=\rho(\widehat{M})\\
&  =\rho(M)
\end{align*}
by Theorem \ref{3.10}. This shows that $a\in R_{\ast}(A)$ and accordingly
$\widehat{a}=0$.
\end{proof}

\begin{lemma}
\label{3.16.5} If $I$ is a closed ideal of a normed algebra $A$ then
$q_{I}(R_{\ast}(A))\subset R_{\ast}(A/I)$ for each $\ast\in\{f,b,c\}$.
\end{lemma}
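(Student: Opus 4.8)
The plan is to derive the inclusion from the boundedness criterion of Lemma~\ref{3.2}, using that the quotient map $q_I$ is a contractive homomorphism and therefore does not increase the joint spectral radius. Fix $a\in R_{\ast}(A)$ and an arbitrary $N\in\mathcal{M}_{\ast}(A/I)$; by Lemma~\ref{3.2} applied in $A/I$ it suffices to show that $\sup_{\lambda\in\mathbb{C}}\rho(\{\lambda q_I(a)\}\cup N)<\infty$. The idea is to lift $N$ to a set $\widetilde{N}\in\mathcal{M}_{\ast}(A)$ with $q_I(\widetilde{N})\supseteq N$, so that for every $\lambda\in\mathbb{C}$
\[
\{\lambda q_I(a)\}\cup N\ \subseteq\ q_I\bigl(\{\lambda a\}\cup\widetilde{N}\bigr).
\]
Since $q_I(L)^n=q_I(L^n)$ gives $\Vert q_I(L)^n\Vert\leqslant\Vert L^n\Vert$ and hence $\rho(q_I(L))\leqslant\rho(L)$ for every bounded $L\subseteq A$, the monotonicity of $\rho$ (Lemma~\ref{st1}(i)) yields
\[
\rho\bigl(\{\lambda q_I(a)\}\cup N\bigr)\ \leqslant\ \rho\bigl(\{\lambda a\}\cup\widetilde{N}\bigr).
\]
As $a\in R_{\ast}(A)$ and $\widetilde{N}\in\mathcal{M}_{\ast}(A)$, Lemma~\ref{3.2} gives $\sup_{\lambda}\rho(\{\lambda a\}\cup\widetilde{N})<\infty$, which is exactly what is needed.

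It remains to produce the lift $\widetilde{N}$. For $\ast=f$ (resp. $\ast=b$) this is immediate: pick for each $x\in N$ a preimage $m_x$ with $\Vert m_x\Vert<\Vert x\Vert+1$; then $\widetilde{N}=\{m_x:x\in N\}$ is a finite (resp. bounded) subset of $A$ with $q_I(\widetilde{N})=N$. For $\ast=c$ I would first reduce to Banach algebras via Lemma~\ref{reg}: since $R_c(A)=A\cap R_c(\overline{A})$ and $R_c(A/I)=(A/I)\cap R_c(\overline{A}/\overline{I})$ with $\overline{A/I}=\overline{A}/\overline{I}$, it is enough to establish $q_{\overline{I}}(R_c(\overline{A}))\subseteq R_c(\overline{A}/\overline{I})$. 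When $A$ is complete, a precompact $N\subseteq A/I$ lies, by \cite[Lemma 8.2.2]{RR64}, in $\operatorname{abs}\{x_k:k\in\mathbb{N}\}$ for some null sequence $(x_k)$; lifting termwise to a null sequence $(y_k)$ in $A$ and setting $\widetilde{N}=\operatorname{abs}\{y_k:k\in\mathbb{N}\}$ gives a compact set whose image $q_I(\widetilde{N})$ is closed and contains the absolutely convex hull of $\{x_k\}$, hence contains $\operatorname{abs}\{x_k\}\supseteq N$.

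The only point that is not purely formal is this last lifting of precompact subsets through $q_I$; after the reduction to the complete case it is the classical fact that compact subsets of a Banach space quotient are images of compact subsets, argued as above. Everything else is a direct combination of the monotonicity and contraction properties of $\rho$ with Lemmas~\ref{3.2} and \ref{reg}.
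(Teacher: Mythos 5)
Your argument is correct and is essentially the paper's own proof: lift a set from $A/I$ to $\mathcal{M}_{\ast}(A)$, use that the quotient homomorphism does not increase the joint spectral radius so that $\rho(\{\lambda q_I(a)\}\cup N)\leqslant\rho(\{\lambda a\}\cup\widetilde{N})=\rho(\widetilde{N})$, and conclude via the boundedness criterion of Lemma~\ref{3.2}. The only difference is that you spell out the lifting of precompact sets (via Lemma~\ref{reg} and the null-sequence argument), a step the paper dismisses with ``clearly''; this is a harmless and indeed welcome elaboration, not a different method.
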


\begin{proof}
Clearly, for any $M\in{\mathcal{M}}_{\ast}(A/I)$, there is $N\in{\mathcal{M}%
}_{\ast}(A)$ with $q_{I}(N)=M$. If $a\in R_{\ast}(A)$ then
\[
\rho(\{{\lambda}q_{I}(a)\}\cup M)=\rho(\{{\lambda}q_{I}(a)\}\cup
q_{I}(N))\leqslant\rho(\{{\lambda}a\}\cup N)=\rho(N)
\]
for every $\lambda\in\mathbb{C}$. By Lemma \ref{3.2}, $q_{I}(a)\in R_{\ast
}(A/I)$.
\end{proof}

Summing up the results of the previous lemmas we obtain the central statement
of this subsection.

\begin{theorem}
\label{3.17} $R_{f}$ satisfies properties $(1^{\circ})$, $(2^{\circ})$,
$(3^{\circ})$ and $(4^{\circ})$ (from the definition of a TR in Subsection
$\ref{sub0}$), $R_{c}$ is an HTR and $R_{b}$ is a hereditary UTR on $(NA)$.
\end{theorem}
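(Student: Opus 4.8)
The plan is to obtain the theorem by assembling the lemmas of this subsection together with Theorem~\ref{3.7}. First I would note that by Theorem~\ref{3.7} each $R_\ast$ ($\ast\in\{f,c,b\}$) assigns to a normed algebra a closed ideal of it, so $R_\ast$ is a map of the right type on $(NA)$, which is a ground class (it is trivially closed ideal and quotient stable); it then remains only to match it against the axioms $(1^\circ)$--$(6^\circ)$ of Subsection~\ref{sub0}, being careful about which axioms hold for which $\ast$.

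Property $(1^\circ)$, $R_\ast(R_\ast(A))=R_\ast(A)$, is exactly Lemma~\ref{3.16}(i) for all three $\ast$. Property $(2^\circ)$, $R_\ast(A/R_\ast(A))=0$, is Lemma~\ref{3.16}(ii), but this is available only for $\ast\in\{f,c\}$: no analogue of Lemma~\ref{3.16}(ii) is at hand for $b$-quasinilpotence (indeed Theorem~\ref{3.10} is explicitly false for $J=R_b$ and a general bounded set), which is precisely why for $R_b$ one claims only a hereditary \emph{UTR} (for which $(2^\circ)$ is not required) rather than a TR. Property $(4^\circ)$, $q_I(R_\ast(A))\subset R_\ast(A/I)$ for a closed ideal $I$, is Lemma~\ref{3.16.5} for all three $\ast$.

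The one axiom not delivered verbatim by a preceding lemma is $(3^\circ)$, which I would establish directly from the definition of $R_\ast$. If $f\colon A\to B$ is a topological isomorphism then $f$ and $f^{-1}$ carry $\mathcal{M}_\ast$ into $\mathcal{M}_\ast$, and since $f$ is multiplicative, $f(M)^n=f(M^n)$; combining $\|f(M^n)\|\le\|f\|\,\|M^n\|$ with $\|M^n\|\le\|f^{-1}\|\,\|f(M^n)\|$, taking $n$-th roots, and letting $n\to\infty$ gives $\rho(f(M))=\rho(M)$ for every $M\in\mathcal{M}_\ast(A)$. Applying this to sets of the form $\{\lambda a\}\cup N$ (and to their $f^{-1}$-images), and using Lemmas~\ref{3.1} and~\ref{3.2}, one sees that $a\in R_\ast(A)$ if and only if $f(a)\in R_\ast(B)$; hence $f(R_\ast(A))=R_\ast(B)$.

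Finally, for the heredity assertions I would invoke Lemma~\ref{3.15}. For $\ast\in\{c,b\}$, part (ii) gives $R_\ast(I)=I\cap R_\ast(A)$ for every ideal $I$ of $A$, which is exactly $(6^\circ)$; since $R_\ast(A)$ is an ideal of $A$ by Theorem~\ref{3.7} and $I$ is an ideal of $A$, the intersection $I\cap R_\ast(A)=R_\ast(I)$ is an ideal of $A$, so $(5_1^\circ)$ holds, and $(5_2^\circ)$ is immediate; thus $R_c$ satisfies $(1^\circ)$--$(6^\circ)$ and is an HTR, while $R_b$ satisfies $(1^\circ)$, $(3^\circ)$, $(4^\circ)$, $(5^\circ)$, $(6^\circ)$ and is a hereditary UTR. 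For $R_f$, Lemma~\ref{3.15}(i) supplies only the inclusion $I\cap R_f(A)\subset R_f(I)$, which is not enough for $(5^\circ)$ or $(6^\circ)$, so for $R_f$ one asserts only $(1^\circ)$--$(4^\circ)$, already verified above. There is no deep obstacle here; the only real care needed is the bookkeeping of which property is available for which $\ast$, so as not to over-claim---in particular, not to assert $(5^\circ)$ or $(6^\circ)$ for $R_f$, nor $(2^\circ)$ for $R_b$.
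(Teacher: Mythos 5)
Your proposal is correct and follows essentially the same route as the paper, which likewise notes that $(3^{\circ})$ holds directly from the definition (topological isomorphisms preserve $\mathcal{M}_{\ast}$ and the joint spectral radius) and derives the remaining axioms from Theorem~\ref{3.7} together with Lemmas~\ref{3.15}, \ref{3.16} and \ref{3.16.5}. Your bookkeeping of which axiom is available for which $\ast$ (no $(2^{\circ})$ for $R_{b}$, only $(1^{\circ})$--$(4^{\circ})$ for $R_{f}$, $(6^{\circ})\Rightarrow(5^{\circ})$ for $R_{c}$ and $R_{b}$) matches exactly what the paper's proof relies on.
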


\begin{proof}
The property (3$^{\circ}$) clearly holds for all $R_{\ast}$. The other
properties follow from Theorem \ref{3.7} and lemmas of this subsection.
\end{proof}

We will apply the term `radical' to all $R_{\ast}$ (taking in mind that only
$R_{c}$ is proved to be a TR on $(NA)$ indeed). The reason is that there are
universal classes of normed algebras on which all $R_{\ast}$ are HTRs; for
instance, they are the class of all finite-dimensional normed algebras, the
class of all commutative normed $Q$-algebras (see Proposition \ref{3.22}), etc.

\begin{proposition}
All $R_{\ast}$ are uniform and symmetric, $R_{f}$ is strong and not
semi-regular, $R_{c}$ and $R_{b}$ are regular and not strong.
\end{proposition}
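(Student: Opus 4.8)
The statement bundles together several independent claims about the three operators $R_f$, $R_c$, $R_b$. I would organize the proof around the list: (a) uniformity of all $R_\ast$; (b) symmetry of all $R_\ast$; (c) $R_f$ is strong; (d) $R_f$ is not semi-regular; (e) $R_c$ and $R_b$ are regular; (f) $R_c$ and $R_b$ are not strong.

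\emph{Uniformity and symmetry.} For uniformity, recall an $\mathcal{A}$-radical is uniform if every subalgebra (in $\mathcal A$) of an $R_\ast$-radical algebra is $R_\ast$-radical; but an $R_\ast$-radical algebra is precisely an $\ast$-quasinilpotent one (since $R_\ast(A)=A$ forces every $M\in\mathcal{M}_\ast(A)$ to satisfy $\rho(M)=\rho(\emptyset)=0$, and conversely), and a subalgebra $B\subset A$ has $\mathcal{M}_\ast(B)\subset\mathcal{M}_\ast(A)$, so $\rho(M)=0$ for all $M\in\mathcal{M}_\ast(B)$. Hence $B$ is $\ast$-quasinilpotent, i.e. $R_\ast$-radical. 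For symmetry: the joint spectral radius is unchanged under passing to the opposite algebra, because $M^n$ in $A^{\operatorname{op}}$ consists of the reversed products, which have the same norms as the products forming $M^n$ in $A$; thus $\rho_{A^{\operatorname{op}}}(M)=\rho_A(M)$ for every bounded (finite, precompact) $M$, and therefore $R_\ast(A^{\operatorname{op}})=R_\ast(A)^{\operatorname{op}}$, which is exactly $R_\ast^{\operatorname{op}}=R_\ast$.

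\emph{Strength of $R_f$ and regularity of $R_c$, $R_b$.} That $R_f$ is strong means $f(R_f(A))\subset R_f(B)$ for every continuous epimorphism $f:A\to B$; this follows from Lemma \ref{3.0}(ii) together with Lemma \ref{3.2}: if $a\in R_f(A)$ and $M\in\mathcal{M}_f(B)$, lift $M$ to a finite set $N\subset A$ with $f(N)=M$; then for each $\lambda$, $\rho(\{\lambda f(a)\}\cup M)=\rho(f(\{\lambda a\}\cup N))\le\|f\|\cdot$(bounded quantity), so $\sup_\lambda\rho(\{\lambda f(a)\}\cup M)<\infty$ and $f(a)\in R_f(B)$ by Lemma \ref{3.2}. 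Regularity of $R_c$ and $R_b$ is just Lemma \ref{reg}: $R_\ast(A)=A\cap R_\ast(\overline A)$ for $\ast\in\{c,b\}$, which is precisely the definition of being regular.

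\emph{The negative assertions — the main obstacle.} This is where the real work lies, and where I expect the difficulty. For ``$R_f$ is not semi-regular'' I need a normed algebra $A$ with $R_f(A)\not\subset R_f(\overline A)$; the natural candidate is the algebra $C$ from Dixon's Example 9.3 (used already in Proposition \ref{p61}): $C$ is radical (so, being commutative? — no, one must check), and its completion's image sits inside a semisimple Banach algebra, forcing $R_f(\overline{A})$ or the relevant $R$-value to vanish while $R_f(C)\ne0$. Concretely one wants $C$ with $C=R_f(C)$ but $\overline C$ not $f$-quasinilpotent; alternatively invoke that $R_f\le\operatorname{rad}_n$ while $R_f$ on $C$ can be all of $C$ yet $\operatorname{Rad}'(C)=0$. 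For ``$R_c$, $R_b$ are not strong'' the cleanest route is again Dixon's continuous isomorphism $\phi:A\to C$ onto a dense subalgebra of a semisimple Banach algebra $B$: here $A$ is radical Banach, hence $c$- and $b$-quasinilpotent? — no, only $\operatorname{Rad}(A)=A$; but one picks instead a commutative radical Banach algebra (so $R_c(A)=\operatorname{Rad}(A)=A$ by Proposition \ref{3.22} and the remark after it) mapping densely and continuously into a semisimple $B$; then $R_c(C)=C\cap R_c(\overline C)=C\cap R_c(B)$, and since $R_b(B)\subset R_c(B)\subset\operatorname{Rad}(B)=0$ we get $R_c(C)=R_b(C)=0$, whereas $R_c(A)=R_b(A)=A$ (for $R_b$ use that a commutative radical $Q$-algebra has $R_b=\operatorname{rad}$, but one must ensure $b$-quasinilpotence — so take $A$ commutative radical \emph{and} $b$-quasinilpotent, e.g. a nilpotent or suitably small algebra, or simply note $A$ finite-dimensional nilpotent would be too small to have an interesting $\phi$; the honest choice is a commutative radical Banach algebra that is $b$-quasinilpotent, which exists). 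Then $\phi(R_b(A))=\phi(A)=C\not\subset R_b(C)=0$, so $R_b$ (and likewise $R_c$) is not strong. The main obstacle is assembling a single example algebra that is simultaneously radical, commutative (to control $R_b$ and $R_c$ via $\operatorname{rad}$), $b$-quasinilpotent, and admits Dixon's dense continuous embedding into a semisimple algebra — one must verify Dixon's construction can be taken commutative, which it can.
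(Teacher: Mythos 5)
Your handling of the positive claims is essentially the paper's: uniformity and symmetry are immediate, strength of $R_f$ follows by lifting finite sets through a continuous epimorphism and invoking Lemma \ref{3.2} (the paper cites Lemma \ref{3.0}(ii), but the content is the same), and regularity of $R_c$, $R_b$ is exactly Lemma \ref{reg}. One caveat: your justification of uniformity rests on the claim that ``$R_\ast$-radical is precisely $\ast$-quasinilpotent,'' which is \emph{false} for $\ast=b$ --- the paper's example of the closure of the sum of the strictly upper-triangular algebras has $A=R_b(A)$ while $\rho(M)=1$ for the unit ball $M$ (see (\ref{f66})). Uniformity is still trivially true, but for the right reason: a subset of a subalgebra $B\subset A$ that is finite/precompact/bounded in $B$ is such in $A$, and $\rho$ is computed from norms alone, so the defining condition of $R_\ast$ restricts to $B$.

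The genuine gap is in the negative assertions. Your plan for ``$R_c$, $R_b$ not strong'' hinges on a \emph{commutative} radical Banach algebra mapped continuously onto a dense subalgebra $C$ of a semisimple Banach algebra $B$, and you assert Dixon's construction ``can be taken commutative.'' It cannot: if $A$ is radical and $\phi$ is continuous, every element of $C=\phi(A)$ is quasinilpotent in $B$; if $C$ is commutative and dense, then $B$ is commutative, the quasinilpotents of $B$ form the closed ideal $\operatorname{Rad}(B)$, and density forces $B=\operatorname{Rad}(B)$, contradicting semisimplicity. So no such commutative example exists, and the route through Proposition \ref{3.22} collapses. Likewise, for ``$R_f$ not semi-regular'' you never establish the key fact $R_f(C)=C$; Jacobson radicality of $C$ does not give $f$-quasinilpotence (whether $R_f=\operatorname{Rad}$ on $(BA)$ is left open in the paper). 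The missing ingredient, which the paper uses and which is recorded in the proof of Theorem \ref{3.18}, is that Dixon's algebra $A$ in \cite[Example 9.3]{D97} is \emph{topologically nilpotent}, not merely radical. Granting this, everything follows from the one (noncommutative) example: $R_b(A)=R_c(A)=A$ by Corollary \ref{3.14}; $C=\phi(A)$ is $f$-quasinilpotent by Lemma \ref{3.0}(ii), hence $R_f(C)=C$, while $R_f(\overline{C})\subset\operatorname{rad}_n(B)=\operatorname{Rad}(B)=0$ by (\ref{fb}), so $R_f$ is not semi-regular; and $\phi(R_b(A))=C\neq 0$ while $R_b(C)\subset R_c(C)=C\cap R_c(B)\subset\operatorname{Rad}(B)=0$ by Lemma \ref{reg}, so $R_b$ and $R_c$ are not strong.
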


\begin{proof}
Since a subalgebra of a $R_{\ast}$-radical algebra is $R_{\ast}$-radical,
uniformity of all $R_{\ast}$ is obvious. Also, it is clear that all $R_{\ast}$
are symmetric. The radicals $R_{c}$ and $R_{b}$ are regular (Lemma \ref{reg})
and in particular semi-regular. However $R_{f}$ is not semi-regular. Indeed,
there exists a normed algebra $C=\phi(A)$ for some continuous homomorphism
$\phi:A\rightarrow B$ from a topologically nilpotent Banach algebra $A$ to a
semisimple Banach algebra $B$, and $C$ is dense in $B$ \cite[Example 9.3]%
{D97}. Note that $R_{f}(C)=C$ and $R_{f}(\overline{C})=0$. On the other hand,
the same example shows that $R_{c}$ and $R_{b}$ are not strong radicals (in
virtue of $A=R_{c}(A)=R_{b}(A)$ and $R_{b}(C)\subset R_{c}(C)\subset
R_{c}(\overline{C})\subset\operatorname{Rad}(B)=0$), while $R_{f}$ is strong
by Lemma \ref{3.0}(ii).
\end{proof}

As a consequence of regularity, $R_{c}$ and $R_{b}$ are topologically
characteristic on $(NA)$ by Theorem \ref{p100}. Is $R_{f}$ topologically
characteristic on $(NA)$?

\begin{theorem}
\label{3.18}$R_{c}<R_{f}$ on $(NA)$ and $R_{b}<R_{c}$ on $(BA)$. Moreover,
$R_{b}$ is not a TR on $(BA)$.
\end{theorem}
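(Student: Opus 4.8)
The plan is to treat the two strict inclusions and the final assertion in turn. Recall from (\ref{fb}) that $R_{b}\leqslant R_{c}\leqslant R_{f}$ on $(NA)$, so only strictness needs to be proved; recall also from Theorem \ref{3.17} that $R_{b}$ satisfies $(1^{\circ})$, $(3^{\circ})$, $(4^{\circ})$, $(5^{\circ})$ and even $(6^{\circ})$ on $(NA)$, hence (by restriction) on $(BA)$ as well. Consequently the statement ``$R_{b}$ is not a TR on $(BA)$'' amounts precisely to the failure of axiom $(2^{\circ})$, i.e.\ to exhibiting a Banach algebra $A$ with $R_{b}(A/R_{b}(A))\neq0$; this is the form in which I would prove it.

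For $R_{c}<R_{f}$ on $(NA)$ I would reuse the algebra $C=\phi(A)$ occurring in the proof of the preceding proposition, where $\phi\colon A\to B$ is a continuous homomorphism from a non-zero topologically nilpotent Banach algebra $A$ onto a dense subalgebra $C$ of a semisimple Banach algebra $B$ (\cite[Example 9.3]{D97}). Being $b$-quasinilpotent, $A$ is $f$-quasinilpotent, whence $C=\phi(A)$ is $f$-quasinilpotent by Lemma \ref{3.0}(ii); so $R_{f}(C)=C\neq0$. On the other hand $R_{c}(C)=C\cap R_{c}(\overline{C})\subset C\cap\operatorname{Rad}(B)=0$, using the regularity of $R_{c}$ (Lemma \ref{reg}), the inequality $R_{c}\leqslant\operatorname{Rad}^{\prime}$, and the semisimplicity of $\overline{C}=B$. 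Thus $R_{c}(C)=0\subsetneq C=R_{f}(C)$.

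For the remaining two claims I would construct a single Banach algebra $L$ with the following three properties: (a) $L$ is $c$-quasinilpotent, so that $R_{c}(L)=L$; (b) $R_{b}(L)$ is a proper closed ideal of $L$; (c) $L/R_{b}(L)$ is a non-zero commutative (radical) Banach algebra. From (a) and (b) one gets $R_{b}(L)\subsetneq L=R_{c}(L)$, proving $R_{b}<R_{c}$ on $(BA)$. From (c), since $L/R_{b}(L)$ is then a commutative $Q$-algebra which (being a quotient of the topologically nil algebra $L$) is radical, Proposition \ref{3.22} yields $R_{b}(L/R_{b}(L))=\operatorname{rad}(L/R_{b}(L))=L/R_{b}(L)\neq0$, so $(2^{\circ})$ fails and $R_{b}$ is not a TR on $(BA)$. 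The candidate for $L$ is an extension
\[
0\longrightarrow D\longrightarrow L\longrightarrow \mathcal{W}\longrightarrow0
\]
realized as an algebra of operators on a Hilbert space, in which $\mathcal{W}$ is the norm-closed algebra generated by the Volterra integration operator on $L^{2}[0,1]$ --- a commutative radical Banach algebra that is not $b$-quasinilpotent (\cite{PW99}) --- and $D=\overline{\bigoplus_{n}A_{n}}$ is the closure of the algebraic direct sum of the strictly upper-triangular matrix algebras $A_{n}$, which is $R_{b}$-radical ($R_{b}(D)=D$), compactly quasinilpotent, but not $b$-quasinilpotent. Here (a) should follow, once the operator model is fixed, from stability of compact quasinilpotence under extensions (compare \cite{ST00} and Theorem \ref{3.10}), and $D\subset R_{b}(L)$ is automatic from $R_{b}(D)=D$ by heredity of $R_{b}$ (Lemma \ref{3.15}); so in (b) and (c) the issue is the reverse inclusion $R_{b}(L)\subset D$.

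The main obstacle is exactly this reverse inclusion: one must verify that no $a\in L\setminus D$ lies in $R_{b}(L)$, equivalently that $R_{b}(L)$ does not ``jump up'' to all of $L$. The delicacy is structural: since $\mathcal{W}=L/D$ is commutative, $\rho(q_{D}(a)\,q_{D}(M))=0$ for every bounded $M$, so the quotient ``sees nothing''; thus for each $a\in L\setminus D$ one must exhibit a bounded set $M_{0}\subset L$ --- necessarily non-precompact, as $L$ is $c$-quasinilpotent --- with $\rho(aM_{0})\neq0$, and the witness must come from the ``$D$-direction'' (using that $D$ is not $b$-quasinilpotent). Designing the connecting corner of the $2\times2$ operator-matrix description of $L$ so that left multiplication by any representative of a non-zero class in $\mathcal{W}$ still detects the non-$b$-quasinilpotence of $D$ is the technical heart of the argument; this, together with checking (a), is where essentially all the work lies.
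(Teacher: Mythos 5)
Your first claim is handled correctly and essentially as in the paper: Dixon's example of a continuous homomorphism $\phi\colon A\to B$ from a topologically nilpotent Banach algebra onto a dense subalgebra $C$ of a semisimple Banach algebra gives $R_{f}(C)=C$ (Lemma \ref{3.0}(ii)) while $R_{c}(C)=C\cap R_{c}(\overline{C})=0$ by regularity, which is exactly the argument used in the text for $R_{c}<R_{f}$ on $(NA)$. Your reduction of ``$R_{b}$ is not a TR on $(BA)$'' to the failure of $(2^{\circ})$ is also legitimate, since Theorem \ref{3.17} gives all the remaining axioms.

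The genuine gap is in the second half. For $R_{b}<R_{c}$ and the failure of $(2^{\circ})$ you only describe a hoped-for extension $0\to D\to L\to\mathcal{W}\to0$ (with $D$ the closure of the sum of the strictly upper-triangular algebras and $\mathcal{W}$ the Volterra algebra) and you state yourself that the construction of the connecting corner and the verification of $R_{b}(L)\subset D$ --- i.e.\ producing, for \emph{every} $a\in L\setminus D$, a bounded $M_{0}$ with $\rho(aM_{0})>0$ --- is ``where essentially all the work lies.'' That is precisely the content of the theorem, so nothing is actually proved; moreover your route needs the exact identification $R_{b}(L)=D$, which is strictly harder than what is needed. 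The paper avoids this entirely by an explicit semigroup-algebra example: take the free non-unital semigroup $G$ on generators $x_{0},x_{1},\dots$, let $J\subset G$ be the set of words that are not subwords of the infinite word $x_{0}x_{1}x_{0}x_{2}x_{0}x_{3}\cdots$, and put $A=l^{1}(G)/l^{1}(J)$. Then $X_{i}AX_{i}=0$ for $i>0$, so by Theorem \ref{3.13} each $X_{i}$ ($i>0$) lies in $R_{b}(A)\subset R_{c}(A)$; the closed ideal they generate and the one-dimensional nilpotent quotient are both $R_{c}$-radical, so extension stability of the TR $R_{c}$ (Theorem \ref{2.3}(i)) gives $R_{c}(A)=A$; but $X_{0}\notin R_{b}(A)$ because $\left\|  (X_{0}N)^{n}\right\|  ^{1/n}=\left\|  X_{0}X_{1}\cdots X_{0}X_{n}\right\|  ^{1/n}=1$ for $N=\{X_{i}:i>0\}$. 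This one computation yields both $R_{b}<R_{c}$ on $(BA)$ and, since an ideal and quotient are $R_{b}$-radical while $A$ is not, that $R_{b}$ cannot satisfy the TR axioms (it would have to be extension stable). If you want to keep your own scheme, you must actually build $L$ and prove the reverse inclusion $R_{b}(L)\subset D$; as it stands, the witness-producing step is missing and there is no evidence that the Volterra-quotient design can be made to work.
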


\begin{proof}
Proposition \ref{3.8} shows that to see the distinction between $R_{f}$ and
$R_{c}$ it suffices to construct an $f$-quasinilpotent algebra which is not
$c$-quasinilpotent. Let $\phi:A\rightarrow B$ be the continuous homomorphism
of a radical Banach algebra $A$ on a dense subalgebra $C$ of a semisimple
Banach algebra $B$ constructed by P. G. Dixon \cite[Example 9.3]{D97}. Recall
that $A$ is topologically nilpotent, i.e. $b$-quasinilpotent, while $C$ is
$f$-quasinilpotent (by Lemma \ref{3.0}) and not $c$-quasinilpotent (since its
completion is not radical).

Let us prove that $R_{b}<R_{c}$ on $(BA)$. Let $G$ be the free non-unital
semigroup with countable set $\{x_{0},x_{1},...\}$ of generators. Let $J$ be
the ideal of $G$ consisting of all words which are not subwords of the
infinite word
\[
W=x_{0}x_{1}x_{0}x_{2}...x_{0}x_{n-1}x_{0}x_{n}\ldots
\]
($P$ is a \textit{subword} of $W$ if $W=RPH$, where $R\in G$ and $H$ is an
infinite word). Then $l^{1}(J)$ is isometrically imbedded as an ideal into
$l^{1}(G)$. Set $A=l^{1}(G)/l^{1}(J)$ and denote by $X_{i}$ the images of the
generators $x_{i}$ in $A$. Note that $X_{0}X_{0}=0$ and $X_{i}PX_{i}=0$ for
every monomial $P$ and $i>0$. Hence $X_{i}AX_{i}=0$ and
\[
(X_{i}M)^{2}=0
\]
for every bounded subset $M\subset A$ and $i>0$. It follows from Theorem
\ref{3.13} that
\[
X_{i}\in R_{b}(A)\subset R_{c}(A)
\]
for all $i>0$. Let $J$ be the closed ideal of $A$ generated by $N=\{X_{i}%
:i>0\}$. By Lemma \ref{3.15},
\[
J=R_{b}(J)=R_{c}(J).
\]
Further, $A/J$ is an one-dimensional algebra generated by a nilpotent. So
\[
A/J=R_{b}(A/J)=R_{c}(A/J).
\]
Since $R_{c}$ is a TR, $A=R_{c}(A)$ by Theorem \ref{2.3}(i). But $R_{b}(A)\neq
A$ since $X_{0}\notin R_{b}(A)$ in virtue of
\[
\left\|  (X_{0}N)^{n}\right\|  ^{1/n}=\left\|  X_{0}X_{1}\cdots X_{0}%
X_{n}\right\|  ^{1/n}=1
\]
for every $n\in\mathbb{N}$. This simultaneously proves that $R_{b}$ is not a
TR on $(BA)$ (otherwise $A=R_{b}(A)$ as in the case of $R_{c}$, a contradiction).
\end{proof}

The following questions are important. Do $R_{f}$ and $R_{c}$ coincide on
$(BA)$? Does $R_{f}$ coincide on $(BA)$ with $\operatorname{Rad}$?

\subsection{$\operatorname{Rad}^{t}$ as a TR related to joint quasinilpotence}

Since the tensor radical $\operatorname{Rad}^{t}$ is also related to a kind of
joint quasinilpotence it is natural to compare $R_{\ast}$ with
$\operatorname{Rad}^{t}$.

Let us firstly prove that $\operatorname{Rad}^{t}$ can be defined in the same
way as $R_{\ast}$.

\begin{theorem}
\label{des} Let $A$ be a Banach algebra. Then $a\in\operatorname{Rad}^{t}(A)$
iff $\rho_{1}(\{a\}\cup M)=\rho_{1}(M)$ for every summable subset $M$ of $A$.
\end{theorem}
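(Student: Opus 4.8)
The plan is to read off both implications from two facts already established for $\operatorname{Rad}^{t}$: the characterization $a\in\operatorname{Rad}^{t}(A)\iff\rho_{1}(aM)=0$ for every summable $M$ (Theorem \ref{1quas}), and the quotient formula $\rho_{1}(N)=\rho_{1}(N/\operatorname{Rad}^{t}(A))$ for summable generalized subsets $N$ (Theorem \ref{3quot}).

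For the forward implication, let $J=\operatorname{Rad}^{t}(A)$ and $a\in J$. Given a summable subset $M$, the generalized subsets $q_{J}(\{a\}\cup M)$ and $q_{J}(M)=M/J$ differ only in the multiplicity assigned to the zero element (since $q_{J}(a)=0$), and that multiplicity is irrelevant to $\rho_{1}$: in forming powers, any product hitting a nonzero value uses no zero factor, and the zero element contributes $0$ to every $\Vert\cdot\Vert_{1}$. Hence $\rho_{1}(q_{J}(\{a\}\cup M))=\rho_{1}(M/J)$, and applying Theorem \ref{3quot} to both $\{a\}\cup M$ and $M$ gives $\rho_{1}(\{a\}\cup M)=\rho_{1}(M)$.

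For the converse, assume $\rho_{1}(\{a\}\cup M)=\rho_{1}(M)$ for every summable subset $M$. First I would upgrade this to $\rho_{1}(\{\lambda a\}\cup M)=\rho_{1}(M)$ for all $\lambda\in\mathbb{C}$: for $\lambda\neq0$ one has the identity $\{\lambda a\}\cup M=\lambda\bigl(\{a\}\cup\lambda^{-1}M\bigr)$ of generalized subsets, so by the homogeneity $\rho_{1}(\lambda N)=|\lambda|\,\rho_{1}(N)$ and the hypothesis applied to the summable set $\lambda^{-1}M$, $\rho_{1}(\{\lambda a\}\cup M)=|\lambda|\,\rho_{1}(\lambda^{-1}M)=\rho_{1}(M)$; the case $\lambda=0$ is trivial. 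Then, for a fixed summable $M$, since $(\lambda a)M\subset(\{\lambda a\}\cup M)^{2}$ as generalized subsets, monotonicity of $\rho_{1}$ together with (\ref{f62}) gives
\[
|\lambda|\,\rho_{1}(aM)=\rho_{1}\bigl((\lambda a)M\bigr)\leqslant\rho_{1}\bigl((\{\lambda a\}\cup M)^{2}\bigr)=\rho_{1}(\{\lambda a\}\cup M)^{2}=\rho_{1}(M)^{2}
\]
for every $\lambda$, so letting $|\lambda|\to\infty$ forces $\rho_{1}(aM)=0$. As $M$ was arbitrary, Theorem \ref{1quas} yields $a\in\operatorname{Rad}^{t}(A)$.

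I do not expect a serious obstacle once Theorems \ref{1quas} and \ref{3quot} are in hand; the only thing requiring care is the bookkeeping with generalized subsets — checking the scaling identity above at the level of functional representations, confirming that $\rho_{1}$ is monotone and satisfies $\rho_{1}(\lambda N)=|\lambda|\,\rho_{1}(N)$, and verifying that adding or deleting a zero element leaves $\rho_{1}$ unchanged.
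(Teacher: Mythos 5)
Your proposal is correct and follows essentially the same route as the paper: the forward implication by applying Theorem \ref{3quot} to both $\{a\}\cup M$ and $M$ (the paper likewise passes to $A/\operatorname{Rad}^{t}(A)$, where the extra zero element is harmless), and the converse by the scaling-plus-squaring argument $(\lambda a)M\subset(\{\lambda a\}\cup M)^{2}$ combined with (\ref{f62}) and Theorem \ref{1quas}. You merely make explicit the bookkeeping (zero-element invariance, the identity $\{\lambda a\}\cup M=\lambda(\{a\}\cup\lambda^{-1}M)$) that the paper leaves implicit.
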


\begin{proof}
If $a\in\operatorname{Rad}^{t}(A)$ then, by Theorem \ref{3quot},
\begin{align*}
\rho_{1}(\{a\}\cup M)  &  =\rho_{1}((\{a\}\cup M)/\operatorname{Rad}%
^{t}(A))=\rho_{1}(M/\operatorname{Rad}^{t}(A))\\
&  =\rho_{1}(M).
\end{align*}

Conversely, if $\rho_{1}(\{a\}\cup M)=\rho_{1}(M)$ then
\[
\rho_{1}((\{a\}\cup M)^{2})=\rho_{1}(\{a\}\cup M)^{2}=\rho_{1}(M)^{2}%
\]
by (\ref{f62}). Since $aM\subset(\{a\}\cup M)^{2}$ as generalized subsets, we
have
\[
\rho_{1}(aM)\leqslant\rho_{1}(M)^{2}.
\]
Changing $a$ by ${\lambda}a$ (which clearly satisfies the same condition) for
$\lambda\in\mathbb{C}$, we get:
\[
|\lambda|\rho_{1}(aM)\leqslant\rho_{1}(M)^{2},
\]
whence $\rho_{1}(aM)=0$ and it remains to apply Theorem \ref{1quas}.
\end{proof}

Now we will prove that the radical $R_{c}$ is weakly tensor.

\begin{theorem}
\label{p40}If $A$ is a compactly quasinilpotent Banach algebra then
$A\widehat{\otimes}B$ is compactly quasinilpotent, for any Banach algebra $B$.
\end{theorem}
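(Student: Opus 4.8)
The plan is to show that $A\widehat{\otimes}B$ is compactly quasinilpotent by reducing to the case where $B$ is unital, then to analyze an arbitrary precompact subset of $A\widehat{\otimes}B$ by approximating it, up to small error, by a precompact set built from finite tensors whose $A$-components lie in a single precompact subset of $A$. The compact quasinilpotence of $A$ will then be applied to that $A$-component set, and the submultiplicative estimate for $\rho$ of a product of subsets (Lemma~\ref{st1}(ii), applied after pushing everything into a commutative situation, or more directly via the tensor-norm estimate used in the proof of Theorem~\ref{perrad}) will force $\rho$ of the approximating set to $0$. A perturbation argument using Lemma~\ref{3.9} (with $J = R_c(A\widehat{\otimes}B)$, or rather a direct $\varepsilon$-estimate) then transfers this to the original set.

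\textbf{Key steps.} First I would reduce to unital $B$: if $B$ is non-unital, the canonical map $A\widehat{\otimes}B\to A\widehat{\otimes}B^{1}$ is isometric onto a closed ideal (this is exactly the fact established inside the proof of Theorem~\ref{rtrad}(i)), and by Lemma~\ref{3.15}(ii) a closed ideal of a compactly quasinilpotent algebra is again compactly quasinilpotent, so it suffices to treat $B$ unital. Second, fix a precompact $M\subset A\widehat{\otimes}B$ and $\varepsilon>0$. Using that elements of $A\widehat{\otimes}B$ are of the form $\sum a_n\otimes b_n$ with $\sum\|a_n\|\|b_n\|<\infty$, together with precompactness of $M$, I would produce a \emph{finite} set of elements $\{c_1,\dots,c_m\}\subset A$ and a finite set $\{d_1,\dots,d_m\}\subset B$ such that every element of $M$ is within $\varepsilon$ of the linear span of $\{c_i\otimes d_j\}$; more precisely, after absorbing scalars, every element of $M$ is within $\varepsilon$ (in the projective norm) of some element $\sum_{i,j}\lambda_{ij}\,c_i\otimes d_j$ with controlled coefficients. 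The key point is that the finite set $K:=\{c_1,\dots,c_m\}\subset A$ can be chosen independently of which element of $M$ we approximate (this is where precompactness of $M$ — covering $M$ by finitely many $\varepsilon$-balls — is used, together with truncating the tails of the series; this is essentially the geometric fact cited from \cite[Lemma~6.9]{ST00} in the proof of Theorem~\ref{3.10}). Third, let $N$ be the precompact subset of $A\widehat{\otimes}B$ consisting of the chosen approximants; then $M\subset N + \varepsilon B_{A\widehat{\otimes}B}$, and by a computation analogous to~(\ref{f50}) in the proof of Theorem~\ref{perrad}, $\|N^k\|\le\|(\operatorname{abs}K)^k\|\cdot C$ for a constant $C$ depending only on the $B$-data and $m$; since $A$ is compactly quasinilpotent and $\operatorname{abs}(K)$ is precompact, $\rho(\operatorname{abs} K)=0$, hence $\rho(N)=0$. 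Fourth, I would invoke Lemma~\ref{3.9} with $J = R_c(A\widehat{\otimes}B)$ — but $R_c(A\widehat{\otimes}B)$ need not contain the $\varepsilon$-ball, so instead I would argue directly: from $M\subset N+\varepsilon B$, using Lemma~\ref{st1} and that $M,N\in\mathcal M_c$, get $\rho(M)\le \rho(\operatorname{abs}(N\cup \varepsilon B))\le \|\,\text{something}\,\|$ — more carefully, $\rho(M^k)^{1/k}$ is controlled by $\rho(N)$ plus an $O(\varepsilon)$ term via the binomial-type expansion of $(N+\varepsilon B)^k$, so letting $\varepsilon\to 0$ yields $\rho(M)=0$.

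\textbf{Main obstacle.} The crux is the uniform finite-dimensional approximation in the projective tensor norm: showing that a \emph{single} precompact (indeed, essentially finite) set $K\subset A$ can serve for all of $M$ simultaneously, with the $B$-side and the approximation error uniformly controlled. The subtlety is that the projective tensor norm is not well-behaved under truncation of $\sum a_n\otimes b_n$ unless one controls $\sum_{n>n_0}\|a_n\|\|b_n\|$ uniformly over $M$, which requires combining the $\varepsilon$-net for $M$ with a tail estimate for each of the finitely many net elements. Once this approximation lemma is in hand, the rest is the by-now-routine subharmonicity/perturbation machinery already developed in this section (Lemmas~\ref{3.9}, \ref{st1}, Theorem~\ref{3.10}) together with the tensor-norm inequality~(\ref{f50}). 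I expect the write-up to spend most of its length on this approximation step, very likely citing \cite[Lemma~6.9]{ST00} or \cite{ST02} for the precise geometric statement, exactly as is done in the proof of Theorem~\ref{3.10}.
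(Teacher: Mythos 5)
Your reduction to unital $B$ and your finite-tensor approximation of a precompact $M\subset A\widehat{\otimes}B$ are fine as far as they go, and the estimate $\rho(N_\varepsilon)=0$ for the approximating set is correct (the factor you call a ``constant $C$'' is really of the form $C^{k}$, but this is harmless since $\rho$ of a finite subset of $A$ vanishes). The genuine gap is the last step, and you half-recognize it yourself: from $M\subset N_\varepsilon+\varepsilon B_{1}$ (with $B_{1}$ the unit ball of $A\widehat{\otimes}B$) and $\rho(N_\varepsilon)=0$ you cannot conclude $\rho(M)=0$ by a ``binomial-type expansion giving $\rho(N)$ plus $O(\varepsilon)$''. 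Expanding $(N+\varepsilon B_{1})^{k}$ and grouping consecutive $N$-blocks only yields a bound of the shape $\rho(N+\varepsilon B_{1})\leqslant C_{\delta}(N)\,\varepsilon+\delta$ for every $\delta>0$, where $C_{\delta}(N)$ is essentially $\sup_{k}\Vert N^{k}\Vert\delta^{-k}$; the error is \emph{not} $O(\varepsilon)$ with a constant independent of $N$. Since your approximant $N_\varepsilon$ changes as $\varepsilon\rightarrow0$ (its cardinality and the rate at which $\Vert N_\varepsilon^{k}\Vert^{1/k}$ decays are uncontrolled), the two limits cannot be interchanged and no conclusion about $\rho(M)$ follows. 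This is not a presentational issue: the joint spectral radius is precisely the kind of quantity that does not pass through such Hausdorff-small perturbations uniformly, and all the perturbation tools of this section (Lemmas~\ref{3.9}, \ref{3.20}, \ref{3.21}, Theorem~\ref{3.12}) require the perturbing set to lie in a quasinilpotent ideal or in $R_{\ast}$ --- as you note, the $\varepsilon$-ball does not, so none of them applies.

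The paper avoids the problem by making the approximation \emph{exact}: by \cite[Corollary 7.2.2]{RR64}, a precompact $M\subset A\widehat{\otimes}B$ is contained in $\operatorname{abs}\{a_{n}\otimes b_{n}:n\geqslant1\}$ for sequences $a_{n}\rightarrow0$ in $A$ and $b_{n}\rightarrow0$ in $B$, with no error term at all. Then Lemma~\ref{st1}(i) gives $\rho(M)\leqslant\rho(\{a_{n}\otimes b_{n}\})\leqslant\rho(P\otimes Q)$ with $P=\{a_{n}\}$, $Q=\{b_{n}\}$, and the projective-norm inequality $\Vert(P\otimes Q)^{n}\Vert\leqslant\Vert P^{n}\Vert\,\Vert Q^{n}\Vert$ yields $\rho(P\otimes Q)\leqslant\rho(P)\rho(Q)=0$, since $P$ is precompact in the compactly quasinilpotent algebra $A$ and $Q$ is bounded. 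If you want to keep your scheme, the fix is exactly this structural fact about precompact subsets of projective tensor products (an $\varepsilon$-free containment in an absolutely convex hull of elementary tensors), not a perturbation argument.
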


\begin{proof}
If $M$ is a precompact set in $A{\widehat{\otimes}}B$ then by \cite[Corollary
7.2.2]{RR64} there exists a sequence $a_{n}\otimes b_{n}$ with $a_{n}%
\rightarrow0,b_{n}\rightarrow0$ such that $M\subset\operatorname{abs}%
\{a_{n}\otimes b_{n}:n\geqslant1\}$. Hence
\begin{align*}
\rho(M)  &  \leqslant\rho(\operatorname{abs}\{a_{n}\otimes b_{n}%
:n\geqslant1\})=\rho(\{a_{n}\otimes b_{n}:n\geqslant1\})\\
&  \leqslant\rho(P\otimes Q),
\end{align*}
where $P=\{a_{n}:n\geqslant1\}$ and $Q=\{b_{n}:n\geqslant1\}$. But
\[
\Vert(P\otimes Q)^{n}\Vert=\Vert P^{n}\otimes Q^{n}\Vert\leqslant\Vert
P^{n}\Vert\Vert Q^{n}\Vert
\]
whence $\rho(P\otimes Q)\leqslant\rho(P)\rho(Q)$. In our assumptions
$\rho(P)=0$ because $P$ is precompact. Hence $\rho(M)=0$.
\end{proof}

It is clear that any summable set $M$ of a normed algebra is precompact and
\[
\rho(M)\leqslant\rho_{1}(M)\leqslant\Vert M\Vert_{1}.
\]
Moreover, if $M$ is finite then
\[
\rho_{1}(M)\leqslant k\rho(M),
\]
where $k=\operatorname{card}(M)$. It follows that if $M$ is summable with
$\rho_{1}(M)=0$ then $\rho(M)=0$, and if $M$ is finite with $\rho(M)=0$ then
$\rho_{1}(M)=0$.

\begin{corollary}
\label{p41}$R_{c}\leqslant\operatorname{Rad}^{t}\leqslant R_{f}$ on $(BA)$.
\end{corollary}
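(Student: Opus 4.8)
The plan is to establish the two inclusions separately, each by reducing to a result already available above. For $R_{c}\leqslant\operatorname{Rad}^{t}$ the point is that $R_{c}(A)$ is a tensor radical \emph{ideal} of $A$: by Theorem~\ref{3.7} it is a closed ideal of $A$, hence a Banach algebra, and it is $c$-quasinilpotent by Proposition~\ref{3.8}(ii). Applying Theorem~\ref{p40} with $R_{c}(A)$ in the role of $A$, one gets that $R_{c}(A)\widehat{\otimes}B$ is $c$-quasinilpotent for every Banach algebra $B$; in particular each of its elements (as a singleton is precompact) is quasinilpotent, so $R_{c}(A)\widehat{\otimes}B$ is topologically nil and therefore radical. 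By definition this says exactly that $R_{c}(A)$ is tensor radical, and since $\operatorname{Rad}^{t}(A)$ is the largest tensor radical ideal of $A$ (Corollary~\ref{4.16}) we conclude $R_{c}(A)\subseteq\operatorname{Rad}^{t}(A)$.

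For $\operatorname{Rad}^{t}\leqslant R_{f}$, fix $a\in\operatorname{Rad}^{t}(A)$; we must show $\rho(\{a\}\cup M)\leqslant\rho(M)$ for every $M\in\mathcal{M}_{f}(A)$. Since $\operatorname{Rad}^{t}$ is an HTR (Theorem~\ref{4.16.1}), so that $a\in\operatorname{Rad}^{t}(A^{1})$, and $R_{f}(A)=R_{f}(A^{1})$ (Lemma~\ref{3.3}), we may assume $A$ unital. Given finite $M=\{a_{1},\dots,a_{m}\}$, pick $t>0$ with $t>m\Vert M\Vert$ and put $M'=t^{-1}M$, so that $m\Vert M'\Vert<1$. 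Then the unital semigroup $\mathcal{S}_{1}(M')$ is a summable subset of $A$: there are at most $m^{l}$ products of length $l$, each of norm $\leqslant\Vert M'\Vert^{l}$, so $\Vert\mathcal{S}_{1}(M')\Vert_{1}\leqslant\sum_{l\geqslant0}(m\Vert M'\Vert)^{l}<\infty$. As $t^{-1}a\in\operatorname{Rad}^{t}(A)$, Theorem~\ref{1quas}(ii) gives $\rho_{1}\bigl((t^{-1}a)\mathcal{S}_{1}(M')\bigr)=0$, hence also $\rho\bigl((t^{-1}a)\mathcal{S}_{1}(M')\bigr)=0$, and this set is bounded (a summable set is bounded). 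Now Theorem~\ref{3.12}, applied with $N=\{t^{-1}a\}$ and with $M'$ in place of $M$, yields $\rho(\{t^{-1}a\}\cup M')=\rho(M')$, and multiplying through by $t$ gives $\rho(\{a\}\cup M)=t\,\rho(M')=\rho(M)$. Thus $a\in R_{f}(A)$.

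The computational parts — the scaling bookkeeping and the summability estimate for $\mathcal{S}_{1}(M')$ — are routine. The one genuine difficulty, and the reason a detour through a semigroup is needed, is that membership in $\operatorname{Rad}^{t}$ only supplies $l^{1}$-type information (Theorem~\ref{1quas}, ultimately the identity $\rho(T_{M})=\rho_{1}(M)$), whereas $R_{f}$ is defined through the sup-norm joint spectral radius $\rho$; one cannot argue directly from $\rho_{1}(\{a\}\cup M)=\rho_{1}(M)$ since $\rho_{1}(M)$ is in general strictly larger than $\rho(M)$. Routing the $l^{1}$-vanishing through $\mathcal{S}_{1}(M')$ and then invoking Theorem~\ref{3.12} — whose proof groups the consecutive ``non-$a$'' letters inside a word — is precisely what converts the available $l^{1}$-estimate into the required statement about $\rho$, and I expect this conversion to be the main obstacle.
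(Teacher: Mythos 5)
Your proof is correct, but the second inclusion is handled by a genuinely different route than the paper's. For $R_{c}\leqslant\operatorname{Rad}^{t}$ you argue exactly as the paper does: Theorem~\ref{p40} makes $R_{c}(A)$ a tensor radical (closed) ideal, and then maximality of $\operatorname{Rad}^{t}(A)$ among tensor radical ideals (equivalently, the TR axiom $(5^{\circ})$, which is how the paper phrases it) gives the inclusion. For $\operatorname{Rad}^{t}\leqslant R_{f}$ the paper is much shorter: by Theorem~\ref{des}, $\rho_{1}(\{\lambda a\}\cup M)=\rho_{1}(M)$ for every \emph{finite} (hence summable) $M$ and every $\lambda$, so $\rho(\{\lambda a\}\cup M)\leqslant\rho_{1}(M)<\infty$ uniformly in $\lambda$, and Lemma~\ref{3.2} (the boundedness-plus-subharmonicity criterion) immediately yields $a\in R_{f}(A)$ --- no unitization, scaling, or semigroup is needed, because Lemma~\ref{3.2} is precisely the device that converts a mere bound on $\lambda\mapsto\rho(\{\lambda a\}\cup M)$ into the equality $\rho(\{a\}\cup M)=\rho(M)$; your remark that one ``cannot argue directly'' from the $\rho_{1}$-identity overlooks this shortcut. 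Your alternative --- unitize via heredity of $\operatorname{Rad}^{t}$ and Lemma~\ref{3.3}, scale $M$ so that $\mathcal{S}_{1}(M')$ is summable, extract $\rho_{1}\bigl(a\,\mathcal{S}_{1}(M')\bigr)=0$ from Theorem~\ref{1quas}(ii), and feed $\rho\bigl(a\,\mathcal{S}_{1}(M')\bigr)=0$ into Theorem~\ref{3.12} --- is sound (the scaling and summability estimates check out, and there is no circularity, since Theorems~\ref{1quas}, \ref{4.16.1}, \ref{3.12} and Lemma~\ref{3.3} are all independent of this corollary), and it has the small merit of producing the equality $\rho(\{a\}\cup M)=\rho(M)$ directly; but it is longer and, since the proof of Theorem~\ref{3.12} itself runs on the same subharmonic-Liouville mechanism as Lemma~\ref{3.2}, it ultimately re-derives through the semigroup what the paper gets in two lines.
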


\begin{proof}
If $a\in\operatorname{Rad}^{t}(A)$ then $\rho_{1}(\{\lambda a\}\cup
M)=\rho_{1}(M)$ for every finite subset $M$ of $A$ and $\lambda\in\mathbb{C}$
by Theorem \ref{des}, whence
\[
\sup\{\rho(\{\lambda a\}\cup M):\lambda\in\mathbb{C\}\leqslant}\sup\{\rho
_{1}(\{\lambda a\}\cup M):\lambda\in\mathbb{C\}=}\rho_{1}(M).
\]
By Lemma \ref{3.2}, $a\in R_{f}(A)$.

It follows from Theorem \ref{f40} that if $A=R_{c}(A)$ then
$A=\operatorname{Rad}^{t}(A)$. Then
\[
R_{c}(A)=\operatorname{Rad}^{t}(R_{c}(A))\subset\operatorname{Rad}^{t}(A)
\]
because $R_{c}(A)$ is a closed ideal of $A$ and $\operatorname{Rad}^{t}$ is a
TR on $(BA)$.
\end{proof}

So, it can be said that the problem of the tensor radicality of a radical
Banach algebra is ``intermediate'' between the problems of finite
quasinilpotence and compact quasinilpotence. It follows from Corollary
\ref{p41} and Lemma \ref{reg} that
\[
\rho_{1}(M)=0
\]
for every summable subset $M$ of $R_{c}(A)$, for a normed algebra $A$.

\subsection{$p$-quasinilpotent algebras}

We will conclude this section by consideration of a natural scale of joint
spectral radii, including $\rho$ and $\rho_{1}$ as polar extrema. A close (but
not identical) notion see in \cite{P97}.

Let $A$ be a normed algebra and $p\geqslant1$. For a generalized subset $M$ of
$A$ with a representative $(a_{\alpha})_{\alpha\in\Lambda}$, let
\[
\Vert M\Vert_{p}=(\sum_{\alpha\in\Lambda}\Vert a_{\alpha}\Vert^{p})^{1/p}.
\]
As in the case $p=1$, $\Vert M\Vert_{p}$ does not depend on a representative.
Moreover,
\begin{equation}
\Vert MN\Vert_{p}\leqslant\Vert M\Vert_{p}\Vert N\Vert_{p} \label{f90}%
\end{equation}
for generalized subsets of $A$ (recall that product $MN$ and a power $M^{n}$
are defined here by rules of generalized subsets; see Subsection \ref{sub}). A
generalized subset $M$ of $A$ is called $p$\textit{-summable} if $\Vert
M\Vert_{p}<\infty$. For a $p$-summable generalized subset $M$ of $A$, it
follows from (\ref{f90}) that there exists
\[
\rho_{p}(M)=\lim(\Vert M^{n}\Vert_{p})^{1/n}=\inf(\Vert M^{n}\Vert_{p}%
)^{1/n}.
\]
Changing as usually sums to suprema for $p=\infty$, one can write that
\[
\rho(M)=\rho_{\infty}(M).
\]
Clearly any $p_{1}$-summable generalized subset $M$ is $p_{2}$-summable if
$p_{1}<p_{2}$; moreover
\[
\Vert M\Vert_{p_{1}}\geqslant\Vert M\Vert_{p_{2}}.
\]
It follows that
\begin{equation}
\rho_{p_{1}}(M)\geqslant\rho_{p_{2}}(M). \label{f10}%
\end{equation}

A normed algebra $A$ is called $p$-\textit{quasinilpotent} if $\rho_{p}(M)=0$,
for every $p$-summable subset $M$ of $A$. It is clear that if $A$ is
$p$-quasinilpotent then $\rho_{p}(M)=0$ for every $p$-summable generalized
subset of $A$.

\begin{theorem}
\label{4.17} Let $A$ be a Banach algebra and $p>1$. Then

\begin{itemize}
\item [(i)]If $A$ is $p$-quasinilpotent then $A$ is $1$-quasinilpotent.

\item[(ii)] If $A$ is $1$-quasinilpotent then $\rho_{p}(M)=0$, for any
$1$-summable subset $M$ of $A$.
\end{itemize}
\end{theorem}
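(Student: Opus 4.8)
The plan is to dispose of (ii) at once and concentrate the work on (i). For (ii): any $1$-summable subset $M$ of $A$ is also $p$-summable, since $\|M\|_{p}\leqslant\|M\|_{1}<\infty$, so the monotonicity (\ref{f10}) gives $\rho_{p}(M)\leqslant\rho_{1}(M)$; as $A$ is $1$-quasinilpotent, $\rho_{1}(M)=0$, whence $\rho_{p}(M)=0$. Nothing more is required.

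For (i) I would fix an arbitrary $1$-summable subset $M$ of $A$, choose a representative $(a_{k})_{k\in I}$ with $\sum_{k\in I}\|a_{k}\|<\infty$, and, after discarding the indices with $a_{k}=0$, assume $a_{k}\neq0$ for all $k$; the goal is $\rho_{1}(M)=0$. The idea is to build from $M$ a single $p$-summable generalized subset $N$ and to dominate the $1$-norms of the powers of $M$ by the $p$-norms of the powers of $N$. With $p'=p/(p-1)$, let $N$ be the generalized subset of $A$ with representative $(\mu_{k}a_{k})_{k\in I}$, where $\mu_{k}=\|a_{k}\|^{-1/p'}$. A short computation shows $\|N\|_{p}^{p}=\sum_{k}\mu_{k}^{p}\|a_{k}\|^{p}=\sum_{k}\|a_{k}\|<\infty$, so $N$ is $p$-summable, and simultaneously $\sum_{k}\mu_{k}^{-p'}=\sum_{k}\|a_{k}\|<\infty$; put $C=\bigl(\sum_{k}\|a_{k}\|\bigr)^{1/p'}$.

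The heart of the argument is the inequality $\|M^{n}\|_{1}\leqslant C^{n}\|N^{n}\|_{p}$, valid for every $n$. To get it I would write, for each tuple $(k_{1},\dots,k_{n})\in I^{n}$,
\[
\|a_{k_{1}}\cdots a_{k_{n}}\|=\bigl(\mu_{k_{1}}\cdots\mu_{k_{n}}\,\|a_{k_{1}}\cdots a_{k_{n}}\|\bigr)\cdot\bigl(\mu_{k_{1}}\cdots\mu_{k_{n}}\bigr)^{-1},
\]
sum over all such tuples, and apply H\"older's inequality with exponents $p$ and $p'$: the $p$-th powers of the first factors sum to $\|N^{n}\|_{p}^{p}$ — here $\|\mu_{k_{1}}a_{k_{1}}\cdots\mu_{k_{n}}a_{k_{n}}\|=\mu_{k_{1}}\cdots\mu_{k_{n}}\|a_{k_{1}}\cdots a_{k_{n}}\|$ — while the $p'$-th powers of the second factors sum to $\bigl(\sum_{k}\mu_{k}^{-p'}\bigr)^{n}=C^{np'}$. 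Extracting $n$-th roots and letting $n\to\infty$ yields $\rho_{1}(M)\leqslant C\,\rho_{p}(N)$; since $A$ is $p$-quasinilpotent and $N$ is $p$-summable, $\rho_{p}(N)=0$, so $\rho_{1}(M)=0$. As $M$ was arbitrary, $A$ is $1$-quasinilpotent.

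I expect the one genuine obstacle to be discovering the weight $\mu_{k}=\|a_{k}\|^{-1/p'}$: this exponent is exactly what forces both $\|N\|_{p}^{p}$ and the H\"older-dual sum $\sum_{k}\mu_{k}^{-p'}$ to collapse to the convergent series $\sum_{k}\|a_{k}\|$, and one checks readily that no other power of $\|a_{k}\|$ makes both quantities finite. The remaining details are routine: the reduction to $a_{k}\neq0$, and the fact — already noted in the text — that $p$-quasinilpotence automatically applies to generalized subsets, so it does no harm if $a_{k}\mapsto\mu_{k}a_{k}$ fails to be injective.
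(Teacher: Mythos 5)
Your proof is correct, and for part (i) it takes a genuinely different route from the paper. Part (ii) is handled exactly as in the paper, via the monotonicity $\rho_{p_{1}}(M)\geqslant\rho_{p_{2}}(M)$ of (\ref{f10}). For part (i) the paper does not argue inside $A$ at all: it invokes Theorem \ref{perrad} (``$1$-quasinilpotent $=$ tensor radical'') and shows that $A{\widehat{\otimes}}B$ is radical for every Banach algebra $B$ by taking $T=\sum a_{k}\otimes b_{k}$, rescaling so that $\sum\Vert a_{k}\Vert^{p}<\infty$ and $\sum\Vert b_{k}\Vert^{q}<\infty$ with $q=p/(p-1)$, and estimating $\Vert T^{n}\Vert\leqslant\Vert M^{n}\Vert_{p}\,\beta^{n}$ by H\"older --- so the dual weights are carried by the second tensor factor rather than by explicit scalars. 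Your argument makes those weights explicit ($\mu_{k}=\Vert a_{k}\Vert^{-1/p'}$), stays entirely inside $A$, and yields the clean quantitative inequality $\rho_{1}(M)\leqslant C\,\rho_{p}(N)$ with $C=\bigl(\sum_{k}\Vert a_{k}\Vert\bigr)^{1/p'}$; your appeal to the paper's remark that $p$-quasinilpotence passes to generalized subsets legitimately handles possible repetitions among the $\mu_{k}a_{k}$. What each approach buys: the paper's proof is short given the machinery already in place (Theorem \ref{perrad}, and Corollary \ref{p41} for the boundary case), whereas yours is elementary and self-contained, needing neither the tensor-product characterization nor any auxiliary algebra $B$. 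One small caveat: the paper's proof also covers $p=\infty$ (citing Corollary \ref{p41}); your computation as written is for finite $p$, but it extends verbatim with $\mu_{k}=\Vert a_{k}\Vert^{-1}$, $\Vert N\Vert_{\infty}=1$ and the sup--sum form of H\"older, so nothing essential is lost.
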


\begin{proof}
(i) Note that the case $p=\infty$ follows from Corollary \ref{p41}. Let
$p<\infty$. It suffices to prove that $A$ is $\operatorname{Rad}^{t}$-radical.
Let $B$ be an arbitrary Banach algebra and $T=\sum a_{k}\otimes b_{k}\in
A{\widehat{\otimes}}B$ with $\sum\left\|  a_{k}\right\|  \left\|
b_{k}\right\|  <\infty$. Multiplying by suitable scalars, one may suppose that
$\sum\left\|  a_{k}\right\|  ^{p}<\infty$ and $\sum\left\|  b_{k}\right\|
^{q}<\infty$, where $q=p/(p-1)$. It follows that
\begin{align*}
\Vert T^{n}\Vert &  =\left\|  \sum a_{k_{1}}...a_{k_{n}}{\otimes}b_{k_{1}%
}...b_{k_{n}}\right\| \\
&  \leqslant(\sum\Vert a_{k_{1}}...a_{k_{n}}\Vert^{p})^{1/p}(\sum\Vert
b_{k_{1}}...b_{k_{n}}\Vert^{q})^{1/q}\leqslant\Vert M^{n}\Vert_{p}\beta^{n},
\end{align*}
where $M$ is a generalized subset with the representative $(a_{k}%
)_{k\geqslant1}$ and
\[
\beta=(\sum\Vert b_{k}\Vert^{q})^{1/q}.
\]
Thus
\[
\Vert T^{n}\Vert^{1/n}\leqslant\beta(\Vert M^{n}\Vert_{p})^{1/n},
\]
whence $\rho(T)\leqslant\rho_{p}(M)=0$.

(ii) follows immediately from (\ref{f10}).
\end{proof}

\end{document}